\documentclass[12pt]{amsart}

\usepackage[T1]{fontenc}
\usepackage[utf8]{inputenc}
\usepackage[polish]{babel}
\AtBeginDocument{}
\AtBeginDocument{}

\usepackage{amsfonts,amssymb,amsmath,amsthm}
\usepackage{url}
\usepackage[colorlinks=true, urlcolor=black, citecolor=black, linkcolor=black, hyperfootnotes=true]{hyperref}
\usepackage{bbm}
\usepackage{bm}
\usepackage{anysize}
\usepackage{cleveref}
\usepackage{dutchcal}

\usepackage{tikz-cd}
\usetikzlibrary{decorations.markings, arrows.meta}
\tikzset{
	marrow/.style={decoration={markings,mark=at position 0.5 with {\arrow{#1}}}, postaction=decorate}
}

\numberwithin{equation}{section}

\theoremstyle{plain}
\newtheorem{theorem}{Theorem}[section]
\newtheorem{lemma}[theorem]{Lemma}
\newtheorem{corollary}[theorem]{\bf Corollary}
\newtheorem{remark}[theorem]{\bf Remark}

\newtheorem{proposition}[theorem]{\bf Proposition}

\theoremstyle{definition}

\newcommand{\nil}{\mathrel{\vphantom{>}\text{\mathsurround=0pt\ooalign{$>$\cr$-$\cr}}}}

\newcommand{\sqni}{\mathrel{\vphantom{\sqsupset}\text{\mathsurround=0pt\ooalign{$\sqsupset$\cr$-$\cr}}}}

\newcommand{\sqin}{\mathrel{\vphantom{\sqsubset}\text{\mathsurround=0pt\ooalign{$\sqsubset$\cr$-$\cr}}}}

\newcommand \Homeo{ \mbox{Homeo}}

\newcommand \Id{ \mbox{Id}}
\newcommand \Sp{{\mathsf{S}}}

\newcommand \mesh{{\rm{ mesh}}}

\newcommand \sqsp{\sqsupset}
\newcommand \sqsb{\sqsubset}
\newcommand \la{\leftarrow}
\newcommand \ra{\rightarrow}
\newcommand \tla{\twoheadleftarrow}

\newcommand{\cl}{\mbox{cl}}

\newcommand \mor{{\rm{mor}}}
\newcommand \ob{{\rm{ob}}}

\newcommand \rt{\mathbf{0}}

\newcommand \diam{{\rm{ diam}}}

\newcommand \fra{Fra\"{i}ss\'{e}}
\newcommand \uhr{\upharpoonright}

\newcommand \AAb{\mathbf{A}}
\newcommand \CCb{\mathbf{C}}

\newcommand \GGb{\mathbf{G}}
\newcommand \GGRb{\mathbf{Gr}}
\newcommand \GGRFb{\mathbf{Gr_F}}
\newcommand \IIb{\mathbf{I}}
\newcommand \JJb{\mathbf{J}}
\newcommand \KKb{\mathbf{K}}
\newcommand \LLb{\mathbf{L}}

\newcommand \SSb{\mathbf{S}}

\newcommand \AAub{\underline{\mathbf{A}}}
\newcommand \CCub{\underline{\mathbf{C}}}
\newcommand \DDub{\underline{\mathbf{D}}}

\newcommand \GGRub{\underline{\mathbf{Gr}}}

\newcommand \IIub{\underline{\mathbf{I}}}
\newcommand \JJub{\underline{\mathbf{J}}}
\newcommand \KKub{\underline{\mathbf{K}}}
\newcommand \LLub{\underline{\mathbf{L}}}
\newcommand \MMub{\underline{\mathbf{M}}}
\newcommand \SSub{\underline{\mathbf{S}}}

\newcommand \KK{\mathbb{K}}

\newcommand \OO{\mathcal{O}}
\newcommand \oo{\mathcal{o}}

\newcommand \sss{\mathcal{s}}
\newcommand \ttt{\mathcal{t}}

\newcommand \NN{\mathbb{N}}

\newcommand \QQ{\mathbb{Q}}

\newcommand \ZZ{\mathbb{Z}}

\newcommand \It{\mathtt{I}}

\author[M. Malicki]{Maciej Malicki}
\address{Faculty of Mathematics, Informatics and Mechanics of the University of Warsaw, ul. Banacha 2, Warsaw, Poland}
\email{mmalicki@mimuw.edu.pl}

\keywords{compact metric spaces, generic homeomorphisms, conjugacy classes, \fra \ theory}
\subjclass[2010]{03E15, 54H11, 37B45, 37B40}
\thanks{Research was partially supported by the National Science Centre, Poland under the Weave-UNISONO call in the Weave programme [grant no 2021/03/Y/ST1/00072].}

\begin{document}
	
\title[Rokhlin properties]{Rokhlin properties in homeomorphism groups of  compact metric spaces}
	
\begin{abstract}
We develop a general framework for studying generic homeomorphisms of compact metric spaces using combinatorial amalgamation methods inspired by Fraïssé theory. Our approach combines Rosendal’s criterion for comeager conjugacy classes with combinatorial codings of compact spaces arising from the Bartoš-Bice-Vignati duality. We introduce a new amalgamation property, called weak minimal amalgamation, formulated in suitable paracategories encoding local dynamical data of homeomorphisms. This yields characterizations of the Rokhlin and the strong Rokhlin properties, i.e., the existence of dense and comeager conjugacy classes in homeomorphism groups of compact metric spaces.
As applications, we reprove Hjorth’s theorem that $\Homeo_+([0,1])$ admits a generic homeomorphism, and establish the existence of generic homeomorphisms for the Cantor fan and the Lelek fan. Moreover, we show that generic homeomorphisms of these spaces have no Li–Yorke pairs, and therefore generically have zero topological entropy. More broadly, the paper develops new connections between combinatorial amalgamation properties and generic phenomena in topological group theory and topological dynamics.	
\end{abstract}
	
	\maketitle
	
	\setcounter{tocdepth}{2}
	
\section{Introduction}

The paper is a contribution to the study of isomorphism-invariant properties shared by ``typical'' homeomorphisms of compact metric spaces. Here, homeomorphisms are viewed as elements of the homeomorphism group $\Homeo(X)$ endowed with the Polish group topology induced by the supremum metric. Two homeomorphisms $f,g \in \Homeo(X)$ are \emph{isomorphic} if there exists a homeomorphism $h \in \Homeo(X)$ such that $$f \circ h=h \circ g,$$ i.e., if $f$ and $g$ belong to the same conjugacy class of $\Homeo(X)$. Since $\Homeo(X)$ is Baire, the notion of a ``typical'' homeomorphism can be formalized using Baire category methods: a property $P(x)$ is said to be \emph{generic} if the set
\[
\{x \in \Homeo(X): P(x)\}
\]
is comeager in $\Homeo(X)$. Recall that comeager sets are topological analogues of full measure sets; in particular, countable intersections of comeager sets are again comeager, and the complement of a comeager set is not comeager.

The search for properties of typical homeomorphisms becomes more tractable when $\Homeo(X)$ admits a \emph{generic homeomorphism}, i.e., an element whose conjugacy class is comeager. Indeed, every isomorphism-invariant property of such a homeomorphism automatically holds generically. In topological dynamics, the existence of a comeager conjugacy class is sometimes called the \emph{strong Rokhlin property}, while the existence of a dense conjugacy class is referred to as the \emph{Rokhlin property}. Rokhlin properties as a tool in investigating generic properties of homeomorphisms already appear in the work of Glasner--Weiss \cite{GlWe}, and later Bernardes--Darji \cite{BeDa}. In particular, Glasner--Weiss used the Rokhlin property to show that zero topological entropy is generic for the Cantor space, and that infinite topological entropy is generic for the Hilbert cube; Bernardes-Darji used the strong Rokhlin property to reprove the former result. On the other hand, the fact that the homeomorphism group of the pseudo-arc has the Rokhlin property, proved in Bice-Malicki \cite{BiMa} implies that there can be no useful continuous invariant for homeomorphisms of the pseudo-arc analogous to the degree of a homeomorphism of the Knaster continuum established by Dębski \cite{De}.  The existence of generic homeomorphisms is also closely related to the structure of Polish groups. For example, it plays a crucial role in Rosendal--Solecki's proof of automatic continuity of $\Homeo([0,1])$ \cite{RoSo}. More generally, Rosendal conjectured that every Polish group with a comeager conjugacy class has the automatic continuity property.

The present paper proposes a general framework for establishing the existence of generic homeomorphisms of compact metric spaces, together with some applications. The methods originate in model theory, more specifically in Fra\"iss\'e theory (see, e.g., \cite{Fr}). In the classical setting, one starts with a category $\KKb$ of finite structures (with embeddings as morphisms) that has \emph{amalgamation}, i.e., for any $A,B_0,B_1 \in \KKb$, and morphisms $\alpha_0: A \to B_0$, $\alpha_1: A \to B_1$ there exist $C \in \KKb$, and morphisms $\alpha'_0: B_0 \to C$, $\alpha'_1: B_1 \to C$ such that $$\alpha'_0 \circ \alpha_0=\alpha'_1 \circ \alpha_1,$$ and constructs from it a canonical countable limit object, the Fra\"iss\'e limit of $\KKb$. A basic example is the ordered set $(\QQ,\leq)$ of the rational numbers, obtained as the Fra\"iss\'e limit of the category of finite linear orders.

Since the original work of Fra\"iss\'e, these ideas have proved remarkably robust and have by now found applications far beyond classical model theory. Over the last two decades, various analogues of Fra\"iss\'e limits have appeared in topology, functional analysis, operator algebras, and topological dynamics. Examples include the pseudo-arc, the Gurarii space, the Poulsen simplex, the Jiang--Su algebra, and the hyperfinite $\mbox{II}_1$ factor (see, e.g., \cite{IrSo}, \cite{Lu}, and \cite{EaFa} for more details). These constructions reveal a common paradigm: highly homogeneous objects often arise as limits of finite or finitely generated approximations satisfying suitable amalgamation principles.

The connection between Fra\"iss\'e theory and generic automorphisms was first explored by Ivanov \cite{Iv}, then, among others, by Kechris--Rosendal \cite{KeRo}, where a variant of amalgamation, called weak amalgamation, was used to characterize the existence of generic automorphisms of countable structures. More recently, Bice--Malicki \cite{BiMa} developed techniques for studying generic homeomorphisms of compact metric spaces building on the framework introduced by Barto\v s--Bice--Vignati \cite{BaBiVi} and \cite{BaBiVi2}. In those works, compact metric spaces are reconstructed from combinatorial data encoded by categories of graphs, and morphisms that are not necessarily functions. More precisely, one considers sequences of finite graphs, giving rise to partially ordered sets, and associates to them a compact space called the \emph{spectrum} of the poset. This construction is analogous to Stone duality: points of the spectrum correspond to certain filters on the poset, while the topology is generated by natural basic open sets determined by its elements (see \cite[Section 3]{Ma} for a short introduction to this topic). This approach also yields a characterization, in terms of directedness as in \cite{KeRo} and \cite{BiMa}, of the existence of a dense conjugacy class.

An alternative projective approach, developed in Irwin--Solecki \cite{IrSo}, constructs compact spaces as quotients of zero-dimensional projective Fra\"iss\'e limits. Both approaches emphasize that compact spaces can be encoded by finite combinatorial approximations together with suitable amalgamation properties. Related finite-combinatorial methods have recently been used by Basso--Codenotti--Vaccaro \cite{BaCoVa} to study generic-orbit problems for the action of homeomorphism groups of Peano continua on spaces of maximal chains.

In \cite{BiMa}, generic homeomorphisms of compact metric spaces are constructed using abstract Banach--Mazur games and certain relations on posets, called refiners, which encode homeomorphisms of the associated spectra. Here we take a different approach, based on Rosendal's criterion for the existence of a comeager orbit of a continuous action of a Polish group, which allows us to dispense with refiners. We obtain a direct characterization of the existence of a generic homeomorphism in terms of a new amalgamation property -- called \emph{weak minimal amalgamation} -- in a suitable paracategory coding a $\pi$-basis of $\Homeo(X)$. More generally, the criterion applies to closed subgroups of $\Homeo(X)$ and reduces the existence of a comeager conjugacy class to a finite combinatorial amalgamation problem. Weak minimal amalgamation can be viewed as a relaxation of classical Fra\"iss\'e amalgamation adapted both to local conjugacy data and to the fact that the relevant morphisms are typically relations rather than functions.

We apply this criterion first to reprove Hjorth's theorem \cite{Hj} asserting that the group $\Homeo_+([0,1])$ of increasing homeomorphisms of the arc $[0,1]$ has a generic homeomorphism, and we show that there exists a generic homeomorphism of the Cantor fan and of the Lelek fan. Barto\v{s}ov\'a--Kwiatkowska \cite{BaKw} had previously proved that the homeomorphism group of the Lelek fan has a dense conjugacy class; thus, in this case our result strengthens the Rokhlin property to the
strong Rokhlin property. We also use Barto\v{s}--Bice--Vignati duality, and Fra\"iss\'e-like techniques to study their generic dynamics.  We show that a generic homeomorphism of either fan has no Li--Yorke pairs; in particular, zero topological entropy is generic. Previously, among non-zero-dimensional compact metric spaces, generic homeomorphisms were known only for the arc and for the degree-1 homeomorphisms of the Knaster continuum, for which Iyer \cite{Iy} recently established the existence of a generic homeomorphism of degree $1$. On the other hand, to the best of our knowledge, generic properties of homeomorphisms that are related to topological dynamics are known only for the Cantor space, and the Hilbert cube.  

The methods also suggest two directions for further work. One is to study generic homeomorphisms of other compacta, such as the pseudo-arc, the Sierpiński carpet, or the Hilbert cube. Another is to relate combinatorial amalgamation properties systematically to generic dynamical properties, for example recurrence, chain transitivity, shadowing-type properties, or the structure of invariant measures. Related connections between the strong topological Rokhlin property, symbolic dynamics, and generic shadowing have recently been established by Doucha \cite{Do}; see also \cite{DoKwMaNi} for Fra\"iss\'e-theoretic methods applied to invariant measures on the Cantor space. 

The paper is organized as follows. In Section 2, we recall basic notions concerning relations and category theory. In Section 3, for a compact metric space $X$, we define a paracategory $\KKb_X$ consisting of finite reflexive graphs arising from open covers of $X$, where adjacency corresponds to non-empty intersection. From this we construct a paracategory $\KKub_X$ of directed graphs coding information about homeomorphisms of $X$. These digraphs determine a basis of neighborhoods in $\Homeo(X)$ by recording how homeomorphisms move points between members of the cover; we also consider paracategories $\KKub_G$ corresponding to subgroups $G \leq \Homeo(X)$. In Section 4, we prove characterizations of the existence of a dense and of a comeager conjugacy class in a Polish $G \leq \Homeo(X)$ in terms of directedness and weak minimal amalgamation. In Section 5, we specialize the theory to spaces arising as Fra\"iss\'e limits of graph categories in the sense of \cite{BaBiVi2}. Finally, Section 6 contains applications to the arc, the Cantor fan, and the Lelek fan.

\section{Basic notions}
\subsection{Relations}
Let $R \subseteq A \times B$, $S \subseteq B \times C$ be relations. For $X \subseteq A$, $$X^R=\{b \in B: \exists a \in X \, a R b\}.$$ The composition $R \circ S \subseteq A \times C$, and inverse $R^{-1} \subseteq B \times A$ are defined in a standard manner, i.e., $$R \circ S =\{ (a,c) \in A \times C: \exists b (a \, R \, b \, S \, c)\}, \ R^{-1}=\{(b,a) \in (B,A): aRb\}.$$ 
We will often denote the inverse of a relation by the inverse of its symbol, e.g., $\sqsb$ is the inverse of $\sqsp$.
We think of $A$ as the co-domain, and $B$ as the domain of $R$. Thus, $R$ is \emph{surjective} if for every $a \in A$ there is $b \in B$ such that $aRb$. It is \emph{co-surjective} if $R^{-1}$ is surjective. It is \emph{co-injective} if for every $a \in A$ there is $b \in B$ such that  
$$ aRb \mbox{ and } \forall a' \in A (a'Rb \rightarrow a=a').$$
Any such $b$ will be called  \emph{$R$-co-injective}. Finally, $R$ is \emph{co-bijective} if it is co-injective and co-surjective.

The relation $\pi_R \subseteq A \times B$ is defined as the largest $R' \subseteq R$ such that  
$$ (a R' b \mbox{ and }  a'R b) \rightarrow a=a'.$$
\begin{remark}
Note that $\pi_R$ is a partial function $\pi_R: B \to A$, and $R$ is co-injective iff $\pi_R$ is surjective.
\end{remark}
Let $A$, $B$ be sets. Consider binary relations defined on both $A$ and $B$ that are denoted by the same symbol $R$. We say that a relation $\sqsp \subseteq A \times B$  is \emph{$R$-preserving} if, 
for all  $b_0, b_1 \in B$,  $a_0, a_1 \in A$
$$(a_0 \sqsp b_0 \mbox{ and } a_1 \sqsp b_1 \mbox{ and } R(b_0,b_1)) \rightarrow R(a_0,a_1).$$
It is \emph{weakly $R$-preserving} if
for all $ b_0, b_1 \in B$ 
$$R(b_0,b_1) \rightarrow \exists a_0, a_1 \in A (a_0 \sqsp b_0 \mbox{ and } a_1 \sqsp b_1 \mbox{ and } R(a_0,a_1)).$$
%
It is \emph{$R$-surjective} if for all $a_0, a_1 \in A$,
 $$R(a_0,a_1) \rightarrow \exists b_0, b_1 \in B (a_0 \sqsp b_0 \mbox{ and }  a_1 \sqsp b_1 \mbox{ and }  R(b_0,b_1)).$$ 
If $R$ is a reflexive graph relation, i.e., $R$ is reflexive and symmetric, we say that $\sqsp \subseteq A \times B$ is \emph{edge-witnessing} if, for all $a_0, a_1 \in G$, 
$$R(a_0, a_1) \rightarrow \exists b \in B (a_0 \sqsp b \mbox{ and } a_1 \sqsp b).$$ 


\subsection{Categories and paracategories}

Recall that a \emph{paracategory} is defined similarly to a category, except that composition of morphisms is not required to be defined for every composable pair. In this paper, we will consider only paracategories $\KKb$ whose objects $A \in \ob(\KKb)$ are finite sets with relations, and morphisms $\sqsp \in \mor(\KKb)$ from $B$ to $A$, where $A, B \in \ob(\KKb)$, are relations $\sqsp \subseteq A \times B$. Composition of morphisms is composition of relations. For convenience, we will often write $A \in \KKb$ instead of $A \in \ob(\KKb)$. For $A,B \in \KKb$, the symbol $\KKb^A_B$ denotes the class of all morphisms in $\KKb$ from $B$ to $A$, and $\KKb_\bullet^A$ is the class of all morphisms to $A$.

A sub-paracategory $\LLb \subseteq \KKb$ is \emph{wide} if $\ob(\LLb)=\ob(\KKb)$. It is \emph{full} if $\LLb^A_B=\KKb^A_B$ for any $A,B \in \LLb$.  
For $A, B \in \KKb$, and $\sqsp \in \KKb^A_B$, we call $B$ a \emph{$\sqsp$-lift} of $A$; it is a lift of $A$ if there is such $\sqsp$, and, for a sub-paracategory $\It \subseteq \KKb$, it is an $\It$-lift if there is such $\sqsp \in \mor(\It)$.

Let $\KKb \subseteq \GGb$ be a sub-paracategory of a category $\GGb$. In particular, compositions of morphisms in $\KKb$ exist (in $\GGb$). 
We say that $\KKb$ has \emph{weak amalgamation} if for any $A \in \KKb$, there are $B \in \KKb$, and $\nil \in \KKb^A_{B}$ such that for any $C_0,C_1 \in \KKb$, and $\sqsp_0 \in \KKb^B_{C_0}$, $\sqsp_1 \in \KKb^B_{C_1}$, there are $D \in \KKb$, and $\sqsp'_0 \in \KKb^{C_0}_D$, $\sqsp'_1 \in \KKb^{C_1}_D$ such that $$\nil \circ \sqsp_0 \circ \sqsp'_0=\nil \circ \sqsp_1 \circ \sqsp'_1.$$
We refer to $\nil$ (and $B$, if $\nil$ is clear from the context) an \emph{amalgamation base} for $A$. We call $D$, and $\sqsp'_0$, $\sqsp'_0$, an \emph{amalgam}  of $\sqsp_0$, $\sqsp_1$ over $\nil$ (or of $C_0$, $C_1$  over $A$, if the morphisms are clear from the context). When the above condition is replaced with the condition $$ \nil \circ \sqsp_0 \circ \sqsp'_0 \subseteq \nil \circ \sqsp_1 \circ \sqsp'_1,$$
we say that $\KKb$ has \emph{weak lax-amalgamation}. When it is replaced with the condition that the intersection  $$ \nil \circ \sqsp_0 \circ \sqsp'_0 \cap \nil \circ \sqsp_1 \circ \sqsp'_1$$
is co-surjective as a relation in $A \times D$, then we say that $\KKb$ has \emph{weak minimal amalgamation} or, shortly, \emph{weak m-amalgamation}. 
 Also, $\KKb$ has amalgamation, lax-amalgamation or m-amalgamation, provided that in the respective conditions we can assume that $B=A$, and $\nil=\Id$. Note that if $\KKb$ has (weak) amalgamation, then it has (weak) m-amalgamation. 
 A category that is closed under isomorphism, countable up to isomorphism, directed, and has (weak) amalgamation, is called a \emph{(weak) \fra \ category}.
 
 \begin{remark}
Most basic facts about \fra \ categories are also true about \fra \ paracategories (defined analogously). In this paper, we restrict ourselves to \fra \ categories because so far we are not aware of any interesting and useful \fra \ paracategories that are not categories.
 \end{remark}
 
Let $\It \subseteq \KKb$ be a subcategory of a paracategory $\KKb$. We say that $\KKb$ is \emph{weakly $\It$-absorbing} if, for any $A \in \It $, there are $B \in \It$, and $\nil \in \It^A_{B}$ such that for any $C \in \KKb$, and $\sqsp \in \KKb^B_{C}$ there are $D \in \It$, and $\sqsp' \in \KKb^{C}_D$ such that $$\nil \circ \sqsp \circ \sqsp' \in \mor(\It).$$ It is \emph{weakly $\It$-lax-absorbing} if for any $\nil$, $\sqsp$, $\sqsp'$ as above, there is $\sqni \in \mor(\It)$ such that $$\nil \circ \sqsp \circ \sqsp' \subseteq \sqni.$$ 
It is  \emph{$\It$-(lax)-absorbing} provided that we can assume that $B=A$, and $\nil=\Id$. We call a sequence $(K_n, \succeq_n)$ in $\KKb$ a \emph{(weak) \fra \ sequence} if, for the category $\It \subseteq \KKb$ whose objects are $K_n$, $n \in \NN$, morphisms are $\succeq^m_n$, where $m \leq n$, and $$\succeq^m_m=\Id, \quad \succeq^m_n= \succeq_m \circ \succeq_{m+1} \circ \ldots \circ \succeq_n,$$ for $m<n$, the family $\{K_n\}$  is coinitial in $\ob(\KKb)$, and $\KKb$ is (weakly) $\It$-absorbing. The following fact is straighforward.

\begin{proposition}
\label{pr:FraConditions}
Let $\KKb$ be a category. The following conditions are equivalent:
\begin{enumerate}
\item $\KKb$ is a (weak) \fra \ category,
\item there exists a (weak) \fra \ sequence in $\KKb$,
\item there exists a subcategory $\It \subseteq \KKb$ such that $\KKb$ is (weakly) $\It$-absorbing.
\end{enumerate}
Moreover, if $\KKb$ ia a (weak) \fra \ category, then $(K_n,\succeq_n)$ is a (weak) \fra \ sequence iff the category $\It \subseteq \KKb$ defined by $\ob(\It)=\{K_n\}$, $\mor(\It)=\{\succeq^m_n\}$ is coinitial, and $\KKb$ is (weakly) $\It$-absorbing.	
\end{proposition}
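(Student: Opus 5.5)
I will prove the cycle (1)$\Rightarrow$(2)$\Rightarrow$(3)$\Rightarrow$(1) and then the ``moreover'' clause, treating the weak and non-weak versions in parallel. Throughout, ``directed'' is read in the projective sense: any two objects have a common lift. ``Coinitial'' means every object of $\KKb$ has a lift among the $K_n$. I also use that $\KKb$ is a genuine category, so composites of morphisms are again morphisms of $\KKb$.

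\textbf{(1)$\Rightarrow$(2).} This is the standard Fra\"iss\'e-sequence construction.
\begin{enumerate}
\item Fix a countable set $\{A_i\}$ of representatives of objects up to isomorphism.
\item Build $K_0 \succeq_0 K_1 \succeq_1 \dots$ by bookkeeping over countably many tasks:
\begin{itemize}
\item \emph{coinitiality tasks}: given $A_i$, use directedness to pass from $K_n$ to a common lift of $K_n$ and $A_i$;
\item \emph{absorption tasks}: given a stage $m$ and a morphism $\sqsp \in \KKb^{K_{m'}}_{C}$ (with $m'$ the stage reserved as amalgamation base for $K_m$), handle it at some later stage $n$.
\end{itemize}
\item In the weak case, interleave steps that replace $K_m$ by an amalgamation base $\nil: B \to K_m$ and set $K_{m+1}=B$. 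Then every later $K_{m'}$ is below this base, and amalgamation over $\nil$ still applies after composing with $\succeq^{m+1}_{m'}$.
\item For an absorption task at stage $n$, amalgamate $\succeq^{m+1}_n$ and $\sqsp$ over $\nil$ to get $D$ with $\sqsp'_0, \sqsp'_1$. Set $K_{n+1}=D$ and $\succeq_n=\sqsp'_0$. Then $\nil\circ\sqsp\circ\sqsp'_1=\succeq^m_{n+1}\in\mor(\It)$, which is exactly weak $\It$-absorption with witness $\nil=\succeq^m_{m+1}$.
\end{enumerate}

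The main obstacle is countability of the tasks. Morphisms into a fixed $K_m$ from objects ranging over representatives must form a countable family; this holds if hom-sets between finite structures are finite or countable, which is automatic here since morphisms are relations between finite sets. Isomorphism closure then lets us transfer from representatives to arbitrary $C$.

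\textbf{(2)$\Rightarrow$(3)} is trivial: take $\It$ to be the category generated by the sequence.

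\textbf{(3)$\Rightarrow$(1).} First note that $\It$ must be coinitial; if the statement only gives absorption, I would read coinitiality into the hypothesis as in the definition of a Fra\"iss\'e sequence.
\begin{itemize}
\item \emph{Weak amalgamation.} Given $A$, take a lift $\succeq: K\to A$ with $K\in\It$. Let $\nil_K: B\to K$ be the absorbing base and use $\succeq\circ\nil_K$ as the base for $A$. Given $\sqsp_0,\sqsp_1$ from this base, absorb each to get $\nil_K\circ\sqsp_i\circ\sqsp_i' \in \mor(\It)$.
\item \emph{Directedness.} For two objects $A_0, A_1$, lift each into $\It$ and compare the two resulting $\It$-morphisms, as below.
\end{itemize}
The subtle step is that two $\It$-morphisms into $K$ need not agree. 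For a sequence category, $\It^{K_m}_{K_n}$ has at most one element for each pair $m\le n$, and any two objects $K_{n_0},K_{n_1}$ have the common lift $K_{\max}$. So I extend the shorter composite by the appropriate $\succeq^{n_i}_{\max}$ and obtain equality. This is why I route (3)$\Rightarrow$(1) through (2): from a general absorbing $\It$, first extract a Fra\"iss\'e sequence by the construction above, run inside $\It$.

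\textbf{Moreover clause.} Given that $\KKb$ is Fra\"iss\'e, a sequence is Fra\"iss\'e exactly when its category is coinitial and absorbing. This is just the definition of a Fra\"iss\'e sequence unpacked, so the two conditions match verbatim and nothing further needs to be proved.
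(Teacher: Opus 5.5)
The gap is in (3)$\Rightarrow$(1) for a general $\It$. Your (1)$\Rightarrow$(2), (2)$\Rightarrow$(3), the sequence case of (3)$\Rightarrow$(1), and the moreover clause are fine; the paper itself gives no argument and only calls the fact straightforward. Your plan to ``extract a \fra\ sequence by the construction above, run inside $\It$'' is circular. Step 4 of that construction amalgamates $\succeq^{m+1}_n$ and $\sqsp$ over $\nil$, so it uses (weak) amalgamation in $\KKb$, which is exactly what you are trying to prove. Absorption handles only one morphism into the base at a time. It gives $\nil\circ\sqsp\circ\sqsp'\in\mor(\It)$, but no way to make this agree with $\succeq^m_n$, or with any other $\It$-morphism into $K_m$, after further lifting.

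No repair is possible at this level of generality. Take $\It=\KKb$: the absorption condition holds trivially, with $B=A$, $\nil=\Id_A$, $D=C$, $\sqsp'=\Id_C$. Moreover $\It$ is coinitial. So condition (3) as literally stated, even with the coinitiality you read into it, holds for every category, including non-directed ones and ones without weak amalgamation.

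So (3) has to be read with $\It$ the category induced by a sequence $(K_n,\succeq_n)$ with $\{K_n\}$ coinitial, as in the moreover clause. More generally, it suffices that $\KKb$ is directed and $\It$ is coinitial and has amalgamation within $\It$. Under that reading, your sequence argument already proves (3)$\Rightarrow$(1) directly, with no detour through (2). The common lift $K_{\max(n_0,n_1)}$ gives directedness. The two absorbed morphisms $\succeq^k_{n_0}$ and $\succeq^k_{n_1}$ are equalized by composing with $\succeq^{n_i}_{\max(n_0,n_1)}$, and postcomposing with the lift $K\to A$ gives weak amalgamation for an arbitrary $A$.

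Two smaller points:
\begin{itemize}
\item Closure under isomorphism in (1) cannot be derived from (2) or (3), so it must be a standing hypothesis on $\KKb$. You use it in (1)$\Rightarrow$(2) but silently drop it in the converse. (Countability up to isomorphism is automatic for categories of finite relational structures such as subcategories of $\GGRb$.)
\item The uniqueness of $\It$-morphisms $K_n\to K_m$ that you rely on requires the $K_n$ to be pairwise distinct, as assumed in Section 5. This can be arranged using closure under isomorphism.
\end{itemize}
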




\subsection{Graphs and digraphs} Let $\GGRb$ be the class of all finite reflexive graphs $(K,\sqcap)$, with the edge relation always denoted by $\sqcap$.  We regard $\GGRb$ as a category with morphisms $\sqsp \in \GGRb^K_L$, $K, L \in \GGRb$, defined as all relations  $\sqsp \subseteq K \times L$ that are co-bijective, and  $\sqcap$-preserving. We may also refer to $\sqcap$-preserving relations as \emph{edge-preserving}. 
The subcategory $ \GGRFb \subseteq \GGRb$ is a wide subcategory such that morphisms in $\GGRFb$ are all the morphisms $\sqsp \in \mor(\GGRb)$ that are functions $\sqsp:B \to A$.


For a paracategory $\KKb \subseteq \GGRb$ with a fixed subcategory $\It \subseteq \KKb$ (for $\KKb=\GGRb$, we always set $\It=\GGRb$), we define $\KKub$ as the paracategory of all triples $(K,\sqcap, \la)$, where  $(K,\sqcap) \in \It$, $$\la=\sqsp \circ \preceq$$ for some $K, L \in \It$, $\sqsp \in \KKb^K_L$, $\succeq \in \It^K_L$, and whose morphisms are all weakly $\la$-preserving morphisms in $\KKb$.  We will refer to objects of $\KKub$ as \emph{$\KKb$-digraphs}, or, shortly, digraphs, and, sometimes, to morphisms in $\KKub$  as \emph{weakly arrow-preserving}. We will frequently refer to $(K,\sqcap, \la)$ by $\la$, and we will denote $K$ by $K_\la$.


\begin{remark}
\label{re:FraisseDigraphs}
	Assume that $\KKb \subseteq \GGRb$ is a \fra \ category with a subcategory $\It$ defined as in Proposition \ref{pr:FraConditions} for some \fra \ sequence $(K_n,\succeq_n)$. Let $\la \in \GGRub$. Then amalgamation immediately implies that $\la \in \KKub$ iff $K_\la \in \It$, and  there are $L \in \It$,  and $\sqsp_0, \sqsp_1\in \KKb^{K_\la}_L$ such that $$\la=\sqsp_0 \circ \sqsubset_1.$$ In particular, if $\la \in \KKub$, then $\ra \in \KKub$.
	%
\end{remark}


\section{Compact metric spaces,  their groups of homeomorphisms, and induced paracategories}
The graphs $(K,\sqcap) \in \GGRb$ that are considered in this paper, are essentially finite open covers $K$ of a fixed compact metric space $X$, with the non-empty intersection relation on $K$ as the edge relation $\sqcap$. They will play the role of finite approximations of $X$. On the other hand, the arrows $\la$ in digraphs $(K,\sqcap, \la)$ will play the role of finite approximations of autohomeomorphisms of $X$.

Let $(X,d)$ be a compact metric space. The supremum metric on $\Homeo(X)$ induced by $d$ is also denoted by $d$. For a finite open cover $K$ of $X$, $\ell(K)$ denotes the Lebesgue number of $K$. It is well known that $\Homeo(X)$ with the supremum metric is a Polish group, i.e., a topological group whose topology is separable and completely metrizable,  and that a subgroup $G \leq \Homeo(X)$ is Polish iff it is closed. See \cite{Gao} for more information on basic properties of Polish spaces and Polish groups. 

Let $K, L$ be covers of $X$ such that $K$ is refined by $L$. We say that $K$ is a \emph{consolidation}  of $L$ (or that $K$ \emph{consolidates} $L$)  if every element of $K$ is a union of elements of $L$, i.e., for $p \in K$,  $$p=\bigcup \{q \in L: q \subseteq p \}.$$ 
 
For the rest of this, and the next section, we fix a compact metric space $(X,d)$, and a sequence $(K_n)$ of minimal open covers of $X$ such that
\begin{enumerate}
	\item each $K_n$ consolidates $K_{n+1}$,
	\item $\mesh(K_{n}) \to 0$.
\end{enumerate}
Such a sequence always exists by \cite[Theorem 1.34]{BaBiVi} (see \cite[Theorem 3.6]{Ma} for a short proof). As mentioned above, we regard $K_n$'s as graphs $(K_n, \sqcap) \in \GGRb$ with the non-empty intersection relation $\sqcap$.  For $f \in \Homeo(X)$, $m \leq n$, let $\sqsp_{f,m,n} \subseteq K_m \times K_n$ be defined by $$p \sqsp_{f,m,n} q \iff p \supseteq f[q],$$ $p \in K_m, q \in K_n$. 
\begin{remark}
Clearly, the relation $\sqsp_{f,m,n}$ is always $\sqcap$-preserving. Moreover, by minimality of $K_n$'s, if it co-surjective, then it is also co-injective, i.e.,  $\sqsp_{f,m,n} \in \GGRb^{K_m}_{K_n}$.
\end{remark}

\begin{proposition}
For all $f\in \Homeo(X)$, and $m \in \NN$, there is $n \geq m$ such that $\sqsp_{f,m,n} \in \GGRb^{K_m}_{K_n}$. 
\end{proposition}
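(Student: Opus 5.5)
By the preceding Remark, $\sqsp_{f,m,n}$ is always $\sqcap$-preserving, and once it is co-surjective, minimality of $K_n$ makes it co-injective as well. So it suffices to find $n \geq m$ such that $\sqsp_{f,m,n}$ is co-surjective. In the paper's convention, $K_n$ is the domain of $\sqsp_{f,m,n} \subseteq K_m \times K_n$, so co-surjectivity means: for every $q \in K_n$ there is $p \in K_m$ with $f[q] \subseteq p$.

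I will use uniform continuity of $f$ together with the Lebesgue number of $K_m$. Let $\varepsilon = \ell(K_m) > 0$; this is positive since $K_m$ is a finite open cover of the compact space $X$. Then every subset of $X$ of diameter less than $\varepsilon$ lies in some member of $K_m$. Since $X$ is compact, $f$ is uniformly continuous, so there is $\delta > 0$ such that $d(x,y) < \delta$ implies $d(f(x),f(y)) < \varepsilon/2$. Because $\mesh(K_n) \to 0$, I can choose $n \geq m$ with $\mesh(K_n) < \delta$.

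Now fix $q \in K_n$. Its diameter is less than $\delta$, so $\diam f[q] \leq \varepsilon/2 < \varepsilon$. By the choice of $\varepsilon$ there is $p \in K_m$ with $f[q] \subseteq p$, that is, $p \sqsp_{f,m,n} q$. This gives co-surjectivity, and the Remark then yields $\sqsp_{f,m,n} \in \GGRb^{K_m}_{K_n}$.

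The only delicate point is keeping the paper's direction conventions straight: "co-surjective" refers to the domain $K_n$. The co-injectivity step is already handled by the Remark, where minimality of the covers is used. So there is no real obstacle beyond this standard Lebesgue-number argument.
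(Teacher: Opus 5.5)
Your proof is correct and is essentially the paper's argument: uniform continuity of $f$ together with $\mesh(K_n)\to 0$ gives co-surjectivity of $\sqsp_{f,m,n}$ for large $n$, and the preceding Remark then supplies edge-preservation and co-injectivity; you have simply made the Lebesgue-number step explicit. One small wording point: co-injectivity actually comes from minimality of the target cover $K_m$, not $K_n$. Each $p\in K_m$ has a point $x$ lying in no other member, and any $q\in K_n$ containing $f^{-1}(x)$ is then $\sqsp_{f,m,n}$-co-injective for $p$; since every cover in the sequence is minimal, this changes nothing.
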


\begin{proof}
	Because $f$ is uniformly continuous, and $\mesh(K_{n}) \to 0$, for every $m \in \NN$, we can find $n \geq m$ such that $\sqsp_{f,m,n}$ is co-surjective. Then we use the above remark.
\end{proof}

For $m \leq n$ and $\sqsp \in \GGRb^{K_m}_{K_n}$, we define
\[ [\sqsp]=\{f \in \Homeo(X):  \sqsp_{f,m,n}=\sqsp \}. \]
Obviously, $f \in [ \sqsp_{f,m,n}]$. The family $\KKb_X=\{K_n\}$ is a sub-paracategory of $\GGRb$ with morphisms that are all co-bijective relations of the form $\sqsp_{f,m,n}$, where $f \in \Homeo(X)$, $m \leq n$.
We may write $\KKb$ for $\KKb_X$ if $X$ is clear from the context, and $\KKb^m_n$ for $(\KKb_X)^{K_m}_{K_n}$. Let $\mathtt{I} \subseteq \KKb_X$ be the wide subcategory with morphisms that are all inclusion morphisms $$\succeq^m_n=\sqsp_{\Id_X,m,n},$$  $m \leq n$. Note that because $K_m$ consolidates $K_n$, if $m \leq n$, we have $$\succeq^k_m \circ \succeq^m_n = \succeq^k_n,$$ for $k \leq m \leq n$, i.e., $\It$ is indeed a category.  When the indices $m$, $n$ are clear from the context, we may write $\succeq$ for $\succeq^m_n$. 

%

%
%
%

For $\sqsp_{f,m,n} \in \mor(\KKb_X)$, let $\la_{f,m,n} \subseteq K_m \times K_m$ be defined by $$\la_{f,m,n}=\sqsp_{f,m,n} \circ \preceq^m_n.$$ 
Thus, objects of $\KKub_X$ are digraphs of the form $\la_{f,m,n}$, where $f \in \Homeo(X)$, $m \leq n$. 

We clearly have that $\la_{f,m,n} \subseteq \la_{f,m,n+1}$, and there exists $n_0 \geq m$ such that $$\la_{f,m,n_0}=\la_{f,m,n_0+1},$$  so we define $$\la_{f,m}=\la_{f,m,n_0}.$$ 
\begin{remark}
By compactness of $X$, $\la_{f,n}$ can be also defined by
$$ p \la_{f,n} q \iff \exists x \in q (p \ni f(x)),$$
where  $p, q \in K_n$.
\end{remark}

For $n \in \NN$, and  a $\KKb$-digraph $\la \subseteq K_n \times K_n$, we define $n_\la=n$, and
\[ [\la]=\{ f \in \Homeo(X): \forall x \in X \, \exists p,q \in K_\la (f(x) \in p \mbox{ and } p \la q \mbox{ and } q \ni x )  \}. \]
Obviously, $f \in [\la_{f,m,n}]$, provided that $\sqsp_{f,m,n} \in \mor(\KKb_X)$.

Let $G \leq \Homeo(X)$ be a subgroup of $\Homeo(X)$. The paracategory $\KKb_G \subseteq \KKb_X$ is defined as the wide sub-paracategory with morphisms that are all the morphisms $\sqsp \in \mor(\KKb_X)$ satisfying $[\sqsp] \cap G \neq \emptyset$. Clearly, $\It \subseteq \KKb_G$. We write $[\sqsp]_G$, $[\la]_G$ for $[\sqsp] \cap G$, $[\la] \cap G$, respectively. For $\LLb \subseteq \GGRb$, let 
\[ G_\LLb=\{f \in \Homeo(X):  \sqsp_{f,m,n} \in \mor(\GGRb) \Rightarrow \sqsp_{f,m,n} \in \mor(\LLb)\}. \]
%

\begin{proposition}
\label{pr:LaxAbs}
For every $G \leq \Homeo(X)$, the paracategory $\KKb_G$ is  $\It$-lax-absorbing.
\end{proposition}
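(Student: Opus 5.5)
The plan is to undo the homeomorphism witnessing the given morphism by composing with a morphism induced by its inverse, so that the composite collapses into an inclusion morphism of $\It$. Since $\It$-lax-absorption takes $B=A$ and $\nil=\Id$, we must show the following. For any $A=K_m\in\It$, any $C=K_k$, and any $\sqsp\in(\KKb_G)^{K_m}_{K_k}$ (necessarily $k\geq m$), there are $n$ and $\sqsp'\in(\KKb_G)^{K_k}_{K_n}$ with
$$\sqsp\circ\sqsp'\subseteq\, \succeq^m_n .$$
Here $\succeq^m_n$ is the required $\sqni\in\mor(\It)$.

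First, by the definition of $\KKb_G$, we fix $f\in G$ with $\sqsp=\sqsp_{f,m,k}$, i.e.\ $f\in[\sqsp]_G$. Since $G$ is a subgroup, $f^{-1}\in G$. Applying the earlier proposition (for every homeomorphism and every $k$ there is $n\geq k$ with $\sqsp_{f^{-1},k,n}\in\GGRb^{K_k}_{K_n}$), we choose $n\geq k$ such that $\sqsp_{f^{-1},k,n}$ is co-bijective. We then set
$$\sqsp'=\sqsp_{f^{-1},k,n}.$$
This is a morphism of $\KKb_X$, and $f^{-1}\in[\sqsp']\cap G$ witnesses that it belongs to $\KKb_G$. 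Also $n\geq k\geq m$, so $\succeq^m_n$ is defined.

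The key computation follows directly from the definitions. Suppose $p\,\sqsp\,q\,\sqsp'\,r$ with $p\in K_m$, $q\in K_k$, $r\in K_n$. Then $p\supseteq f[q]$ and $q\supseteq f^{-1}[r]$. Hence
$$p\supseteq f[f^{-1}[r]]=r,$$
that is, $p\succeq^m_n r$, because $\succeq^m_n=\sqsp_{\Id_X,m,n}$ is just inclusion. This gives $\sqsp\circ\sqsp'\subseteq\succeq^m_n$, as required.

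I expect no real obstacle. The only points to check are that the composite is compared with a genuine morphism of $\It$ (it is, by the index bookkeeping $n\geq m$) and that $\sqsp'$ genuinely lies in $\KKb_G$, which uses that $G$ is closed under inverses. Equality need not hold, since $f[q]$ may be strictly smaller than $p$; this is precisely why only lax-absorption is claimed.
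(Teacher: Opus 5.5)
Your proof is correct and is the same argument as the paper's: you compose $\sqsp_{f,m,k}$ with $\sqsp_{f^{-1},k,n}$, taking $n$ large enough that the latter is co-bijective, and observe that the composite lies inside the inclusion morphism $\succeq^m_n$. You also spell out two points the paper leaves implicit, namely the inclusion $p\supseteq f[f^{-1}[r]]=r$ and the fact that $\sqsp_{f^{-1},k,n}$ belongs to $\KKb_G$ because $G$ is closed under inverses.
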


\begin{proof}
Let $f \in G$, and let $k,m \in \NN$ be such that $\sqsp_{f,k,m} \in \KKb_G$.  Let $n \in \NN$ be such that $\sqsp_{f^{-1},m,n} \in \KKb_G$. Then $\sqsp_{f,k,m} \circ \sqsp_{f^{-1},m,n} \subseteq \succeq^k_n$.
\end{proof}

\begin{proposition}
\label{pr:Subset}
Let $G \leq \Homeo(X)$. If $\twoheadleftarrow \in \KKub_G$ is an $\It$-lift of $\la \in \KKub_G$, then $[\twoheadleftarrow]_G \subseteq [\la]_G$.
\end{proposition}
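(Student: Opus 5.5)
The plan is to unwind the definitions and push witnesses from $K_{n}$ up to $K_m$ along the inclusion morphism. Write $m=n_\la$ and $n=n_{\twoheadleftarrow}$. That $\twoheadleftarrow$ is an $\It$-lift of $\la$ means there is a morphism in $\KKub_G$ from $K_n$ to $K_m$ lying in $\It$. Such a morphism must be the inclusion $\succeq^m_n$, so $m\le n$ and $a \succeq^m_n b$ iff $a\supseteq b$. Being a morphism of $\KKub_G$, it is weakly arrow-preserving. By the definition of weakly $R$-preserving, applied with $R$ interpreted as $\twoheadleftarrow$ on $K_n$ and as $\la$ on $K_m$, this says: whenever $b_0,b_1\in K_n$ satisfy $b_0 \twoheadleftarrow b_1$, there are $a_0,a_1\in K_m$ with $a_0\supseteq b_0$, $a_1\supseteq b_1$ and $a_0 \la a_1$.

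Now fix $f\in[\twoheadleftarrow]_G$ and $x\in X$. By the definition of $[\twoheadleftarrow]$ there are $p,q\in K_n$ with $f(x)\in p$, $p\twoheadleftarrow q$ and $x\in q$. Applying the weak arrow-preservation above to the pair $(p,q)$ yields $p',q'\in K_m$ with $p'\supseteq p$, $q'\supseteq q$ and $p'\la q'$. Then $f(x)\in p\subseteq p'$ and $x\in q\subseteq q'$. Since $x$ was arbitrary, $f\in[\la]$, and as $f\in G$ we get $f\in[\la]_G$.

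There is no real obstacle here. The only point requiring care is bookkeeping of conventions: the morphism is a relation $\subseteq K_m\times K_n$ pointing from $K_n$ to $K_m$, and the only morphisms of $\It$ between $K_n$ and $K_m$ are the inclusions $\succeq^m_n$ with $m\le n$. So I will verify explicitly that the lift structure forces exactly this inclusion and that weak preservation is used in the direction from $K_n$-edges to $K_m$-edges.
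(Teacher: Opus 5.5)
Your proof is correct and follows the paper's argument essentially verbatim: both fix $x$, take witnesses $p \twoheadleftarrow q$ in $K_{n_{\twoheadleftarrow}}$, and use that $\succeq^{n_\la}_{n_{\twoheadleftarrow}}$ is weakly arrow-preserving to obtain $p' \supseteq p$ and $q' \supseteq q$ with $p' \la q'$. Your remark that an $\It$-morphism between these levels must be the inclusion $\succeq^m_n$ with $m \le n$ only makes explicit what the paper uses implicitly.
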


\begin{proof}
For $f \in [\twoheadleftarrow]$, $x \in X$, we fix $p,q \in K_\twoheadleftarrow$ such that $$f(x) \in p, \quad p \twoheadleftarrow q, \quad q \ni x.$$ Because $\succeq^{n_\la}_{n_\twoheadleftarrow}$ is a morphism from $\twoheadleftarrow$ to $\la$, i.e., it is weakly arrow-preserving, there are $p', q' \in K_\la$ such that $$ p \preceq^{n_\la}_{n_\twoheadleftarrow} p', \quad p' \la q', \quad q' \succeq^{n_\la}_{n_\twoheadleftarrow} q,$$ i.e., $$f(x) \in p', \quad p' \la q', \quad q' \ni x.$$ As $x \in X$ was arbitrary, we get that $f \in [\la]$.
\end{proof}

\begin{proposition}
\label{pr:Basis}
Let $G \leq \Homeo(X)$. For every $\la \in \KKub_G$, $[\la]_G$ is open in $G$. Moreover, for every $f \in G$, and $\epsilon>0$, there exists $n \in \NN$ such that $\diam([\la_{f,n}])<\epsilon$.  In particular,  the collection $$\{[\la]_G: \la \in \KKub_G\}$$ is a basis of $G$. 
\end{proposition}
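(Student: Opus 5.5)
We prove the three assertions in turn: openness of $[\la]$, smallness of $[\la_{f,n}]$, and then the basis property, which follows from the first two.

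\emph{Openness.} Fix $\la \in \KKub_G$ with $K_\la=K_n$ and $f \in [\la]_G$. We want $\delta>0$ such that every $g \in G$ with $d(f,g)<\delta$ lies in $[\la]$.

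The defining condition says that for each $x$ there are $p \la q$ with $x \in q$ and $f(x)\in p$. So $X$ is covered by the sets
\[
W_{p,q}=q \cap f^{-1}[p], \qquad p \la q .
\]
These sets are open, and there are finitely many of them. Let $\eta>0$ be a Lebesgue number of this cover.

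Next, choose $\delta>0$ with $\delta<\eta$ small enough, using uniform continuity of $f$. Then for any $x$ and any $g$ with $d(f,g)<\delta$, we want a pair $p\la q$ with $x\in q$ and $g(x)\in p$. Pure compactness is the safer route here, so we argue by contradiction instead.

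Suppose $g_k\to f$ in $G$ but $g_k \notin [\la]$. Then there are witnesses $x_k$ such that, for every pair $p \la q$, we have $x_k\notin q$ or $g_k(x_k)\notin p$. Pass to a subsequence with $x_k\to x$. Then $g_k(x_k)\to f(x)$. Pick $p\la q$ with $x\in q$ and $f(x)\in p$. Since $p$ and $q$ are open, eventually $x_k\in q$ and $g_k(x_k)\in p$, which is a contradiction. This step is routine.

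\emph{Diameter.} Let $f\in G$ and $\epsilon>0$. Choose $n$ with $\mesh(K_n)<\epsilon/2$. For $g\in[\la_{f,n}]$ and $x\in X$, there are $p \la_{f,n} q$ with $g(x)\in p$ and $x\in q$. By the Remark characterizing $\la_{f,n}$, there is $y\in q$ with $f(y)\in p$.

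We need to compare $f(x)$ and $f(y)$. The points $x$ and $y$ lie in $q$, so $d(x,y)<\mesh(K_n)$. By uniform continuity, choose $n$ also so large that $\mesh(K_n)$ is below the modulus of continuity of $f$ for $\epsilon/2$. Then
\[
d(g(x),f(x)) \le d(g(x),f(y))+d(f(y),f(x)) < \mesh(K_n)+\epsilon/2 .
\]
Hence every $g \in [\la_{f,n}]$ is within $\epsilon$ of $f$, so $\diam([\la_{f,n}])\le 2\epsilon$. Rescaling $\epsilon$ gives the stated bound.

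\emph{Basis.} We also need $\la_{f,n}\in\KKub_G$ and $f\in[\la_{f,n}]$. By the earlier proposition, there is $n'\ge n$ with $\sqsp_{f,n,n'}\in\GGRb$. Since $f\in G$, this relation lies in $\KKb_G$, and $\la_{f,n}=\la_{f,n,n_0}$ for $n_0$ large. We then get $f\in[\la_{f,n}]$ directly from the definition.

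Combining this with openness, every open neighbourhood of $f$ in $G$ contains some open set $[\la_{f,n}]_G$ containing $f$. This gives the basis property. The only delicate point is the bookkeeping that $\la_{f,n}$ is indeed an object of $\KKub_G$. This requires the index $n_0$ to satisfy $\sqsp_{f,n,n_0}\in\mor(\KKb_X)$, which holds since co-surjectivity is preserved when passing to a larger second index.
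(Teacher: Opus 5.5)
Your proposal is correct and follows essentially the same route as the paper: openness is proved by the same sequential-compactness contradiction (the Lebesgue-number attempt you start and then drop is unused and can be deleted), and the diameter bound uses the same uniform-continuity and mesh estimate. Comparing each $g \in [\la_{f,n}]$ with $f$ is slightly cleaner than the paper's bound on $\bigcup\{p : p \la_{f,n} q\}$, and your explicit check that $\la_{f,n} \in \KKub_G$ and $f \in [\la_{f,n}]_G$ supplies bookkeeping the paper leaves implicit.
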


\begin{proof}
Let $\la \in \KKub_G$. To show that $[\la]_G$ is open, we fix $f \in [\la]_G$, and suppose that there exist $f_i \in G$, $i \in \NN$, such that $f_i$ converge  to $f$ but  $f_i \not \in [\la]_G$, $i \in \NN$. We fix $x_i \in X$ witnessing it, i.e., such that there are no $p,q \in K_\la$ with $$f_i(x_i) \in p, \quad p \la q, \quad q \ni x_i.$$ Without loss of generality, we can assume that $x_i$ converge to some $x \in X$. Then $f_i(x_i)$ converge to $f(x)$. However, as $f \in [\la]_G$, there are $p, q \in K_\la$  such that $$f(x) \in p, \quad p \la q, \quad  q \ni x,$$ and the same would also hold for almost all $x_i$; a contradiction.

Now, for a fixed $f \in G$, and $\epsilon>0$, let $\delta>0$ be such that $d(x,x')<\delta$ implies $d(f(x),f(x'))<\epsilon/3$, and let $n \in \NN$ be such that $\mesh(K_n)<\delta$. Then, for every $q \in K_n$,
$$\diam(\bigcup \{p \in K_n: p \la_{f,n} q)\})<\epsilon,$$
so $d(g,g')<\epsilon$ for any $g,g' \in [\la_{f,n}]$. In other words, $\diam([\la_{f,n}])<\epsilon$.
\end{proof}

\begin{remark}
	\label{re:ExtendMor}
	Let $m, n \in \NN$, and let $\sqsp, \sqni \in \KKb^m_n$ be such that $\sqsp \subseteq \sqni$. Let $\la=\sqsp \circ \preceq^m_n$, $\twoheadleftarrow=\sqni \circ \preceq^m_n$. It is straightforward to observe that	 
	\begin{enumerate}
			\item $[\la] \subseteq [\twoheadleftarrow]$,
		\item if $f \in [\sqsp]$, $g \in [\sqni]$, then $d(f,g)<\mesh(K_m)$,	
		\item if $\mesh(K_m)<\epsilon$, and $\diam([\la])<\epsilon$, then $\diam([\twoheadleftarrow])<3\epsilon$.   
	\end{enumerate}
\end{remark}

\begin{proposition}
	\label{pr:DownwardClosed}
	
	Let $G \leq \Homeo(X)$, and let $\LLb=\KKb_G$. For any $\la, \twoheadleftarrow \in \LLub$, $\sqsp \in \LLub^\la_{\tla}$, $f \in [\twoheadleftarrow]_G$, and $g\in [\sqsp]_G$, we have that $$gfg^{-1} \in [\la]_G.$$ 
\end{proposition}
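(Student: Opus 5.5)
Unwinding the definitions turns the statement into a pointwise check. Put $m=n_\la$ and $n=n_\tla$. Since $\sqsp\in\LLub^\la_\tla$, it is a morphism of $\KKb_G$ from $K_n$ to $K_m$, so $\sqsp\in\KKb^m_n$ and $\sqsp\subseteq K_m\times K_n$. As a morphism of $\LLub$ it is weakly arrow-preserving: for $p',q'\in K_n$ with $p'\tla q'$ there are $p,q\in K_m$ with $p\sqsp p'$, $q\sqsp q'$ and $p\la q$. Moreover $g\in[\sqsp]_G$ means $\sqsp_{g,m,n}=\sqsp$, so
\[ p\sqsp p' \iff p\supseteq g[p'] \qquad (p\in K_m,\ p'\in K_n). \]
To show $gfg^{-1}\in[\la]$, fix $y\in X$. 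It suffices to find $p,q\in K_m$ with $gfg^{-1}(y)\in p$, $p\la q$ and $y\in q$. Membership in $G$ is automatic, since $f,g\in G$ and $G$ is a group.

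Set $x=g^{-1}(y)$. Because $f\in[\tla]$, there are $p',q'\in K_n$ with $f(x)\in p'$, $p'\tla q'$ and $x\in q'$. Weak arrow-preservation of $\sqsp$ then gives $p,q\in K_m$ with $p\sqsp p'$, $q\sqsp q'$ and $p\la q$. By the description of $\sqsp=\sqsp_{g,m,n}$, we have $p\supseteq g[p']$ and $q\supseteq g[q']$. Hence
\[ gfg^{-1}(y)=g(f(x))\in g[p']\subseteq p, \qquad y=g(x)\in g[q']\subseteq q. \]
Together with $p\la q$, this is exactly the witness required in the definition of $[\la]$. Since $y$ was arbitrary, $gfg^{-1}\in[\la]_G$.

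The only step needing care is the bookkeeping of directions. First, $\sqsp\subseteq K_m\times K_n$ points from the coarse cover $K_m$ to the fine cover $K_n$, matching the convention that a morphism in $\KKb^A_B$ goes from $B$ to $A$. Second, the condition $[\sqsp]$ is what turns the combinatorial relation $p\sqsp p'$ into the geometric containment $g[p']\subseteq p$. The argument mirrors Proposition~\ref{pr:Subset}, with the inclusion morphism $\succeq$ replaced by $\sqsp_{g,m,n}$: the identity there is now $g$, so conjugating by $g$ moves the witness sets from $K_\tla$ into $K_\la$. I expect no real obstacle beyond checking these conventions.
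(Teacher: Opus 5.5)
Your proof is correct and is essentially the paper's argument. You pull the point back by $g^{-1}$, take witnesses in $K_\tla$ from $f\in[\tla]$, and transfer them to $K_\la$ by weak arrow-preservation of $\sqsp$; the identity $\sqsp=\sqsp_{g,n_\la,n_\tla}$ then gives $g[p']\subseteq p$ and $g[q']\subseteq q$, and the rest is only a relabelling of points and cells.
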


\begin{proof}
	Fix $\la, \twoheadleftarrow \in \LLub$, $\sqsp \in \LLub^\la_{\tla}$, $f \in [\twoheadleftarrow]_G$, $g\in [\sqsp]_G$.
	We fix $x \in X$, and $p,q \in K_\twoheadleftarrow$ such that $$f(g^{-1}(x)) \in p, \quad  p \twoheadleftarrow q, \quad q \ni g^{-1}(x).$$ As $\sqsp$ is weakly arrow-preserving, there are $p', q' \in K_\la$ such that $$p \sqsb p', \quad p' \la q', \quad q' \sqsp q.$$ But then $$g(f(g^{-1}(x))) \in p', \quad  p' \la q', \quad q' \ni g(g^{-1}(x))=x.$$ Because $x \in X$ was arbitrary, we get that $gfg^{-1} \in [\la]$. 
\end{proof}

\begin{proposition}
	\label{pr:InMorphism}
	Let $G \leq \Homeo(X)$, and let $\LLb=\KKb_G$. Let $\la \in \LLub$, and $f,g \in G$. If $gfg^{-1} \in [\la]_G$, then there exists $n_0 \in \NN$ such that $$\sqsp_{g,n_\la, n} \in \LLub^\la_{\la_{f,n}},$$ for every $n \geq n_0$.
\end{proposition}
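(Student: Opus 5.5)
We must show three things for all large $n$: that $\sqsp_{g,n_\la,n}$ is a morphism in $\LLb=\KKb_G$, that $\la_{f,n}$ is an object of $\LLub$, and that $\sqsp_{g,n_\la,n}$ is weakly arrow-preserving from $\la_{f,n}$ to $\la$. Write $m=n_\la$.

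The first two are routine. For the morphism part, the Proposition preceding the definition of $[\sqsp]$ gives $n_1 \geq m$ with $\sqsp_{g,m,n_1}$ co-surjective. Since $K_{n_1}$ consolidates every $K_n$ with $n \geq n_1$, each $q \in K_n$ lies in some $q' \in K_{n_1}$, so co-surjectivity persists for all $n \geq n_1$. By the Remark on minimality, $\sqsp_{g,m,n}$ is then co-bijective, hence lies in $\KKb_X$. Because $g \in [\sqsp_{g,m,n}]_G$, it lies in $\KKb_G$. For the object part, $\la_{f,n}=\la_{f,n,n'}=\sqsp_{f,n,n'} \circ \preceq^n_{n'}$ for suitable $n'$, where $\sqsp_{f,n,n'} \in \KKb_G$ (since $f \in G$) and $\preceq^n_{n'}$ comes from $\It$. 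So $\la_{f,n} \in \LLub$.

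The real content is weak arrow-preservation. By the Remark after the definition of $\la_{f,m}$, $p \la_{f,n} q$ means there is $x \in q$ with $f(x) \in p$. We need $p',q' \in K_\la$ with
$$p' \sqsp_{g,m,n} p, \quad p' \la q', \quad q' \sqsp_{g,m,n} q,$$
that is, $g[p] \subseteq p'$ and $g[q] \subseteq q'$. The hypothesis $gfg^{-1} \in [\la]$ applied at $g(x)$ gives $p'' \la q''$ with $gf(x) \in p''$ and $g(x) \in q''$. These are only pointwise memberships, not containments of the images $g[p]$, $g[q]$, so we need a Lebesgue-number argument.

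This is the step I expect to be the main obstacle. To handle it, I first upgrade the hypothesis to a uniform one, using the compactness argument from the proof of Proposition~\ref{pr:Basis} (that $[\la]$ is open). Set $h=gfg^{-1}$. I claim there is $\delta>0$ such that for every $y \in X$ there are $p' \la q'$ in $K_\la$ with $B(h(y),\delta) \subseteq p'$ and $B(y,\delta) \subseteq q'$. Suppose not. Then we get $\delta_i \to 0$ and points $y_i \to y$ witnessing failure. Choose $p' \la q'$ with $h(y) \in p'$ and $y \in q'$. Since $p'$ and $q'$ are open and $h$ is continuous, for large $i$ the balls $B(h(y_i),\delta_i)$ and $B(y_i,\delta_i)$ fall inside $p'$ and $q'$, a contradiction.

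Now take $n_0 \geq n_1$ so large that for all $n \geq n_0$, every $q \in K_n$ has $\diam(g[q])<\delta/2$ and $\diam(gf[q])<\delta/2$; this is possible by uniform continuity and $\mesh(K_n)\to 0$. Given $p \la_{f,n} q$ with witness $x \in q$ and $f(x) \in p$, put $y=g(x)$, so $h(y)=gf(x)$. Apply the uniform claim at $y$ to get $p' \la q'$. Then $g[q] \subseteq B(g(x),\delta) \subseteq q'$. For $p$ we need $g[p]$ to lie near $h(y)=g(f(x))$; since $f(x) \in p$, we get $g[p] \subseteq B(gf(x),\delta) \subseteq p'$. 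So we only need $\diam(g[p])<\delta$, which already follows from the first condition on $n_0$; the condition on $gf$ is unnecessary. This gives the weakly arrow-preserving property and completes the proof.
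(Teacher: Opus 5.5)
Your proof is correct, and it gets the required uniformity by a different compactness route than the paper. The paper works pointwise. It says that $n$ \emph{works for} $x$ if some $p'\la q'$ with $gfg^{-1}(x)\in p'$, $x\in q'$ contains $g[p]$ and $g[q]$ for all $p\la_{f,n}q$ whose closures contain $fg^{-1}(x)$ and $g^{-1}(x)$, respectively. It then needs three facts:
\begin{enumerate}
\item working persists as $n$ grows;
\item an $n$ that works for $x$ works on a whole neighbourhood of $x$ (this tacitly uses that only finitely many closed cells are involved);
\item $n_0$ can be extracted from a finite subcover.
\end{enumerate}
The final check is left to the reader.

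You instead prove a single Lebesgue-number statement for the open cover $\{p'\times q' : p'\la q'\}$ of the graph $\{(gfg^{-1}(y),y): y\in X\}$, using the same sequential-compactness argument as for openness of $[\la]$. After that, $n_0$ is determined only by the modulus of uniform continuity of $g$ at scale $\delta$ together with $\mesh(K_n)\to 0$. This avoids the first two auxiliary facts and yields a more explicit $n_0$. It also shows that continuity of $gf$ plays no role in choosing $n$.

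You also supply the routine points the paper omits. First, $\sqsp_{g,n_\la,n}$ remains co-surjective for all large $n$, hence by minimality co-bijective and in $\mor(\KKb_G)$. Second, $\la_{f,n}=\sqsp_{f,n,n'}\circ\preceq^n_{n'}$, with $n'$ large, is an object of $\LLub$.
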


\begin{proof}
	Fix $x \in X$. We say that $n \in \NN$ works for $x$ if there are $p',q' \in K_\la$ such that $$gfg^{-1}(x) \in p', \quad  p' \la q', \quad q' \ni x,$$ and, for $\twoheadleftarrow=\la_{f,n}$, $\sqsp=\sqsp_{g,n_\la,n}$, the following holds. For any $p,q \in K_{\twoheadleftarrow}$ such that $$fg^{-1}(x) \in \cl(p),  \quad p \twoheadleftarrow q, \quad  \cl(q) \ni g^{-1}(x)$$ we have $$p' \sqsp p, \quad q' \sqsp q.$$ Note that if $n$ works for $x$, and $n' \geq n$, then $n'$ also works for $x$. Because $\mesh(K_n) \to 0$, and $g$ is continuous, there exists $n \in \NN$ that works for $x$. Because $gf$ is continuous, if $n$ works for $x$, there is an open $U_x \subseteq X$ such that $x \in U_x$, and $n$ works for all $x' \in U_x$. By compactness of $X$, there exists $n_0 \in \NN$ that works for all $x \in X$. We leave it to the reader to verify that $n_0$ is as required.
\end{proof}

\begin{corollary}
\label{co:LiftBasis}
Let $G \leq \Homeo(X)$, and $\la \in \KKub_G$. Then the collection $$\{[\twoheadleftarrow]_G: \twoheadleftarrow \in \KKub_G \mbox{ is an } \It \mbox{-lift of} \la \}$$ is a basis of $[\la]_G$.
\end{corollary}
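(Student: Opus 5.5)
We must show: for every $f \in [\la]_G$ and every open neighbourhood $U$ of $f$ in $G$, there is $\twoheadleftarrow \in \KKub_G$, an $\It$-lift of $\la$, such that $f \in [\twoheadleftarrow]_G \subseteq U$. By Proposition \ref{pr:Subset} every such $[\twoheadleftarrow]_G$ is already contained in $[\la]_G$, and by Proposition \ref{pr:Basis} it is open. So only the neighbourhood condition remains.

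The natural candidate is $\twoheadleftarrow=\la_{f,n}$ for large $n$. I will then check the following.

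First, $\la_{f,n}\in\KKub_G$. By the earlier proposition there is $n'\ge n$ with $\sqsp_{f,n,n'}\in\GGRb^{K_n}_{K_{n'}}$. Its intersection with $G$ contains $f$, so it lies in $\KKb_G$. Enlarging $n'$ if necessary, $\la_{f,n}=\la_{f,n,n'}=\sqsp_{f,n,n'}\circ\preceq^n_{n'}$ is a $\KKb_G$-digraph.

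Second, $f\in[\la_{f,n}]_G$; this is the obvious fact noted earlier.

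Third, $\diam([\la_{f,n}])$ tends to $0$ as $n\to\infty$. This is the second statement of Proposition \ref{pr:Basis}. Hence $[\la_{f,n}]_G\subseteq U$ for $n$ large.

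The remaining and main point is that $\succeq^{n_\la}_n$ is a morphism in $\KKub_G$ from $\la_{f,n}$ to $\la$, i.e., that it is weakly arrow-preserving. For this I would apply Proposition \ref{pr:InMorphism} with $g=\Id_X\in G$. Since $gfg^{-1}=f\in[\la]_G$, it yields $n_0$ such that
\[
\sqsp_{\Id,n_\la,n}=\succeq^{n_\la}_n\in\LLub^{\la}_{\la_{f,n}}
\]
for all $n\ge n_0$, where $\LLb=\KKb_G$. So for $n\ge n_0$ large enough that the diameter bound also holds, $\la_{f,n}$ is an $\It$-lift of $\la$ (the morphism $\succeq^{n_\la}_n$ lies in $\It$) with $f\in[\la_{f,n}]_G\subseteq U\cap[\la]_G$. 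This gives the basis property.

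If Proposition \ref{pr:InMorphism} is read as not providing membership in $\mor(\It)$ directly, I would just note that $\succeq^{n_\la}_n\in\mor(\It)\subseteq\mor(\KKb_G)$, and that the weakly arrow-preserving condition is exactly what the proposition gives.
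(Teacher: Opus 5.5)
Your proposal is correct and follows essentially the same route as the paper, which applies Proposition \ref{pr:InMorphism} with $g=\Id_X$ to a fixed $f\in[\la]_G$ and then invokes Propositions \ref{pr:Subset} and \ref{pr:Basis}. You additionally spell out details the paper leaves implicit: that $\la_{f,n}\in\KKub_G$, that $f\in[\la_{f,n}]_G$, and that the lifting property via $\succeq^{n_\la}_n\in\mor(\It)$ and the diameter bound hold simultaneously for all sufficiently large $n$.
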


\begin{proof}
For a fixed $\la \in \KKub_G$, we apply Proposition \ref{pr:InMorphism} to a fixed $f \in [\la]$, and $g=\Id_X$. Then we use Propositions \ref{pr:Subset} and \ref{pr:Basis}.
\end{proof}

\begin{corollary}
	\label{co:DensePiBasis}
	Let $G \leq \Homeo(X)$, and let $\LLb \subseteq \KKb_G$ be such that $G_\LLb$ is a dense subset of $G$. Then  the collection $$\{[\la]_G: \la \in \LLub\}$$ is a $\pi$-basis of $G$, and, for every $\la \in \LLub$, the collection $$\{[\twoheadleftarrow]_G: \twoheadleftarrow \in \LLub \mbox{ is an } \It \mbox{-lift of} \la \}$$ is a $\pi$-basis of $[\la]_G$.
\end{corollary}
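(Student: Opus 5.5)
The plan is to reduce both claims to Proposition \ref{pr:Basis} and Corollary \ref{co:LiftBasis}, using density of $G_\LLb$ to pick basis elements coded by digraphs in $\LLub$. For the first claim, fix a nonempty open $U \subseteq G$. Since $G_\LLb$ is dense in $G$, choose $f \in U \cap G_\LLb$. By Proposition \ref{pr:Basis}, fix $\epsilon>0$ with the $\epsilon$-ball around $f$ in $G$ contained in $U$, and $n$ such that $\diam([\la_{f,n}])<\epsilon$. Since $f \in [\la_{f,n}]_G$, we get $[\la_{f,n}]_G \subseteq U$, and $[\la_{f,n}]_G$ is open and nonempty.

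It remains to check that $\la_{f,n} \in \LLub$. By definition, $\la_{f,n}=\sqsp_{f,n,n_0}\circ\preceq^n_{n_0}$ for some $n_0$, and we may enlarge $n_0$ so that $\sqsp_{f,n,n_0}\in\GGRb^{K_n}_{K_{n_0}}$. This is possible because enlarging $n_0$ keeps co-surjectivity, and the proposition before Remark \ref{re:ExtendMor} applies. Since $f \in G_\LLb$, this morphism lies in $\mor(\LLb)$, while $\preceq^n_{n_0}$ comes from $\It$. Hence $\la_{f,n}$ is an $\LLb$-digraph, i.e., an object of $\LLub$.

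For the second claim, fix $\la\in\LLub$ and a nonempty open $V\subseteq[\la]_G$. Run the same argument to get $f\in V\cap G_\LLb$ and $m$ with $[\la_{f,m}]_G\subseteq V$ and $\la_{f,m}\in\LLub$. I then need to enlarge $m$ so that $\la_{f,m}$ is an $\It$-lift of $\la$, meaning that $\succeq^{n_\la}_m$ is weakly arrow-preserving from $\la_{f,m}$ to $\la$, and that this morphism lies in $\mor(\LLb)$.

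For this I apply Proposition \ref{pr:InMorphism} with $g=\Id_X$ and the given $f\in[\la]_G$. It yields $m_0$ such that $\sqsp_{\Id,n_\la,m}=\succeq^{n_\la}_m$ belongs to $(\KKub_G)^\la_{\la_{f,m}}$ for all $m\ge m_0$. I take $m$ large enough for both the diameter bound and this condition, using that the diameter bound persists for larger $m$ because $\la_{f,m'}$ is an $\It$-lift of $\la_{f,m}$ (Proposition \ref{pr:Subset}). Inclusion morphisms belong to $\It\subseteq\LLb$, so $\la_{f,m}$ is an $\It$-lift of $\la$ in $\LLub$.

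The main obstacle is confirming that $\It\subseteq\LLb$ and that $\LLub$ uses the same $\It$, so that inclusions count as $\LLb$-morphisms. The statement of the Corollary implicitly assumes $\LLb$ is a wide sub-paracategory containing $\It$. If this is not automatic, I would note that $\Id_X\in G_\LLb$ follows from density together with the lax-absorption of Proposition \ref{pr:LaxAbs}; otherwise I would take it as a standing assumption.
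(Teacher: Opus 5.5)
Your argument is correct and is essentially the intended one: the paper states this corollary without proof, as an immediate consequence of Proposition \ref{pr:Basis} and the proof of Corollary \ref{co:LiftBasis} (Proposition \ref{pr:InMorphism} with $g=\Id_X$), and the one extra ingredient is exactly your use of density to choose $f\in G_\LLb$, which guarantees $\la_{f,n}\in\LLub$. On your last paragraph, $\It\subseteq\LLb$ is built into the definition of $\LLub$, which presupposes a fixed subcategory $\It$ of the underlying paracategory, so take it as a standing assumption; your alternative justification via density and Proposition \ref{pr:LaxAbs} does not work, since lax-absorption only gives $\sqsp_{f,k,m}\circ\sqsp_{f^{-1},m,n}\subseteq\succeq^k_n$ and says nothing about whether $\succeq^k_n\in\mor(\LLb)$.
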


\begin{corollary}
	\label{co:UpwardClosed}
	Let $G \leq \Homeo(X)$, and let $\LLb \subseteq \KKb_G$ be weakly $\It$-absorbing and such that $G_\LLb$ is a dense subset of $G$. Let $\la \in \LLub$. Then there is $n \geq n_\la$ such that for every $\sqsp \in \LLb^n_\bullet$ there are $\sqsp' \in \mor(\LLb)$, and $\twoheadleftarrow \in \LLub$ such that  $$\sqni=\succeq^{n_\la}_n \circ \sqsp \circ \sqsp' \in \mor(\It),$$ and $\twoheadleftarrow$ is a $\sqni$-lift of $\la$. In particular, $\tla$ is an $\It$-lift of $\la$.
\end{corollary}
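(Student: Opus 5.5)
The plan is to unwind the definition of weak $\It$-absorption at the object $K_{n_\la}$, and then produce the required digraph $\twoheadleftarrow$ as $\la_{f,k}$ for a suitable $f \in G_\LLb \cap [\la]_G$, using Proposition \ref{pr:InMorphism} with $g=\Id_X$.

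First, apply weak $\It$-absorption of $\LLb$ to $A=K_{n_\la}\in\It$. This gives $B\in\It$, say $B=K_n$, together with $\nil\in\It^{n_\la}_n$. Since $\It$ only contains the inclusion morphisms, $\nil=\succeq^{n_\la}_n$, and $n \geq n_\la$. This $n$ is the one claimed in the statement. Given any $\sqsp\in\LLb^n_\bullet$, say from $C=K_m$, absorption yields $D=K_k$ and $\sqsp_0'\in\LLb^{m}_{k}$ with
\[ \succeq^{n_\la}_n\circ\sqsp\circ\sqsp_0'=\succeq^{n_\la}_k\in\mor(\It). \]
Any further enlargement $\sqsp'=\sqsp'_0\circ\succeq^k_{k'}$, with $k'\geq k$, still gives $\succeq^{n_\la}_n\circ\sqsp\circ\sqsp'=\succeq^{n_\la}_{k'}\in\mor(\It)$, because $\It$ is a category. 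I will need $\sqsp'\in\mor(\LLb)$. If $\sqsp_0'=\sqsp_{h,m,k}$, then $\sqsp_0'\circ\succeq^k_{k'}=\sqsp_{h,m,k'}$ is co-surjective, hence co-bijective by the minimality remark. Its membership in $\LLb$ is the one point I expect to need care: it holds when $\LLb$ is closed under precomposition with inclusions, and that closure is automatic for $\LLb=\KKb_G$ since $[\sqsp_{h,m,k'}]_G\ni h$. I would verify or assume this closure explicitly. This freedom to increase $k$ is the key technical point.

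Next, choose $f$. Since $\la\in\LLub$, we have $\la=\sqsp_1\circ\preceq$ for some $\sqsp_1\in\mor(\LLb)$, and any element of $[\sqsp_1]_G\neq\emptyset$ lies in $[\la]_G$. By Proposition \ref{pr:Basis}, $[\la]_G$ is open, so by density of $G_\LLb$ there is $f\in G_\LLb\cap[\la]_G$. Applying Proposition \ref{pr:InMorphism} with $g=\Id_X$ gives $n_0$ such that $\succeq^{n_\la}_{k'}=\sqsp_{\Id,n_\la,k'}$ is a morphism from $\la_{f,k'}$ to $\la$, that is, it is weakly arrow-preserving, for all $k'\geq n_0$. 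Take $k'\geq\max(k,n_0)$.

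Finally, set $\twoheadleftarrow=\la_{f,k'}$. It lies in $\LLub$: we have $\la_{f,k'}=\sqsp_{f,k',j}\circ\preceq^{k'}_j$ for some $j$ with $\sqsp_{f,k',j}\in\mor(\GGRb)$, and since $f\in G_\LLb$ this morphism belongs to $\LLb$. By the previous step, $\sqni=\succeq^{n_\la}_n\circ\sqsp\circ\sqsp'=\succeq^{n_\la}_{k'}$ is weakly arrow-preserving from $\twoheadleftarrow$ to $\la$, so $\twoheadleftarrow$ is a $\sqni$-lift, and in particular an $\It$-lift, of $\la$.
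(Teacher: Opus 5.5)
Your argument is essentially the paper's own. You use weak $\It$-absorption at $K_{n_\la}$, then Proposition~\ref{pr:InMorphism} with $g=\Id_X$, then take $\sqsp'=\sqsp'_0\circ\succeq^{k}_{k'}$ and $\tla=\la_{f,k'}$ for $k'$ large. Choosing $f\in G_\LLb\cap[\la]_G$ via openness and density is more careful than the paper's ``fixed $f\in[\la]$'', because it is what guarantees $\la_{f,k'}\in\LLub$.

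One of your claims is wrong, though. The identity $\sqsp_{h,m,k}\circ\succeq^k_{k'}=\sqsp_{h,m,k'}$ fails for overlapping covers, and only $\subseteq$ holds. A $q\in K_{k'}$ may satisfy $h[q]\subseteq p$ while no $r\in K_k$ with $r\supseteq q$ satisfies $h[r]\subseteq p$. The two relations agree for clopen partitions, but not in general. So your argument does not show that closure under composition with inclusions is automatic for $\KKb_G$.

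Switching to $\sqsp'=\sqsp_{h,m,k'}$ does not repair this. The composite $\succeq^{n_\la}_n\circ\sqsp\circ\sqsp'$ then merely contains $\succeq^{n_\la}_{k'}$, and it need not lie in $\mor(\It)$.

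So the membership $\sqsp'_0\circ\succeq^k_{k'}\in\mor(\LLb)$ should be stated as a hypothesis, e.g.\ that $\LLb$ is closed under composition with $\It$-morphisms. The paper uses exactly this step without comment. It does hold in the applications, where $\LLb$ is a \fra\ category: the composite is again an $\LLb$-morphism, and Proposition~\ref{pr:Robust}(1) realizes it by an element of $G$.
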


\begin{proof}
 Using the assumption that $\LLb$ is weakly $\It$-absorbing, we fix $n \geq n_\la$ such that for every $\sqsp \in \LLb^n_\bullet$ there is $\sqsp'_0 \in \mor(\LLb)$ such that $$\succeq^{n_\la}_n \circ \sqsp \circ \sqsp'_0 \in \mor(\It).$$ Then we apply Proposition \ref{pr:InMorphism} to a fixed $f \in [\la]$, and $g=\Id_X$, to find $n',n'' \in \NN$ such that, for $\sqsp'=\sqsp'_0 \circ \succeq^{n'}_{n''}$, we have  that $\twoheadleftarrow=\la_{f,n''}$ is a $\sqni$-lift of $\la$.
\end{proof}

\section{Conjugacy classes in homeomorphism groups of compact metric spaces}
Recall that a subset $A \subseteq X$ of a Polish space $X$ is \emph{meager} if there are closed nowhere dense $F_n \subseteq X$ such that $A \subseteq \bigcup_n F_n$. It is \emph{comeager} if $X \setminus A$ is meager. It is well known that the intersection of a countable family of comeager sets is dense. Also,  the Baire category theorem holds for Polish spaces, i.e., if $A$ has the Baire property, then  $A$ is non-meager iff there exists a non-empty open $U \subseteq X$ such that $A \cap U$ is comeager as a subset of the Polish space $U$. See \cite{Gao} for more information on this topic.

We start with a citerion for the existence of a non-meager orbit under a continuous action of a Polish group on a Polish space. It is a variant of an analogous criterion for the existence of a co-meager orbit due to C. Rosendal. Note that the conjugation action of a Polish group on itself is continuous.

\begin{proposition}
	\label{pr:Criterion}
	Let $Y$ be a Polish space and $G$ be a Polish group acting continuously on $Y$. Then the following are equivalent:
	\begin{enumerate}
		\item Suppose that the following condition holds:
		
		\medskip
		
		\noindent for any open $V \ni 1_G$, and non-empty open $U \subseteq Y$ there exists a non-empty open $U' \subseteq U$ such that for any non-empty open $W_1, W_2 \subseteq U'$, we have
		\[
		V \cdot W_1 \cap W_2 \neq \emptyset.
		\]
		\medskip
		Then there exists a non-meagre $G$-orbit.
		\item Suppose that there exists a comeagre $G$-orbit. Then the above condition holds.
	\end{enumerate}
\end{proposition}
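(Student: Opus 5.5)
Item (1) asserts that the displayed condition implies a non-meagre orbit. Item (2) asserts that a comeagre orbit implies the condition. I will prove the two implications separately, using the Baire category theorem as recalled above together with standard facts about orbits of Polish group actions. In particular, I use that orbits are Borel (hence have the Baire property) and Effros' theorem: an orbit $G\cdot y$ is non-meagre iff it is comeagre in some non-empty open set iff the map $g\mapsto g\cdot y$ is open onto its image for the relative topology.

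\emph{Item (2).} Let $O=G\cdot y_0$ be comeagre, and fix $V\ni 1_G$ open and $U\neq\emptyset$ open. Pick an open $V_0\ni 1_G$ with $V_0^{-1}V_0\subseteq V$. By Effros' theorem, $V_0\cdot y$ is a relatively open neighbourhood of $y$ in $O$ for each $y\in O$. Since $O$ is comeagre, it is dense, so $O\cap U\neq\emptyset$; fix $y\in O\cap U$. Choose an open $U'\subseteq U$ with $y\in U'$ and $U'\cap O\subseteq V_0\cdot y$.

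Now let $W_1,W_2\subseteq U'$ be non-empty open. Since $O$ is dense, there are $z_i\in W_i\cap O\subseteq V_0\cdot y$, say $z_i=g_i\cdot y$ with $g_i\in V_0$. Then $g_2g_1^{-1}\cdot z_1=z_2$. So the step that needs care is that I actually need $g_2g_1^{-1}\in V$, whereas the choice of $V_0$ only controls $V_0^{-1}V_0$. I fix this by choosing $V_0$ with $V_0V_0^{-1}\subseteq V$ instead. With that choice, $z_2\in V\cdot W_1\cap W_2$, as required.

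\emph{Item (1).} Fix a countable basis $\{U_n\}$ of $Y$, and a countable neighbourhood basis $\{V_k\}$ of $1_G$ consisting of symmetric sets. For each $k$, let $D_k$ be the union of all open sets $U'$ that witness the condition for $V_k$ and some $U$. The condition gives such a $U'$ inside every non-empty open $U$, so $D_k$ is open dense. Hence $D=\bigcap_k D_k$ is comeagre.

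Next I claim that two points $y,z\in D$ lying in a common witness neighbourhood at every level have orbits that are not meagre-separated. More precisely, I run the standard Rosendal argument: suppose for contradiction that every orbit is meagre. Then the orbit equivalence relation, which is analytic, is meagre in $Y\times Y$ by Kuratowski–Ulam. On the other hand, I will show it is non-meagre in $U'\times U'$ for a suitable $U'$, giving the contradiction.

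To do this, I build, via a Banach–Mazur style back-and-forth game in $U'\times U'$, points $y,z$ and group elements $g_k\in V_k$ whose products $h_k=g_k\cdots g_1$ converge in $G$ (completeness follows from $\sum$-smallness of the $V_k$ in a compatible left-invariant metric), with $h\cdot y=z$. At stage $k$, the condition $V_k\cdot W_1\cap W_2\neq\emptyset$ lets me move the current open set near $y$ into the current open set near $z$, and then shrink both using continuity of the action. This makes the orbit relation comeagre in $U'\times U'$, so some orbit is comeagre in $U'$ and hence non-meagre.

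The main obstacle is the convergence bookkeeping in this construction. The translates $h_k\cdot W$ must shrink while the increments $g_k$ stay in $V_k$, and the right way to handle this is to replace the points by the translated open sets $h_k^{-1}W$ at each step, as in Rosendal's proof.
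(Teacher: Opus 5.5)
\paragraph{Part (2).} Your proof of (2) is the paper's argument. You pick $V_0$ with $V_0V_0^{-1}\subseteq V$, use Effros to find an open $U'\ni y$ with $U'\cap O\subseteq V_0\cdot y$, and write $z_2=(g_2g_1^{-1})\cdot z_1$. One small correction: openness of $g\mapsto g\cdot y$ onto the orbit characterizes orbits that are non-meagre \emph{in themselves}, not orbits that are non-meagre in $Y$. A fixed point in a perfect $Y$ is a counterexample to the converse you state. You only use the correct direction, so this does not affect the argument.

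\paragraph{Part (1): two genuine gaps.} For (1) you take a different and much heavier route, and as written it has two genuine gaps.

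\emph{Convergence.} The convergence claim is wrong as stated. A compatible left-invariant metric $d_L$ on a Polish group need not be complete. Moreover, for $h_k=g_k\cdots g_1$ (new factors on the left), $d_L(g_kh_{k-1},h_{k-1})$ is not controlled by $d_L(g_k,1_G)$. A right-invariant metric does control this, but you also need $h_k^{-1}$ to be Cauchy, i.e.\ control of the conjugates $h_{k-1}^{-1}g_kh_{k-1}$. A sequence $V_k$ fixed in advance does not give this in general. You must choose $V_{k+1}$ only after $h_k$ is known, e.g.\ so that $\varrho(gh_k,h_k)<2^{-k}$ for all $g\in V_{k+1}$, where $\varrho$ is a complete metric on $G$. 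So the fixed sets $D_k$ and $D=\bigcap_k D_k$ are not the right bookkeeping.

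\emph{Applicability of the condition at stage $k$.} More importantly, you never explain why the condition for the small set $V_k$ applies at stage $k$. It only yields $V_k\cdot W_1\cap W_2\neq\emptyset$ for $W_1,W_2$ inside a \emph{single} $V_k$-witness, whereas the sets around $y$ and around $z$ are in general far apart. The missing mechanism is as follows.
\begin{itemize}
\item Maintain $h_{k-1}\cdot A_{k-1}\subseteq B_{k-1}$, and choose $B_{k-1}$ inside a $V_k$-witness. This is possible since, for each $V$, the union of $V$-witnesses is open dense.
\item After the opponent shrinks to $A\times B$, both $h_{k-1}\cdot A$ and $B$ lie in that witness. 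So some $g_k\in V_k$ gives $g_kh_{k-1}\cdot A\cap B\neq\emptyset$.
\item Take $B_k$ to be a small neighbourhood, inside a $V_{k+1}$-witness, of a point of this intersection, and set $A_k=h_k^{-1}\cdot B_k$.
\end{itemize}
Your claim about $y,z\in D$ lying in a common witness at every level is not what is needed.

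With these repairs your route works, and it even gives more: for every witness $U'$, comeagre many points of $U'$ have orbit comeagre in $U'$.

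\paragraph{The paper's route.} The paper's proof needs none of this machinery.
\begin{itemize}
\item Assume all orbits are meagre. Baire category in $G$, applied to the closed sets $\{g: g\cdot x\in F_n\}$, gives for each $x$ some $V$ with $V\cdot x$ nowhere dense.
\item $V$ ranges over a countable basis and $\{x: V\cdot x \text{ is nowhere dense}\}$ is Borel. Hence there are one $V$ and one non-empty open $U$ such that $V\cdot x$ is nowhere dense for comeagre many $x\in U$.
\item A single application of the condition to this $V$ and $U$ gives $U'$ in which each $U'\cap V^{-1}\cdot W$ ($W\subseteq U'$ basic) is open dense in $U'$.
\item So comeagre many $x\in U'$ have $V\cdot x$ dense in $U'$, a contradiction.
\end{itemize}
There are no infinite products, no completeness of $G$ beyond Baire category, and no Kuratowski--Ulam.
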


\begin{proof}
	
	We prove (1).
	Suppose that all orbits are meagre. Let $x \in Y$ be arbitrary.
	As $G \cdot x$ is meagre, there exist closed nowhere dense sets
	$\{F_n : n \in \mathbb{N}\}$ such that
	\[
	G \cdot x \subseteq \bigcup_{n} F_n.
	\]
	Let
	\[
	B_n = \{ g \in G : g \cdot x \in F_n \}.
	\]
	Then each $B_n$ is closed, and by the Baire category theorem there exists $n$
	such that $B_n$ has non-empty interior.
	Let $V \ni 1_G$ be open and $g \in G$ such that $gV \subseteq B_n$.
	Then
	\[
	V \cdot x \subseteq g^{-1} \cdot F_n,
	\]
	so $V \cdot x$ is nowhere dense.
	We conclude that for every point $x \in Y$ there exists $V \ni 1_G$ such that
	$V \cdot x$ is nowhere dense.
	
	As there are only countably many possible $V$ (we may restrict to a basis at $1_G$),
	there exists $V \ni 1_G$ and a non-empty open set $U \subseteq X$ such that the set
	\[
	\{ x \in U : V \cdot x \text{ is somewhere dense} \}
	\]
	is meagre (Here we use the fact that $\{ x \in Y : V \cdot x \text{ is nowhere dense} \}$ is a Borel set and hence has the property of Baire, and that a non-meagre set with the Baire property must be comeagre in an open set). Let now $U' \subseteq U$ be as given by the condition in~(2).
	The property of $U'$ implies that the set
	\[
	\{ x \in U' : V \cdot x \text{ is dense in } U' \}
	=
	\{ x \in U' : \forall W_1 \subseteq U',\ V \cdot x \cap W_1 \neq \varnothing \}
	\]
	is a dense $G_\delta$ subset of $U'$, which is a contradiction
	(the quantifier $\forall W_1 \subseteq U'$ can be taken to range over a countable
	basis of $U'$).
	
	Now we prove (2).
	As orbits are Borel, and so they have the Baire property, there exists $x_0 \in Y$, and a non-empty open $U \subseteq X$ such that $G \cdot x_0 \cap U$ is co-meagre in $U$. Fix open $V \ni 1_G$, and 
	let $V' \ni 1_G$ be open such that $V' V'^{-1} \subseteq V$.
	Let $x \in U \cap G \cdot x_0$.
	By Effros's theorem \cite[Theorem ???]{Gao}, there exists an open set $U_0 \subseteq Y$ such that
	\[
	V' \cdot x = U_0 \cap G \cdot x,
	\]
	and in particular $U_0 \subseteq V' \cdot x$.
	Finally, let $U' = U_0 \cap U$ and let $W_1, W_2 \subseteq U'$ be non-empty open sets.
	Then there exist $v_1, v_2 \in V'$ such that
	\[
	v_1 \cdot x \in W_1
	\quad\text{and}\quad
	v_2 \cdot x = v_2 v_1^{-1} (v_1 \cdot x) \in V \cdot W_1 \cap W_2.
	\]
	
\end{proof}

\begin{theorem}
\label{th:Dense} 
Let $G \leq \Homeo(X)$ be closed, and let $\LLb \subseteq \KKb_G$ be such that $G_\LLb$ is a dense subset of $G$. Then  $\LLb$ is directed iff $G$ has a dense conjugacy class. 
\end{theorem}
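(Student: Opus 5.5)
Here "directed" has to be read for the paracategory $\LLub$ of $\LLb$-digraphs (weakly arrow-preserving morphisms): for any $\la_0,\la_1 \in \LLub$ there are $\twoheadleftarrow \in \LLub$ and morphisms $\sqsp_i \in \LLub^{\la_i}_{\twoheadleftarrow}$, $i=0,1$. The plan is to reduce the topological statement to Proposition \ref{pr:DownwardClosed} and Proposition \ref{pr:InMorphism}, using the $\pi$-basis from Corollary \ref{co:DensePiBasis}. First, $G$ is Polish and the conjugation action is continuous. So $G$ has a dense conjugacy class iff the action is topologically transitive, i.e.\ for any nonempty open $U_0,U_1\subseteq G$ there is $g\in G$ with $gU_1g^{-1}\cap U_0\neq\emptyset$. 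The forward direction is immediate from a dense orbit. For the converse, the set of points with dense orbit is $\bigcap_{W}\bigcup_{g} g^{-1}Wg$ over a countable basis, and topological transitivity makes it a dense $G_\delta$. By Corollary \ref{co:DensePiBasis} it suffices to check transitivity on sets of the form $[\la]_G$ with $\la\in\LLub$, since any nonempty open set contains one of them.

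($\Rightarrow$) Assume $\LLub$ is directed and fix $\la_0,\la_1\in\LLub$. Take $\twoheadleftarrow$ and $\sqsp_i\in\LLub^{\la_i}_{\twoheadleftarrow}$. Since $\sqsp_i\in\mor(\LLb)\subseteq\mor(\KKb_G)$, the sets $[\sqsp_i]_G$ are nonempty; pick $g_i\in[\sqsp_i]_G$. Also $[\twoheadleftarrow]_G\neq\emptyset$: by definition $\twoheadleftarrow=\sqsp\circ\preceq$ with $\sqsp\in\KKb_G$, and any $f\in[\sqsp]_G$ lies in $[\twoheadleftarrow]$ by the observation $f\in[\la_{f,m,n}]$. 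Choose such $f$. Proposition \ref{pr:DownwardClosed} gives $g_ifg_i^{-1}\in[\la_i]_G$. Hence $h=g_0g_1^{-1}$ conjugates the element $g_1fg_1^{-1}\in[\la_1]_G$ into $[\la_0]_G$, which is transitivity.

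($\Leftarrow$) Assume $f\in G$ has dense conjugacy class and fix $\la_0,\la_1\in\LLub$. Density gives $g_0,g_1\in G$ with $g_ifg_i^{-1}\in[\la_i]_G$. By Proposition \ref{pr:InMorphism}, for all large $n$ we have $\sqsp_{g_i,n_{\la_i},n}\in\KKub_G{}^{\la_i}_{\la_{f,n}}$, so $\la_{f,n}$ is a common lift in $\KKub_G$. The main obstacle is that these morphisms and the digraph $\la_{f,n}$ must lie in $\LLb$ rather than only in $\KKb_G$. To handle this I would first choose $f\in G_\LLb$. The orbit of $f$ is dense, so the orbit meets the dense set $G_\LLb$ (an open set meets it), and we may replace $f$ by a conjugate lying in $G_\LLb$. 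The $g_i$ must then be found in $G_\LLb$ as well. Conjugation by $g$ being continuous and $G_\LLb$ being dense, the set $\{g: gfg^{-1}\in[\la_i]_G\}$ is open and nonempty, so it meets $G_\LLb$ and we can pick $g_i\in G_\LLb$. Then every co-bijective $\sqsp_{g_i,m,n}$ and $\sqsp_{f,m,n}$ is in $\mor(\LLb)$, so $\la_{f,n}\in\LLub$ (with $n$ large so that $\sqsp_{f,n,n'}$ is co-bijective, using the proposition guaranteeing such $n'$). The morphisms $\sqsp_{g_i,n_{\la_i},n}$ are weakly arrow-preserving by Proposition \ref{pr:InMorphism} and lie in $\LLb$, which gives directedness. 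The point needing the most care is verifying that $\LLub$'s objects require $K_\la\in\It$ and the form $\sqsp\circ\preceq$ with $\sqsp\in\LLb$, which holds since $\It$ consists of all $K_n$.
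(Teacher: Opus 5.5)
Your ($\Rightarrow$) direction is correct and is the paper's argument with the details filled in. You take a common lift $\twoheadleftarrow$, apply Proposition~\ref{pr:DownwardClosed} to some $f\in[\twoheadleftarrow]_G$, use the conjugator $g_0g_1^{-1}$, and then get transitivity on the $\pi$-basis of Corollary~\ref{co:DensePiBasis}, hence a dense $G_\delta$ set of elements with dense conjugacy class.

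The gap is in ($\Leftarrow$), at the step ``the orbit of $f$ is dense, so the orbit meets the dense set $G_\LLb$ (an open set meets it), and we may replace $f$ by a conjugate lying in $G_\LLb$''. Two dense sets need not intersect (think of the rationals and the irrationals). The conjugacy class of $f$ is not open, so your parenthetical does not apply. Nothing in the hypotheses forces the class of $f$ to meet $G_\LLb$: $G_\LLb$ is only assumed to be dense, not open and not conjugation-invariant. Everything after this step uses $f\in G_\LLb$, namely that $\la_{f,n}\in\LLub$ and that the relations $\sqsp_{g_i,n_{\la_i},n}$ lie in $\mor(\LLb)$. So the argument as written does not go through.

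The fix, which is what the paper does, is to perturb $f$ rather than conjugate it, and to do so after the $g_i$ have been chosen. Keep the original $f$ with dense class, and pick $g_0,g_1\in G_\LLb$ in the nonempty open sets $\{g: gfg^{-1}\in[\la_i]_G\}$, exactly as you do. For these fixed $g_i$, the set $\{f'\in G: g_if'g_i^{-1}\in[\la_i]_G \mbox{ for } i=0,1\}$ is open, by openness of $[\la_i]_G$ and continuity of conjugation. It contains $f$, so it meets the dense set $G_\LLb$. Take $f'$ in this intersection. It need not have a dense conjugacy class, and you never use that property again. Now apply Proposition~\ref{pr:InMorphism} to $f'$ and $g_0,g_1$ with a common large $n$. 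Your remaining observations then give $\la_{f',n}\in\LLub$ and $\sqsp_{g_i,n_{\la_i},n}\in\LLub^{\la_i}_{\la_{f',n}}$, since $f',g_i\in G_\LLb$, and this completes the proof.
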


\begin{proof}
Fix $A_0, A_1 \in \LLub$. Since $G$ has a dense conjugacy class, there are $f \in G$, and  $g_0,g_1 \in G_\LLb$ such that $$g_0fg_0^{-1} \in [A_0]_G, \quad g_1fg_1^{-1} \in [A_1]_G.$$ By Proposition \ref{pr:InMorphism}, there is $n \in \NN$ such that  $$\sqsp_{g_0,n_{A_0}, n} \in (\KKub_G)^{A_0}_{\la_{f,n}}, \quad \sqsp_{g_1,n_{A_1}, n} \in (\KKub_G)^{A_1}_{\la_{f,n}}.$$ But $G_\LLb$ is a dense subset of $G$, so there is also $f \in G_\LLb$ so that the above holds.

Now suppose that $\LLub$ is directed, and fix $A_0, A_1 \in \LLub$. We fix $B \in \LLub$, and morphisms $\sqsp_0 \in \LLub^{A_0}_B$, $\sqsp_1 \in \LLub^{A_1}_B$. By Proposition \ref{pr:DownwardClosed},  there are $f_0 \in [A_1]$,  $g_0 \in G$ such that $g_0 f_0 g^{-1}_0 \in [A_0]_G$. Moreover, by continuity of group operations in $G$, there is an open $U\subseteq [A_1]_G$ such that $g_0fg^{-1}_0 \in [A_0]$ for every $f \in U$.  As $A_1 \in \LLub$ was arbitrary, it follows that the set of all $f \in G$ such that there is $g \in G$ with $g f g^{-1} \in [A_0]_G$ is open and dense in $G$. As $A_0 \in \LLub$ was arbitrary, the set of $f \in G$ with dense conjugacy class is comeager; in particular, it is not empty. 
\end{proof}

\begin{theorem}
\label{th:AmalNonMeager}
Let $G \leq \Homeo(X)$ be closed, and let $\LLb \subseteq \KKb_G$ be such that $G_\LLb$ is a dense subset of $G$. If $\LLb$ is weakly $\It$-absorbing, and $\LLub$ has weak m-amalgamation, then $G$ has a non-meager conjugacy class.  
\end{theorem}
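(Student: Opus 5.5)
We will verify the condition in Proposition \ref{pr:Criterion}(1) for the conjugation action of $G$ on itself ($Y=G$), and conclude that some conjugacy class is non-meager. Fix an open $V \ni 1_G$ and a non-empty open $U \subseteq G$. By Corollary \ref{co:DensePiBasis}, the sets $[\la]_G$, $\la \in \LLub$, form a $\pi$-basis of $G$, so we may shrink $U$ to some $[A]_G \subseteq U$ with $A \in \LLub$. We may also assume $V \supseteq \{g \in G : d(g,1_G)<3\epsilon\}$ for some $\epsilon>0$.

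Next we pass to a finer $A$. By Corollary \ref{co:LiftBasis} and Proposition \ref{pr:Basis}, we may replace $A$ by an $\It$-lift $A'$ with $[A']_G \subseteq [A]_G$ and $\mesh(K_{n_{A'}})<\epsilon$. Let $\nil \in \LLub^{A'}_B$ be a weak m-amalgamation base for $A'$. By Corollary \ref{co:UpwardClosed} and the weak $\It$-absorption, we arrange that $\nil$ can be completed to an $\It$-morphism: after lifting $B$ further, we may assume $\nil \circ \sqsp \circ \sqsp' \in \mor(\It)$ after suitable extensions. We then take $U'=[B]_G$, or rather $[B']_G$ for an $\It$-lift $B'$ of $B$ which exists by Corollary \ref{co:UpwardClosed}. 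Proposition \ref{pr:Subset} gives $U' \subseteq [A']_G \subseteq U$.

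Now let $W_1,W_2 \subseteq U'$ be non-empty open. Again by Corollary \ref{co:DensePiBasis}, choose $C_i \in \LLub$ that are $\It$-lifts of $B'$ with $[C_i]_G \subseteq W_i$; call the lifting morphisms $\sqsp_i = \succeq$. Weak m-amalgamation gives $D \in \LLub$ and $\sqsp'_i \in \LLub^{C_i}_D$ such that
\[
R=(\nil\circ\succeq\circ\sqsp'_0)\cap(\nil\circ\succeq\circ\sqsp'_1)
\]
is co-surjective. Pick $f \in [D]_G$, which is non-empty because $D \in \LLub$, and $g_i \in [\sqsp'_i]_G$. Proposition \ref{pr:DownwardClosed} gives $g_i f g_i^{-1} \in [C_i]_G \subseteq W_i$. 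Writing $h = (g_1 f g_1^{-1})$ and $k = g_1 g_0^{-1}$, we have $k(g_0fg_0^{-1})k^{-1}=h$, so $h \in kW_1k^{-1}\cap W_2$. It remains to show $k \in V$.

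The main obstacle is this last estimate: we need $d(g_0,g_1)$ small, uniformly in $C_i$ and $D$. Both composites $\nil\circ\succeq\circ\sqsp'_i$ are realized by $g_0$ and $g_1$ respectively, at the level of the cover $K_{n_{A'}}$. Each of them contains the co-surjective relation $R$. Hence for every $x$ there is a single $p \in K_{n_{A'}}$ (the $R$-image of a small set around $x$) with $g_0(x),g_1(x)\in p$, modulo bookkeeping on which cover element of $D$ contains $x$. This should give $d(g_0,g_1)<\mesh(K_{n_{A'}})<\epsilon$, in the spirit of Remark \ref{re:ExtendMor}(2). Right invariance then yields $d(g_1g_0^{-1},1_G)<\epsilon$, hence $k \in V$.

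What I expect to require care is the subtlety that $\nil$ is a digraph morphism rather than a realized homeomorphism relation. This is where weak $\It$-absorption is used: it replaces $\nil\circ\succeq\circ\sqsp'_i$ by relations contained in genuine $\It$-morphisms, i.e.\ inclusions $\succeq$, so that the co-surjective common part genuinely controls both $g_0$ and $g_1$ pointwise. Possibly we will need to pass to finer lifts of $D$ to make the realizations exact, using Proposition \ref{pr:InMorphism}.
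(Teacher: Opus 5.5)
Your proof is correct, and its skeleton is the paper's. Both use condition (1) of Proposition \ref{pr:Criterion} and the $\pi$-basis $\{[A]_G\}$. Both arrange, via weak $\It$-absorption and Corollary \ref{co:UpwardClosed}, that the amalgamation base is an $\It$-morphism. Both then take $\It$-lifts $C_i$ with $[C_i]_G\subseteq W_i$, form an m-amalgam $D$, and apply Proposition \ref{pr:DownwardClosed}.

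Where you differ is in showing that the conjugator lies in $V$.

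The paper's route:
\begin{itemize}
\item It takes $W$ symmetric with $W^2\subseteq V$, and $A$ fine enough that $[\Id_{K_A}]_G\subseteq W$.
\item It applies Corollary \ref{co:UpwardClosed} once more to get $E$ and $\sqni\in\LLub^D_E$ with $\succeq\circ\sqsp'_1\circ\sqni$ an inclusion.
\item This puts every realizer of $\sqsp'_1\circ\sqni$ into $[\Id_{K_A}]_G$.
\item Co-surjectivity of the intersection transfers this to realizers of $\sqsp'_0\circ\sqni$, so $g_1g_0^{-1}\in W^2$.
\end{itemize}

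Your route bypasses $E$. The base composed with the lifts $C_i\to B'$ is the inclusion $\succeq^{n_{A'}}_{n_{C_i}}$, and $\sqsp'_i=\sqsp_{g_i,n_{C_i},n_D}$. So co-surjectivity of $R$ gives, for each $q\in K_D$, some $p\in K_{A'}$ with $g_0[q]\cup g_1[q]\subseteq p$. Hence $d(g_0,g_1)\le\mesh(K_{A'})<\epsilon$. Right-invariance of the supremum metric then gives $d(g_1g_0^{-1},1_G)=d(g_1,g_0)$.

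Your route is shorter, and it makes clear that $g_0,g_1$ only need to be close to each other, not to the identity. The paper's route uses only that $[\Id_{K_A}]_G$ is a neighbourhood of $1_G$ and never appeals to invariance of the metric, at the price of the extra lift $E$.

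Two things to tidy when writing it up:
\begin{itemize}
\item The relation $R$ must be built from the new base $B'\to A'$. As written, $\nil\circ\succeq$ does not compose, since $\succeq$ lands in $K_{B'}$, not $K_B$.
\item Your estimate uses that this base composed with $\succeq\colon C_i\to B'$ is (or is contained in) an inclusion. Saying that the whole composite including $\sqsp'_i$ is contained in an inclusion would only give $q\subseteq p$, and would say nothing about $g_i$.
\end{itemize}
With this in place, no further lifting of $D$ via Proposition \ref{pr:InMorphism} is needed.
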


\begin{proof}
We verify condition (1) of Proposition~\ref{pr:Criterion}. Fix $V \ni 1_G$, and let $W \ni 1_G$ be open, symmetric, and such that $W^2 \subseteq V$. Fix $A \in \LLub$. Without loss of generality, we can assume that $[I]_G \subseteq W$, where $I=\Id_{K_A}$ is the identity digraph on $K_A$. Fix $B \in \LLub$, and $\nil \in \LLub^A_B$ that form a weak amalgamation basis for $A$. By the fact that $\LLb$ is weakly $\mathtt{I}$-absorbing, and Corollary \ref{co:UpwardClosed}, we can assume that $\nil \in \mor(\It)$. 

By Corollary \ref{co:DensePiBasis}, the collection $\{[C]_G: C \in \LLub \mbox{ is an } \It \mbox{-lift of } B  \}$ is a $\pi$-basis of $[B]_G$. Fix $\It$-lifts $C_0, C_1$ of $B$, and fix an m-amalgam $D \in \LLub$ with morphisms $\sqsp'_0 \in \LLub^{C_0}_D$, $\sqsp'_1 \in \LLub^{C_1}_D$. 
%
%
By Corollary \ref{co:UpwardClosed}, there are $E \in \LLub$ and $\sqni \in \LLub^D_E$ such that
%
$$ \succeq^{n_A}_{n_{C_1}} \circ \sqsp'_1 \circ \sqni = \succeq_{n_E}^{n_A}.$$
Hence, the above equality implies that if $g \in [\sqsp'_1 \circ \sqni]_G$, then $g \in [I]_G$. Moreover, by m-amalgamation, for every $q \in K_D$ there is $p \in K_A$ such that $$p \succeq \circ \sqsp'_0 q, \quad  p \succeq \circ \sqsp'_1 q.$$ Consequently, we also have that for every $q \in K_E$ there is $p \in K_A$ such that $$p \succeq \circ \sqsp'_0 \circ \sqni q, \quad p \succeq \circ \sqsp'_1 \circ \sqni q,$$ i.e., $p \succeq q$. Hence, if $g \in [\sqsp'_0 \circ \sqni]_G$, then $g \in [I]_G$. 

In particular, by Proposition \ref{pr:DownwardClosed}, there are $f \in [C_0]_G$, and $g \in W^2 \subseteq V$ such that $gfg^{-1} \in [C_1]_G$, i.e., $g[C_0]_Gg^{-1} \cap [C_1]_G \neq \emptyset$. Moreover, $[C_0]_G$, $[C_1]_G$ as above form a $\pi$-basis of $[B]$.  Since the sets $[A]_G$, $A\in\LLub$, form a $\pi$-basis of $G$,
this verifies condition (1) of Proposition~\ref{pr:Criterion}.
\end{proof}

\begin{theorem}
	\label{th:ComeagerDir&Amal}
Let $G \leq \Homeo(X)$ be closed, and let $\LLb \subseteq \KKb_G$ be weakly $\It$-absorbing and such that $G_\LLb$ is a dense subset of $G$. Then $\LLub$ is directed and has weak m-amalgamation iff $G$ has a comeager conjugacy class.
\end{theorem}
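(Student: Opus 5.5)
The plan is to prove the two directions separately, reducing the forward implication to Theorems \ref{th:Dense} and \ref{th:AmalNonMeager}, and the converse to part (2) of Proposition \ref{pr:Criterion} together with Proposition \ref{pr:InMorphism}.

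\emph{($\Rightarrow$)} Assume $\LLub$ is directed and has weak m-amalgamation. Theorem \ref{th:Dense} gives a dense conjugacy class. Since $\LLb$ is weakly $\It$-absorbing, Theorem \ref{th:AmalNonMeager} gives a non-meager conjugacy class $\mathcal{O}$. The orbit $\mathcal{O}$ is Borel, so by the Baire category theorem it is comeager in some non-empty open $U\subseteq G$. The set of elements with dense conjugacy class is comeager, since it is a dense $G_\delta$ by the proof of Theorem \ref{th:Dense}. Hence it meets $\mathcal{O}\cap U$, and so $\mathcal{O}$ itself is dense. For every $h\in G$, the set $h\mathcal{O}h^{-1}=\mathcal{O}$ is comeager in $hUh^{-1}$. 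Density of $\mathcal{O}$ implies that the open sets $hUh^{-1}$, $h \in G$, cover a dense open set: the union $\bigcup_h hUh^{-1}$ is the saturation of $U$, and it meets every open set because $\mathcal{O}\subseteq\bigcup_h hUh^{-1}$. Since comeagerness is local, $\mathcal{O}$ is comeager in $G$.

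\emph{($\Leftarrow$)} Assume $G$ has a comeager conjugacy class. Then it has a dense one, so $\LLub$ is directed by Theorem \ref{th:Dense}. For weak m-amalgamation, fix $A\in\LLub$. Apply Proposition \ref{pr:Criterion}(2) with $V=[I]_G$ for $I=\Id_{K_A}$, after shrinking $A$ via Corollary \ref{co:UpwardClosed} so that this $V$ is small enough, and with $U=[A]_G$. This yields a non-empty open $U'\subseteq[A]_G$. By Corollary \ref{co:DensePiBasis}, choose an $\It$-lift $B$ of $A$ with $[B]_G\subseteq U'$; the corresponding $\nil=\succeq^{n_A}_{n_B}$ will be the amalgamation base.

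Now let $\sqsp_0\in\LLub^B_{C_0}$ and $\sqsp_1\in\LLub^B_{C_1}$. Take $f_i\in[C_i]_G$ and $g_i\in[\sqsp_i]_G$. By Proposition \ref{pr:DownwardClosed} we have $g_if_ig_i^{-1}\in[B]_G$, and by continuity there are open $W_i\subseteq[B]_G$ consisting of such conjugates. The criterion then gives $v\in V$ and $h\in W_1$ with $vhv^{-1}\in W_2$, and we may take $h$ in the comeager class and in $G_\LLb$ by density. Write $h=g_1f'g_1^{-1}$ with $f'\in [C_1]_G$ close to $f_1$, and $vhv^{-1}=g_2f''g_2^{-1}$ with $f''\in[C_0]_G$; this labels $W_1$ as coming from $C_1$ and $W_2$ from $C_0$. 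Conjugating, a single $f$ satisfies $\gamma_0 f\gamma_0^{-1}\in[C_0]_G$ and $\gamma_1 f\gamma_1^{-1}\in[C_1]_G$, where $\gamma_0,\gamma_1$ satisfy $g_0\gamma_0\approx v g_1\gamma_1$ up to the $V$-error. Proposition \ref{pr:InMorphism} then gives $n$ with $\sqsp_{\gamma_i,n_{C_i},n}\in\LLub^{C_i}_{\la_{f,n}}$, and density of $G_\LLb$ lets us assume these morphisms lie in $\LLb$. Set $D=\la_{f,n}$.

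The main obstacle is checking co-surjectivity of
\[\nil\circ\sqsp_0\circ\sqsp'_0\cap\nil\circ\sqsp_1\circ\sqsp'_1 .\]
Both composites approximate homeomorphisms $g_0\gamma_0$ and $g_1\gamma_1$ that differ by an element of $V=[I]_G$. So for each $q\in K_D$ and $x\in q$, the images of $x$ under the two maps lie in a common $p\in K_A$, provided $n$ is large enough that the images of $q$ are contained in single elements of $K_A$. This is exactly where the choice $V=[I]_G$ is used, mirroring the argument in the proof of Theorem \ref{th:AmalNonMeager} in reverse. Making it rigorous requires care with the Lebesgue number of $K_A$ and with replacing $V$ by a smaller neighbourhood, which I would handle by passing to a finer $\It$-lift before fixing $V$.
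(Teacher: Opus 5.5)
Your ($\Rightarrow$) direction is the paper's. The paper just cites Theorems \ref{th:Dense} and \ref{th:AmalNonMeager}; you additionally supply the standard ``dense $+$ non-meager $\Rightarrow$ comeager'' step that the paper leaves implicit.

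For ($\Leftarrow$) your route is genuinely different, and correct once the fix you flag at the end is made. The paper proceeds as follows:
\begin{itemize}
\item It fixes $f\in[A]_G$ in the comeager class and $V$ with $\diam(V)<\ell(K_A)/4$.
\item It uses the Effros argument behind Proposition \ref{pr:Criterion}(2) to find an $\It$-lift $B$ in which $\{gfg^{-1}:g\in V\}$ is dense.
\item It reduces $\sqsp_0,\sqsp_1$ to inclusions via Corollary \ref{co:UpwardClosed}, so that $[C_i]_G\subseteq[B]_G$.
\item It picks $g_0,g_1\in V$ with $g_ifg_i^{-1}\in[C_i]_G$. Co-surjectivity follows because both conjugators are $\ell(K_A)/4$-close to the identity and $\mesh(C_i)<\ell(K_A)/4$.
\end{itemize}
You use only the statement of Proposition \ref{pr:Criterion}(2), applied to $W_i=g_i[C_i]_Gg_i^{-1}\subseteq[B]_G$ (via Proposition \ref{pr:DownwardClosed}). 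From it you extract $f\in[C_1]_G$ with $\gamma_1=\Id_X$ and $\gamma_0=g_0^{-1}vg_1$. Then $g_0\gamma_0=vg_1\gamma_1$ holds exactly, not just approximately, and only the ``quotient'' $v$ has to be small. This buys you three things:
\begin{itemize}
\item arbitrary $\sqsp_i$ are handled without Corollary \ref{co:UpwardClosed};
\item the mesh requirement falls on $B$, which you choose, rather than on the $C_i$;
\item the argument becomes the exact mirror of the proof of Theorem \ref{th:AmalNonMeager}.
\end{itemize}

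With $V=[\Id_{K_A}]_G$ the last step fails, and the problem has nothing to do with $n$. The composites $\nil\circ\sqsp_i\circ\sqsp'_i$ pass through cells $b_i\in K_B$ with $b_1\ni g_1(x)$ and $b_0\ni g_0\gamma_0(x)=v(g_1(x))$. You need a single $p\in K_A$ with $b_0\cup b_1\subseteq p$, not merely one containing the two image points. An element of $[\Id_{K_A}]_G$ can move points by up to $\mesh(K_A)$, which is not small compared with $\ell(K_A)$. Enlarging $n$ does not shrink the $b_i$.

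The remedy you sketch is the right one, provided co-surjectivity is checked over the original $A$ and not over a refined copy:
\begin{itemize}
\item take an $\It$-lift $A'$ of $A$ with $3\mesh(K_{A'})<\ell(K_A)$;
\item put $V=[\Id_{K_{A'}}]_G$;
\item choose $B$ as an $\It$-lift of $A'$ and let $\nil=\succeq^{n_A}_{n_B}$.
\end{itemize}
Then $\diam(b_0\cup b_1)\le 3\mesh(K_{A'})<\ell(K_A)$.

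Also, you do not need $h$ in the comeager class. What you do need is $f,\gamma_0\in G_\LLb$, so that $\la_{f,n}\in\LLub$ and $\sqsp_{\gamma_0,n_{C_0},n}$ is an $\LLb$-morphism. Obtain this by perturbing $(f,\gamma_0)$ inside the open set of pairs $(f',\gamma')$ satisfying $f'\in[C_1]_G$, $\gamma'f'\gamma'^{-1}\in[C_0]_G$ and $g_0\gamma'g_1^{-1}\in V$. This keeps the exact identity $g_0\gamma'=v'g_1$ with $v'\in V$.
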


\begin{proof}
By Theorems \ref{th:Dense} and \ref{th:AmalNonMeager}, we only need to prove that the existence of a comeager conjugacy class implies weak m-amalgamation. 
Fix $A \in \LLub$, and  set $\epsilon=\ell(K_A)$. Fix $f \in [A]_G$ with comeager conjugacy class, and an open $V \ni 1_G$ such that $$\{gfg^{-1}: g \in V \} \subseteq [A]_G, \quad \diam(V)<\epsilon/4.$$ By Proposition \ref{pr:Criterion}(2) and Corollary \ref{co:DensePiBasis}, there is an $\It$-lift $B$ of $A$ such that $\{gfg^{-1}: g \in V \}$ is dense in $[B]_G$. 
 
 Fix $C_0,C_1 \in \LLub$, and $\sqsp_0 \in \LLub^B_{C_0}$, $\sqsp_1 \in \LLub^B_{C_1}$. We can assume that $\mesh(C_0), \mesh(C_1)< \epsilon/4$, and, by Corollary  \ref{co:UpwardClosed}, that $\sqsp_0, \sqsp_1  \in \mor(\It) $, i.e., $[C_0]_G, [C_1]_G \subseteq [B]_G$. Since $\{gfg^{-1}:g \in V\}$ is dense in $[B]_G$, there are $g_0, g_1 \in V$ such that $$g_0fg^{-1}_0 \in [C_0]_G, \quad g_1fg^{-1}_1 \in [C_1]_G.$$ 
 
Since $[C_0]_G$ and $[C_1]_G$ are open, we can further choose $f_0\in G_{\LLb}$ sufficiently close to $f$ so that
 \[
 g_0f_0g_0^{-1}\in[C_0]_G,\quad
 g_1f_0g_1^{-1}\in[C_1]_G.
 \]
 By Proposition \ref{pr:InMorphism}, there is $n \in \NN$, such that, for $D=\la_{f_0,m,n}$, $\sqsp'_0=\sqsp_{g_0,m,n} \in \LLub^{C_0}_D$, $\sqsp'_1=\sqsp_{g_1,m,n} \in \LLub^{C_1}_D$. 
Consequently, for every $r  \in K_D$, and $q_0 \in K_{C_0}$, $q_1 \in K_{C_1}$ such that 
$q_0 \sqsp'_0 r$, $q_1 \sqsp'_1 r$ we have
 $$\diam(q_0 \cup q_1)<\epsilon,$$ so for every $r \in K_D$, there is $p \in K_A$ such that $$p \succeq \circ \sqsp'_0 r, \quad p \succeq \circ \sqsp'_1 r,$$ i.e., $$p \succeq \circ \sqsp_0 \circ \sqsp'_0 r, \quad p \succeq \circ \sqsp_1 \circ \sqsp'_1 r.$$ In other words, $D$ is the required m-amalgam.
\end{proof}

\section{Spectra of posets, and \fra \ limits}
In this section, instead of selecting a compact space with an appropriate sequence of its open covers, and then defining corresponding (para)categories of graphs, we will proceed in the opposite direction. We will select a category of graphs, an appropriate sequence in it, and construct a compact space so that this sequence will be, essentially, a sequence of its covers. This approach -- a topological variant of \fra \ theory -- has been described in detail in \cite{BaBiVi2}. We also refer the reader to Section 3 in \cite{Ma} for a simplified treatment that is sufficient in the present context. Here, we recall only basic notions and facts required to carry out the construction.

\subsection{Spectra od posets}
Let $\KKb \subseteq \GGRb$ be a category of graphs, and let $(K_n, \succeq_n)$ be a sequence in $\KKb$. We assume that $K_n$'s are pairwise disjoint. Let $\KK=\bigcup_n K_n$, and let $\succeq$ be the reflexive and transitive closure of $\bigcup_n \succeq_n$. Then $(\KK,\succeq)$ is clearly a poset, called the poset \emph{induced} by $(K_n, \succeq_n)$, and sometimes referred to also as $(K_n, \succeq_n)$; $K_n$'s are called \emph{levels} of $\KK$. Note that because all $\succeq_n$ are surjective, $(\KK,\succeq)$ has no minimal elements. The function $r:\KK \to \NN$, where $r(p)$ is the unique $n \in \NN$ such that $p \in K_n$, is a rank function assigning $0$ to maximal elements. Obviously, $r^{-1}(n)=K_n$; in particular $r^{-1}(n)$ is finite, for $n \in \NN$. Posets $P$ admitting such a rank $r:P \to \NN$ with finite preimages $r^{-1}(n)$, $n \in \NN$, are called \emph{$\omega$-posets} in \cite{BaBiVi}, and posets induced by sequences $(K_n, \succeq_n)$ are called \emph{$\omega$-chains} in \cite{Ma}. 

A subset $C \subseteq \KK$ is called a \emph{cap} if it is  $\preceq$-refined by some level $K_n$ (caps are defined somewhat differently in \cite{BaBiVi} but for $\omega$-chains these two definitions are equivalent). We call $S \subseteq \KK$ a \emph{selector} if it has non-empty intersection with every cap (equivalently: $\KK \setminus S$ is not a cap).  The \emph{spectrum} $\Sp \KK$ of $\KK$ is the family of all minimal selectors in $\KK$. The reader is referred to the introductory part of Section 3 in \cite{Ma} for some motivation for the notions of cap, selector, and spectrum.

 For $p \in \KK$, let $$p^\in=\{S \in \Sp \KK: p \in S \},$$ and, for $A \subseteq \KK$, let $$A^\in=\{p^\in: p \in A \}.$$ The spectrum $\Sp \KK$ is equipped with the topology $\tau_\KK$ generated by the sub-basis $\KK^\in$ (which in fact is a basis, see \cite[Corollary 2.14]{BaBiVi}).  By \cite[Proposition 2.8]{BaBiVi}, the topology $\tau_\KK$ is $T_1$ compact. Moreover, if $\KK$ is a regular $\omega$-poset (see \cite{BaBiVi} or \cite{Ma} for the definition and basic properties), then $\Sp \KK$  is regular, i.e., metrizable. By \cite[Propositions 1.7]{BaBiVi} and \cite[Propositions 2.8]{BaBiVi} (see also \cite[Proposition 3.12]{Ma}), $(K^\in_n)$ is a sequence of minimal open covers of $\Sp \KK$ such that
\begin{enumerate}
	\item each $K^\in_n$ consolidates $K^\in_{n+1}$,
	\item $\mesh_d(K^\in_{n}) \to 0$ for every compatible metric $d$ on $\Sp \KK$.
\end{enumerate}
Additionally, $(K^\in_n,\supseteq \uhr K^\in_{n} \times K^\in_{n+1})$ is isomorphic with $(K_n, \succeq_n)$ (see \cite[Corollary 3.17]{Ma}), so we can identify these two sequences, and the induced posets. 

\subsection{\fra \ categories and their limits} Now suppose that $\KKb$ is a \fra \ category, and $(K_n, \succeq_n)$ is a \fra \ sequence in $\KKb$. Let $\KK$ be the poset induced by $(K_n, \succeq_n)$.  
By \cite[Proposition 4.32]{BaBiVi2}, any two \fra \ sequences in $\KKb$ have homeomorphic spectra, so we will call the spectrum $\Sp \KK$ of $\KK$ the \emph{\fra \ limit} of $\KKb$, and denote it by $\Sp \KKb$. Clearly, if $(K_n, \succeq_n)$ is a \fra \ sequence in $\KKb$, then $(K^\in_n,\supseteq \uhr K^\in_{n} \times K^\in_{n+1})$ is  also a \fra \ sequence in $\KKb$. See \cite[Theorem 4.34]{BaBiVi2} for conditions on morphisms in $\KKb$ that warrant metrizability of its \fra \ limit. The next proposition provides basic facts about $\KKb$, $\KKub$, and their relationships with $\KKb_{\Sp \KKb}$, $\KKub_{\Sp \KKb}$.

\begin{proposition}
\label{pr:Robust}
Let $\KKb \subseteq \GGRb$ be a \fra \ category with a subcategory $\It \subseteq \KKb$ induced by a \fra \ sequence $(K_n, \succeq_n)$ in $\KKb$, and let $X=\Sp \KKb$. Let the paracategory $\KKb_{X}$ be given by the sequence $(K^\in_n)$ of minimal open covers of $X$. 
\begin{enumerate}
	\item for any $m \leq n$, and $\sqsp \in \KKb^m_n$, we have $[\sqsp]_{G_\KKb} \neq \emptyset$; in particular, $\sqsp \in \mor(\KKb_X)$, and $\la=\sqsp \circ \preceq_n^m \in  \KKub_X$,
	\item $\cl(G_\KKb)$ is a subgroup of $\Homeo(X)$, 
	\item for any $\tla \in \KKub$, $n \in \NN$, and $\sqni \in \KKb^n_{n_\tla}$,  there is $\la \in \KKub$ such that $K_\la=K_n$, and $\sqni \in \KKub^\la_{\tla}$, i.e., $\tla$ is a $\sqni$-lift of $\la$,
	\item for any $\la \in \KKub$, $n \in \NN$, and $\sqni \in \KKb^{n_\la}_n$, there is $\tla \in \KKub$ such that $K_\tla=K_n$, and $\sqni \in \KKub^\la_{\tla}$, i.e., $\tla$ is a $\sqni$-lift of $\la$,
	\item suppose $G \leq \Homeo(X)$ is such that  for every ${\sqsp} \in (\KKb_X)_G$ there is ${\sqni} \in \KKb$ such that ${\sqsp} \subseteq {\sqni}$. Then $G_\KKb$ is dense in $G$; in particular, if $G=\Homeo(X)$, then $\cl(G_\KKb)=\Homeo(X)$.
\end{enumerate}	
\end{proposition}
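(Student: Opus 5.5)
The main tool will be the standard back-and-forth construction for Fra\"iss\'e sequences, in the form of Proposition \ref{pr:FraConditions}: $\KKb$ is $\It$-absorbing, so for every $\sqsp \in \KKb^{K_m}_C$ there are $n$ and $\sqsp' \in \KKb^C_{K_n}$ with $\sqsp \circ \sqsp' = \succeq^m_n$.

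For (1), fix $\sqsp \in \KKb^m_n$. I will build sequences $m=m_0<m_1<\dots$ and $n=n_0<n_1<\dots$ together with morphisms $\sqsp_k \in \KKb^{m_k}_{n_k}$ and $\sqsb'_k\in\KKb^{n_k}_{m_{k+1}}$ (a back-and-forth ladder) such that $\sqsp_k\circ\sqsb'_k=\succeq^{m_k}_{m_{k+1}}$ and $\sqsb'_k \circ \sqsp_{k+1} = \succeq^{n_k}_{n_{k+1}}$, starting from $\sqsp_0=\sqsp$. Absorption applied alternately on the two sides gives each next rung. The ladder then induces mutually inverse maps $\Sp\KK\to\Sp\KK$: send a minimal selector $S$ to the minimal selector generated by $\bigcup_k S^{\sqsp_k}$, using that the $\sqsp_k$ are co-bijective and edge-preserving and that the compositions are the inclusion morphisms. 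The result is a homeomorphism $f$ with $\sqsp_{f,m_k,n_k}=\sqsp_k$ (up to the identification $K_n\cong K_n^\in$). I expect checking that the induced map is well defined on spectra and continuous to be the main obstacle. For that I would rely on the duality results of \cite{BaBiVi2} (morphism sequences of $\omega$-chains induce continuous maps on spectra) rather than redo them. Since every $\sqsp_{f,m_k,n_k}$ lies in $\KKb$ (by adjusting the ladder, the intermediate levels too), $f\in G_\KKb$, which gives $[\sqsp]_{G_\KKb}\neq\emptyset$. The remaining claims of (1) follow from the definitions of $\KKb_X$ and $\KKub_X$.

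For (2), it suffices to show that $G_\KKb$ is closed under composition and inverses up to closure. The inverse of $f$ corresponds to the reversed ladder, whose morphisms are in $\KKb$. For products, $\sqsp_{fg,m,n}$ is sandwiched by compositions $\sqsp_{f,m,k}\circ\sqsp_{g,k,n}$ of morphisms in $\KKb$, hence lies in $\KKb$ because $\KKb$ is a category. So $G_\KKb$ is a subgroup, and its closure is a subgroup.

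For (3) and (4), I will use Remark \ref{re:FraisseDigraphs}. Write $\tla=\sqsp_0\circ\sqsb_1$ with $\sqsp_i\in\KKb^{n_\tla}_L$. For (3), set $\la=\sqni\circ\tla\circ\sqsb'$, where $\sqni$ is the given morphism. Extending $\sqni\circ\sqsp_1$ by absorption to some inclusion-compatible form gives the right shape, and weak arrow preservation is immediate since $\sqni$ is co-surjective. For (4), I will use amalgamation of $\sqni$ with $\sqsp_0,\sqsp_1$ to produce $\sqsp'_0,\sqsp'_1$ from a common $L'$, define $\tla=\sqsp'_0\circ\sqsb'_1$, and check that $\sqni$ is weakly arrow-preserving by chasing witnesses through the amalgam.

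For (5), given $f\in G$ and $\epsilon$, I take $n$ with $\mesh$ small and $\sqsp=\sqsp_{f,m,n}\in(\KKb_X)_G$, and choose $\sqni\supseteq\sqsp$ in $\KKb$. By (1) there is $g\in[\sqni]_{G_\KKb}$, and by Remark \ref{re:ExtendMor}(2) we get $d(f,g)<\mesh(K_m)$, so $G_\KKb$ is dense in $G$.
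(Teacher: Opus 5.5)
\noindent Your arguments for (1), (3) and (5) are essentially the paper's. For (1) you use an absorption ladder $\sqsp_k\circ\sqsb'_k=\succeq$, $\sqsb'_k\circ\sqsp_{k+1}=\succeq$ and turn it into a homeomorphism by the spectral duality. For (3) you set $\la=\sqni\circ\tla\circ\sqin$ and invoke Remark~\ref{re:FraisseDigraphs}. For (5) you realize an enlargement of $\sqsp_{f,m,n}$ via (1) and then apply Remark~\ref{re:ExtendMor}(2).

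The genuine gap is in (2), where you conclude that $G_\KKb$ itself is a subgroup. For products you only have $\sqsp_{f,m,k}\circ\sqsp_{g,k,n}\subseteq\sqsp_{fg,m,n}$, usually strictly. Morphisms of $\KKb$ are not closed under enlarging the relation, so the fact that $\KKb$ is a category tells you nothing about $\sqsp_{fg,m,n}$. Moreover, for given $m\le n$ there need not even be a level $k$ in between at which both factors are $\GGRb$-morphisms (e.g.\ when $m=n$). For inverses, the reversed ladder exists only for homeomorphisms built as in (1). An arbitrary $f\in G_\KKb$ comes with no ladder, and nothing forces $\sqsp_{f^{-1},m,n}\in\mor(\KKb)$.

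The missing idea, which is how the paper argues, is to prove closure only up to approximation. For $f,f'\in G_\KKb$, pick $k\le m\le n$ with $\mesh(K_k)<\epsilon$ such that $\sqsp=\sqsp_{f,k,m}$ and $\sqsp'=\sqsp_{f',m,n}$ are morphisms, hence in $\KKb$. Then $\sqsp\circ\sqsp'\in\KKb^k_n$ and $\sqsp\circ\sqsp'\subseteq\sqsp_{f\circ f',k,n}$. So (1) gives $g\in[\sqsp\circ\sqsp']_{G_\KKb}$, and Remark~\ref{re:ExtendMor}(2) yields $d(f\circ f',g)<\epsilon$. For inverses, (1) applied to $\sqsp_{f,k,m}$ produces $h\in G_\KKb$ with $h^{-1}\in G_\KKb$ and $d(f,h)<\mesh(K_k)$. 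Such $h$ are therefore dense in $G_\KKb$, and continuity of inversion gives $\cl(G_\KKb)^{-1}\subseteq\cl(G_\KKb)$.

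For (4) you take a different route from the paper. The paper uses absorption to reduce to an inclusion $\sqni=\succeq$, sets $\tla=\succeq\circ\sqsb'$ for some $\sqsp'$ with $\sqsp\circ\sqsp'=\succeq$, and then transports with (3). Whichever route you choose, the witness chase you leave to the reader is where the content is, and it is not automatic with relational morphisms. Suppose $\sqni\circ\sqsp'_i=\sqsp_i\circ\beta$ with $\beta\in\KKb^L_{L'}$, and $b_0\tla b_1$ is witnessed by $c\in L'$. The equations only give $e_0,e_1\in L$ with $e_0\,\beta\,c$, $e_1\,\beta\,c$, $a_0\sqsp_0 e_0$ and $a_1\sqsp_1 e_1$. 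Since $\beta$ is merely a relation, $e_0$ and $e_1$ need only be adjacent. So $a_0\la a_1$ does not follow unless you arrange $\beta$ to be a function, or $c$ to be $\beta$-co-injective.
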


\begin{proof}
(1): Let $\KK$ be the poset  induced by $(K_n, \succeq_n)$. Fix $\sqsp \in \KKb^{m_0}_{n_0}$.  Using absorption, we select $m_k, n_k \in \NN$, for $k>0$, and $\sqni_k \in \KKb^{m_k}_{n_k}$, $\sqni'_k \in \KKb^{n_k}_{m_{k+1}}$ such that $\sqni_0=\sqsp$, and $$\sqni_k \circ \sqni'_{k}=\succeq, \quad \sqni'_k \circ \sqni_{k+1}=\succeq.$$
Let $\sqni=(\bigcup_k \sqni_k)^\preceq$, $\sqni'=(\bigcup_k \sqni'_k)^\preceq$. We want to use \cite[Proposition 3.7]{BaBiVi}. It is easy to check that $\sqni$, $\sqni'$ are refiners (i.e., $C^{\sqni}$ and $C^{\sqni'}$ are caps, whenever $C \subseteq \KK$ is a cap) such that $$\sqni' \circ \sqni=\succeq.$$ By \cite[Proposition 3.7]{BaBiVi}, the mappings $f$, $g$ defined by $S \mapsto S^{\sqin}$, $S \mapsto S^{\sqin'}$ are homeomorphisms of $X$ such that $g=f^{-1}$, $\sqsp_{f,m_0,n_0}=\sqni_0=\sqsp$, and $f,g \in G_\KKb$. 
 
(2): Fix $\epsilon>0$, $k \leq m \leq n \in \NN$, $\sqsp \in \KKb^k_m$ such that $\mesh(K^\in_k)<\epsilon$, $\sqsp' \in \KKb^m_n$, and $f \in [\sqsp]$, $f' \in [\sqsp']$. Clearly, $\sqsp \circ \sqsp' \subseteq \sqsp_{f \circ f',k,n}$. By (1), there is $g \in [\sqsp \circ \sqsp']_{G_\KKb}$, and, by Remark \ref{re:ExtendMor}(2), $d(f \circ f',g)<\mesh(K^\in_k)<\epsilon$. Thus $\cl(G_\KKb)$ is closed under products. Moreover, by (1), every element of $G_\KKb$ can be approximated arbitrarily closely
by $g\in G_\KKb$ such that $g^{-1}\in G_\KKb$. Hence $\cl(G_\KKb)$ is also closed under inverses, and therefore is a subgroup.

(3): We define $$\la=\sqni \circ \sqsp_1 \circ \sqsb_0 \circ \sqin,$$ where $\sqsp_0,  \sqsp_1$ are such that $\tla=\sqsp_1 \circ \sqsb_0$, and use Remark \ref{re:FraisseDigraphs}.

(4): Fix $\la=\sqsp \circ \preceq$, where $\sqsp \in \KKb_l^{n_\la}$ for some $l \in \NN$. Suppose first that $n:=n_\la=l$, and $\sqni \in \It$. Using absorption, we fix $n' \in \NN$ and $\sqsp' \in \KKb^l_{n'}$ such that $$\sqsp \circ \sqsp'= \succeq^{n_\la}_{n'}.$$ Then $$\twoheadleftarrow=\succeq^{l}_{n'} \circ \sqsb'$$ is easily verified to be as required. If $n>l$, we can actually assume that $n=l$ by replacing $\sqsp$ with $\sqsp \circ \succeq^l_{n}$; if $n<l$, in the last step we apply (3) to $\tla$ constructed as above and $\succeq^{n}_l$. Finally, if $\sqni \not \in \It$, using absorption, we apply the above construction to $\sqni \circ \sqni' \in \It$, and then apply (3) to $\tla$ and $\sqni'$.  

(5): We note that, by (1), $[\la]_{G_\KKb} \neq \emptyset$, for every $\la \in \KKub$ such that $K_\la=K_n$ for some $n \in \NN$.  Thus, by our assumption and Remark \ref{re:ExtendMor}(2), $\{[\la]: \la \in \KKub \}$ is a $\pi$-base of $G$, i.e., $G_\KKb$ is dense in $G$.
\end{proof}

In the context of \fra \ limits, Theorem \ref{th:Dense} and Theorem \ref{th:ComeagerDir&Amal} can be formulated as follows:

\begin{theorem}
\label{th:ConjugacyForFraisse}
	Let $\KKb \subseteq \GGRb$ be a  \fra \ category, and let $G=\cl(G_\KKb) \leq \Homeo(\Sp \KKb)$.
	\begin{enumerate}
	\item  $G$ has a dense conjugacy class iff $\KKub$ is directed,
	\item $G$ has a comeager conjugacy class iff $\KKub$ is directed, and has weak m-amalgamation.
	\end{enumerate}
\end{theorem}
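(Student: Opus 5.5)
The plan is to reduce Theorem \ref{th:ConjugacyForFraisse} to Theorems \ref{th:Dense} and \ref{th:ComeagerDir&Amal}. Set $X=\Sp \KKb$, let the paracategory $\KKb_X$ be given by the covers $(K^\in_n)$, which we identify with $(K_n,\succeq_n)$, and take $\LLb=\KKb$ viewed as a sub-paracategory of $\KKb_G$, where $G=\cl(G_\KKb)$. We then need to verify the following hypotheses.
\begin{enumerate}
\item $G$ is a closed subgroup of $\Homeo(X)$.
\item $\LLb\subseteq \KKb_G$.
\item $G_\LLb$ is dense in $G$.
\item $\LLb$ is weakly $\It$-absorbing.
\item The paracategory $\LLub$ built from $\LLb$ with respect to $\KKb_G$ coincides with $\KKub$ as defined from the \fra\ category $\KKb$.
\end{enumerate}

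Item (1) is Proposition \ref{pr:Robust}(2) together with the fact that $G$ is a closure. For item (2), Proposition \ref{pr:Robust}(1) gives $[\sqsp]_{G_\KKb}\neq\emptyset$ for every $\sqsp\in\KKb^m_n$, so $[\sqsp]\cap G\neq\emptyset$, i.e.\ $\sqsp\in\mor(\KKb_G)$. Since the objects are the same levels $K_n$, $\KKb$ is a wide sub-paracategory of $\KKb_G$. Item (3) holds because $G_\KKb\subseteq G_\LLb\cap G$ and $G_\KKb$ is dense in its closure $G$ by definition.

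For item (4), $(K_n,\succeq_n)$ is a \fra\ sequence, so $\KKb$ is $\It$-absorbing by Proposition \ref{pr:FraConditions}. In particular it is weakly $\It$-absorbing, taking $\nil=\Id$.

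For item (5), objects of $\LLub$ are triples $(K,\sqcap,\sqsp\circ\preceq)$ with $\sqsp\in\LLb=\KKb$ and $\succeq\in\It$, and morphisms are the weakly arrow-preserving morphisms of $\KKb$. This is literally the definition of $\KKub$ with respect to $\It$, so the two agree. Remark \ref{re:FraisseDigraphs} confirms that this class is the natural one.

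With the hypotheses in place, part (1) follows from Theorem \ref{th:Dense}, which states that $\LLb$ being directed is equivalent to $G$ having a dense conjugacy class. The directedness in question is that of the digraph paracategory, as used in the proof of Theorem \ref{th:Dense}. Part (2) follows from Theorem \ref{th:ComeagerDir&Amal}, whose hypotheses are exactly items (1)–(5).

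The main obstacle I expect is a bookkeeping one. The earlier theorems are phrased for $\LLb\subseteq\KKb_G$, where $\KKb_G$ is defined via the concrete covers of $X$. So one must check that the abstract $\KKb$-morphisms, transported through the identification $K_n\cong K_n^\in$, are genuinely relations of the form $\sqsp_{f,m,n}$ with $f\in G$, and that the $\It$ in both settings is the same inclusion category. Proposition \ref{pr:Robust}(1) provides exactly the needed realization, via $f\in[\sqsp]_{G_\KKb}$. For the digraph level, if needed, items (3) and (4) of Proposition \ref{pr:Robust} guarantee that lifts and pushdowns stay inside $\KKub$. This is what the proofs of Theorems \ref{th:Dense} and \ref{th:ComeagerDir&Amal} implicitly use via Corollary \ref{co:UpwardClosed}.
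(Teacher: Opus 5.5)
Your proposal is correct and is essentially the paper's own argument. The paper gives no separate proof: it presents Theorem \ref{th:ConjugacyForFraisse} as the \fra-limit reformulation of Theorems \ref{th:Dense} and \ref{th:ComeagerDir&Amal}, with Proposition \ref{pr:Robust} supplying exactly the hypotheses you verify ($G$ a closed subgroup, $\KKb\subseteq\KKb_G$, $G_\KKb$ dense in $G$, $\It$-absorption from the \fra\ sequence). Your reading of ``$\LLb$ is directed'' in Theorem \ref{th:Dense} as directedness of $\LLub$ is the intended one.

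The only tacit points, which the paper shares, are these. First, $\Sp\KKb$ must be metrizable for the framework of Sections 3--4 to apply. Second, only morphisms in $\KKb^{K_m}_{K_n}$ with $m\le n$ literally belong to $\KKb_G$. This is harmless, since composing on the right with some $\succeq^n_{n'}$ moves a morphism to a higher domain level without affecting directedness or weak m-amalgamation.
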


\section{Applications}
\subsection{The arc}
In this section, we reprove Hjorth's result that the group of increasing homeomorphisms of the unit interval has a comeager conjugacy class. The original proof (see \cite[Theorem 4.6]{Hj}) explicitely characterizes the comeager class. Here, we show that an appropriate category of digraphs has weak m-amalgamation.

For a graph $(K,\sqcap) \in \GGRb$, and $x \in K$, the \emph{degree} of $x$ is $$|\{x' \in K: x' \sqcap x, \, x' \neq x\}|.$$ A \emph{path} is a finite, acyclic and connected graph all of whose vertices have degree at most $2$.  Every path has exactly two vertices of degree $1$, called \emph{ends}, and exactly two linear orderings that are compatible with the graph relation (i.e., the immediate successor of every element is its neighbor). We associate with every path such an ordering $\leq$, i.e., we regard paths as linearly ordered sets, and we use the notation $[x,y]$, $(x,y)$, $x+1$, etc., without further comments. Alternatively, we can think of paths as intial segments $\{0,\ldots, n\}$ of the natural numbers $(\NN, \leq)$ with the standard ordering $\leq$. Somewhat abusing notation, we always denote the smallest element of a path $K$ by $\rt$, and the largest element by $\max K$.

For $K, L \in \GGRb$, we say that a relation $R \subseteq K \times L$, is \emph{monotone} if the set $C^R$ is connected (as an induced subgraph of $L$), whenever $C \subseteq K$ is connected. If $K$ and $L$ are paths, we say that $R$ is \emph{$\leq$-monotone} if it is $\leq $-preserving or $\geq $-preserving.  

\begin{remark}
Let $K=\{0,1\}$, $L=\{0,1,2, 3\}$, and $\sqsp= K \times L \setminus \{(0,1), (1,0)\}$. Then $\sqsp \in \GGRb^K_L$ is monotone but not $\leq$-monotone. 
\end{remark}

\begin{remark}
\label{re:ConnectedBasic}
Let $K$, $L$ be paths, and let $\sqsp \in \GGRb^K_L$. It is straightforward to observe that, because $\sqsp$ is edge-preserving,
\begin{enumerate}
	\item $C \subseteq L$ is connected iff it is a path, i.e., an interval $[x,x']$,
	\item if $C \subseteq L$ is connected, then $C^\sqsb$ is connected,
	\item for every $l \in L$ there are unique neighbors $\alpha^\sqsb_l, \beta^\sqsb_l \in K$ (possibly $\alpha^\sqsb_l= \beta^\sqsb_l$) such that $$\{l\}^\sqsb=\{\alpha^\sqsb_l, \beta^\sqsb_l\}, \quad \alpha^\sqsb_l \leq \beta^\sqsb_l.$$ We will write $\alpha_l$, $\beta_l$ if $\sqsp$ is clear from the context.
\end{enumerate}
\end{remark}

\begin{proposition}
	\label{pr:CharCMon}
	Let $K, L \in \GGRb$ be paths, and let $\sqsp \in \GGRb^K_L$ be such that $\pi_{\sqsp}(\rt)=\rt$. Then $\sqsp$ is monotone iff the following conditions hold:
	\begin{enumerate}
		\item $\pi_\sqsp(\max L)=\max K$,
		\item  for any $l,l' \in L$, we have that $l \leq l'$ implies $$\alpha_{l} \leq \alpha_{l'}, \quad \beta_{l} \leq \beta_{l'}.$$ 
	\end{enumerate}
\end{proposition}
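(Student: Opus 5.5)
The plan is to first record the local structure given by Remark \ref{re:ConnectedBasic}. Since $\sqsp$ is co-surjective and edge-preserving, for every $l \in L$ the set $\{l\}^\sqsb$ is a non-empty clique in the path $K$, namely $\{\alpha_l,\beta_l\}$ with $\beta_l \in \{\alpha_l,\alpha_l+1\}$. Consequently, for an interval $C=[a,b]\subseteq K$ we get
\[
C^\sqsp=\{l \in L: [\alpha_l,\beta_l]\cap[a,b]\neq\emptyset\}=\{l\in L: \beta_l\geq a\}\cap\{l \in L: \alpha_l\leq b\}.
\]
By Remark \ref{re:ConnectedBasic}(1), monotonicity of $\sqsp$ amounts to every such set being an interval of $L$. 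Both directions of the proposition will be proved through this description.

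\emph{Sufficiency.} Assume (2). Then $l\mapsto\alpha_l$ and $l\mapsto\beta_l$ are non-decreasing. Hence $\{l:\beta_l\geq a\}$ is a final segment of $L$, and $\{l:\alpha_l\leq b\}$ is an initial segment. Their intersection $C^\sqsp$ is therefore an interval. It is non-empty because $\sqsp$ is co-surjective. So $\sqsp$ is monotone; note that this direction uses only (2).

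\emph{Necessity.} Assume $\sqsp$ is monotone. The hypothesis $\pi_\sqsp(\rt)=\rt$ means $\alpha_\rt=\beta_\rt=\rt$. I will prove (1) first, since it supplies the anchor at the top end needed for the $\beta$ half of (2); I expect (1) to be the main obstacle. If $|K|=1$ everything is trivial, so assume $|K|\geq 2$, and fix a co-injective $l_0$ with $\pi_\sqsp(l_0)=\max K$.
\begin{itemize}
\item The set $C^\sqsp$ for $C=[\rt,\max K-1]$ is an interval containing $\rt$, because $\rt\sqsp\rt$. It does not contain $l_0$, so it equals $[\rt,m]$ for some $m<l_0$.
\item Co-surjectivity then forces $\{l\}^\sqsb=\{\max K\}$ for every $l>m$.
\item In particular this holds for $l=\max L\geq l_0>m$, which gives $\pi_\sqsp(\max L)=\max K$, i.e.\ (1).
\end{itemize}

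For (2), take $l\leq l'$.
\begin{itemize}
\item To show $\alpha_l\leq\alpha_{l'}$, apply monotonicity to $C=[\rt,\alpha_{l'}]$. Its image is an interval containing $\rt$ and $l'$, hence it contains $l$. By the displayed description this means $\alpha_l\leq\alpha_{l'}$.
\item Symmetrically, to show $\beta_l\leq\beta_{l'}$, apply monotonicity to $C=[\beta_l,\max K]$. Its image contains $l$, and by (1) it also contains $\max L$; hence it contains $l'$, which gives $\beta_{l'}\geq\beta_l$.
\end{itemize}
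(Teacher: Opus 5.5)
Your proof is correct, but it is organized differently from the paper's.

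\textbf{The paper's route.} The paper proves sufficiency contrapositively: a gap $l'_0<l<l'_1$ in some $C^\sqsp$ is said to yield a violation of (2), which is left as ``straightforward''. For necessity it establishes (2) first, locally for $l'=l+1$, applying monotonicity only to singletons.
\begin{itemize}
\item A drop $\alpha_{l+1}<\alpha_l$ forces, via Remark~\ref{re:ConnectedBasic}(2) applied to $[\rt,l]$, some $l''<l$ with $\alpha_{l+1}\sqsp l''$. Then $\{\alpha_{l+1}\}^\sqsp$ skips $l$.
\item A drop $\beta_{l+1}<\beta_l$ forces $\alpha_{l+1}=\beta_{l+1}=\alpha_l$. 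The already proved $\alpha$-monotonicity then puts any private point of $\beta_l$ beyond $l+1$, so $\{\beta_l\}^\sqsp$ skips $l+1$.
\end{itemize}
Only afterwards is (1) derived, from (2) together with surjectivity of $\pi_\sqsp$.

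\textbf{Your route.} Your formula $C^\sqsp=\{l:\beta_l\geq a\}\cap\{l:\alpha_l\leq b\}$ turns sufficiency into a one-line argument and spells out what the paper leaves to the reader. Your necessity argument is symmetric. The $\alpha$-half is tested on initial intervals, anchored by $\pi_\sqsp(\rt)=\rt$. The $\beta$-half is tested on final intervals, anchored by (1), which you prove first via $[\rt,\max K-1]$. Your argument also never needs Remark~\ref{re:ConnectedBasic}(2).

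\textbf{What the paper's ordering shows.} Condition (1) already follows from (2) plus co-injectivity: if $\pi_\sqsp(l)=\max K$, then $\alpha_{l'}=\max K$ for all $l'\geq l$. Combined with your remark that sufficiency uses only (2), this means that, under the standing hypotheses, monotonicity is equivalent to (2) alone. Your route derives (1) directly from monotonicity, so it does not make this redundancy visible.

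\textbf{A small slip.} In the sufficiency part, $C^\sqsp\neq\emptyset$ for nonempty $C$ follows from surjectivity of $\sqsp$, i.e.\ every $k\in K$ is related to some $l$. In the paper's terminology this comes from co-injectivity, not from co-surjectivity.
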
 

\begin{proof}
First, we show that if $\sqsp$ is not monotone, then (2) does not hold. Let us fix $C=[k,k'] \subseteq K$ such that $D= C^\sqsp$ is not connected. Then there exist $l,l'_0,l'_1 \in L$ such that $l'_0<l<l'_1$, $l'_0,l'_1 \in D$, and $l \not \in D$. It is straightforward to verify that either $l, l'_0$ or $l, l'_1$ violate (2). 


Conversely, suppose that $\sqsp$ is monotone.  First we show (2).  It sufficies to consider the case that $l'=l+1$. Fix $l \in L$, and suppose that $\alpha_{l+1}<\alpha_l$. Clearly, $[\rt,l]^\sqsubset$ is an interval containing $\rt$ (in $K$) and $\alpha_l$, so there is $l'<l$ such that $\alpha_{l+1} \sqsp l'$. But then $l$ would  witness that $\{\alpha_{l+1}\}^\sqsp$ is not connected, which would be a contradiction, so $\alpha_{l} \leq \alpha_{l+1}$.  Suppose that $\beta_{l+1}<\beta_l$. Then we must have that $\beta_l=\alpha_l+1$, and $\alpha_{l+1}=\beta_{l+1}=\alpha_l$. Using that $\alpha_{l} \leq \alpha_{l+1}$, we observe that if $\pi_\sqsp(l')=\beta_l$, then $l'>l+1$. Hence, $l+1$ witnesses that $\{\beta_l\}^\sqsp$ is not connected, a contradiction.

Finally, by (2), if $\pi_\sqsp(l)=\max K$, and $l'>l$, then $\pi_\sqsp(l')=\max K$. Since $\pi_\sqsp$ is surjective, this implies $\pi_\sqsp(\max L)=\max K$, so (1) holds.
\end{proof}

\begin{corollary}
\label{co:CharCMon}
Let $K, L \in \GGRb$ be paths, and let $\sqsp \in \GGRb^K_L$.
\begin{enumerate}
	\item If $\sqsp$ is  a function, then $\sqsp$ is monotone iff it is $\leq$-monotone.
	\item  If $\sqsp$ is monotone, then there exists a $\leq$-monotone function $\sqsp' \in \GGRb^K_L$ such that $\sqsp' \subseteq \sqsp$.
\end{enumerate}
\end{corollary}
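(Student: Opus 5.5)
The plan is to reduce both parts to Proposition \ref{pr:CharCMon} by a normalization step, and then read off the conclusions from the monotonicity of $l \mapsto \alpha_l$ and $l \mapsto \beta_l$.

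\emph{Normalization.} Replacing $K$ by its reversed ordering changes neither monotonicity nor $\leq$-monotonicity, and does not change the underlying graph. So whenever $\pi_\sqsp(\rt)$ is defined, we may assume $\pi_\sqsp(\rt)\in\{\rt,\max K\}$ and then reverse, so that $\pi_\sqsp(\rt)=\rt$. We first argue that $\pi_\sqsp(\rt)$ must be an end of $K$. Suppose $\pi_\sqsp(\rt)=k$ with $\rt<k<\max K$. Then $\{k-1,k+1\}^\sqsp$ cannot be an interval: it contains co-injective points of $k\pm1$ but not $\rt$. This uses monotonicity together with Remark \ref{re:ConnectedBasic}(2).

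If $\pi_\sqsp(\rt)$ is undefined, then $\rt$ relates to two adjacent elements $k<k+1$. In that case I would still reverse $K$ if needed so that the co-injective points for smaller elements of $K$ come first. I would then check that the proof of Proposition \ref{pr:CharCMon}(2) only used $\rt\in[\rt,l]^\sqsb$ in the weak form ``$[\rt,l]^\sqsb$ is an interval containing $\alpha_\rt$''. I expect this to go through, but it is the point I would verify most carefully.

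\emph{Part (1).} Let $\sqsp$ be a function $f:L\to K$. Then $\alpha_l=\beta_l=f(l)=\pi_\sqsp(l)$, and $\pi_\sqsp(\rt)=f(\rt)$ is defined. If $f$ is monotone, then after normalization Proposition \ref{pr:CharCMon}(2) says exactly that $f$ is nondecreasing, i.e., $\leq$-preserving. Before normalization this means $f$ is $\leq$- or $\geq$-preserving. Conversely, if $f$ is nondecreasing, then the preimage $C^\sqsp=f^{-1}[C]$ of an interval $C=[k,k']$ is an interval of $L$, hence connected by Remark \ref{re:ConnectedBasic}(1). The same holds for nonincreasing $f$.

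\emph{Part (2).} After normalization, define
\[
f(l)=\max\{\pi_\sqsp(l'): l'\leq l,\ \pi_\sqsp(l') \text{ defined}\},
\]
falling back to $\alpha_\rt$ if the set is empty. By construction $f$ is nondecreasing. It is surjective because $\pi_\sqsp$ is surjective (the Remark in Section 2) and nondecreasing on its domain, which follows from Proposition \ref{pr:CharCMon}(2).

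The main obstacle is showing $f\subseteq\sqsp$, i.e., $\alpha_l\leq f(l)\leq\beta_l$. Write $f(l)=\pi_\sqsp(l')$ with $l'\leq l$ maximal. The upper bound is immediate: $\pi_\sqsp(l')\in\{\alpha_{l'},\beta_{l'}\}$, so $\pi_\sqsp(l')\leq\beta_{l'}\leq\beta_l$. For the lower bound, suppose $\alpha_l>\pi_\sqsp(l')$. The co-injective point $l''$ for $\pi_\sqsp(l')+1$ lies strictly after $l'$ by monotonicity of $\pi_\sqsp$, and by the choice of $l'$ it also lies after $l$. Then the points $l'$ and $l''$ both lie in $\{\pi_\sqsp(l'),\pi_\sqsp(l')+1\}^\sqsp$, while $l$, strictly between them, does not. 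This contradicts connectedness of that set.

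Once $f\subseteq\sqsp$ is established, the remaining properties follow. The function $f$ is edge-preserving as a subrelation of the edge-preserving $\sqsp$. It is co-bijective because it is a surjective function. Hence $f\in\GGRb^K_L$, and $f$ is $\leq$-monotone by part (1).
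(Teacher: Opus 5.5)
Your overall route is the paper's: reduce to Proposition \ref{pr:CharCMon} and read everything off the monotonicity of $l\mapsto\alpha_l$ and $l\mapsto\beta_l$. However, the step you single out as the main obstacle in Part (2), the lower bound $\alpha_l\le f(l)$, is not proved.

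Your contradiction needs $l\notin\{\pi_\sqsp(l'),\pi_\sqsp(l')+1\}^\sqsp$. That holds only when $\alpha_l\ge\pi_\sqsp(l')+2$. If $\alpha_l=\pi_\sqsp(l')+1$, then $l$ is related to $\pi_\sqsp(l')+1$, and the connectedness argument yields nothing. The fallback value $\alpha_\rt$ is also never checked. This subcase needs edge-preservation rather than connectedness. Put $k=\pi_\sqsp(l')$ and let $l_1\in(l',l]$ be least with $\alpha_{l_1}>k$. Then $l_1-1$ is related to $k$, so every element of $\{l_1\}^\sqsb$ is adjacent to $k$. This forces $\{l_1\}^\sqsb=\{k+1\}$, so $l_1$ lies in the domain of $\pi_\sqsp$, contradicting the maximality of $l'$.

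The detour is avoidable. The paper simply sets $\sqsp'(l)=\beta_l$; your running maximum in fact equals $l\mapsto\alpha_l$, which works equally well. Then $\sqsp'\subseteq\sqsp$ holds by definition, $\sqsp'$ is nondecreasing by Proposition \ref{pr:CharCMon}(2), and $\pi_\sqsp\subseteq\sqsp'$ gives surjectivity. Nothing else needs checking.

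On normalization: the paper's proof silently stays in the setting $\pi_\sqsp(\rt)=\rt$ of Proposition \ref{pr:CharCMon}. Your attention to this goes beyond the paper, but the argument does not work as written.

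\begin{itemize}
\item Monotonicity says nothing about $\{k-1,k+1\}^\sqsp$, because $\{k-1,k+1\}$ is not connected. What works is this: take co-injective points $l_-,l_+$ for $k-1,k+1$ and suppose $l_-<l_+$. The interval $\{k,k+1\}^\sqsp$ then contains the least element of $L$ and $l_+$, but not $l_-$. The case $l_+<l_-$ is symmetric, using $\{k-1,k\}^\sqsp$.
\item In the undefined case, ``an interval containing $\alpha_\rt$'' is not enough. The proof of Proposition \ref{pr:CharCMon}(2) needs $\alpha_{l+1}\in[\rt,l]^\sqsb$ even when $\alpha_{l+1}<\alpha_\rt$. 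The same co-injective-point argument, applied to $k-1$ and $k+2$, shows that $\{\rt\}^\sqsb$ is $\{\rt,\rt+1\}$ or $\{\max K-1,\max K\}$. After reversing $K$, the least element of $K$ lies in $[\rt,l]^\sqsb$ for every $l$, and that proof goes through verbatim.
\end{itemize}

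Part (1) is fine once normalization is in place.
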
 

\begin{proof}
To show (1), we note that $\sqsp$ is a function iff $\alpha_l=\beta_l$, $l \in L$. To show (2), we define a function $\sqsp':L \to K$, by putting ${\sqsp'}(l)=\beta_l$, and observe that it is surjective because $\pi_\sqsp \subseteq \sqsp'$, and $\sqsp$ is co-injective, i.e., $\pi_\sqsp$ is surjective.
\end{proof}
  
Let $\IIb \subseteq \GGRb$ be the category of all paths, with monotone morphisms $\sqsp \in \mor(\GGRb)$ such that $\pi_\sqsb(\rt)=\rt$ as morphisms. By \cite[Proposition 5.13]{BaBiVi2} and \cite[Theorem 5.15]{BaBiVi2}, $\IIb$ is a \fra \ category, and its \fra \ limit is homeomorphic with  the arc, i.e., the interval $[0,1]$. Let $\It \subseteq \IIb$ be given be a fixed \fra \ sequence in $\IIb$, and let us denote by $\Homeo_+([0,1])$ the clopen subgroup of $\Homeo([0,1])$ consisting of all increasing homeomorphisms, i.e., $f \in  \Homeo([0,1])$ such that $f(0)=0$. In the sequel, we denote $\IIb$-digraphs by arrows $\la$ or $\twoheadleftarrow$ if such notation makes arguments more suggestive, otherwise we use letters $A$, $B$, etc. Also, we denote morphisms that are functions by letters $p$, $\pi$, etc. rather than symbols $\sqsp$, $\sqni$, etc. Recall that, for $K \subseteq L$, a function $p:L \to K$ is called a \emph{contraction} if $p \uhr K=\Id_K$.

  \begin{proposition}
  	\label{co:G_IDense}
  	Let $K,L,M \in \IIb$, and let $\sqsp \in \GGRb^K_L$, $\sqsp' \in \GGRb^L_M$, $\succeq \in \GGRb^K_M$ be such that
  	\begin{enumerate}
  		\item $\pi_{\sqsp}(\rt)=\rt$, $\pi_{\sqsp'}(\rt)=\rt$,
  		\item $\sqsp \circ \sqsp' \subseteq \succeq$,
  		\item $\succeq$ is monotone.
  	\end{enumerate}
  	Then there exists a monotone $\sqni \in \GGRb^K_L$ such that $\sqsp \subseteq \sqni$. In particular, $\cl(G_\IIb)=\Homeo_+([0,1])$. 
  \end{proposition}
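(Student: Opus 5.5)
The plan is to obtain $\sqni$ by ``filling in'' $\sqsp$ to convex hulls, and then to verify monotonicity through the characterization in Proposition \ref{pr:CharCMon}. For $k \in K$ let $J_k=\{k\}^{\sqsp} \subseteq L$, and let $\tilde J_k=[\min J_k,\max J_k]$ be its convex hull in the path $L$. Define $\sqni \subseteq K \times L$ by $k \sqni l \iff l \in \tilde J_k$. Then $\sqsp \subseteq \sqni$ is automatic, and $\sqni$ is co-surjective because $\sqsp$ is. What remains is to show that $\sqni$ is edge-preserving and co-injective, and that it is monotone with $\pi_{\sqin}(\rt)=\rt$. Since every $\{k\}^{\sqni}$ is an interval, monotonicity reduces to the requirement that the hulls of adjacent $k,k+1$ overlap or abut. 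This follows once the hulls are ordered and together cover $L$.

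The heart of the argument is an ordering claim, which is where hypotheses (1)--(3) enter. The claim is that for $k+2 \leq k'$ every element of $J_k$ lies strictly below every element of $J_{k'}$. Granting it, the hulls $\tilde J_k$ and $\tilde J_{k'}$ are disjoint for non-adjacent $k,k'$, which gives edge-preservation. Moreover, a $\sqsp$-co-injective $l$ for $k$ can lie only in $\tilde J_k$ and possibly in the hulls of $k\pm 1$. I would then check that the extreme co-injective points survive, so that $\sqni$ remains co-injective.

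To prove the claim I will transport the question to $M$. Since $\sqsp'$ is co-surjective, each $l$ has some $m$ with $l \sqsp' m$. By (2), $\{m\}^{\sqsb'\circ\sqsb} \subseteq \{\alpha^{\preceq}_m,\beta^{\preceq}_m\}$, where $\preceq$ denotes the inverse of the given $\succeq$, and by (3) and Proposition \ref{pr:CharCMon} the maps $m\mapsto \alpha^{\preceq}_m$ and $m \mapsto \beta^{\preceq}_m$ are non-decreasing. So the sets $\{k\}^{\succeq}$ are intervals of $M$, ordered increasingly in $k$, and sets for non-adjacent $k$ are disjoint. I then need to pull this order back to $L$. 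For this I will use that $\sqsp'$ is edge-preserving and that $\pi_{\sqsp'}(\rt)=\rt$ and $\pi_{\sqsp}(\rt)=\rt$ fix the orientation. Concretely, I would argue by contradiction. Suppose $l' \in J_{k'}$ and $l \in J_k$ with $l'<l$ and $k' \geq k+2$. Walking along $L$ from $\rt$, whose $\sqsp$-image is $\rt$ and whose $\sqsp'$-preimage contains $\rt_M$, edge-preservation forces the $\sqsp$-labels to move by at most one step between neighbours. This should yield an $m$ whose $\succeq$-image contains two elements of $K$ at distance at least $2$, contradicting the fact that $\succeq$ is a morphism. I expect this step to be the main obstacle. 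The reason is that $\sqsp'$ is not assumed monotone, so the order on $L$ is only indirectly controlled by $M$. If the direct argument fails, I would first replace $\sqsp'$ by a suitable sub-relation, using Corollary \ref{co:CharCMon}(2) applied to $\succeq$, before running it.

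Finally, for the ``in particular'' part I use Proposition \ref{pr:Robust}(5) with $G=\Homeo_+([0,1])$. A morphism $\sqsp \in (\KKb_X)_G$ comes from an increasing $f$, so it satisfies $\pi_{\sqsb}(\rt)=\rt$. Composing with $\sqsp_{f^{-1},m,n}$, as in Proposition \ref{pr:LaxAbs}, gives $\sqsp'$ with $\sqsp\circ\sqsp' \subseteq \succeq^k_n$, which is monotone. The first part then provides a monotone $\sqni \supseteq \sqsp$ in $\IIb$, so $G_\IIb$ is dense in $\Homeo_+([0,1])$. Together with Proposition \ref{pr:Robust}(2) and the inclusion $G_\IIb\subseteq \Homeo_+([0,1])$, this gives $\cl(G_\IIb)=\Homeo_+([0,1])$.
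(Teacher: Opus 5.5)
\paragraph{Co-injectivity of the hull relation.} This step fails, and it cannot be repaired as the statement is written, because nothing in (1)--(3) controls the top ends of the paths. Take
\begin{itemize}
\item $K=\{0,1\}$, $L=\{0,1,2\}$, $M=\{0,1,2,3\}$,
\item $\sqsp=\{(0,0),(1,1),(0,2)\}$,
\item $\sqsp'=\{(0,0),(1,1),(2,2),(1,3)\}$,
\item $\succeq=\{(0,0),(0,1),(0,2),(1,1),(1,2),(1,3)\}$.
\end{itemize}
All three relations are co-bijective and edge-preserving, (1) holds, $\succeq$ is monotone, and $\sqsp\circ\sqsp'=\{(0,0),(0,2),(1,1),(1,3)\}\subseteq\succeq$. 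Here $\tilde J_0=L$ swallows $1\in L$, which is the only co-injective point of $1\in K$. In fact every monotone $\sqni\supseteq\sqsp$ has $\{0\}^{\sqni}=L$ and so is not co-injective. So the required $\sqni$ does not exist at all. The paper's own argument breaks at the same place: the largest $l''$ with $\alpha_{l''}=\pi_{\sqsp}(l)-1$ can be $\max L$.

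What is missing is a top-end hypothesis such as $\pi_{\sqsp}(\max L)=\max K$. In the application it holds because $f(1)=1$, just as $f(0)=0$ gives (1), and your last paragraph should say so. Under that hypothesis your construction does work:
\begin{itemize}
\item Consecutive labels $\{l\}^{\sqsb}$ have a clique as union and start at $\{\rt\}$, so $k$ is reached no later than $k+1$.
\item Hence for $k<\max K$ the point $l^*=\min J_{k+1}-1$ has $\{l^*\}^{\sqsb}=\{k\}$ and lies below $\tilde J_{k+1}$.
\item It also lies above $\tilde J_{k-1}$, because your ordering claim together with edge-preservation of $\sqsp$ gives $\min J_{k+1}\ge \max J_{k-1}+2$.
\item For $k=\max K$ take $\max L$.
\end{itemize}
The same continuity of the label walk is what makes adjacent hulls overlap or abut in your monotonicity step.

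\paragraph{The ordering claim.} Your ordering claim is true; it sharpens the paper's first claim $\alpha^{\sqsb}_l-1\le\alpha^{\sqsb}_{l'}$. However, the route you sketch cannot reach it. No single $m$ has two points at distance $\ge 2$ in its $\succeq$-image, since $\succeq$ is edge-preserving and $m\sqcap m$. So the contradiction has to come from comparing two points of $M$ through the monotonicity of $\succeq$.

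The missing step is a first-passage argument in $M$. Suppose $l<l'$ and $l'\sqsp' m'$. Then $[\rt,m']^{\sqsb'}$ is connected by Remark~\ref{re:ConnectedBasic}(2) and contains $\rt$ and $l'$, hence also $l$. So $l\sqsp' m_0$ for some $m_0\le m'$. By (2) and Proposition~\ref{pr:CharCMon}, $\beta^{\sqsb}_l-1\le\alpha^{\preceq}_{m_0}\le\alpha^{\preceq}_{m'}\le\alpha^{\sqsb}_{l'}$, which is exactly your claim.

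Proposition~\ref{pr:CharCMon} does apply to $\succeq$, although you did not check its hypothesis. If $\{\rt\}^{\preceq}$ were $\{\rt,\rt+1\}$, then $\{\rt\}^{\succeq}$ and $\{\rt+1\}^{\succeq}$ would be intervals of $M$ both containing $\rt$. They would then be nested, contradicting co-injectivity of $\succeq$. By (1) and (2) the resulting singleton is $\{\rt\}$. Your fallback via Corollary~\ref{co:CharCMon}(2) applied to $\succeq$ does not act on $\sqsp'$ and is not needed.
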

  
  \begin{proof}
  	First, we claim that for any $l,l' \in L$, if $l<l'$, then $\alpha^\sqsb_{l}-1 \leq \alpha^\sqsb_{l'}$. Fix $m,m' \in M$ such that $l  \sqsp' m$, $l'  \sqsp' m'$. Observe that if $m \leq m'$, then the claim directly follows from (2) and (3). Suppose that $m'<m$. But then $[\rt,l'] \subseteq [\rt,m']^{\sqsb'}$. In particular,  $l \in [\rt,m']^{\sqsb'}$, i.e., there is $m_0 \leq m'$ such that $l \sqsp' m_0$. The claim follows from the above observation applied to $m_0 \leq m'$.
  	
  	Now we observe that if $l \in L$ is $\sqsp$-co-injective, and $\alpha_{l+1}= \pi_\sqsp(l)-1$, then there exists $l'>l$ that is $\sqsp$-co-injective, and $\pi_\sqsp(l)=\pi_\sqsp(l')$. 
  	Indeed, let $l'' \in L$ be the largest such that $\alpha_{l''}= \pi_\sqsp(l)-1$. But then, because $\sqsp$ is edge-preserving, $\alpha^\sqsb_{l''+1}=\beta^\sqsb_{l''+1}=\pi_\sqsp(l)$, i.e.,  $l'=l''+1$ is as required.
  	
  	In particular, for every such $l \in L$, we can extend $\sqsp$ to $\sqni' \subseteq K \times L$ by putting $$\alpha^\sqsb_{l+1} \sqni' l, \quad \pi_\sqsp(l) \sqni' l+1,$$ i.e., $\alpha^{\sqni'}_{l}=\alpha^{\sqni'}_{l+1}$, $\beta^{\sqni'}_l=\beta^{\sqni'}_{l+1}$. By the above observation, $\sqni'$ is still co-injective. 
  	
  	If now $\beta^{\sqsb}_{l+1}<\beta^{\sqsb}_l$, then $l$ is not $\sqsp$-co-injective, and we extend $\sqsp$ by putting
  	\[
  	\beta^{\sqsb}_l\sqni' l+1.
  	\]
  	Repeating this procedure eliminates all remaining decreases of
  	$\beta$. Thus we eventually obtain a co-injective
  	$\sqni\supseteq\sqsp$ such that both
  	$(\alpha^{\sqni}_l)_{l\in L}$ and
  	$(\beta^{\sqni}_l)_{l\in L}$ are nondecreasing. Hence, by
  	Proposition~\ref{pr:CharCMon}, $\sqni$ is monotone.
  	
  	The last statement follows from the following observations. Clearly, $G_\IIb$ is a subset of $\Homeo_+([0,1])$.  Fix  $f \in \Homeo_+([0,1])$. First, observe that for any $m,n \in \NN$ such that $\sqsp=\sqsp_{f,m,n} \in \KKb_{[0,1]}$, the fact that $f(0)=0$ implies that $\pi_{\sqsp}(\rt)=\rt$. By Proposition \ref{pr:LaxAbs}, there exist $\sqsp' \in  \KKb_{[0,1]}$, and $\preceq \in \It$ such that $\sqsp \circ \sqsp' \subseteq \succeq$, i.e., (2) holds. By the above observation, (1) also holds. As $\preceq$ is monotone, i.e., (3) holds, and $f$ was arbitrary, we can apply Proposition \ref{pr:Robust}(5).
  \end{proof}



\begin{proposition}
\label{pr:CharI}
A relation $R \subseteq K \times K$, $K \in \IIb$, is an $\IIb$-digraph of the form $R=\sqsp_0 \circ \sqsb_1$, where $\sqsp_0, \sqsp_1 \in \mor(\IIb)$ are functions  iff it surjective, co-surjective, $\leq$-monotone, and reflexive on ends.
\end{proposition}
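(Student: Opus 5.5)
The plan is to identify $R=\sqsp_0\circ\sqsb_1$, with $\sqsp_0,\sqsp_1$ functions $L\to K$, with the set of pairs $\{(\sqsp_0(l),\sqsp_1(l)): l\in L\}$. Indeed, $a\sqsp_0 b\sqsb_1 c$ means exactly $a=\sqsp_0(b)$ and $c=\sqsp_1(b)$. So $R$ is the image of the path $L$ under $l\mapsto(\sqsp_0(l),\sqsp_1(l))$, and both directions become statements about this lattice path in $K\times K$. I read ``$\leq$-monotone'' for $R\subseteq K\times K$ as saying that any two pairs of $R$ are comparable in the coordinatewise order, so that $R$ is a chain in $K\times K$.

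\emph{($\Rightarrow$).} By Corollary \ref{co:CharCMon}(1), a morphism of $\IIb$ that is a function is $\leq$-monotone. Since $\pi_\sqsb(\rt)=\rt$, Proposition \ref{pr:CharCMon}(1) also gives $\max L\mapsto\max K$, so each $\sqsp_i$ is a non-decreasing surjection with $\rt\mapsto\rt$ and $\max L\mapsto\max K$. Then:
\begin{itemize}
\item the pairs $(\sqsp_0(l),\sqsp_1(l))$ increase coordinatewise in $l$, so $R$ is a chain;
\item surjectivity of $\sqsp_0$ gives surjectivity of $R$, and surjectivity of $\sqsp_1$ gives co-surjectivity;
\item $l=\rt$ and $l=\max L$ give $(\rt,\rt)$ and $(\max K,\max K)$ in $R$, i.e.\ reflexivity on ends.
\end{itemize}
Edge-preservation of the $\sqsp_i$ further shows that consecutive points of the image differ by at most one in each coordinate. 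This extra property is what the converse will have to recover.

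\emph{($\Leftarrow$).} Enumerate the chain $R$ increasingly as $r_0<r_1<\dots<r_k$ in the product order. Reflexivity on ends gives $r_0=(\rt,\rt)$ and $r_k=(\max K,\max K)$. The key step is to show that consecutive $r_i,r_{i+1}$ differ by at most $1$ in each coordinate.
\begin{itemize}
\item Suppose the first coordinate jumps from $a$ to $a'\geq a+2$.
\item By surjectivity, some pair $(a+1,c)$ lies in $R$.
\item Comparability with $r_i$ and $r_{i+1}$ forces this pair strictly between them, contradicting that they are consecutive.
\item The second coordinate is handled symmetrically, using co-surjectivity.
\end{itemize}

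Now take the path $L=\{0,\dots,k\}$ and define $\sqsp_0(i)$ and $\sqsp_1(i)$ to be the first and second coordinates of $r_i$. Each $\sqsp_j$ is then non-decreasing and sends $\rt$ to $\rt$. It is surjective, because it runs from $\rt$ to $\max K$ in steps of size at most one. It is edge-preserving for the same reason, and it is co-bijective because it is a surjective function. By Corollary \ref{co:CharCMon}(1), each $\sqsp_j$ is monotone, hence a morphism of $\IIb$, and by construction $R=\sqsp_0\circ\sqsb_1$.

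The main obstacle is the step-size argument, together with fixing the precise meaning of $\leq$-monotonicity for a relation on $K$ (as a chain condition) so that it matches the forward direction. A secondary point is the requirement that $K_R\in\It$ in the definition of $\IIub$. Following Remark \ref{re:FraisseDigraphs}, I would treat that as given by the hypothesis $K\in\IIb$ viewed as a level of the fixed Fra\"iss\'e sequence. If necessary, I would replace $L$ by an $\It$-lift using absorption; composing with a surjective monotone function $L'\to L$ does not change the image set.
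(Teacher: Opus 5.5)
Your proof is correct and is essentially the paper's argument. The paper also takes the parametrizing path to be $R$ itself, ordered lexicographically (which for a chain agrees with your coordinatewise enumeration), and uses the two coordinate projections as $\sqsp_0,\sqsp_1$. Your step-size argument and your remark about passing to an $\It$-lift make explicit two points the paper leaves implicit: edge-preservation of the projections and the requirement $K_R\in\It$.
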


\begin{proof}
The implication from left to right is immediate. To see the converse, fix $R \subseteq K \times K$, $K \in \IIb$, that is surjective, co-surjective, $\leq$-monotone, and reflexive on ends. We order $R$ lexicographically, and let $\pi_0:R \to K$, $\pi_1:R \to K$ be projections on the first, and on the second coordinate, respectively. By the assumptions, $\pi_0$ and $\pi_1$ are surjective $\leq$-monotone functions preserving the root, and hence belong to $\mor(\IIb)$. Since
\[
R=\pi_0\circ\pi_1^{-1},
\]
Remark~\ref{re:FraisseDigraphs} shows that $R$ is an $\IIb$-digraph.
\end{proof}

We define  $\JJub \subseteq \IIub$ as the full subcategory of all $\leq$-monotone $\IIb$-digraphs. 
 
\begin{proposition}
\label{pr:IisDirected}
$\JJub$ is directed, and for every $A \in \IIub$ there is $B \in \JJub$  such that  $B \subseteq A$. In particular, $\JJub$ is co-initial in $\IIub$.
\end{proposition}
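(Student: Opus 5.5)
The plan is to prove the second assertion first, since it reduces everything to $\leq$-monotone digraphs, and then to prove directedness by an explicit ``concatenation'' construction.

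\emph{Co-initiality.} Fix $A\in\IIub$ on $K=K_A$. By Remark~\ref{re:FraisseDigraphs}, $K\in\It$ and $A=\sqsp_0\circ\sqsb_1$ for some $L\in\It$ and $\sqsp_0,\sqsp_1\in\IIb^K_L$. By Corollary~\ref{co:CharCMon}(2), there are $\leq$-monotone functions $p_i\subseteq\sqsp_i$ with $p_i(l)=\beta^{\sqsb_i}_l$.
\begin{itemize}
\item Each $p_i$ preserves the root. Indeed, $\pi_{\sqsb_i}(\rt)=\rt$ forces $\{\rt\}^{\sqsb_i}=\{\rt\}$, so $\beta_{\rt}=\rt$.
\item Hence $p_i\in\mor(\IIb)$.
\item Put $B=p_0\circ p_1^{-1}\subseteq A$. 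Then $B$ is surjective, co-surjective, $\leq$-monotone, and reflexive on the ends, because the $p_i$ are surjective, monotone, and map ends to ends by Proposition~\ref{pr:CharCMon}(1).
\item By Proposition~\ref{pr:CharI}, $B\in\IIub$, and therefore $B\in\JJub$.
\end{itemize}
Since $B\subseteq A$ live on the same $K$, the identity $\Id_K\in\mor(\It)$ is trivially weakly arrow-preserving from $B$ to $A$. This shows that $\JJub$ is co-initial in $\IIub$.

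\emph{Directedness.} Let $A_0,A_1\in\JJub$ on $K_0,K_1$. Let $K=K_0\oplus K_1$ be the concatenated path, with $\max K_0$ adjacent to the root of $K_1$.
\begin{itemize}
\item On $K$ define $C=A_0\cup A_1$.
\item $C$ is $\leq$-monotone, surjective, co-surjective and reflexive on the ends of $K$. Here one uses that each $A_i$ is reflexive on its own ends, so $C$ is not broken at the junction.
\item Let $q_0:K\to K_0$ be the identity on $K_0$ and send $K_1$ to $\max K_0$.
\item Let $q_1:K\to K_1$ send $K_0$ to $\rt$ and be the identity on $K_1$.
\end{itemize}
Both $q_i$ are root-preserving $\leq$-monotone surjections, hence lie in $\mor(\IIb)$. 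Each $q_i$ is weakly arrow-preserving from $C$ to $A_i$:
\begin{itemize}
\item an arrow of $C$ inside the retained block goes to the same arrow of $A_i$;
\item an arrow inside the collapsed block goes to the loop at an end of $K_i$, which lies in $A_i$ by reflexivity on ends.
\end{itemize}

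The main obstacle is that objects of $\IIub$ must live on levels of the fixed \fra\ sequence, while $K$ in general does not. I would handle this as follows.
\begin{enumerate}
\item Using co-initiality of $\{K_n\}$ and directedness of $\IIb$, choose $n$ and a morphism $\sqni\in\IIb^K_{K_n}$.
\item Replace $C$ by a digraph realized through $\IIb$-morphisms, as in the argument of Proposition~\ref{pr:Robust}(3)/(4). By Proposition~\ref{pr:CharI} with Remark~\ref{re:FraisseDigraphs}, $C=\pi_0\circ\pi_1^{-1}$ for the projections $\pi_i$ from $C$. Then Proposition~\ref{pr:Robust}(4) applied to $\sqni$ yields $\tla\in\IIub$ on $K_n$ with $\sqni\in\IIub^{C}_{\tla}$.
\item Apply the co-initiality step to $\tla$ to get $B\in\JJub$ with $B\subseteq\tla$.
\end{enumerate}
The morphisms $q_i\circ\sqni\circ\Id$ from $B$ to $A_i$ are then weakly arrow-preserving, since compositions of weakly arrow-preserving relations are weakly arrow-preserving. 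They lie in $\IIb$, because $\IIb$ is a category. Hence $B$ witnesses directedness. The point to check carefully is that Proposition~\ref{pr:Robust}(4) really applies to $C$ even though $K$ is not a level, possibly after first pulling $C$ back along a monotone morphism into a level.
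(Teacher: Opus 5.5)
Your proposal is correct and follows the paper's proof. Co-initiality is the same argument: restrict to the functional sub-morphisms $l\mapsto\beta_l$ of Corollary~\ref{co:CharCMon}(2) and apply Proposition~\ref{pr:CharI}. Directedness uses the same concatenation $C=A_0\sqcup A_1$ with the two end-collapsing contractions. The paper simply asserts $C\in\JJub$ and ignores that the concatenated path need not be a level. You are right that Proposition~\ref{pr:Robust}(4) does not literally apply to $C$, but the gap closes directly: take a monotone surjection $q\colon K_n\to K$ in $\IIb$ and pull $C$ back to a staircase chain from $(\rt,\rt)$ to $(\max K_n,\max K_n)$ inside $\bigcup_{(x,y)\in C}q^{-1}(x)\times q^{-1}(y)$; this chain lies in $\JJub$ by Proposition~\ref{pr:CharI} and makes $q$ weakly arrow-preserving.
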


\begin{proof}
For $A, B \in \JJub$, let $K=K_A \sqcup K_B$ be ordered by the extension of the linear orders on $K_A$, $K_B$ to a linear order so that  $x\leq x'$ for every $x \in K_A$, $x' \in K_B$. Put $C=A \sqcup B$. Then $C \in \JJub$, and the contractions $p_A:K \to K_A$, $p_B:K \to K_B$ that map all elements in $K_B$,  $K_A$ to an appropriate end, are in $\JJub^A_C$, $\JJub^B_C$, respectively.  Thus, $\JJub$ is directed.

Now fix $A \in \IIub$, and $\sqsp \in \IIb$ such that $A=\sqsp \circ \preceq$. By Corollary \ref{co:CharCMon}(2), there are functions $\sqsp', \preceq' \in \mor(\IIb)$ such that $\sqsp' \subseteq \sqsp$, $\preceq' \subseteq \preceq$. For $B=\sqsp' \circ \preceq'$, Proposition \ref{pr:CharI} gives that $B \in \JJub$, and clearly $B \subseteq A$, i.e., $\Id_K \in \IIub^A_B$, where $K=K_A=K_B$.

\end{proof}

Now we set about proving that $\JJub$ has weak m-amalgamation. Let $K \in \GGRb$, and $\leftarrow \subseteq K \times K$ be a relation. We say that $x \in K$ is a \emph{transshipment} point of $\la$ if there exists a bi-infinite $\la$-walk $(x_n)_{n \in \ZZ}$ such that $x_n=x$ iff $n=0$. Otherwise, $x$ is a \emph{terminal} point. 
\begin{remark}
If $\la \in \IIub$, and $\la$ is $\leq$-monotone, then $x \in K_\la$ is a transshipment point of $\la$ iff there are $x',x'' \in K$ such that $x' < x < x''$ and either $x' \leftarrow x \leftarrow x''$ or $x' \rightarrow x \rightarrow x''$.
\end{remark}
We say that $\leftarrow$ is \emph{simple} if the only terminal points are the ends. It is \emph{very simple} if, except for the ends, every vertex has exactly one incoming arrow and exactly one outgoing arrow. In particular, if $\la$ is very simple, then it is simple. 
Let $x_0, \ldots, x_n$ be the increasing enumeration of all terminal points of $\leftarrow$, and let $$\leftarrow_i=\leftarrow \upharpoonright [x_i,x_{i+1}] \times [x_i,x_{i+1}],$$  $i<n$. We call $\leftarrow_i$'s \emph{pieces} of $\la$. 
The full subcategory of $\JJub$ consisting of all simple $\leq$-monotone $\IIb$-digraphs is denoted by $\JJub_s$, the full subcategory of very simple $\IIb$-digraphs is denoted by $\JJub_v$, and the full subcategory of $\IIb$-digraphs with very simple pieces  is denoted by $\JJub_{pv}$.

By an \emph{orbit} of $\leftarrow \in \JJub_v$ we mean a maximal directed $\la$-path (equivalently: a directed $\la$-path contaning both ends of $\la_K$). Let us denote by $\oo_\leftarrow \in \NN$ the number of orbits of $\leftarrow$, and by $\sss_\leftarrow \in \NN$ the size of the largest orbit.  For a non-end $x \in K_\la$, let $\OO(x)$ be the unique orbit that $x$ belongs to. Let $\OO_1(x)$ be the unique $x' \in K_\la$ such that $x' \la x$, and let $\OO_{n+1}(x)=\OO_1(\OO_n(x))$, if $\OO_n(x)$ is a non-end, and  $\OO_{n+1}(x)=\OO_n(x)$, otherwise.

\begin{remark}
\label{re:TerminalReflexive}
Note that, for $\la \in \JJub_{pv}$, and $x \in K_\la$, we have that $x$ is a terminal point iff $x$ is reflexive.
\end{remark}

\begin{remark}
\label{re:RGBArc}
Let $\la,\tla \in \JJub_v$ be such that $|K_\la|>1$, and let $\sqsp \in \JJub^\la_\tla$. It is straightforward to observe that we can write $K_\tla$ as $3$ intervals $[x_0,x_1] \cup (x_1, x_2) \cup [x_2,x_3]$, where $x_i \leq x_{i+1}$, and
\begin{enumerate}
	\item $\rt \sqsp x$, for $x \in [x_0,x_1]$,
	 \item $\rt, \max K_\la  \not \sqsp x$, for $x \in (x_1,x_2)$,
	 \item $\max K_\la \sqsp x$, for $x \in [x_2,x_3]$,
	 \item $|\OO(x) \cap (x_1,x_2)|=|K_\la|-2$, for every $x \in K_\tla$.
\end{enumerate}  
\end{remark}

In the next two lemmas, we present two methods of lifting a very simple $\IIb$-digraph. The first one splits an orbit into two orbits by splitting each element of the orbit except for the ends, in this way, increasing the number of orbits without increasing their sizes. The second one splits an end into a number of elements, increasing the sizes of orbits without increasing their number. 

\begin{lemma}
\label{le:SplitOrbit}
Let $\la \JJub_v$, and let $x_0 \la \ldots \la x_n$ be an orbit of $\la$. We can split it into two orbits as follows. We construct $\tla \in \JJub_v$ with $K_\la \subseteq K_\tla$, and a  contraction $p \in \JJub^\la_\tla$, by splitting each of $x_1, \ldots, x_{n-1}$ into two points $x_i, y_i$, so that
\begin{enumerate}
\item $K_\tla=K_\la \sqcup \{y_i\}$,
\item $y_i=x_i+1$, $0<i<n$, or $x_i=y_i+1$, $0< i<n$,
\item $x_0 \la y_1 \la \ldots \la y_{n-1} \la  x_n$ is an orbit of $\tla$,
\item $p(y_i)=x_i$, $0<i <n$.
\end{enumerate}
In particular, $\oo_\tla=\oo_\la+1$.
\end{lemma}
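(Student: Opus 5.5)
The construction is explicit, so the proof consists of defining $\tla$ and $p$ and then checking the required properties.

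\emph{Construction.} Since $\la$ is $\leq$-monotone and very simple, the orbit $x_0 \la x_1 \la \ldots \la x_n$ runs from one end to the other. Its interior points $x_1, \ldots, x_{n-1}$ are therefore strictly monotone in $K_\la$, say increasing; the decreasing case is symmetric. We insert a new point $y_i$ immediately after each $x_i$, so $y_i=x_i+1$, and obtain a path $K_\tla$. Alternatively we insert each $y_i$ immediately before $x_i$. We use one convention uniformly, and choose it so that monotonicity is preserved; this choice is checked below. The arrows are defined as follows:
\begin{itemize}
\item every arrow of $\la$ not involving the orbit is kept;
\item the orbit $x_0 \la x_1 \la \ldots \la x_n$ is kept;
\item a new orbit $x_0 \la y_1 \la \ldots \la y_{n-1} \la x_n$ is added.
\end{itemize}
The contraction $p$ fixes $K_\la$ and sends $y_i \mapsto x_i$.

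\emph{Routine checks.} $p$ is a surjective, $\leq$-monotone, root-preserving function, and it is edge-preserving because $y_i$ is adjacent only to $x_i$ and to the old neighbour of $x_i$. Hence $p \in \mor(\IIb)$. It is weakly arrow-preserving, since every arrow of $\tla$ is mapped by $p$ onto an arrow of $\la$. In $\tla$, every non-end vertex still has exactly one incoming and one outgoing arrow, so $\tla$ is very simple. The orbit count rises by exactly one, giving $\oo_\tla=\oo_\la+1$.

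\emph{Main obstacle.} The key step is showing that $\tla$ is $\leq$-monotone and belongs to $\IIub$. By Proposition \ref{pr:CharI} it suffices to verify that $\tla$ is surjective, co-surjective, $\leq$-monotone, and reflexive on ends. Surjectivity, co-surjectivity, and reflexivity on ends are immediate. For monotonicity we use that, since $\la$ is $\leq$-monotone, the orbits of $\la$ do not cross: if $a \la b$, $a' \la b'$ and $a<a'$, then $b\le b'$. The new orbit runs parallel to the old one and occupies adjacent positions, $x_i<y_i<x_i+1_{\text{old}}$. For any other arrow $a \la b$ with $a<x_i$ we have $b\le x_{i+1}$. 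It must be strict, $b<x_{i+1}$, unless $b=x_{i+1}$ at an end, since $\la$ is very simple. In either case $b<y_{i+1}$. The argument for $a>x_i$ is symmetric. The two orbits themselves remain non-crossing: $x_i<y_i$ and $x_{i+1}<y_{i+1}$ (similarly when consecutive points decrease). The only subtle case is the shared ends $x_0$ and $x_n$, where both orbits start and finish; these cause no crossing because the orbits agree there. With the convention "$y_i$ on the same side as the successor direction" fixed uniformly, this settles $\leq$-monotonicity and hence $\tla\in\JJub_v$.
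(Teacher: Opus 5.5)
Your proposal is correct and follows the paper's approach. The paper's proof only remarks that conditions (1)--(4) determine $\tla$ and $p$, and your construction is exactly that one: keep all arrows of $\la$, add the parallel orbit $x_0 \la y_1 \la \cdots \la y_{n-1} \la x_n$, and let $p$ collapse $y_i$ to $x_i$. Your checks of very simplicity, of weak arrow-preservation of $p$, and of non-crossing via Proposition~\ref{pr:CharI} supply the details the paper leaves out.

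Two minor remarks. First, either uniform placement of the $y_i$ (all immediately above, or all immediately below, the $x_i$) preserves $\leq$-monotonicity; only mixing the two conventions creates a crossing, so no special choice is needed. Second, like the paper, you tacitly need $n\geq 2$: for an orbit $x_0 \la x_1$ nothing is split, and then $\oo_\tla=\oo_\la$.
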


\begin{proof}
Conditions (1)-(4) unequivocally describe the construction.
\end{proof}

\begin{lemma}
	\label{le:SameLengthOfOrbits}
	Let $\leftarrow \in \JJub_v$, and let $t \geq \sss_\la$. We can split an end of $\la$ as follows.  We construct $\twoheadleftarrow \in \JJub_v$ with $K_\la \subseteq K_\tla$, and a contraction $p \in \JJub_\twoheadleftarrow^\leftarrow$ so that  
\begin{enumerate}
	\item $\oo_\twoheadleftarrow=\oo_\leftarrow$,
	\item all the orbits of $\twoheadleftarrow$ have size $t$,
	\item $p(y)=\rt$ for every $y \in K_\tla \setminus K_\la$ or $p(y)=\max K_\la$ for every $y \in K_\tla \setminus K_\la$,  
\end{enumerate}
\end{lemma}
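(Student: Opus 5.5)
The construction is explicit, in the same spirit as Lemma \ref{le:SplitOrbit}: we insert new points at one end, say at $\rt$ (the case of $\max K_\la$ is symmetric), and reroute each orbit through enough of these new points to bring its size up to $t$. Write the orbits of $\la$ as $O_1,\ldots,O_k$, with $k=\oo_\la$ and sizes $s_j\leq \sss_\la\leq t$. Every orbit contains both ends. By Remark \ref{re:RGBArc}(4) applied to $\Id$, or directly from very simplicity together with $\leq$-monotonicity, near $\rt$ the arrows have a definite direction. Assume all orbits start at $\rt$, i.e.\ $\rt \la x$ for the first non-end point of each orbit; otherwise reverse the arrows.

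\textbf{Step 1: the new vertex set.} Put $d_j=t-s_j$ and $d=\max_j d_j$. Define $K_\tla=K_\la\sqcup\{z_1,\ldots,z_N\}$, where the new points are placed below $\rt$ and are linearly ordered. The new minimum becomes the end of $K_\tla$, and $\rt$ becomes an interior point. We will choose the $z$'s so that each orbit $O_j$ is prolonged by a chain of $d_j$ new points from the new end to the old first point of $O_j$. The contraction $p$ sends all $z_i$, as well as the old $\rt$, to $\rt$. Then $p$ is a surjective monotone function with $p(\rt_{K_\tla})=\rt$, so $p\in\mor(\IIb)$, and condition (3) holds.

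\textbf{Step 2: defining the arrows.} We need a very simple, $\leq$-monotone relation in which each orbit acquires exactly $d_j$ new interior points. The old point $\rt$ must now itself lie on exactly one orbit as an interior point, so it has to count toward that orbit's length. The natural device is the one used in Lemma \ref{le:SplitOrbit}: arrange the new points in $d$ consecutive blocks, where block $i$ (counted from the old $\rt$ downward) contains one point for each orbit with $d_j\geq i$. The points inside each block are ordered in the same way as the first non-end points of the orbits. Arrows then go from the new end into block $d$, then successively down to block $1$, and finally to the first points of the orbits. To keep the relation $\leq$-monotone, orbits with larger $d_j$ should occupy the outer positions of each block. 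Alternatively, one can first permute the order in which orbits are extended, or first apply Lemma \ref{le:SplitOrbit} style splitting so that all blocks are full, and lengthen the shorter orbits by inserting their extra points at suitable positions. Old $\rt$ is handled by placing it as the point for one orbit inside block $1$. After this, every non-end point has exactly one incoming and one outgoing arrow, the ends are reflexive, and the orbit count is unchanged, so (1) and (2) hold. Membership $\tla\in\JJub$ follows from Proposition \ref{pr:CharI}, since the relation is surjective, co-surjective, $\leq$-monotone and reflexive on ends.

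\textbf{Step 3: $p$ is a morphism.} We check that $p$ is weakly arrow-preserving. Arrows among the new points and the old $\rt$ map to $\rt\la\rt$. An arrow from a new point into the first point $x$ of an orbit maps to $\rt\la x$, which is an arrow of $\la$. All other arrows are unchanged.

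The main obstacle is Step 2: achieving $\leq$-monotonicity when orbits with different deficits $d_j$ are interleaved. I would first try to handle the blocks by a non-crossing arrangement, in which the arrows between consecutive blocks form a monotone matching, with shorter orbits skipping blocks only at the outermost positions. If this fails, I would fall back on an inductive argument that adds one point at a time to a single orbit, choosing an extremal orbit at each step.
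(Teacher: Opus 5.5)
\textbf{There is a genuine gap in Step 2.} Step 2 carries the entire content of the lemma, and it is never carried out. You yourself flag $\leq$-monotonicity of the block arrangement as the main obstacle and then only list things to try. For an arbitrary distribution of deficits the block recipe really does fail.

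Here is why. $\tla$ must be a chain in the product order and contain $(e,e)$, where $e$ is the new end. Hence every $w$ with $e\tla w$ must lie below every $y$ with $x\tla y$ for some $x>e$. That is, the first new point of each orbit must lie below every target of an arrow leaving a new interior point. Now suppose two orbits $a,b$ have deficits with either $d_b<d_a-1$, or $d_b=d_a-1$ and $a$ preceding $b$ in the order of first points. Take $w$ to be the first new point of $b$, and $x\tla y$ the arrow leaving the outermost new point $x$ of $a$. Then $e<x$ and $y<w$, which is a crossing.

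Your fallbacks do not repair this. The order inside each block is forced, because the arrows into the old first points must be order-preserving. Splitting as in Lemma \ref{le:SplitOrbit} increases $\oo_\la$, which violates (1).

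\textbf{The missing ingredient is the rigidity of very simple digraphs.} Assume $\rt\la\rt+1$ and enumerate $K_\la$ increasingly as $\rt=y_0<y_1<\dots<y_n$. Then:
\begin{enumerate}
\item The points $y_1,\dots,y_{\oo}$, with $\oo=\oo_\la$, lie on pairwise distinct orbits, so they represent all orbits.
\item Every orbit has size $\sss_\la$ or $\sss_\la-1$.
\item The sizes are non-increasing along $y_1,\dots,y_{\oo}$.
\end{enumerate}
In fact $\la$ is just $y_i\la y_{i+\oo}$, clipped at the ends. Consequently the deficits take at most two consecutive values, and the larger one occurs exactly on the orbits of a final segment $y_{r+1},\dots,y_{\oo}$. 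This is precisely what would make your blocks consistent: full blocks of size $\oo$, plus one partial block next to the new end made up of those final orbits.

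\textbf{How the paper proceeds.} The paper avoids the block bookkeeping by making your ``extremal orbit'' fallback precise:
\begin{itemize}
\item Insert one point $y'_0$ between $\rt$ and $y_1$.
\item Put $\rt\tla y'_0\tla y_{\oo}$ and remove $\rt\la y_{\oo}$.
\item Keep all other arrows, and set $p(y'_0)=\rt$.
\end{itemize}
The result is very simple and monotone, still has $\oo$ orbits, and $p$ is weakly arrow-preserving. The step lengthens by one the orbit of $y_{\oo}$, which by the size fact is a shortest orbit. That orbit now passes through the new first point $y'_0$, so iterating cycles through the orbits. Since $t\geq\sss_\la$, the process stops exactly when all orbits have size $t$.

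You need the size fact both to know which orbit can be lengthened and to see that the iteration ends at uniform size $t$.
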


\begin{proof}
Since the two possible orientations are symmetric, we may assume that $\rt \la  \rt+1$; the case that $\rt \ra  \rt+1$ is analogous. Let $\oo=\oo_\la$, $\sss=\sss_\la$. Let $x_0 \la x_1  \la  \ldots \la x_m$ be the orbit $O_0$ of $\leftarrow$ such that $x_0=\rt$, and $x_1=\rt+1$. We can assume that $m>1$.  As $\leftarrow$ is very simple, for every $0<i<m-1$, and every orbit $O \neq O_0$, there must be a unique $x \in O$ with $x_i < x<x_{i+1}$. Therefore the orbit of $x_1$ has size $\sss$, and every orbit has size either $\sss-1$ or $\sss$. 

Now, let $y_0, \ldots,y_n$ be the $\leq$-increasing enumeration of $K_\la$ (i.e., $y_0=\rt$). We observe that $y_1,\ldots,y_{\oo}$ belong to distinct orbits, and
hence represent all the orbits of $\la$.  Moreover, either the orbit of $y_{\oo}$ has size $\sss-1$ or all the orbits of $\la$ have size $\sss$.  Next, we define a path $K=\{ y_0, y'_0, y_1, y_2 \ldots, y_n \}$, where $y'_0 \not \in K_\la$, and $y_0< y'_0<y_1$. Finally, we define  $\twoheadleftarrow \subseteq K \times K$ by putting
\begin{enumerate}
\item $y_0 \twoheadleftarrow y'_0 \twoheadleftarrow y_{\oo}$,
\item $y_0 \not \twoheadleftarrow y_{\oo}$,
\item  $y \twoheadleftarrow y'$ iff $y \la y'$, for other cases. 
\end{enumerate}
Then $\twoheadleftarrow \in \JJub_v$, and the contraction $p: K \to K_\la$ such that $p(y'_0)=y_0$ is in $\JJub^\la_\tla$. Moreover, the size of the orbit of $y_{\oo}$ in $\twoheadleftarrow$ is $1$ larger than in $\la$.  By repeating this procedure sufficiently many times, we can ensure that all the orbits in $\tla$ have size $t$.
\end{proof}

\begin{lemma}
	\label{le:IvHasAP}
	$\JJub_v$ has weak m-amalgamation.
\end{lemma}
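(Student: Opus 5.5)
The plan is to exploit the rigidity of very simple $\leq$-monotone digraphs. Such a digraph looks like a fixed-point-free increasing map on an interval. I expect it to be determined, up to isomorphism and the choice of orientation, by two numbers: the number of orbits $\oo_\la$ and the sizes of its orbits. Once all orbits have a common size $t$, the numbers $\oo_\la$ and $t$ should suffice. The first step is to prove this rigidity claim. As in the proof of Lemma \ref{le:SameLengthOfOrbits}, very simplicity forces consecutive points of one orbit to be separated by exactly one point of every other orbit. Hence the increasing enumeration of $K_\la$ cycles through the orbits in a fixed order, so $\la$ is determined by $\oo_\la$, $t$ and the orientation of $\rt$.

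Given $A\in\JJub_v$, I will take as amalgamation base a lift $\nil\in\It$ obtained by Lemma \ref{le:SameLengthOfOrbits}, possibly after Lemma \ref{le:SplitOrbit}, so that $B$ has all orbits of one size and $\nil$ is a contraction collapsing the new points to an end. By Corollary \ref{co:UpwardClosed} we may take $\nil\in\mor(\It)$. Now fix $\sqsp_0\in\JJub^B_{C_0}$ and $\sqsp_1\in\JJub^B_{C_1}$ with $C_0,C_1\in\JJub_v$. Applying Lemma \ref{le:SplitOrbit} repeatedly and then Lemma \ref{le:SameLengthOfOrbits}, each $C_i$ is lifted by contractions $p_i$ to some $D_i$ with the same number of orbits $\oo$ and all orbits of the same size $t$, where $\oo,t$ are chosen large and common to both. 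By rigidity there is an isomorphism $\iota\colon D_0\to D_1$, which I will possibly compose with the order reversal if the orientations disagree. I then set $D=D_0$, $\sqsp'_0=p_0$ and $\sqsp'_1=p_1\circ\iota$.

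It remains to check that $\nil\circ\sqsp_0\circ\sqsp'_0\cap\nil\circ\sqsp_1\circ\sqsp'_1$ is co-surjective, that is, every $d\in K_D$ is sent to a common element of $K_A$ by both routes. Remark \ref{re:RGBArc} describes any morphism between very simple digraphs as collapsing two end intervals onto the ends. On the middle interval, each orbit passes through the interior of the target exactly once in order, by (4). So the image of $d$ under either route is determined by two things: which end block $d$ lies in, or else the position of $d$ along its orbit $\OO(d)$ relative to the point where the orbit leaves the initial end block. Thus both routes act on each orbit of $D$ as a shift along the orbit of $A$, and the two shifts may differ. This is why $B$ is chosen with $\nil$ collapsing extra points onto an end: a discrepancy of the shifts inside the padded part of $B$ is absorbed, since both routes then land on the same end of $A$.

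The main obstacle is this alignment step: controlling the offset between the two shifts uniformly over all orbits and all $C_0,C_1$. My first attempt is to exploit the freedom in constructing $D_0,D_1$. Padding an end of $C_0$ (resp.\ $C_1$) with Lemma \ref{le:SameLengthOfOrbits} moves where the middle interval begins along each orbit. Choosing different amounts of padding at $\rt$ and at $\max$ for $D_0$ and $D_1$, while keeping $t$ common, should let the interior portions of the orbits be matched under $\iota$. Any residual misalignment must then be confined to the ends, where $\nil$ identifies it. If a uniform offset is not achievable, I would fall back to weak m-amalgamation in the strict sense. Then it is enough that, for each $d$, the two images in $B$ are within the $\nil$-fibre of a single point of $A$. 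I would choose $B$ with enough padding, namely $\sss_B$ exceeding $2\,\sss_A$, to guarantee this.
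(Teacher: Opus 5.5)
There is a genuine gap: the alignment step, which you correctly flag as the main obstacle, is left open, and neither mechanism you propose for it works. Your rigidity observation itself is correct: up to orientation, a very simple digraph is the shift by $\oo_\la$ on the interior of $K_\la$.

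First, making $\oo$ and $t$ common and composing with the (unique) isomorphism $\iota$ gives no control over which orbit of $B$ a given orbit of $D$ is sent to by the two routes. Padding at the ends only moves where the middle region starts or ends. So it cannot repair a mismatch in how the orbits of $D$ are distributed over the orbits of $B$. Such a mismatch produces discrepancies of up to about $\oo_B$ positions in the interior of $K_B$, which $\nil$ cannot absorb in general.

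The paper instead lifts $C_0,C_1$ to a common $D$ with $|[\rt,y_i]^{\sqsp_0}|=|[\rt,y_i]^{\sqsp_1}|$ for $i\le\oo_B$, where $y_1,\dots,y_{\oo_B}$ are the lowest interior points of $K_B$, one on each orbit. This requires splitting selected orbits, not merely equalizing their number. It synchronizes the two routes at one point $w$ of every orbit of $D$.

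Second, the $\sqsp_i$ are relations that are only weakly arrow-preserving, so a route is not a rigid shift along the orbit. Even after synchronization, the two images may drift apart by one position per arrow as you move along the orbit from $w$, i.e., by up to the orbit size $\sss$ of $B$.

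This drift has to be absorbed in the interior of $A$, not at an end, and your fallback $\sss_B>2\sss_A$ goes the wrong way. Padding via Lemma \ref{le:SameLengthOfOrbits} lengthens orbits, which allows more drift. It also leaves the interior fibres of $\nil$ unchanged: the fibre of a function morphism over an interior point of $K_A$ contains exactly one point from each orbit of $B$ lying over the orbit of that point.

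The missing idea is the paper's two-stage base $\nil=\nil_0\circ\nil_1$:
\begin{enumerate}
\item $\nil_0\in\JJub^A_{B_0}$ is chosen so that any two points of $K_{B_0}$ at distance at most $2$ have a common $\nil_0$-image.
\item $\nil_1$ is a function built with Lemmas \ref{le:SplitOrbit} and \ref{le:SameLengthOfOrbits}. All orbits of $B$ have the fixed size $\sss=\sss_{B_0}$, while, thanks to orbit splitting (which adds orbits without lengthening them), every fibre of $\nil_1$ has more than $\sss$ points.
\end{enumerate}
A drift of at most $\sss$ in $K_B$ then becomes a distance of at most $2$ in $K_{B_0}$, and $\nil_0$ supplies the common point of $K_A$.

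Finally, the order reversal you mention is not a morphism, since it does not fix $\rt$. It is also unnecessary, because lifts inherit the orientation.
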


\begin{proof}
Let $A \in \JJub_v$.  We may assume that $\rt A \rt+1$; the case that $\rt+1 A \rt$ is analogous. We construct a weak m-amalgamation base $B$ whose orbits all have the same length and whose fibres are sufficiently large. First, we fix $B_0 \in \JJub_v$, and $\nil_0 \in \JJub^{A}_{B_0}$ such that for any $y,y' \in K_{B_0}$ with $\rho(y,y') \leq 2$ there is $x \in K_{A}$ such that $x \nil_0 y,y'$. We may also assume that $|K_{B_0}| \geq 4$.  Using Lemma \ref{le:SplitOrbit} and Lemma \ref{le:SameLengthOfOrbits}, we construct $B \in \JJub_v$, and a function $\nil_1 \in \JJub^{B_0}_{B}$ such that all the orbits of $B$ have size $\sss=\sss_{B_0}$, and $|\{y\}^{\nil_1} |>\sss$, for every $y \in K_{B_0}$. In particular, $\oo_{B}>1$. Let $\nil=\nil_0 \circ \nil_1$.

The following simple observation is the key to the proof. Suppose $C \in \JJub_v$, $\sqsp \in \JJb^{B}_{C}$, $z C z'$, and $y, y' \in K_{B}$ are such that $y>\rt$, and $$y \sqsp z, \quad y' \sqsp z'.$$ 
Since every non-terminal point of $B$ has a unique successor, either $y=\max K_ B$, or there is a unique point $y'' \in K_B$ such that $yBy''$. In the latter case, $\rho(y',y'') \leq 1$. Iterating this observation, we get that for any $z \in K_C$, $n \in \NN$, and $y, y' \in K_{B}$ such that $y> \rt$, and $$y \sqsp z, \quad y' \sqsp \OO_n(z),$$ we have that $\rho(y',\OO_n(y)) \leq n$. In particular, since $\OO_\sss(y)=\max K_{B}$, i.e., an orbit of B reaches its terminal point in at most $\sss$ steps, $$\rho(y',\OO_n(y)) \leq \sss.$$

Now, let  ${C_i} \in \JJub_v$, $\sqsp_i \in \JJub^{B}_{C_i}$, $i=0,1$.  Let $y_0=\rt, \ldots, y_n=\max K_{B}$ be the increasing enumeration of $K_{B}$. We note that every orbit of $B$ meets  $\{ y_1, \ldots, y_{\oo_{B}} \}$ exactly once. By Lemmas \ref{le:SplitOrbit} and \ref{le:SameLengthOfOrbits}, we can assume that $C_0=C_1=D$, and $$|[\rt,y_i]^{\sqsp_0}|=|[\rt,y_i]^{\sqsp_1}|,$$ for $i \leq \oo_{B}$. 
%
%
The equalities above imply that, for every $z \in K_D$, there is $w \in \OO(z)$, and $0 \leq i \leq \oo_{B}$, such that $$y_i \sqsp_0 w, \quad y_i \sqsp_1 w.$$  If $z<w$, then, clearly, $$\rt \sqsp_0 z, \quad \rt \sqsp_1 z,$$
$$\rt \nil \circ \sqsp_0 z, \quad \rt \nil \circ \sqsp_1 z.$$
Otherwise, by the above claim applied to $w$ and the unique $n \in \NN$ such that $z=\OO_n(w)$, and because $|\{y\}^{\nil_1} |>\sss$, for $y \in K_{B_0}$, 
there are $y_0,y_1 \in K_{B_0}$ such that $\rho(y_0,y_1) \leq 2$, and $$y_0 \nil_1 \circ \sqsp_0 z, \quad y_1 \nil_1 \circ \sqsp_1 z.$$
Finally, by the choice of $\nil_0$, there is $x \in K_A$ such that $$x \nil \circ \sqsp_0 z, \quad x \nil \circ \sqsp_1 z.$$ 
Since $z \in K_D$ was arbitrary, the intersection  $$\nil \circ \sqsp_0 \cap \nil \circ \sqsp_1$$ is co-surjective. Hence $D$ is the desired m-amalgam.
\end{proof}

\begin{lemma}
	\label{le:TerminalAreStable}
	Let $\leftarrow \in \JJub_{pv}$ be such that each of its very simple pieces has more than one orbit. Let ${\twoheadleftarrow} \in \JJub$, and $\sqsp \in \underline{\mathbf{I}}_\twoheadleftarrow^\leftarrow$. If $y \in K_\tla$ is a terminal point of $\tla$, then there is a terminal point $x \in K_\la$ of $\la$ such that $x \sqsp y$.    
\end{lemma}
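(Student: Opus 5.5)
We argue by contraposition: we assume that $y \in K_\tla$ is a point such that every $x \in \{y\}^{\sqsb}$ is a transshipment point of $\la$, and we show that $y$ is a transshipment point of $\tla$. By Remark~\ref{re:ConnectedBasic}(3), $\{y\}^\sqsb=\{\alpha_y,\beta_y\}$ with $\alpha_y,\beta_y$ neighbours. Hence either a single non-terminal point $x$ covers $y$, or $y$ is covered by two adjacent non-terminal points. In the second case both points lie in the interior of the same piece $\la_i$ of $\la$. Since $\la_i$ is very simple, both points have exactly one incoming and one outgoing arrow. By Remark~\ref{re:TerminalReflexive}, they are irreflexive.

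The main step is to produce arrows $y' \tla y \tla y''$ (or the reverse orientation) with $y'<y<y''$ in $K_\tla$, and then to invoke the Remark characterizing transshipment points for $\leq$-monotone digraphs. Here $\tla$ is only in $\JJub$ at worst, so we will use the $\leq$-monotonicity of $\tla$ together with the fact that $\sqsp$ is monotone with $\pi_\sqsb(\rt)=\rt$ (Proposition~\ref{pr:CharCMon}). The arrows at $y$ come from reflexivity considerations. If $y$ were terminal, $\leq$-monotonicity of $\tla$ would force every arrow at $y$ to go weakly in one direction. Then some arrow $y\tla y^*$ would have $y^*$ on the same side as all predecessors, or $y$ would be reflexive with no crossing. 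Weak arrow-preservation of $\sqsp$ then transports $y \tla y$ or $y \tla y^*$ to an arrow $x \la x'$ with $x \sqsp y$ and $x' \sqsp y^*$.

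The key difficulty is to rule out this degenerate situation using the hypothesis that each very simple piece has at least two orbits. With two orbits, the irreflexive point $x$ has its $\la$-predecessor and $\la$-successor separated from it by points of another orbit: in a very simple digraph with at least two orbits, $\OO_1(x)$ is never adjacent to $x$. Let $z$ be a neighbour of $y$ in $K_\tla$ such that $\{y,z\}^{\sqsb}$ contains only $x$ and its neighbours. Edge-preservation and weak arrow-preservation give $\la$-arrows among points within $\rho$-distance $1$ of $x$. The only such arrows are loops at terminal points, and none are near $x$, since $x$ is non-terminal and $\la$ has no arrows between adjacent distinct points in the piece. This yields a contradiction. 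Consequently, the arrows of $\tla$ at $y$ must reach beyond $\{y\}$'s immediate fibre in both directions, which gives the configuration $y'<y<y''$ with consistent orientation, the orientation being inherited from that of $x$ in $\la_i$.

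The hardest part is the bookkeeping in the case where $y$ is covered by two adjacent points $\alpha_y<\beta_y$. In that case we must check that the orientations inherited from $\alpha_y$ and $\beta_y$ agree. We plan to argue that adjacent non-terminal points of a very simple piece have the same orientation, both pointing toward the same end, since $\la_i$ is $\leq$-monotone. With this, the two inherited orientations coincide and we conclude.
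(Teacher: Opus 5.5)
Your proposal has a genuine gap: it never establishes its main step, the configuration $y'<y<y''$ with $y'\tla y\tla y''$ (or the reverse). The facts you lean on to get there are false.

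\begin{itemize}
\item The claim that in a very simple digraph with at least two orbits $\OO_1(x)$ is never adjacent to $x$ fails whenever $\OO_1(x)$ is an end. In the proof of Lemma~\ref{le:SameLengthOfOrbits} one has $\rt\la\rt+1$.
\item It is also not true that the only $\la$-arrows among points at $\rho$-distance at most $1$ from a non-terminal $x$ are loops. On $\{0,\dots,5\}$, take the relation with orbits $0\la 1\la 3\la 5$ and $0\la 2\la 4\la 5$, plus loops at $0$ and $5$. It is a very simple $\leq$-monotone $\IIb$-digraph with two orbits. Yet it contains $1\la 3$ (both neighbours of $x=2$) and $4\la 5$ (with $x=4$ interior).
\end{itemize}

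So transporting an arrow between $y$ and a neighbour $z$ gives no contradiction, and nothing forces $\tla$ to have such arrows in the first place. The only correct statement of this kind is that an arrow between two distinct \emph{adjacent} points of a very simple piece with at least two orbits has an end of that piece as an endpoint.

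The step ``the orientation being inherited from that of $x$'' is asserted, not proved. Weak arrow-preservation only pushes $\tla$-arrows down to $\la$-arrows. Locating the arrows at $y$ would need a real comparison of fibres using the nondecreasing $\alpha,\beta$ of Proposition~\ref{pr:CharCMon}, which you do not carry out. Also, the part of your ``degenerate situation'' in which $y$ is terminal but not reflexive cannot occur at all, for reasons that have nothing to do with $\la$.

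That is the missing idea: a terminal point of a $\leq$-monotone $\IIb$-digraph is reflexive. Suppose $y\not\tla y$. By surjectivity and co-surjectivity, pick $a\tla y$ and $y\tla b$. Since $\tla$ is a monotone staircase, the pairs $(a,y)$ and $(y,b)$ are comparable coordinatewise. This forces $a$ and $b$ onto opposite sides of $y$, which is exactly the criterion in the Remark characterizing transshipment points of $\leq$-monotone digraphs.

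Hence you only need to transport the single loop $y\tla y$. Weak arrow-preservation gives $u\la v$ with $u,v\in\{y\}^{\sqsb}=\{\alpha_y,\beta_y\}$.
\begin{itemize}
\item If $u=v$, this point is reflexive, hence terminal by Remark~\ref{re:TerminalReflexive}.
\item If $u\neq v$, they are adjacent points of one very simple piece joined by an arrow. By the two-orbit hypothesis one of them is an end of that piece, hence terminal.
\end{itemize}
This is the paper's proof, and it makes your orientation bookkeeping for the case $\alpha_y<\beta_y$ unnecessary.
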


\begin{proof}
	Fix a terminal point $y \in K_\twoheadleftarrow$, and  $x \in K_\leftarrow$ such that $x \sqsp y$. Suppose that $x$ is a transshipment point. Since terminal points of
	$\leq$-monotone $\IIb$-digraphs are reflexive, while transshipment
	points of $\IIb$-digraphs with very simple pieces are not, there must
	be $x'\neq x$ adjacent to $x$ such that $x'\sqsp y$, and either $x'$
	is terminal, $x'\la x$, or $x\la x'$. But every very simple piece of $\la$ has more than one orbit, so only the first option is possible, i.e., $x'$ is  a terminal point.
\end{proof}

\begin{lemma}
	\label{le:Jp_Amalg}
	$\JJub_{pv}$ has weak m-amalgamation.
\end{lemma}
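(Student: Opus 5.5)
The plan is to reduce the statement to Lemma \ref{le:IvHasAP}, applied piece by piece. The reduction rests on Lemma \ref{le:TerminalAreStable}, which says that terminal points of a lift lie over terminal points of the base, as long as every very simple piece of the base has more than one orbit. This means a morphism into such a base respects the decomposition into pieces. The amalgamation problem over the whole digraph then splits into independent amalgamation problems over its very simple pieces.

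\textbf{Building the base.} Fix $A \in \JJub_{pv}$ and let $A_0, \ldots, A_{k-1}$ be its pieces. Each $A_i$ is in $\JJub_v$, possibly after reindexing the interval $[x_i,x_{i+1}]$ as a path. For each $i$, Lemma \ref{le:IvHasAP} gives a weak m-amalgamation base $\nil_i \in \JJub^{A_i}_{B_i}$. By the construction in that proof, we may assume each $B_i$ has more than one orbit; if needed, we first apply Lemma \ref{le:SplitOrbit}, which only lifts the base further. Glue the $B_i$ end to end, identifying $\max K_{B_i}$ with $\rt_{B_{i+1}}$, to get $B \in \JJub_{pv}$. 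Since the $\nil_i$ are contractions fixing the ends, the $\nil_i$ glue to $\nil \in \JJub^A_B$, and the terminal points of $B$ are exactly the glued ends, which are reflexive (Remark \ref{re:TerminalReflexive}).

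\textbf{Decomposing $C_0$ and $C_1$.} Take $C_j \in \JJub_{pv}$ and $\sqsp_j \in \JJub^B_{C_j}$ for $j=0,1$. By Lemma \ref{le:TerminalAreStable}, every terminal point of $C_j$ lies over a terminal point of $B$. I would next show the converse in a usable form: the preimage $\{t\}^{\sqsp_j}$ of each terminal point $t$ of $B$ contains a terminal point of $C_j$. The argument is that arrows can cross the fibre only along edges, and $\sqsp_j$ is weakly arrow-preserving. Using monotonicity of $\sqsp_j$ and Proposition \ref{pr:CharCMon}, this yields a decomposition of $C_j$ into consecutive blocks $C_j^i$, each a concatenation of very simple pieces sitting over $B_i$. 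Blocks meet only at terminal points that map to terminal points of $B$.

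\textbf{Expected main obstacle.} The difficulty is that a block $C_j^i$ may contain several very simple pieces of $C_j$, so it is not itself very simple, and Lemma \ref{le:IvHasAP} does not apply to it directly. I would first try to lift $C_j$ further, using Lemmas \ref{le:SplitOrbit} and \ref{le:SameLengthOfOrbits} inside each piece. Where the pieces of a block map onto $B_i$ through a fibre over an end of $B_i$, I would try to replace the block by a single very simple $\JJub_v$-digraph $E$ with a morphism $\sqsp'$ into the block, so that the composite to $B_i$ is in $\JJub^{B_i}_E$. If this collapsing fails, the fallback is to redo the proof of Lemma \ref{le:IvHasAP} directly for $\JJub_{pv}$ targets. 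That proof used only the orbit-distance estimate $\rho(y',\OO_n(y))\le\sss$ and the equal-count normalisation $|[\rt,y_i]^{\sqsp_0}|=|[\rt,y_i]^{\sqsp_1}|$, both of which are local to a piece.

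\textbf{Finishing.} Once each block problem is reduced, amalgamate blockwise over $\nil_i$ to get $D_i$ with $\nil_i\circ\sqsp_0\circ\sqsp'_0 \cap \nil_i\circ\sqsp_1\circ\sqsp'_1$ co-surjective. Then glue the $D_i$ at their ends into $D \in \JJub_{pv}$. Co-surjectivity of the intersection is pointwise in $z \in K_D$, so it follows from the blockwise statements. The glue points map to the common terminal point of $A$ under both composites, so they are covered as well.
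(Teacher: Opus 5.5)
Your outline (glue piecewise bases from Lemma~\ref{le:IvHasAP}, localise with Lemma~\ref{le:TerminalAreStable}, amalgamate locally, glue) is the right one. But the proposal stops exactly where an idea is needed: you never say how to amalgamate a block $C_j^i$ made of several very simple pieces.

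\textbf{Why the collapse fails.} Your main suggestion, replacing such a block by a very simple lift $E$, cannot work in general. Take $B_i$ with $\rt\la\rt+1$, so that it flows into its root $a_i$. Let the block start with three points $c_0<c_1<c_2$ over $a_i$ with $c_0\ra c_1\ra c_2$, witnessed by the loop at $a_i$, followed by a copy of $B_i$ rooted at $c_2$. Then:
\begin{enumerate}
\item $c_2$ is an attracting terminal point, while $c_1$ and $c_2+1$ are transshipment points;
\item every arrow leaving $\{c_1,c_2,c_2+1\}$ ends at $c_2$;
\item every arrow ending in $\{c_0,c_1\}$ starts at $c_0$.
\end{enumerate}
An orbit of a very simple $E$ runs between the ends of $E$, which lie over $c_0$ and over the last point of the block. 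So it must pass from points whose images lie on one side of $c_2$ to points whose images lie on the other side. Images are sets of at most two adjacent points and every arrow of $E$ needs a witness, so this is impossible. Hence this block has no lift in $\JJub_v$ at all. Your fallback, redoing Lemma~\ref{le:IvHasAP} for $\JJub_{pv}$ targets, is a promise rather than an argument. That proof normalises $C_0,C_1$ using Lemmas~\ref{le:SplitOrbit} and~\ref{le:SameLengthOfOrbits}, which are only available for very simple digraphs.

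\textbf{The missing idea.} The extra pieces are harmless because they sit over terminal points of $B$. By Lemma~\ref{le:TerminalAreStable}, every terminal point of $C_j$ inside the block over $B_i=[a_i,b_i]$ relates to $a_i$ or to $b_i$. By monotonicity, the block therefore consists of a chunk over $a_i$, then exactly one very simple piece (from the last terminal point over $a_i$ to the first one over $b_i$), then a chunk over $b_i$. So decompose $K_B$, $K_{C_0}$, $K_{C_1}$ simultaneously into matching intervals with terminal ends. Each interval of $K_B$ is of one of two kinds:
\begin{enumerate}
\item a whole piece $B_i$: the matching intervals of $C_0$, $C_1$ are single very simple pieces, amalgamated by Lemma~\ref{le:IvHasAP};
\item a single terminal point $t$: the matching chunks are arbitrary and are amalgamated by plain concatenation with contractions, as in Proposition~\ref{pr:IisDirected}.
\end{enumerate}
In the second case the m-condition is automatic. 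Images of such chunks lie in $\{t-1,t,t+1\}$, and the base relates these points only to the terminal point of $A$ below $t$, thanks to the large end fibres of $\nil_1$. Gluing the local amalgams gives $D$. This is exactly the paper's proof: intervals on which all three restrictions are very simple or whose part in the base is a singleton, then Lemma~\ref{le:IvHasAP} or Proposition~\ref{pr:IisDirected} respectively, then gluing.

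Two smaller points. First, the $\nil_i$ are not contractions, but any $\IIb$-morphism sends ends co-injectively to ends, which is all the gluing needs. Second, the converse of Lemma~\ref{le:TerminalAreStable} that you want is true, but not merely because arrows cross fibres along edges. The real reason is that interior terminal points of $B$ are attracting or repelling with transshipment neighbours, so orbits of a lift cannot pass over them.
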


\begin{proof}
	Fix  ${\leftarrow_i} \in \JJub_{pv}$, $i=0,1,2$, and $\sqsp_i \in \JJub_{\leftarrow_i}^{\leftarrow_0}$, $i=1,2$. Using Lemma \ref{le:TerminalAreStable}, we can present each $K_{\leftarrow_i}$ as intervals $K_i^j=[x^j,y^j]$, $j \leq m$, $x^j \leq y^j \leq x^{j+1}$, whose ends are terminal points, so that, for every $j \leq m$, all three $\leftarrow_i \upharpoonright K_i^j$, $i=0,1,2$, are very simple or $K_0^j$ is a singleton, and $(K^j_i)^{\sqsubset_i}=K^j_0$, $i=1,2$. Then, for every $j \leq m$, we can amalgamate $\leftarrow_i \upharpoonright K_i^j$, $i=1,2$ over $\leftarrow_i \upharpoonright K_0^j$, using Proposition \ref{pr:IisDirected} or Lemma \ref{le:IvHasAP}, and glue together the obtained amalgams.
\end{proof}

\begin{lemma}
	\label{le:SimpleToVerySimple}
	$\JJub_{pv}$ is cofinal in $\JJub$.
\end{lemma}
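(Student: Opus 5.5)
Given $\la \in \JJub$, we will build a lift $\tla \in \JJub_{pv}$ together with a morphism $\sqsp \in \JJub^\la_\tla$; that is what cofinality requires. The construction goes in two stages. First we pass to a simple digraph, and then we make each piece very simple.

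\emph{Stage 1: reduction to the simple case.} Let $x_0<\dots<x_n$ be the terminal points of $\la$, and let $\la_i$ be the pieces. Since $\la$ is $\leq$-monotone, every terminal point is reflexive. It is also either a local sink or source, or it has no neighbour arrow in one direction. On each interval $[x_i,x_{i+1}]$, the restriction $\la_i$ is a $\leq$-monotone $\IIb$-digraph whose only terminal points are its ends. So $\la_i$ is simple, and by the Remark characterizing transshipment points, every interior point sits strictly between a predecessor and a successor in a consistent direction.

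If some terminal point is not an end of its pieces, we split it. Suppose an interior vertex $x$ is terminal but not reflexive-isolated, so that arrows enter and leave it from the same side. We duplicate $x$ into two reflexive copies $x<x'$ and contract $x'\mapsto x$. Each copy receives the arrows from one side, which makes both copies ends of adjacent pieces. The contraction is weakly arrow-preserving and is an $\IIb$-morphism by Proposition \ref{pr:CharI}. It therefore suffices to treat each piece separately and glue the lifts along the common terminal points. Gluing works because contractions that fix the ends glue to a contraction, and the glued relation is again surjective, co-surjective, $\leq$-monotone and reflexive on ends.

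\emph{Stage 2: simple to very simple.} Fix a simple piece $\la_i$ on $K=[\rt,\max K]$, and assume $\rt\la\rt+1$. By $\leq$-monotonicity, the arrow relation is a monotone staircase joining $(\rt,\rt)$ to $(\max,\max)$ that stays strictly on one side of the diagonal off the ends, since interior points are non-reflexive. A vertex may still have several incoming or outgoing arrows, which appear as horizontal or vertical runs of the staircase. We remove them by splitting vertices. If $y$ has outgoing arrows to $z_1<\dots<z_k$, we replace $y$ by $k$ consecutive copies $y^1<\dots<y^k$ with $y^j\la z_j$, and send each copy to $y$ by a contraction. Monotonicity of the staircase ensures the new relation is still $\leq$-monotone, surjective and co-surjective. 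The incoming arrows of $y$ are then redistributed so that the copies pick them up in order. After finitely many splittings, each interior vertex has exactly one in-arrow and one out-arrow, so the piece is very simple.

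The resulting contraction is a function in $\mor(\IIb)$ that fixes the root and is $\leq$-monotone. It is weakly arrow-preserving because every new arrow lies over an old one. This yields $\tla\in\JJub_{pv}$ with a contraction in $\JJub^\la_\tla$.

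\emph{Main obstacle.} The delicate step is the splitting in Stage 2. Splitting $y$ to fix its out-degree changes the in-degree bookkeeping of the copies. We must therefore assign the incoming arrows to consecutive copies so that the staircase remains monotone, and so that no copy becomes reflexive or terminal, which would spoil simplicity. The plan is to do this by induction on the total excess $\sum_y(\deg^{in}(y)-1)+(\deg^{out}(y)-1)$, performing one split per step. At each step we check via the Remark on transshipment points that the new vertices are transshipment points and that the excess strictly drops.
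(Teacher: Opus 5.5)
Your overall route is the paper's: cut $\la$ at its terminal points, lift each simple piece by contractions that split vertices of excess degree, and glue. The gap is in the step you flag as the main obstacle. A single split does \emph{not} in general decrease the total excess.

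Suppose a transshipment point $y$ has $k\geq 2$ arrows in one direction and a single arrow, to or from a non-end $b$, in the other. Every copy $y^j$ still needs an arrow in that other direction, because the lift must be surjective and co-surjective. Since the contraction $p$ is weakly arrow-preserving, that arrow must join $y^j$ to $p^{-1}(b)$, and hence to $b$ itself if $b$ is not split in the same step. So $b$ gains $k-1$ arrows, and the excess is transferred rather than removed.

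For example, on the path $0<1<2<3<4$ take the staircase $(0,0),(0,1),(0,2),(1,2),(2,3),(3,4),(4,4)$. It is simple, and its only violation is at $2$. Splitting $2$ moves the violation to $3$ with the same total excess. Only the next split, whose partner is the end $4$, removes it.

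So your induction needs a measure that records \emph{where} the excess sits. The paper supplies this. It treats one kind of violation at a time, always splitting a violating transshipment point with maximal $\ttt$, the distance along arrows to a terminal point. It observes that splitting does not change $\ttt$-values, so the transferred excess always lands strictly closer to the end of the piece, where extra arrows are allowed. It then repeats the argument for the reverse direction.

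Two further corrections.
\begin{itemize}
\item Your Stage 2 claim that a simple piece stays strictly off the diagonal is false. On $0<1<2$ the staircase $(0,0),(0,1),(1,1),(1,2),(2,2)$ is simple, since $0\la 1\la 2$ makes $1$ a transshipment point, yet $1$ is reflexive. Such tangential contacts also violate very simplicity, and your rule $y^j\la z_j$ must say where the loop goes. For instance, split $1$ into $1^1<1^2$ and take $(0,0),(0,1^1),(1^1,1^2),(1^2,2),(2,2)$.
\item The extra splitting in Stage 1 is unnecessary. Terminal points are by definition ends of the adjacent pieces, and a $\leq$-monotone relation cannot contain both $(x-1,x)$ and $(x,x-1)$. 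What gluing actually requires is that the glue points stay terminal in the glued lift. This holds because the lift of a piece with interior points stays on the same side of the diagonal, so the configuration of the staircase at each terminal point is preserved. The paper's one-line reduction also leaves this implicit.
\end{itemize}
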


\begin{proof}
Since every $\la \in \JJub$ consists of simple pieces, it suffices to show that $\JJub_v$ is cofinal in $\JJub_s$. For $K \in \IIb$, $\leftarrow \subseteq K \times K$ and $x \in K$, let $\ttt_{\la \, x}$, $\ttt_{x \, \la}$ be the sizes of the shortest directed $\la$-path, $\ra$-path, respectively, starting with $x$, and ending with a terminal point. Fix $\leftarrow \in \JJub_s$. Observe that if $|\{x\}^\ra|=1$ for every $x \in K_\leftarrow$ with $\ttt_{\leftarrow \, x}>1$, and $|\{x\}^\la|=1$ for every $x \in K_\leftarrow$ with $\ttt_{x \,\la }>1$, then $\la \in \JJub_v$. Suppose that the latter condition fails. Choose, among its violations, a transshipment point $y \in K_\leftarrow$ with maximal $\ttt_{y \la}$, and $x<x' \in K_\leftarrow$ such that $$x,x' \leftarrow y,$$ $$x'' \not \leftarrow y \mbox{ if } x<x''<x'.$$ As $y$ is a transshipment point, there is $z>y$ with $y \leftarrow z$. We extend $K_\leftarrow$ to a path $K$ by splitting $y$ into two consecutive points $y, y'$ so that $y'$ is placed right above $y$. 
Then we define $\twoheadleftarrow$ on $K$ by modifying $\leftarrow$ in the following way:
	
	\begin{enumerate}
		\item $w \twoheadleftarrow y'$ iff $w \leftarrow y$  and $w \geq x'$,
		\item $y' \twoheadleftarrow w$ iff $y \leftarrow w$,
		\item $w \not\tla y$ iff $w \la y$ and $w \geq x'$,
		\item $w \twoheadleftarrow w'$ iff $w \leftarrow w'$, for other cases. 
	\end{enumerate}
	
	Clearly, ${\tla} \in \JJub_s$, and the contraction $p:K \rightarrow K_\la$ such that $p(y')=y$ is in $\JJub^\la_\tla$. Moreover, $$|\{y\}^\twoheadrightarrow|, \, |\{y'\}^\twoheadrightarrow|< |\{y\}^\ra|,$$ and 
	$$\ttt_{y' \tla } =\ttt_{y \la},$$
	$$ \ttt_{z \tla}=\ttt_{z \la}, \, z \in K_\la.$$ 
Repeating this construction finitely many times eliminates all violations of the above criterion. Repeating the same argument for $\ra$ yields a very simple lift.
\end{proof}

\begin{theorem}
	\label{th:IhasCAP}
	$\IIub$ has weak m-amalgamation. In particular,  $\Homeo_+([0,1])$ has a comeager conjugacy class.
\end{theorem}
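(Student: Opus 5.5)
The plan is to transfer weak m-amalgamation from $\JJub_{pv}$ to $\IIub$ using the cofinality results, and then apply Theorem \ref{th:ConjugacyForFraisse}(2) together with Proposition \ref{co:G_IDense}.

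\emph{Step 1: a base for $\IIub$.} Fix $A \in \IIub$. By Proposition \ref{pr:IisDirected} there is $A' \in \JJub$ with $A' \subseteq A$, so $\Id \in \IIub^A_{A'}$. By Lemma \ref{le:SimpleToVerySimple} there is $A'' \in \JJub_{pv}$ with a morphism $\sqsp \in \IIub^{A'}_{A''}$. Applying Lemma \ref{le:SplitOrbit} piecewise, we may also assume that every very simple piece of $A''$ has more than one orbit. Next take a weak m-amalgamation base $\nil'' \in \JJub_{pv}$ for $A''$ from Lemma \ref{le:Jp_Amalg}, with domain $B$. The candidate base for $A$ is then $\nil = \Id \circ \sqsp \circ \nil''$.

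\emph{Step 2: pulling arbitrary extensions into $\JJub_{pv}$.} Let $C_0, C_1 \in \IIub$ and $\sqsp_i \in \IIub^B_{C_i}$. Using Proposition \ref{pr:IisDirected} and Lemma \ref{le:SimpleToVerySimple} again, pass to lifts $C_i' \in \JJub_{pv}$ with morphisms $\sqsp_i^* \in \IIub^{C_i}_{C_i'}$. The compositions $\sqsp_i \circ \sqsp_i^*$ are morphisms from $C_i'$ to $B$ inside $\JJub$; since $\JJub_{pv}$ is a full subcategory, they lie in $\JJub_{pv}$.

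One caveat needs checking here. Compositions in $\IIub$ must be weakly arrow-preserving. Weak arrow-preservation composes: an arrow lifts to an arrow at the middle level, and that arrow lifts again. Monotonicity and the root condition also compose, since $\IIb$ is a category. So these compositions are morphisms.

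\emph{Step 3: amalgamate and check co-surjectivity.} By Lemma \ref{le:Jp_Amalg} there is an m-amalgam $D$ of $C_0', C_1'$ over $\nil''$, with morphisms $\sqsp_i'$. Then
\[ \nil'' \circ \sqsp_0 \circ \sqsp_0^* \circ \sqsp_0' \cap \nil'' \circ \sqsp_1 \circ \sqsp_1^* \circ \sqsp_1' \]
is co-surjective. Composing on the left with $\Id \circ \sqsp$ preserves co-surjectivity of the intersection: a common witness $x$ for $z$ at the level of $A''$ maps to any $x'$ with $x' \,(\Id\circ\sqsp)\, x$. Such an $x'$ exists because morphisms of $\GGRb$ are co-surjective, and $x'$ is then a common witness at the level of $A$. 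Hence $D$, with the morphisms $\sqsp_i^* \circ \sqsp_i'$, is an m-amalgam over $\nil$.

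\emph{Step 4: the consequence for $\Homeo_+([0,1])$.} Directedness of $\IIub$ follows from Proposition \ref{pr:IisDirected}, since $\JJub$ is directed and coinitial. Proposition \ref{co:G_IDense} gives $\cl(G_\IIb)=\Homeo_+([0,1])$. Theorem \ref{th:ConjugacyForFraisse}(2) then yields a comeager conjugacy class.

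The main obstacle is Step 2: the lifts $C_i'$ must actually map into $B$ by genuine $\IIub$-morphisms, and the arguments of Lemma \ref{le:Jp_Amalg} must apply to them. In particular, Lemma \ref{le:TerminalAreStable} requires that each very simple piece of $B$ have more than one orbit. I would secure this by choosing $B$ via Lemma \ref{le:SplitOrbit}, as is done in the construction in Lemma \ref{le:IvHasAP}, where $\oo_B>1$.
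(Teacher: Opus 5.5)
Your proposal is correct and follows the paper's route. The paper simply asserts that weak m-amalgamation passes from $\JJub_{pv}$ to $\IIub$ via co-initiality (Proposition~\ref{pr:IisDirected}, Lemma~\ref{le:SimpleToVerySimple}), then invokes Theorem~\ref{th:ConjugacyForFraisse}(2) with Proposition~\ref{co:G_IDense}; your Steps~2--3 make this transfer explicit, and your two checks are exactly what it needs: $\IIub$-morphisms compose, and left composition with a co-surjective relation preserves co-surjectivity of the intersection. The extra piecewise use of Lemma~\ref{le:SplitOrbit} and the closing worry about Lemma~\ref{le:TerminalAreStable} are unnecessary, since you use Lemma~\ref{le:Jp_Amalg} as a black box and it is stated for all of $\JJub_{pv}$.
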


\begin{proof}
By Proposition \ref{pr:IisDirected}, $\JJub$ is cofinal in $\IIub$, and, by Lemma \ref{le:SimpleToVerySimple}, $\JJub_{pv}$ is cofinal in $\JJub$. By Lemma \ref{le:Jp_Amalg}, $\JJub_{pv}$ has weak m-amalgamation, so $\IIub$ has weak m-amalgamation.  Finally, by Proposition \ref{pr:IisDirected}, Theorem \ref{th:ConjugacyForFraisse}(2) and Proposition \ref{co:G_IDense}, $\Homeo_+([0,1])$ has a comeager conjugacy class.
\end{proof}

\subsection{The Cantor fan.} 

The \emph{Cantor fan} is the compact space $C = (2^\NN×\times I)/ \sim $, where $2^\NN$ is the Cantor space, and the equivalence relation $\sim$ is defined by $(x,t) \sim (y,s)$ if and only if the pairs are equal or $s= t= 0$. 

A graph $F \in \GGRb$ is a \emph{fan} if it is a tree (i.e., an acylic, connected reflexive graph) with a single vertex of degree strictly larger than $2$. We call this distinguished vertex \emph{the root}, and, somewhat abusing notation, always denote it by $\rt$. The path metric on a fan is always denoted by $\rho$, and $\leq$ is the unique partial ordering compatible with the graph relation, and with $\rt$ as the smallest element. The maximal elements are referred to as \emph{ends}. A \emph{spoke} in a fan $F$ is a path $[\rt,f]$,  where $f \in F$ is an end. The family of all spokes in $F$ is denoted by $*F$. A fan is \emph{regular} if all its spokes have the same size. For fans $F, G$, a morphism ${\sqsp} \in  \GGRb^F_G$ is called \emph{spoke-monotone} if for any $\tau \in *G$, and any path $C \subseteq F$, $C^\sqsp \cap \tau$ is a path. A relation $A \subseteq F \times G$ is \emph{spoke-preserving} if $\tau^A \in *F$, for every $\tau \in *G$.

The category $\CCb \subseteq \GGRb$ consists of all fans and spoke-monotone morphisms $\sqsp \in \GGRb^F_G$ that preserve roots and ends (i.e., $\rt$ and every end $g \in G$ are $\sqsp$-injective, and $\pi_\sqsp(\rt)=\rt$, $\pi_\sqsp(g)$ is an end). In particular, morphisms in $\CCb$ are spoke-preserving. By \cite[Proposition 5.29]{BaBiVi2} and \cite[Theorem 5.39]{BaBiVi2}, $\CCb$ is a \fra \ category, and its \fra \ limit is the Cantor fan. We associate with $\CCb$ a subcategory $\It$ given be a fixed \fra \ sequence.

 



%
\begin{remark}
For $F,G \in \CCb$, $\sqsp \in \GGRb^F_G$, we have $\sqsp \in \CCb^F_G$ iff $\sqsp$ is spoke-preserving, and $\sqsp \uhr \tau^\sqsb \times \tau \in \mor(\IIb)$, for every $\tau \in *G$. 
\end{remark}
%



 

\begin{remark}
For every $A \in \CCub$, we have that $\rt A \rt$, and for every end $x \in K_A$ there is an end $x' \in K_A$ such that $x A x'$.
\end{remark}  
 
\begin{proposition}
\label{pr:FAnDirected}
$\CCub$ is directed.
\end{proposition}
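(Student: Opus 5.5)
Given $A, B \in \CCub$, I will build a common lift $C$ by the same disjoint-union trick used for $\JJub$ in Proposition \ref{pr:IisDirected}, now gluing the two fans at their roots. Let $F=K_A$ and $G=K_B$, and form the fan $H$ by identifying the roots $\rt_F$ and $\rt_G$. Then $*H=*F\sqcup *G$, and $H$ is again a fan, since its root has degree at least $3$; if either $F$ or $G$ were degenerate, I would first pass to a suitable lift. On $H$ I put $C=A\cup B$, which is well defined because $\rt A\rt$ and $\rt B\rt$ by the remark preceding the proposition.

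The morphisms will be retractions. The map $p_A\colon H\to F$ is the identity on $F$. On each spoke $\tau\in *G$ it should send $\tau$ onto a single spoke $\sigma\in *F$ by a monotone surjection-like map fixing the root and sending the end to the end. The simplest choice, collapsing all of $G$ to $\rt$, is not allowed, because morphisms of $\CCb$ must send ends to ends. So I fix one spoke $\sigma_0\in *F$ and map each spoke $\tau\in *G$ to $\sigma_0$ by a non-decreasing map $\tau\to\sigma_0$ with $\rt\mapsto\rt$ and $\max\tau\mapsto\max\sigma_0$. This requires $|\tau|\ge|\sigma_0|$, which I will arrange beforehand by lengthening the spokes of $G$ (passing to an $\It$-lift of $B$ via Proposition \ref{pr:Robust}(4)), or else I will use a non-injective relation. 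Such a map is a function in $\GGRb$: it is edge-preserving, and it is co-injective since $F\subseteq H$ is fixed. By the remark characterizing $\CCb$-morphisms spokewise, it is spoke-monotone and preserves roots and ends, so $p_A\in\CCb^F_H$. The map $p_B$ is defined symmetrically.

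It then remains to check three things. First, $p_A$ is weakly arrow-preserving from $C$ to $A$. For an arrow $x\,C\,y$ inside $F$, take $x\,A\,y$ itself. For an arrow $x\,B\,y$ inside $G$ I need $p_A(x)\,A\,p_A(y)$, which is the delicate point because $p_A$ squeezes all of $G$ onto $\sigma_0$. To make this work I will not use an arbitrary map but choose $\sigma_0$ and the map using the structure of $A$. Since $\rt A\rt$ and there are ends $e,e'$ with $eAe'$, I can take $\sigma_0=[\rt,e]$ and map all of $G$ to $\{\rt,e\}$ plus the needed intermediate points. Alternatively, I can avoid the problem by letting $p_A$ map $G$ onto a spoke and pick $C$ on $G$ to be a digraph pulled back along $p_A$ intersected with $B$.

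The cleanest fix is to define $C$ as $A\cup B'$, where $B'$ is a subdigraph of $B$, $B'\subseteq p_A^{-1}\circ A\circ p_A$, and $B'$ is still a $\CCb$-digraph with $\Id\in\CCub^B_{B'}$. I expect the main obstacle to be showing that $C$ is a genuine $\CCb$-digraph, meaning it has the form $\sqsp_0\circ\sqsb_1$ as in Remark \ref{re:FraisseDigraphs}. For this I plan to glue the spokewise factorizations of $A$ and $B$ at the root, using that each is a spokewise $\IIb$-digraph (Proposition \ref{pr:CharI} applied on spokes).
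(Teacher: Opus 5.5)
Your construction is exactly the paper's: arrange $K_A\cap K_B=\{\rt\}$ and take $C=A\cup B$. But the step you flagged as delicate is a genuine gap, and it cannot be closed within this approach.

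The wedge trick works for $\JJub$ because the top end of a path carries a loop of every $\IIb$-digraph, so the other piece can be collapsed onto it. In $\CCb$ no such collapse is possible: every end $g$ of the domain must be co-injective, with $\pi_{\sqsp}(g)$ an end. Consequently, if $\sqsp\in\CCub^A_C$ and $g,g'$ are ends of the $K_B$-part with $g\,B\,g'$, weak arrow-preservation forces $\pi_{\sqsp}(g)\,A\,\pi_{\sqsp}(g')$. So any morphism $C\to A$ induces a digraph homomorphism from the spoke graph of $B$ into that of $A$, and such a homomorphism need not exist.

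Here is a counterexample.
\begin{itemize}
\item Take $B=\succeq\circ\preceq$ with $\succeq\in\mor(\It)$. This $B$ is reflexive, so $g\,B\,g$ for every end $g$.
\item Take $A=\sqsp_0\circ\sqsb_1$ with $\tau^{\sqsb_0}\neq\tau^{\sqsb_1}$ for every spoke $\tau$ of the intermediate fan (e.g.\ cyclically permuting the spokes of $K_A$).
\item For an end $e$, $e\,A\,e$ would need a spoke $\tau$ with $\tau^{\sqsb_0}=\tau^{\sqsb_1}=[\rt,e]$. So $A$ has no loop at any end.
\end{itemize}
Hence there is no morphism at all, function or relation, from $A\cup B$ to $A$.

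Both of your repairs fail for the same reason. Sending $K_B$ onto a single spoke $[\rt,e]$ sends every end-to-end arrow of $B$ to $(e,e)$. A sub-digraph $B'\subseteq B$ that is still a $\CCb$-digraph must, by the remark preceding the proposition, have an end-to-end arrow at each end, and for this $B$ that can only be a loop. The paper's one-line proof has the same defect. It is valid verbatim for $\LLub$, where morphisms need not preserve ends and $K_B$ can be collapsed onto $\rt$ using $\rt\,A\,\rt$, but it fails for $\CCub$.

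The missing idea is a product rather than a wedge. The common lift's spoke graph must map into both spoke graphs, and the tensor product does this.
\begin{itemize}
\item Write $A=\sqsp_0\circ\sqsb_1$ and $B=\sqni_0\circ\sqin_1$ with $\sqsp_j\in\CCb^{K_A}_L$, $\sqni_j\in\CCb^{K_B}_M$. By Corollary \ref{co:CharCMon}(2), applied spokewise, assume these are functions; this only shrinks $A$ and $B$, which is harmless.
\item Let $N$ be the fan whose spokes are disjoint copies of all monotone unit-step lattice paths (\emph{staircases}) in $\lambda\times\mu$ from $(\rt,\rt)$ to the pair of ends, over all $(\lambda,\mu)\in *L\times *M$, glued at $(\rt,\rt)$.
\item Build $H$ the same way from all staircases in $\sigma\times\tau$, over all $(\sigma,\tau)\in *K_A\times *K_B$.
\item Set $\gamma_j=\sqsp_j\times\sqni_j$. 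After collapsing repeated consecutive points, it sends each staircase onto a staircase. It is onto, because monotone surjections of paths lift staircases.
\item Let $\alpha,\beta$ be the two coordinate projections on $H$.
\end{itemize}
Then $\gamma_j,\alpha,\beta$ are $\CCb$-morphisms. By Remark \ref{re:FraisseDigraphs}, $C=\gamma_0\circ\gamma_1^{-1}$ is a $\CCb$-digraph, after the usual identification of $H$ and $N$ with levels of the \fra\ sequence. Let $q_L,q_M$ be the coordinate projections on $N$. The identities $\alpha\circ\gamma_j=\sqsp_j\circ q_L$ and $\beta\circ\gamma_j=\sqni_j\circ q_M$ give $\alpha\in\CCub^A_C$ and $\beta\in\CCub^B_C$ immediately.
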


\begin{proof}
Let $A,B \in \CCub$. We can assume that $K_A \cap K_B=\{\rt\}$. Then $C=A \cup B$ is as required.
\end{proof}

For $A \in\CCub$, the \emph{spoke-graph} of $A$ is a directed graph on $*K_A$ defined by putting an arrow from $\sigma'$ to $\sigma$ if $x' A x$ for the ends $x \in \sigma$, $x' \in \sigma'$.  Let $\AAb \subseteq \GGRb$ be the full subcategory of all finite anticliques, i.e., reflexive graphs with no edges other than the loops. Note that morphisms in $\AAb$ are all surjective functions between finite anticliques. In particular, $\AAub$ is the category of all surjective and co-surjective relations on finite anticliques. As in \cite{Kw}, by a \emph{spiral} we mean a directed graph consisting of two directed cycles (possibly consisting of only one element, i.e., a loop), and a single directed path connecting them.  Let $\SSub \subseteq \AAub$ be the full subcategory consisting of all $\AAb$-digraphs that are disjoint unions of spirals. It has been proved in \cite{Kw} that $\SSub$ is co-initial in $\AAub$ (Lemma 4.8), and is a \fra \ category (Theorem 3.1). As a matter of fact,  a stronger notion of morphism of $\AAb$-digraphs, called \emph{epimorphism}, is considered there: epimorphisms are arrow-preserving (not just weakly arrow-preserving), and arrow-surjective. However, for functions, arrow-preservation is equivalent to weak arrow-preservation, and it is routine to verify (see, e.g., \cite[Proposition 3.3]{Kw} and its proof) that morphisms in $\SSub$ are always arrow-surjective, i.e., they are epimorphisms.

\begin{remark}
\label{re:SpiralStructure}
 Every vertex in a spiral has either exactly one incoming, and one outgoing arrow, one incoming, and two outgoing arrows or two incoming, and one outgoing arrows.
\end{remark}

Weak m-amalgamation in $\IIub$, together with amalgamation in $\IIb$, $\CCb$, and $\SSb$, are sufficient to show weak m-amalgamation in $\CCub$. However, the argument we present below uses auxiliary categories $\CCb_2$, and $\CCub_2$ rather than $\IIb$, and $\IIub$. Although this approach is somewhat more tedious, its advantage is that it will later be adapted to establish weak m-amalgamation in a category of digraphs corresponding to the Lelek fan.

As a matter of fact, weak m-amalgamation in $\IIub$, together with amalgamation in $\IIb$, $\CCb$, and $\SSb$, are sufficient to show weak m-amalgamation in $\CCub$ but the argument we present below uses certain auxiliary categories $\CCb_2$, and $\CCub_2$ rather than $\IIb$, and $\IIub$. Although this approach is somewhat more tedious, its advantage is that it can be adapted to establish weak m-amalgamation in a category of digraphs corresponding to the Lelek fan.

Let $\CCb_2 \subseteq \CCb$ be the wide subcategory consisting of all regular fans $F$ with exactly two spokes, denoted by $\sigma_F, \tau_F \in *F$. Let $\CCub_2$ be the category of all surjective and co-surjective $\leq$-monotone relations $A \subseteq \sigma_F \times \tau_F$, $F \in \CCb_2$, with weakly arrow-preserving morphisms from $\CCb_2$ as morphisms;  in this context, $\sigma_A$, $\tau_A$ stand for $\sigma_{K_A}$, $\tau_{K_A}$, respectively.  Suppose that $A \in \CCub$ is such that $K_A$ is a regular fan, and $\sigma \in *K_A$ is such that $\sigma^A \in *K_A$. We define $A_{\sigma}=A \uhr \sigma^A \times \sigma$.



\begin{lemma}
	\label{le:C2Lift}
	Let $F,G \in \CCb_2$,  let $A \in \CCub_2$ be such that $K_A=F$, and let $\sqsp \in (\CCb_2)^F_G$ be a function. There exists a $\sqsp$-lift $B \in \CCub_2$ of $A$. 
\end{lemma}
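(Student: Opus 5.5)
The plan is to construct $B$ explicitly as a ``pullback'' of $A$ along $\sqsp$, taking the largest possible relation. Write $F=\sigma_F\cup\tau_F$ and $G=\sigma_G\cup\tau_G$. Since $\sqsp$ is a morphism of $\CCb_2$, it preserves roots and ends and is spoke-preserving. After possibly renaming the spokes of $G$, we may therefore assume that $\sqsp$ maps $\sigma_G$ onto $\sigma_F$ and $\tau_G$ onto $\tau_F$. Moreover, its restrictions $p_\sigma=\sqsp\uhr\sigma_G$ and $p_\tau=\sqsp\uhr\tau_G$ are $\leq$-monotone surjective functions of paths fixing $\rt$; this uses the remark characterizing $\CCb$-morphisms together with Corollary \ref{co:CharCMon}(1). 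Since $A\subseteq\sigma_F\times\tau_F$, we define
\[
B=\{(y,z)\in\sigma_G\times\tau_G:\ p_\sigma(y)\,A\,p_\tau(z)\}.
\]

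The key steps are as follows, in this order.

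First, $B$ is surjective and co-surjective. For every $y\in\sigma_G$, surjectivity of $A$ gives some $x$ with $p_\sigma(y)\,A\,x$, and surjectivity of $p_\tau$ gives $z$ with $p_\tau(z)=x$. The co-surjective direction is symmetric.

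Second, $\sqsp$ is weakly arrow-preserving from $B$ to $A$. This is immediate from the definition, since $yBz$ gives $p_\sigma(y)\,A\,p_\tau(z)$ with $p_\sigma(y)\sqsp y$ and $p_\tau(z)\sqsp z$. Consequently $\sqsp\in(\CCub_2)^A_B$, provided $B$ is an object of $\CCub_2$.

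Third, $B$ is $\leq$-monotone, which is the main point and which I expect to be the main obstacle. Suppose $A$ is $\leq$-preserving, so that $x\,A\,x'$ and $u\,A\,u'$ with $x<u$ force $x'\leq u'$; the $\geq$-preserving case is symmetric. Take $yBz$, $y'Bz'$ with $y<y'$. Then $p_\sigma(y)\leq p_\sigma(y')$. If the inequality is strict, we get $p_\tau(z)\leq p_\tau(z')$ from the property of $A$, and then need $z\le z'$. But $p_\tau$ being monotone does not prevent $z>z'$ when $p_\tau(z)=p_\tau(z')$, and the same issue arises when $p_\sigma(y)=p_\sigma(y')$.

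So the naive pullback is in general \emph{not} $\leq$-monotone, and I would refine it. The first thing to try is to replace $B$ by a monotone ``staircase'' inside it. Order the pairs of $B$ lexicographically in $\sigma_G\times\tau_G$, and inside each block $p_\sigma^{-1}(x)\times p_\tau^{-1}(x')$ with $x\,A\,x'$, keep only a monotone path from the lower-left corner to the upper-right corner of the rectangle. Consecutive rectangles along the monotone staircase $A$ share a row or column, or are diagonally adjacent. One then chooses the sub-paths so that they glue into a single monotone lattice path from $(\rt,\rt)$ to $(\max\sigma_G,\max\tau_G)$ that visits every row and every column. This resulting $B'\subseteq B$ remains surjective, co-surjective and $\leq$-monotone. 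It is still weakly arrow-preserving, because every pair of $A$ is hit: the path enters each rectangle corresponding to a pair of $A$.

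Finally, one must confirm that $B'$ lives on the regular fan $G$, which is automatic since $G\in\CCb_2$ is given. Hence $B'$ is a $\sqsp$-lift of $A$. If the $\geq$-preserving case arises, it is handled by reversing the order on $\tau_G$ in the staircase construction.
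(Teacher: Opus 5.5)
Your proof is correct in substance, but it follows a different route from the paper's. The paper factors $\sqsp$ into elementary maps, each splitting one point $y$ into two consecutive points $y,y+1$, and uses $A\mapsto A^{-1}$ to assume $y\in\tau_F$. It then changes $A$ by a single pair: if $y$ is $A$-related to at least two points, the topmost pair $(x+1,y)$ is moved to $(x+1,y+1)$; otherwise the unique pair $(x,y)$ is duplicated as $(x,y+1)$. You instead build the lift in one step, as a monotone sub-relation of the pullback $B$.

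Your route has three advantages:
\begin{itemize}
\item It needs no factorization of $\sqsp$. The paper's intermediate two-spoke fans are not regular, so they are not literally objects of $\CCb_2$; this is harmless, but your construction stays on $G$ throughout.
\item Weak arrow-preservation becomes trivial.
\item It describes all lifts. Since $\sqsp$ is a function, weak arrow-preservation of a relation $B'$ means exactly $B'\subseteq B$. So the $\sqsp$-lifts of $A$ are precisely the objects of $\CCub_2$ on $G$ contained in $B$.
\end{itemize}
The paper's argument, in turn, is shorter and fully explicit at each step.

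Two details of your last step need fixing.

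First, the sub-paths cannot in general run from the lower-left to the upper-right corner of each block. Write $I_x=p_\sigma^{-1}(x)$ and $J_{x'}=p_\tau^{-1}(x')$. If $(x,x'),(x,x'+1)\in A$ and $|I_x|\geq 2$, then the upper-right corner of $I_x\times J_{x'}$ and the lower-left corner of $I_x\times J_{x'+1}$ are incomparable. Here is what works in the $\le$-preserving case (the other case is symmetric).
\begin{itemize}
\item List $A$ increasingly as $a_0<\dots<a_k$. Consecutive pairs differ by $(1,0)$, $(0,1)$ or $(1,1)$, because $A$ is a surjective and co-surjective chain.
\item The fibres $I_x$, $J_{x'}$ are consecutive intervals, because $p_\sigma,p_\tau$ are nondecreasing surjections.
\item Start at $(\rt,\rt)$, and enter each block where the previous one was left.
\item Leave the block $I_x\times J_{x'}$ as follows: if the next step is $(1,0)$, move only in the $\sigma$-coordinate up to $\max I_x$; if it is $(0,1)$, move only in the $\tau$-coordinate up to $\max J_{x'}$; if it is $(1,1)$, go to the corner $(\max I_x,\max J_{x'})$. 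Then take one unit step into the next block.
\item In the last block, end at $(\max\sigma_G,\max\tau_G)$.
\end{itemize}

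Second, your final justifications are misattributed. $B'$ is weakly arrow-preserving simply because $B'\subseteq B$. Meeting every block of $A$ gives arrow-surjectivity, which is not required. $B'$ is surjective and co-surjective because it is a chain with unit steps from $(\rt,\rt)$ to $(\max\sigma_G,\max\tau_G)$.
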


\begin{proof}
	As $\sqsp$ can be decomposed into functions that split a single point into two points, and the inverse of a $\leq$-monotone relation is $\leq$-monotone, it suffices to consider the case that $\sqsp$ splits a point $y \in \tau_F$ into two points $y, y'=y+1 \in G$. If there is $x \in \sigma_F$ such that $x A y$, $x+1 A y$, we take the $\leq$-maximal such $x$, and define $$B=(A \cup \{ (x+1,y+1)) \} \setminus \{(x+1,y)\}.$$ Otherwise, let $x$ be the unique $x \in K_A$ with $x A y$. We define $$B=A \cup \{ (x,y+1) \}.$$  
	%
It is straighforward to verify that $B$ is as required.
\end{proof}

\begin{lemma}
\label{le:C2Bijective}
For every $A \in \CCub_2$, there exists a function $\sqsp \in (\CCb_2)^{K_A}_\bullet$, and a bijective $\sqsp$-lift $B \in \CCub_2$ of $A$, i.e., $B \subseteq  \sigma_{B} \times \tau_{B}$ is a bijective relation. 
\end{lemma}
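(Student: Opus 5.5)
We aim for a function $\sqsp \in (\CCb_2)^{K_A}_G$ that splits points of $\sigma_{K_A}$ and $\tau_{K_A}$ just enough that the pulled-back relation becomes a bijection.

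\textbf{Setting up the target relation.} View $A \subseteq \sigma_A \times \tau_A$ as a surjective, co-surjective, $\leq$-monotone relation between two paths of the same length $N+1$, both starting at the root. Monotonicity together with (co-)surjectivity means the pairs of $A$, listed lexicographically as $(a_0,b_0),\dots,(a_k,b_k)$, form a staircase. Consecutive pairs differ by $+1$ in exactly one coordinate, or in both coordinates. The first pair is $(\rt,\rt)$ and the last is $(\max\sigma,\max\tau)$.

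\textbf{Building $G$ and $\sqsp$.} We take a new regular two-spoke fan $G$ whose spokes $\sigma_G$ and $\tau_G$ both have $k+1$ vertices, indexed $0,\dots,k$. We define $\sqsp$ on $\sigma_G$ by $i \mapsto a_i$ and on $\tau_G$ by $i \mapsto b_i$; the root goes to the root. Each map is a surjective, nondecreasing function from a path onto a path, moving adjacent points to equal or adjacent points. Hence, by Corollary \ref{co:CharCMon}(1), each restriction lies in $\mor(\IIb)$. By the remark characterizing $\CCb$-morphisms, $\sqsp$ lies in $\CCb$. It is a morphism of $\CCb_2$ because $G$ is regular, with two spokes each of size $k+1$. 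Finally, let $B$ be the diagonal relation $\{(i,i): i\le k\} \subseteq \sigma_G \times \tau_G$. It is bijective, $\leq$-monotone, surjective and co-surjective, so $B \in \CCub_2$.

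\textbf{Weak arrow-preservation.} We must show $\sqsp \in (\CCub_2)^A_B$. For each arrow $(i,i)$ of $B$, we need $x \sqsp i$ on $\sigma$ and $y \sqsp i$ on $\tau$ with $x A y$. The choice $x=a_i$, $y=b_i$ works, since $(a_i,b_i)\in A$.

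If the definition of $\CCub_2$-objects implicitly requires $B$ to arise from $\CCb_2$-morphisms, as in the definition of $\KKub$, this is covered as follows. $B = \Id \circ \Id^{-1}$ on the two spokes, and by Proposition \ref{pr:CharI} any surjective, co-surjective $\leq$-monotone relation of this type is allowed.

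\textbf{Expected obstacle.} The main obstacle is the ends. We need $\pi_\sqsp(\max \sigma_G)$ to be an end and the root to be preserved, and we need the two spokes of $G$ to be glued at the root consistently, since index $0$ must be the same vertex on both spokes. Both hold because $(a_0,b_0)=(\rt,\rt)$ and $(a_k,b_k)=(\max,\max)$. If one instead wants to stay within the paper's one-point-splitting framework, the same result follows by iterating Lemma \ref{le:C2Lift}. The measure to track is $|A|-|\sigma_A|$: each "corner" of the staircase where a point has two $A$-neighbours is removed by a single split. This measure decreases strictly, and once it reaches zero the relation is bijective.
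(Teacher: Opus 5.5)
Your construction is correct, but it reaches the lift by a different route from the paper's.

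\textbf{The two routes.} The paper performs local surgery. It repeatedly splits a point $y\in\tau_A$ that has two $A$-predecessors, so as to separate the two pairs through $y$, and then does the same for $A^{-1}$. Termination and the final shape are left implicit. Strictly speaking, the paper also passes through two-spoke fans that are not regular, since only one spoke grows at each step. You instead write down the endpoint in one stroke: enumerate $A$ as a chain $(a_0,b_0)<\dots<(a_k,b_k)$, take the two coordinate projections as $\sqsp$, and take the diagonal as $B$. This is exactly the device used in the proof of Proposition~\ref{pr:CharI}.

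\textbf{How they compare.} Each of the paper's splits only reassigns pairs and so keeps $|A|$ fixed. Hence its iteration ends, up to isomorphism, at precisely your object: spokes with $|A|$ vertices, the diagonal relation, and the $i$-th pair sent to $(a_i,b_i)$. Your version gives an explicit description with a size bound and needs no termination argument. It never leaves $\CCb_2$, and weak arrow-preservation becomes a one-line check. The paper's version exhibits $\sqsp$ as a composite of elementary splittings, which fits the local splitting moves used in Lemmas~\ref{le:C2Lift} and~\ref{le:C2MAmalgamation}.

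\textbf{What you should justify.} The staircase property should be proved rather than asserted. Suppose two consecutive pairs of the chain differed by at least $2$ in some coordinate. Then surjectivity (resp.\ co-surjectivity) of $A$, together with its $\leq$-monotonicity, would produce a pair of $A$ strictly between them. Surjectivity and co-surjectivity also force the least and greatest pairs to be $(\rt,\rt)$ and $(\max\sigma_A,\max\tau_A)$. These facts are exactly what make $\sqsp$ edge-preserving, root-preserving and end-preserving.

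\textbf{Your closing aside.} The aside is wrong as stated. Splitting a point of $\tau_A$ leaves $|A|-|\sigma_A|$ unchanged. Moreover, $|A|=|\sigma_A|$ only says that $A$ is a function from $\sigma_A$ onto $\tau_A$, not a bijection. The quantity that works is $2|A|-|\sigma_A|-|\tau_A|$: it drops by one with each elementary split and vanishes exactly when $A$ is bijective. Your main argument does not rely on the aside, so this does not affect its correctness.
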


\begin{proof}
For a given $y \in K_A$ such that there exist two distinct $x,x' \in K_A$ with $x A y$, $x' A y$, we split $y$ into two points $y, y'$, and define $B$ by modifying $A$ so that $$(x,y), (x',y') \in B, \quad (x',y), (x,y') \not \in B.$$ After repeating this construction sufficiently many times, and considering also $A^{-1}$, we will obtain the required $\sqsp$ and $B$. 
\end{proof}
	
\begin{lemma}
\label{le:C2MAmalgamation}
Let   $A, {B_i} \in \CCub_2$ be bijective, and let $\sqsp_i \in (\CCub_2)^{A}_{B_i}$, $i=0,1$. There exist a bijective  $C  \in \CCub_2$, and functions $\sqsp'_i \in (\CCub_2)^{B_i}_{C}$, $i=0,1$, that m-amalgamate $B_0$, $B_1$ over $A$. 
\end{lemma}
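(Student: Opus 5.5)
The idea is to use the bijections $A$, $B_0$, $B_1$ to identify the spoke $\tau$ with the spoke $\sigma$ in each fan. This reduces the problem to amalgamating monotone surjections between paths, and the desired $C$ can then be taken to be a bijection again. Write $\varphi_A:\tau_A\to\sigma_A$ for the order isomorphism determined by the bijective $\leq$-monotone relation $A$, so that $\varphi_A(y)\,A\,y$; define $\varphi_{B_i}$ similarly. Since $\sqsp_i$ is spoke-preserving and preserves roots and ends, it restricts to $\IIb$-morphisms $\sigma_{B_i}\to\sigma_A$ and $\tau_{B_i}\to\tau_A$. Transport the second restriction to $\sigma$ by setting
$$R_i=\sqsp_i\uhr(\sigma_A\times\sigma_{B_i}),\qquad S_i=\varphi_A^{-1}\circ\bigl(\sqsp_i\uhr(\tau_A\times\tau_{B_i})\bigr)\circ\varphi_{B_i}^{-1},$$
regarded as relations $\subseteq\sigma_A\times\sigma_{B_i}$. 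Because $A$ and $B_i$ are bijections, weak arrow-preservation of $\sqsp_i$ says exactly that $T_i=R_i\cap S_i$ is co-surjective: for every $x\in\sigma_{B_i}$ there is $a\in\sigma_A$ with $a\,R_i\,x$ and $a\,S_i\,x$.

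The first step, which I expect to be the main obstacle, is to extract from $T_i$ a $\leq$-monotone surjective function $t_i:\sigma_{B_i}\to\sigma_A$ with $t_i\subseteq T_i$ (read as $t_i(x)\,T_i\,x$) and $t_i(\rt)=\rt$, $t_i(\max)=\max$. By Proposition \ref{pr:CharCMon} and Remark \ref{re:ConnectedBasic}, for each $x$ both $\{x\}^{R_i^{-1}}$ and $\{x\}^{S_i^{-1}}$ are intervals of size at most $2$, and their endpoints $\alpha,\beta$ are nondecreasing in $x$. If $S_i$ arises from an orientation-reversing identification, I first flip it by composing with the order-reversal; this is harmless because $A$ and $B_i$ have the same orientation type on the relevant spokes. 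Hence $\{x\}^{T_i^{-1}}$ is a nonempty interval whose endpoints are both maxima of nondecreasing sequences and both minima of nondecreasing sequences. I will define $t_i(x)=\min\{x\}^{T_i^{-1}}$, which is nondecreasing. Edge-preservation of $R_i$ gives that consecutive values differ by at most $1$, and preservation of roots and ends gives $t_i(\rt)=\rt$ and $t_i(\max)=\max$. So $t_i$ is surjective, and by Corollary \ref{co:CharCMon}(1) it is an $\IIb$-morphism.

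Next I form the fibred product of $t_0$ and $t_1$. Let
$$P=\{(x_0,x_1)\in\sigma_{B_0}\times\sigma_{B_1}:\ t_0(x_0)=t_1(x_1)\},$$
ordered lexicographically. Monotonicity of the $t_i$, together with the fact that each takes consecutive values, makes $P$ a staircase path from $(\rt,\rt)$ to $(\max,\max)$, exactly as in the proof of Proposition \ref{pr:CharI}. The two projections $p_i:P\to\sigma_{B_i}$ are then surjective, $\leq$-monotone, root-preserving functions. Let $C$ be the regular two-spoke fan with $\sigma_C=P$ and $\tau_C$ a disjoint copy $\psi[P]$ of $P$, and put $C=\{(l,\psi(l)):l\in P\}$, which is bijective. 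Define $\sqsp'_i$ to be $p_i$ on $\sigma_C$ and $\varphi_{B_i}^{-1}\circ p_i\circ\psi^{-1}$ on $\tau_C$. These are functions in $\CCb_2$. They send the arrow $(l,\psi(l))$ to $(p_i(l),\varphi_{B_i}^{-1}(p_i(l)))\in B_i$, so they are arrow-preserving and in particular lie in $(\CCub_2)^{B_i}_C$.

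Finally, I check the m-amalgamation condition. For $l\in\sigma_C$, put $a=t_0(p_0(l))=t_1(p_1(l))$. Then $a\,R_i\,p_i(l)$ for $i=0,1$, so $a\,(\sqsp_0\circ\sqsp'_0)\,l$ and $a\,(\sqsp_1\circ\sqsp'_1)\,l$. For $\psi(l)\in\tau_C$, use the same $a$ via $a\,S_i\,p_i(l)$: unwinding the definition of $S_i$, the element $\varphi_A^{-1}(a)\in\tau_A$ is related by both composites to $\psi(l)$. Hence $\sqsp_0\circ\sqsp'_0\cap\sqsp_1\circ\sqsp'_1$ is co-surjective, and $C$ is the required bijective m-amalgam.
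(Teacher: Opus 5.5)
Your reduction to one spoke via $\varphi_A,\varphi_{B_i}$ is sound, and so is the extraction of the selectors $t_i$. The map $t_i(x)=\max(\alpha^{R_i}_x,\alpha^{S_i}_x)$ is nondecreasing by Proposition~\ref{pr:CharCMon}, moves by steps of at most one because $R_i$ is edge-preserving, and fixes root and end. The final co-surjectivity check is also right.

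The step that fails is the claim that the full fibred product $P=\{(x_0,x_1):t_0(x_0)=t_1(x_1)\}$, ordered lexicographically, is a path with monotone projections. $P$ is the union of the rectangles $t_0^{-1}(a)\times t_1^{-1}(a)$, and inside a rectangle it contains crossing pairs. If $x,x+1\in t_0^{-1}(a)$ and $t_1^{-1}(a)=[r,s]$ with $r<s$, then $(x,s)$ is immediately followed by $(x+1,r)$. So $p_1$ decreases there by $s-r$, and is not even edge-preserving when $s-r\ge 2$. For example, take $t_0=t_1\colon\{0,1,2\}\to\{0,1\}$ with $0,1\mapsto 0$ and $2\mapsto 1$; then the second projection reads $0,1,0,1,2$ along $P$. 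The analogy with Proposition~\ref{pr:CharI} breaks down because there the relation is $\le$-monotone (non-crossing), whereas $P$ is not.

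The repair is to replace $P$ by a maximal chain $P'\subseteq P$ for the coordinatewise order. For instance, write $t_0^{-1}(a)=[p_a,q_a]$ and $t_1^{-1}(a)=[r_a,s_a]$, and take the union over $a$ of the L-shaped staircases $\{p_a\}\times[r_a,s_a]\cup[p_a,q_a]\times\{s_a\}$. Consecutive staircases are joined by the diagonal step from $(q_a,s_a)$ to $(q_a+1,s_a+1)$. The projections of $P'$ are then surjective, $\le$-monotone, root-preserving functions with $t_0\circ p_0=t_1\circ p_1$; this is just amalgamation in $\IIb\cap\GGRFb$. The rest of your argument uses only this identity, so it goes through verbatim with $P'$.

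One minor point: in the orientation-reversing case you mention, $\psi$ must also be order-reversing. Otherwise $\sqsp'_i$ relates the root of $K_C$ to both $\rt$ and $\max\tau_{B_i}$. For the digraphs the paper actually uses, $\rt A\rt$, so this case does not arise.

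With these corrections your proof is a valid and more explicit alternative to the paper's. The paper does not transport $\tau$ onto $\sigma$. Instead it enumerates the arrows $x_jAy_j$ of $A$ and uses $N(\sqsp,i)$ to count the arrows of $B_i$ lying over the first $i$ arrows of $A$. It then splits arrows of $B_0$ or $B_1$ one at a time until these cumulative counts agree, and identifies the two equal-sized bijective lifts as $C$, leaving the m-amalgamation check to the reader.

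Your $t_i$ is exactly the map sending an arrow of $B_i$ to the lowest arrow of $A$ beneath it; the counts $N$ record its cumulative fibre sizes. Your staircase pullback performs all of the paper's splittings at once. Your monotonicity argument for $t_i$ is precisely what the paper's omitted check needs.
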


\begin{proof}
 Let $\{x_i\}$, $\{y_i\}$, $i \leq n$, be the increasing enumerations of $\sigma_A$, $\tau_A$, i.e., $x_i A y_i$, $i \leq n$. For every $i\leq n$, $B \in \CCub_2$, and $\sqsp \in (\CCub_2)^A_B$,  define
 \[
 N(\sqsp,i)=
 \left|
 \left\{ (c,d)\in B: \exists j \leq i \, (x_j \sqsp_i c \text{ and }  y_j \sqsp_i d) \right\} \right|.
 \]

We show that there exist $C \in \CCub_2$, and functions $\sqsp'_i \in (\CCub_2)^{B_i}_{C}$, $i=0,1$, such that  
$$N(\sqsp_0 \circ \sqsp'_0,i)=N(\sqsp_1 \circ \sqsp'_1,i),$$
%
for $i \leq n$.  We leave it to the reader to verify that such $C$ m-amalgamates $B_0$, $B_1$ over $A$.  

Suppose that for some $i\leq n$, and all $j<i$, 
$$ N(\sqsp_0,j)=N(\sqsp_1,j), $$
and
$$ N(\sqsp_0,i)<N(\sqsp_1,i). $$
We fix a $\sqsp_0$-co-injective $d \in \tau_{B_0}$ such that $y_i\sqsp_0 d$, and the unique $c \in \sigma_{B_0} $ with $c B_0 d$. Note that $c$ must also be $\sqsp_0$-co-injective, and $x_i\sqsp_0 c$. Next, we construct a bijective $B'_0$ from $B_0$ by splitting $c$ and $d$ into $c, c'$ and $d, d'$, and putting $c' B'_0 d'$. Finally, we define the function $\sqsp'_0$ so that it splits $c$ into $c$, $c'$, and $d$ into $d$, $d'$.
Then
 $$  N(\sqsp_0,i)<N(\sqsp_0 \circ \sqsp'_0,i) \leq N(\sqsp_1,i).$$
 If the reverse inequality holds, we apply the same construction to
 $B_1$.
 
Repeating the appropriate splitting until equality is obtained, and then proceeding successively for $i=0,\ldots,n$, we obtain bijective $B'_0, B'_1 \in \CCub_2$, and functions $$\sqsp'_0 \in (\CCb_2)^{K_{B_0}}_{K_{B'_0}}, \quad \sqsp'_1 \in (\CCb_2)^{K_{B_1}}_{K_{B'_1}}$$ such that
 $$N(\sqsp_0 \circ \sqsp'_0,i)=N(\sqsp_1 \circ \sqsp'_1,i),$$
 for $i \leq n$. In particular, $K_{B'_0}$, and $K_{B'_1}$ have all the spokes of the same size, so, by bijectivity, we can assume that $B'_0=B'_1=C$.  
\end{proof}

Lemmas \ref{le:C2Bijective} and \ref{le:C2MAmalgamation} immediately imply:

\begin{corollary}
\label{co:C2MAmalgamation}
$\CCub_2$ has weak m-amalgamation, and the amalgamating morphisms can always be chosen to  be functions.
\end{corollary}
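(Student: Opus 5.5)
The plan is to combine Lemma \ref{le:C2Bijective} and Lemma \ref{le:C2MAmalgamation} in the standard way: first pass to a bijective amalgamation base, then amalgamate any two extensions of it after making them bijective as well.

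\emph{Choice of base.} Fix $A \in \CCub_2$. By Lemma \ref{le:C2Bijective} there are a function $\nil \in (\CCb_2)^{K_A}_\bullet$ and a bijective $\nil$-lift $B \in \CCub_2$ of $A$; thus $\nil \in (\CCub_2)^A_B$. We take $\nil$, $B$ as the amalgamation base.

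\emph{Reducing to bijective extensions.} Let $C_0, C_1 \in \CCub_2$ and $\sqsp_i \in (\CCub_2)^B_{C_i}$. Applying Lemma \ref{le:C2Bijective} to each $C_i$ gives functions $\sqsp''_i$ and bijective $\sqsp''_i$-lifts $C'_i \in \CCub_2$ of $C_i$. Weak arrow-preservation is closed under composition of morphisms in $\CCb_2$: an arrow of $C'_i$ is witnessed by an arrow of $C_i$, which in turn is witnessed by an arrow of $B$. Hence $\sqsp_i \circ \sqsp''_i \in (\CCub_2)^B_{C'_i}$.

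\emph{Amalgamation.} Lemma \ref{le:C2MAmalgamation}, applied to the bijective $B$, $C'_0$, $C'_1$, yields a bijective $D$ and functions $\sqsp'''_i \in (\CCub_2)^{C'_i}_D$ that m-amalgamate over $B$. This means that
\[
\sqsp_0 \circ \sqsp''_0 \circ \sqsp'''_0 \cap \sqsp_1 \circ \sqsp''_1 \circ \sqsp'''_1
\]
is co-surjective. Composing on the left with $\nil$ preserves co-surjectivity of the intersection: if $z$ lies in the intersection via a common $y \in K_B$, then any $x$ with $x \nil y$, which exists since $\nil$ is surjective, witnesses that $z$ lies in $\nil\circ\ldots\cap\nil\circ\ldots$. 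Setting $\sqsp'_i=\sqsp''_i\circ\sqsp'''_i$ therefore gives weak m-amalgamation. Moreover each $\sqsp'_i$ is a composition of functions and hence a function, which proves the second clause.

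\emph{Expected obstacle.} Given the two lemmas, the only delicate point is the bookkeeping in the third step. One has to check that compositions remain morphisms of $\CCub_2$ and that the m-amalgam over $B$ transfers to an m-amalgam over $A$. Both checks are routine.
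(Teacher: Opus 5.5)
Your proposal is correct and matches the paper, which simply says the corollary follows immediately from Lemmas \ref{le:C2Bijective} and \ref{le:C2MAmalgamation}; you have written out the routine bookkeeping behind that remark (closure of weak arrow-preservation under composition, and passing co-surjectivity of the intersection from $B$ down to $A$). One small terminological slip: in the paper's convention, the property of $\nil$ used in your last step is co-surjectivity (every $y \in K_B$ satisfies $x \nil y$ for some $x \in K_A$), not surjectivity, and it holds because $\nil$ is a function $K_B \to K_A$, i.e., a co-bijective morphism.
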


\begin{proposition}
	\label{pr:CatMCoInitial}
	The full subcategory $\DDub \subseteq \CCub$ of all $A \in \CCub$ that satisfy the following conditions:
	\begin{enumerate}
		\item $K_{A}$ is regular,
		\item $A$ is spoke-preserving, and $A_{\sigma} \in \CCub_2$, for every  $\sigma \in *K_A$,
		\item the spoke-graph of $A$ is in $\SSb$.
	\end{enumerate}
	is co-initial in $\CCub$.
\end{proposition}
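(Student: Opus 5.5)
Fix $A \in \CCub$; the goal is to find $D \in \DDub$ and a morphism in $\CCub^A_D$. Co-initiality here means that every $A$ admits a lift inside $\DDub$. The plan is to successively lift $A$ so as to secure conditions (1), (2), (3), in that order. At every step we check that the properties already obtained survive the next lift.

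\textbf{Step 1: spoke-preservation.} Write $A=\sqsp_0 \circ \sqsb_1$ with $\sqsp_0,\sqsp_1 \in \CCb^{K_A}_L$, using Remark \ref{re:FraisseDigraphs}. Morphisms in $\CCb$ are spoke-preserving, so every spoke $\tau \in *L$ determines spokes $\tau^{\sqsp_0}$ and $\tau^{\sqsp_1}$ of $K_A$. Following the idea of Proposition \ref{pr:IisDirected} and Corollary \ref{co:CharCMon}(2), we replace $\sqsp_0,\sqsp_1$ by functions contained in them. This yields a sub-digraph $A' \subseteq A$, and $\Id$ is then a morphism from $A'$ to $A$. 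Spoke by spoke, $A'$ is a union of $\leq$-monotone relations between pairs of spokes.

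Next we take $K=L$ and the digraph $\sqsb_1 \circ \sqsp_0$-type relation on $L$. More precisely, $B=\sqsp_1' \circ \sqsb'_0$ is a lift via $\sqsp_1$, using Proposition \ref{pr:Robust}(3) and (4). This is arranged so that each spoke of $K_B$ is sent by $B$ into exactly one spoke. If a spoke is still sent to several spokes, we split it at the root into copies, one copy per target spoke. Splitting off copies of a spoke is a $\CCb$-morphism, namely the function identifying the copies. We then apply Lemma \ref{le:C2Lift} to each spoke pair, which makes every restriction $B_\sigma$ a $\leq$-monotone surjective, co-surjective relation between two spokes.

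\textbf{Step 2: regularity.} Lengthen all spokes to a common length $N$ by splitting points near the ends. This is done by a function in $\CCb$ that is spokewise in $\CCb_2$. Lemma \ref{le:C2Lift}, applied to each pair $(\sigma^B,\sigma)$ with the same function on both spokes, gives compatible lifts. Spoke-preservation is unaffected, because the spoke structure is unchanged. After this step (1) and (2) hold: each $B_\sigma$ lies in $\CCub_2$ once we view $\sigma^B \cup \sigma$ as a two-spoke regular fan. The degenerate case $\sigma^B=\sigma$ is handled by doubling the spoke.

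\textbf{Step 3: the spoke-graph, the main obstacle.} The spoke-graph of $B$ is an $\AAb$-digraph. It is surjective and co-surjective because $B$ is surjective and co-surjective on ends. By \cite[Lemma 4.8]{Kw}, $\SSub$ is co-initial in $\AAub$, so there are a spiral-union $S$ and a surjective function $\pi\colon S \to *K_B$ that is weakly arrow-preserving. We realise $\pi$ geometrically. Let $K_D$ be the regular fan whose spokes are indexed by $S$, where the spoke $s$ is a copy of $\pi(s)$. The map collapsing copies is a $\CCb$-morphism. For each arrow $s' \to s$ of $S$ we define $D$ on the spoke pair $(s',s)$ as the copy of $B_{\pi(s)}$. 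This is possible because the arrow $\pi(s')\to\pi(s)$ is in the spoke-graph of $B$.

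The delicate point is that a vertex of $S$ may have two outgoing or two incoming arrows (Remark \ref{re:SpiralStructure}). In that case $D$ is not spoke-preserving at that spoke. To repair this, we split each such spoke once more along the spiral branching, using the functional lifts of Lemma \ref{le:C2Lift}. We then check that the resulting $D$ is still in $\CCub$, that is, of the form $\sqsp\circ\preceq$; this should follow from Remark \ref{re:FraisseDigraphs} by spokewise construction. We also check that the collapsing map is weakly arrow-preserving, so that it lies in $\CCub^B_D$. Composing with the earlier morphisms gives the required lift of $A$ in $\DDub$.
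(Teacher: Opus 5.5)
\textbf{There is a genuine gap, and it lies in the target of your Step 1.} You read (2) as saying that $B$ sends each spoke into exactly one spoke. That means every vertex of the spoke-graph has exactly one outgoing arrow (or exactly one incoming arrow, depending on orientation). Digraphs in $\CCub$ are surjective and co-surjective, so every vertex also has at least one arrow on the other side. A finite digraph with both properties is a permutation. This has two consequences.

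First, the literal reading is incompatible with (3) whenever a spiral genuinely branches, and your Step 3 repair cannot work. Suppose you split a branching spoke $r_0$ of the red cycle into copies that each carry one outgoing arrow. Each copy still needs an incoming arrow, which can only come from copies of its predecessor on the cycle. So the branching just moves backwards along the cycle and never disappears.

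Second, and worse, Step 1 is impossible in general. A morphism in $\CCub^A_D$ sends ends to ends, and it induces a surjective, weakly arrow-preserving map from the spoke-graph of $D$ onto that of $A$. Take $K_A$ with three spokes $a,b,c$, and functions $\sqsp_0,\sqsp_1$ producing the spoke-graph arrows $a\to a$, $b\to a$, $c\to b$, $c\to c$. Then $b$ lies on no directed cycle, so no lift of $A$ can have a permutation as its spoke-graph. Hence your procedure of splitting a spoke ``one copy per target'' cannot terminate. Separately, Lemma \ref{le:C2Lift} cannot \emph{make} restrictions $\leq$-monotone: it assumes an object of $\CCub_2$ and only lifts it along a function.

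\textbf{What the paper actually does.} Its proof, like Lemma \ref{le:DAmalg}, uses (2) only per arrow of the spoke-graph. For each pair of spokes $(\sigma',\sigma)$ joined by an arrow, the restriction $A\uhr\sigma'\times\sigma$ should be a single $\leq$-monotone relation, i.e., an element of $\CCub_2$. Once $\sqsp_0,\sqsp_1$ are functions, this holds as soon as no two distinct spokes $\tau_0,\tau_1$ of the middle fan satisfy $\tau_0^{\sqsb_0}=\tau_1^{\sqsb_0}$ and $\tau_0^{\sqsb_1}=\tau_1^{\sqsb_1}$. Such a ``conflicting pair'' is the real obstruction, because the union of two monotone relations on the same pair of spokes need not be monotone. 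Your proposal never addresses it.

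The paper removes conflicting pairs one at a time:
\begin{enumerate}
\item add a copy $\sigma'_0$ of $\sigma_0=\tau_1^{\sqsb_0}$ to $K_A$, and redirect $\tau_1$ to it under $\sqsp'_0$;
\item add to the middle fan a copy $\tau'$ of a spoke $\tau$ with $\tau^{\sqsb_1}=\sigma_0$, sent to $\sigma'_0$ by $\sqsp'_1$, so that $\sigma'_0$ is covered;
\item the contractions witness the lift, and the number of conflicting pairs strictly drops.
\end{enumerate}
Only after this does one pass to a spiral cover of the spoke-graph by duplicating spokes. Your Step 3 is essentially that last step, provided that on an arrow $s'\to s$ you copy $B\uhr\pi(s')\times\pi(s)$ rather than $B_{\pi(s)}$. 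Under the per-arrow reading, no repair at branching vertices is needed.
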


\begin{proof}
	Fix $A\in\CCub$, and write $F=K_A$. Choose $G\in\CCb$ and
	$\sqsp_0,\sqsp_1\in\CCb_G^F$ such that $A=\sqsp_0\circ\sqsb_1$. 
	By splitting $\rt$, if necessary, we may assume that $F$ is regular.
	Applying Corollary~\ref{co:CharCMon}(2) to each spoke of $G$, we may
	also assume that $\sqsp_0$ and $\sqsp_1$ are functions.
	
	We construct a lift of $A$ satisfying (1)--(3). First, we separate conflicting spokes. Since $\sqsp_0$ and $\sqsp_1$ are functions, condition~(2) is satisfied
	provided that distinct spokes of $G$ having the same
	$\sqsp_1$-image always have different $\sqsp_0$-images. Thus it is
	enough to eliminate pairs of distinct spokes
	$\tau_0,\tau_1\in *G$ such that
	\[
	\tau_0^{\sqsb_0}=\tau_1^{\sqsb_0},
	\quad
	\tau_0^{\sqsb_1}=\tau_1^{\sqsb_1}.
	\]
	
	Suppose such a pair exists, and write
	\[
	\tau_0^{\sqsb_0}=\tau_1^{\sqsb_0}=\sigma_0,
	\quad
	\tau_0^{\sqsb_1}=\tau_1^{\sqsb_1}=\sigma_1.
	\]
	We choose a spoke $\tau\in *G$ with
	$\tau^{\sqsb_1}=\sigma_0$.
	Form regular fans $F'$ and $G'$ by adding copies
	$\sigma'_0$ of $\sigma_0$ and $\tau'$ of $\tau$, respectively,
	and let
	$p_F:F'\to F$ and $p_G:G'\to G$
	be the corresponding contractions.
	
	We define $\sqsp'_0,\sqsp'_1$ exactly as follows:
	$\sqsp'_0$ copies the restriction of $\sqsp_0$ from
	$\sigma_0\times\tau_1$ to $\sigma'_0\times\tau_1$,
	copies the restriction of $\sqsp_0$ from
	$\tau^{\sqsb_0}\times\tau$ to
	$\tau^{\sqsb_0}\times\tau'$,
	$\sqsp'_1$ copies the restriction of $\sqsp_1$ from
	$\tau^{\sqsb_1}\times\tau$ to
	$\tau^{\sqsb_1}\times\tau'$,
	and elsewhere the two relations agree with
	$\sqsp_0,\sqsp_1$.
	Let $A'=\sqsp'_0\circ\sqsb'_1$.
	
	The contractions $p_F$ and $p_G$ witness that $A'$ is a lift of $A$.
	Moreover, the chosen conflicting pair disappears, while the new spoke
	$\tau'$ cannot create a new conflict because its
	$\sqsp'_1$-image is the new spoke $\sigma'_0$.
	Hence the number of conflicting pairs of spokes is strictly smaller
	than before. Repeating the construction finitely many times yields a
	lift $B'$ of $A$ satisfying (1) and (2).
	
	Finally, we ensure that (3) holds. Let $K$ be the spoke graph of $B'$. Since $\SSb$ is co-initial in $\AAb$, there are $S\in\SSb$ and $\sqsp\in\AAb^K_S$. 
	Lifting every spoke of $B'$ along $\sqsp$ produces a lift $B$ of $B'$ whose spoke graph is $S$. The construction only duplicates spokes and their local relations, so regularity, spoke-preservation, and the property that every local restriction belongs to $\CCub_2$ are preserved. Thus $B\in\DDub$, and $B$ is a lift of $A$.
	
\end{proof}

\begin{lemma}
	\label{le:DAmalg}
	$\DDub$ has weak m-amalgamation.
\end{lemma}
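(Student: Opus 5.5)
The plan is to decompose an object of $\DDub$ into a spoke-level part and a radial part. The spoke-level part is the spoke-graph, which lies in $\SSb$ by condition (3). The radial part consists of the local relations $A_\sigma \in \CCub_2$, $\sigma \in *K_A$, given by condition (2). We then amalgamate each part separately: the first with the \fra\ property of $\SSub$ from \cite{Kw}, the second with Corollary~\ref{co:C2MAmalgamation}. Finally we glue the results into a single fan.

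Fix $A \in \DDub$. First we choose the amalgamation base. Pick $B \in \DDub$ and $\nil \in \DDub^A_B$ such that the spoke-graph $S_B$ of $B$, with the spoke map induced by $\nil$, is an amalgamation base for $S_A$ in $\SSub$; since $\SSub$ is \fra, $\nil$ may even be chosen to induce the identity on spokes. We also require that for each $\sigma \in *K_B$ the local restriction $\nil \uhr \sigma^{\sqsb}\times\sigma$ composed with $B_\sigma$ is a weak m-amalgamation base in $\CCub_2$ for $A_{\sigma'}$, where $\sigma'$ is the image spoke. Such a base exists by Corollary~\ref{co:C2MAmalgamation} applied spoke by spoke. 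Because the bases given by Lemma~\ref{le:C2Bijective} are obtained by functions that only split points, one common refinement works for all spokes: the spokes of a regular fan have equal length, and we can repeat splittings until all spokes match. We also make $B$ bijective on every spoke.

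Now take $C_0,C_1 \in \DDub$ and $\sqsp_i \in \DDub^B_{C_i}$. Each $\sqsp_i$ is spoke-preserving, so it induces a map of spoke-graphs $S_{C_i}\to S_B$. The key point is that this map is a morphism in $\SSub$. It is weakly arrow-preserving because arrows between spokes are witnessed at the ends, and ends are preserved. It is a surjective function because $\sqsp_i$ is co-bijective on ends. We amalgamate in $\SSub$ to get a spiral-union $S$ with maps $S \to S_{C_i}$ that agree over $S_A$. For each vertex $\tau \in S$ we then look at the two spokes $\tau_i \in *K_{C_i}$ it maps to. These lie over a common spoke $\sigma$ of $B$, and the arrow structure of $S$ tells us which spoke is $\tau^{D}$. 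Next we amalgamate the local relations $(C_0)_{\tau_0}$ and $(C_1)_{\tau_1}$ over $B_\sigma$ in $\CCub_2$ using Lemma~\ref{le:C2MAmalgamation}, with functions as the amalgamating maps. This amalgamation has to be done coherently along each arrow $\tau' \to \tau$ of $S$, since $D_{\tau}$ relates the spoke $\tau$ to the spoke $\tau'$. Gluing all spokes at the root gives a fan $K_D$ and $D = \bigcup_\tau D_\tau$, with morphisms $\sqsp'_i$ defined spokewise. Then co-surjectivity of $\nil\circ\sqsp_0\circ\sqsp'_0 \cap \nil\circ\sqsp_1\circ\sqsp'_1$ can be checked spoke by spoke. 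On each spoke it follows from the m-amalgamation property of the local amalgams, and the root is handled because all morphisms fix $\rt$.

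The main obstacle is this coherence step. Every spoke of $D$ takes part in two local relations, one as domain and one as codomain, and by Remark~\ref{re:SpiralStructure} it may take part in up to three. The local $\CCub_2$-amalgams must therefore choose the same refinement of each spoke. First try to reduce to bijective local relations, using Lemma~\ref{le:C2Bijective} and the counting function $N$ from Lemma~\ref{le:C2MAmalgamation}. For bijective data the amalgam is determined by the counts $N(\cdot,i)$, so one can split points uniformly along each spoke until the counts agree for every spoke simultaneously. Regularity, i.e.\ equal spoke lengths, makes this simultaneous splitting possible, in the same way that bijectivity let us identify $B'_0=B'_1$ in Lemma~\ref{le:C2MAmalgamation}. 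If $S$ contains cycles, propagating splittings around a cycle may not terminate. In that case we fall back on splitting every spoke to a common large length that is divisible appropriately, rather than splitting adaptively.
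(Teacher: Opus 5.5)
Your proposal has a genuine gap. It sits exactly where you place the main obstacle: making the local $\CCub_2$-amalgams agree on shared spokes.

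\emph{Global bijectivity is not available.} Your remedy takes $B$, and then the local data, bijective on every spoke, and equalizes the counts $N(\cdot,i)$ uniformly along each spiral. This cannot be carried out.

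If all local relations along a cycle of the spoke-graph are bijective, they are level-preserving isomorphisms of spokes. So every point on a spoke of that cycle lies on a closed walk. A functional weakly arrow-preserving morphism sends arrows to arrows, hence closed walks to closed walks. Moreover, every spoke on a cycle of the lower spoke-graph is the image of a spoke on a cycle of the upper one. Hence a bijective $B$ with functional $\nil$ (as in your construction), or a bijective $D$ with functional $\sqsp'_0$, would force every point on a cycle-spoke of $A$, resp.\ $C_0$, to lie on a closed walk.

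This fails in general. Take any object of $\DDub$, split every end into three points, and along one arrow of a cycle of its spoke-graph relate the new points by the staircase $(e_1,e'_1),(e_2,e'_1),(e_3,e'_2),(e_3,e'_3)$, relating them level-wise elsewhere. This gives an object of $\DDub$ lifting the original one by a function, and in it $e_2$ lies on no closed walk. Since $A$ is arbitrary and $C_0$ is an arbitrary lift of $B$, neither bijectivity assumption is available.

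Consequently the counting argument of Lemma~\ref{le:C2MAmalgamation}, which needs bijective objects, cannot be run on all spokes simultaneously. Lemma~\ref{le:C2Bijective} only bijectifies one two-spoke piece, by splitting points on both of its spokes. That changes the neighbouring pieces, so it just reproduces the coherence problem. The fallback of splitting every spoke to a common, suitably divisible length is not an argument either. Equal lengths do not make refinement maps agree, either here or in your gluing of the local bases into $B$. A uniform refinement also does not equalize mismatched fibre counts over $A$.

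\emph{How the paper proceeds.} The paper uses bijectivity only inside a single two-spoke piece, via Corollary~\ref{co:C2MAmalgamation}. For each arrow $u\la v$ of the $\SSb$-amalgam $U$ it takes an independent local m-amalgam $D_{u,v}$ with functional morphisms. Pieces are indexed by arrows rather than vertices, since by Remark~\ref{re:SpiralStructure} a spoke may belong to three pieces. It then makes the copies of each shared spoke coincide by amalgamating the functional spoke maps in $\IIb\cap\GGRFb$ and lifting the pieces along the resulting functions with Lemma~\ref{le:C2Lift}. That lemma applies to arbitrary, non-bijective $\CCub_2$-relations. Postcomposition with lifting functions preserves co-surjectivity of the intersection, so each piece remains an m-amalgam. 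The same device is what legitimately glues the local bases into $B$; there only one map per piece and spoke has to be matched.

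For $D$, however, a single refinement of a shared spoke must match the maps into $C_0$ and into $C_1$ simultaneously. Since refinements only repeat points, this forces adjacent local amalgams to induce the same joint image in the product of the corresponding spokes of $C_0$ and $C_1$. The paper's sketch, which does one further amalgamation of the four functions, does not make this explicit. So the obstacle you flagged is real. A complete proof must choose the local amalgams with compatible joint spoke maps, and global bijectivity cannot be the tool for that.
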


\begin{proof}
	Let $A\in\DDub$. By Lemma \ref{le:C2Bijective} and Corollary
	\ref{co:C2MAmalgamation}, for every $\sigma\in *K_A$ there are
	$B_\sigma\in\CCub_2$ and a function
	\[
	\nil_\sigma\in(\CCub_2)^{A_\sigma}_{B_\sigma}
	\]
	such that $\nil_\sigma$ is a weak $m$-amalgamation base for $A_\sigma$.
	
	Since $\IIb\cap\GGRFb$ has amalgamation
	\cite[Proposition~5.13]{BaBiVi2}, Lemma \ref{le:C2Lift} allows us to
	replace the $B_\sigma$'s by further lifts so that all copies
	corresponding to the same spoke of $K_A$, as well as the corresponding
	restrictions of the maps $\nil_\sigma$, agree. We may also assume that
	all the spokes occurring in the $K_{B_\sigma}$'s have the same size.
	Thus the $B_\sigma$'s glue to a fan $B\in\DDub$, and the functions
	$\nil_\sigma$ glue to a function $\nil\in\DDub^A_B$.
	
	
	Fix $C_i\in\DDub$ and $\sqsp_i\in\DDub^B_{C_i}$, $i=0,1$. Let $S,T_i\in\SSb$ be the spoke-graphs of $B,C_i$, respectively, and
	let $\alpha_i\in\SSb^S_{T_i}$ be induced by $\sqsp_i$. Since $\SSb$ has amalgamation, there are
	$U\in\SSb$ and morphisms $\alpha'_i\in\SSb^{T_i}_U$, $i=0,1$, which amalgamate $\alpha_0$ and $\alpha_1$.
	
	Fix an arrow $u\la v$ in $U$. Let $\sigma_i,\tau_i\in T_i$ be the
	images of $u,v$ under $\alpha'_i$, and let
	$\sigma_B,\tau_B\in S$ be their common images under $\alpha_i$.
	The restriction
	\[
	B\uhr\sigma_B\times\tau_B
	\]
	is one of the local weak $m$-amalgamation bases chosen above.
	Therefore the restrictions
	\[
	C_i\uhr\sigma_i\times\tau_i,\quad i=0,1,
	\]
	admit an $m$-amalgam over the corresponding restriction of $A$.
	Thus we obtain paths $\sigma^{u,v},\tau^{u,v}$, a relation
	\[
	D_{u,v}\subseteq\sigma^{u,v}\times\tau^{u,v},
	\quad D_{u,v}\in\CCub_2,
	\]
	and functional morphisms
	\[
	\sqsp^{u,v}_{i,\sigma}\subseteq
	\sigma_i\times\sigma^{u,v},
	\quad
	\sqsp^{u,v}_{i,\tau}\subseteq
	\tau_i\times\tau^{u,v},
	\quad i=0,1,
	\]
	that weakly m-amalgamate $C_i \uhr \sigma_i \times \tau_i$ over the corresponding
	restriction of $A$.
	
	By taking further lifts with Lemma \ref{le:C2Lift}, we may assume that
	all paths $\sigma^{u,v},\tau^{u,v}$ have the same size. We now make
	the local constructions compatible on common spokes. Suppose first that
	\[
	u\la v\la w.
	\]
	For each $i=0,1$, the functions
	\[
	\sqsp^{u,v}_{i,\tau}\colon \tau_i \to \tau^{u,v}
	\quad\text{and}\quad
	\sqsp^{v,w}_{i,\sigma}\colon \tau_i \to \sigma^{v,w}
	\]
	have the same domain. By amalgamation in $\IIb\cap\GGRFb$, there are
	functions
	\[
	\sqni^{u,v}_{\tau,i}\in\IIb^{\tau^{u,v}}_\bullet,
	\quad
	\sqni^{v,w}_{\sigma,i}\in\IIb^{\sigma^{v,w}}_\bullet
	\]
	such that
	\[
	\sqsp^{u,v}_{i,\tau}\circ\sqni^{u,v}_{\tau,i}
	=
	\sqsp^{v,w}_{i,\sigma}\circ\sqni^{v,w}_{\sigma,i}.
	\]
	Thus, for each $i$, the maps induced by the two adjacent local
	amalgams agree after passing to suitable functional extensions. We next amalgamate, again in $\IIb\cap\GGRFb$, all four functions $\sqni^{u,v}_{\tau,i}, \sqni^{v,w}_{\sigma,i}$, $i=0,1$,
	over a single path. Applying Lemma \ref{le:C2Lift} to
	$D_{u,v}$ and $D_{v,w}$ along the resulting functional extensions, we
	may replace these local amalgams by further lifts for which
	\[
	\tau^v=\tau^{u,v}=\sigma^{v,w}
	\]
	and, for $i=0,1$,
	\[
	\sqsp_i^v
	=
	\sqsp^{u,v}_{i,\tau}
	=
	\sqsp^{v,w}_{i,\sigma}.
	\]
	In other words, the two adjacent local amalgams now use literally the
	same copy of the spoke corresponding to $v$, and the maps from both
	$C_0$ and $C_1$ agree on that spoke.
	
	
	The same finite-amalgamation argument applies at branching vertices, where three local amalgams meet at the same spoke. By Remark \ref{re:SpiralStructure}, these are all possible local configurations
	in a spiral. Repeating the construction at every vertex of $U$, we obtain, for
	each $v\in U$, a single path $\tau^v$ and functions $\sqsp_i^v\colon \tau_i \to \tau^v$, $i=0,1$, such that every local relation $D_{u,v}$ is defined on
	$\tau^u\times\tau^v$ and its local morphisms are precisely the
	restrictions of $\sqsp_i^u\cup\sqsp_i^v$. Notice that every
	synchronization is obtained by replacing the relevant local
	$m$-amalgams by common lifts. Since postcomposition with the lifting maps preserves co-surjectivity, the local $m$-amalgamation property is preserved throughout the construction.
	
	Glue the paths $\tau^v$, $v\in U$, at their roots and put $D=\bigcup_{u\la v}D_{u,v}$.
	The compatibility just established makes this union well defined.
	Moreover, $D$ is spoke-preserving, every two-spoke restriction belongs
	to $\CCub_2$, $K_D$ is regular, and the spoke-graph of $D$ is
	$U\in\SSb$. Thus $D\in\DDub$.
	
	For $i=0,1$, the maps $\sqsp_i^v$, $v\in U$, glue to a functional
	morphism
	\[
	\sqsp'_i\in\DDub^{C_i}_D.
	\]
	For every arrow $u\la v$ in $U$, the restriction of
	\[
	\bigl(\nil\circ\sqsp_0\circ\sqsp'_0\bigr)
	\cap
	\bigl(\nil\circ\sqsp_1\circ\sqsp'_1\bigr)
	\]
	to the corresponding two-spoke piece is co-surjective by the choice
	of $D_{u,v}$. Every point of $K_D$ belongs to one of these local
	two-spoke amalgams, since every spoke of a spiral is incident with an
	arrow. Likewise, every point of $K_A$ belongs to the corresponding
	restriction of $A$. Hence the union of the local co-surjective
	intersections is co-surjective, and therefore the whole intersection
	above is co-surjective in $K_A\times K_D$. Thus
	$D,\sqsp'_0,\sqsp'_1$ $m$-amalgamate $C_0,C_1$ over $A$.
\end{proof}

\begin{lemma}
\label{le:C_IDense}
$G_\CCb$ is a dense subset of $\Homeo(C)$.
\end{lemma}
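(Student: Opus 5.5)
The plan is to follow the argument of Proposition \ref{co:G_IDense} for the arc and reduce the statement to Proposition \ref{pr:Robust}(5), applied with $G=\Homeo(C)$. It therefore suffices to show the following: for every morphism $\sqsp=\sqsp_{f,m,n}\in\KKb_C$ with $f\in\Homeo(C)$, there is $\sqni\in\CCb^{K_m}_{K_n}$ with $\sqsp\subseteq\sqni$.

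First I will collect the structural properties that $\sqsp_{f,m,n}$ inherits from $f$. Every homeomorphism of the Cantor fan fixes the vertex, since it is the unique point at which $C$ fails to be locally connected (or the unique branch point). It also maps each arc from the vertex to an endpoint onto another such arc, and maps endpoints to endpoints. Consequently, for $n$ large enough:
\begin{enumerate}
\item $\pi_\sqsp(\rt)=\rt$;
\item ends of $K_n$ are $\sqsp$-related only to elements of spokes of $K_m$ that contain their images, and the ends satisfy the end-preservation requirement;
\item for every spoke $\tau\in *K_n$, the set $\tau^{\sqsb}$ is contained in a union of boundedly many spokes of $K_m$ meeting near the root.
\end{enumerate}
Here I use that a spoke of $K_n$ is a chain of small open sets along a family of arcs that stay close in the Cantor coordinate, so its image under $f$ is a small-diameter family along arcs of $C$.

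Next, using Proposition \ref{pr:LaxAbs}, I will fix $\sqsp'$ with $\sqsp\circ\sqsp'\subseteq\succeq^m_k$, where $\succeq^m_k\in\It$ is a $\CCb$-morphism, hence spoke-preserving and spoke-monotone. The goal is then to enlarge $\sqsp$ to a spoke-preserving relation. For each $\tau\in*K_n$, I choose a spoke $\sigma_\tau\in *K_m$ containing $\pi_\sqsp(e)$, where $e$ is the end of $\tau$; the end property makes $\pi_\sqsp(e)$ an end. Away from the root, $\tau^\sqsb$ already lies in $\sigma_\tau$, because the image arc lies in one arc of $C$. Near the root, I will replace $\sqsp$ by a modification that is allowed to add pairs, redirecting elements of $\tau$ adjacent to $\rt$ so that they relate to $\rt$ or to elements of $\sigma_\tau$. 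The smallness of $f$ near the vertex and root-preservation guarantee that the modified relation stays edge-preserving.

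Having made each restriction $\sqsp\uhr\sigma_\tau\times\tau$ a co-bijective relation between paths, I will apply Proposition \ref{co:G_IDense} spokewise. The inputs are $K=\sigma_\tau$, $L=\tau$, $M$ the union of spokes of $K_k$ that $\sqsp'$ maps into $\tau$, and $\succeq$ restricted to that spoke; these satisfy conditions (1)--(3) there. That proposition yields a monotone $\sqni_\tau\supseteq\sqsp\uhr\sigma_\tau\times\tau$ with $\pi_{\sqni_\tau}(\rt)=\rt$. I then set $\sqni=\bigcup_\tau\sqni_\tau$; the pieces agree at the root, so $\sqni$ is spoke-preserving and each restriction lies in $\mor(\IIb)$. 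By the Remark characterizing $\CCb$-morphisms, $\sqni\in\CCb^{K_m}_{K_n}$. Finally, Proposition \ref{pr:Robust}(5) gives density of $G_\CCb$.

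I expect the main obstacle to be the step that enlarges $\sqsp$ to a spoke-preserving relation near the root. The difficulty is that $\sqsp$ may relate an element of one spoke of $K_n$ to elements of several spokes of $K_m$ near $\rt$, and Proposition \ref{pr:Robust}(5) requires a superset, so we cannot simply delete those pairs. If a superset cannot be arranged directly, my first fallback is to pass to $\sqsp\circ\succeq^n_{n'}$ for large $n'$. That composite still lies in $[\cdot]$ for approximations of $f$ in the sense of Remark \ref{re:ExtendMor}(2), and this suffices for a $\pi$-base argument: it shows every $[\la]$ meets $G_\CCb$ even if not every $\sqsp$ literally extends.
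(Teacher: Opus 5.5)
Your outline matches the paper's: Proposition \ref{pr:LaxAbs} gives $\sqsp'$ with $\sqsp\circ\sqsp'\subseteq\succeq$, then Proposition \ref{co:G_IDense} is applied on each spoke, the spokewise extensions are united, and Proposition \ref{pr:Robust}(5) finishes. But the step you single out as the main obstacle is left open, and neither of your remedies can close it. Suppose some $q\in\tau$ were $\sqsp$-related to non-root elements of two different spokes of $K_m$. Then \emph{no} $\sqni\supseteq\sqsp$ could be spoke-preserving: adding pairs only enlarges $\tau^{\sqsb}$, while ``redirecting'' $q$ means deleting pairs, which Proposition \ref{pr:Robust}(5) forbids. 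The fallback via $\sqsp\circ\succeq^n_{n'}$ does not help either. Every element of $K_{n'}$ contained in $q$ inherits all $\sqsp$-relations of $q$, so the defect persists. As a result, the co-bijectivity of the restrictions $\sqsp\uhr\sigma_\tau\times\tau$, which you need in order to invoke Proposition \ref{co:G_IDense}, is never established. Your item (3), ``boundedly many spokes'', aims at the wrong statement.

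The missing idea is that the obstacle never occurs. Every morphism $\sqsp=\sqsp_{f,m,n}$ of $\KKb_C$ is already spoke-preserving and root- and end-preserving, with co-bijective restrictions to spokes. The paper uses this silently when it fixes $\tau$ with $\tau^{\sqsb}=\sigma$. Here is a sketch.

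The set $\{\rt_n\}$ is a minimal selector, and so is the set of ends along each thread of spokes. Hence the vertex lies only in root elements, and endpoints lie only in end elements. Minimal selectors are up-closed, so a non-vertex point lies only in elements of the spokes along its own thread. Consequently the non-root elements of $\sigma\in *K_m$ contain only points of arcs from a clopen set $U_\sigma$ of endpoints, and these sets are pairwise disjoint.

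The map $f$ fixes the vertex, since it is the unique branch point. (Note it is the unique point where $C$ \emph{is} locally connected, not where it fails to be.) It permutes the arcs via a homeomorphism $\hat f$ of the endpoint set.

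Co-surjectivity at the end $e_\tau$ forces $\hat f[U_\tau]\subseteq U_\sigma$ for a single $\sigma$. Therefore every non-root $q\in\tau$, however close to $\rt$, is related only to elements of $\sigma$, and $e_\tau$ is related only to $e_\sigma$. Since $\tau^{\sqsb}\subseteq\sigma$ contains both $\rt$ and $e_\sigma$ and is connected, $\tau^{\sqsb}=\sigma$.

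For co-injectivity of $\sqsp\uhr\sigma\times\tau$: for each $p\in\sigma$, every arc from $\hat f[U_\tau]$ passes through a point lying in $p$ and in no other element of $K_m$. The element of $\tau$ containing its $f$-preimage is then related only to $p$.

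With this, no modification near the root is needed. Finally, when applying Proposition \ref{co:G_IDense}, take $M$ to be a single spoke $\upsilon\in *K_k$ with $\upsilon^{\sqsb'}=\tau$, as the paper does. A union of spokes is not a path, and that proof uses the linear order on $M$.
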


\begin{proof}
Fix  $k,l \in \NN$, and $\sqsp \in (\KKb_C)^k_l$. Fix  $m \in \NN$, and $\sqsp' \in (\KKb_C)^l_m$ such that $\sqsp \circ \sqsp' \subseteq \succeq^k_m$. Fix $\sigma \in *K_k$, $\tau \in *K_l$ such that $\tau^\sqsb=\sigma$, and $\upsilon \in *K_m$ such that $\upsilon^\sqsb=\tau$. Let $\sqsp_\tau=\sqsp \uhr \sigma \times \tau$. We note that $\preceq^k_m \uhr \sigma \times \upsilon$ is monotone as a composition of monotone relations. By Corollary \ref{co:G_IDense}, there is $\sqni_\tau \in \IIb$ such that $\sqsp_\tau \subseteq \sqni_\tau$. Put $\sqni=\bigcup_\tau \sqni_\tau$. As $ \pi_{\sqni} (\rt)=\rt$, we have $\sqni \in \mor(\CCb)$. Now we get the statement of the lemma from Proposition \ref{pr:Robust}(3).   
\end{proof}

From Proposition \ref{pr:FAnDirected}, Lemma \ref{le:DAmalg}, Lemma \ref{le:C_IDense}, and Theorem \ref{th:ConjugacyForFraisse}(2) we immediately get
\begin{theorem}
	$\CCub$  has weak m-amalgamation, and $\Homeo(C)$ has a comeager conjugacy class.
\end{theorem}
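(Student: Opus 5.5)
The plan is to assemble the statement from results already in place, exactly as the paper did for the arc in Theorem \ref{th:IhasCAP}. The cited Fra\"iss\'e-limit facts give that $\CCb$ is a \fra \ category whose limit is the Cantor fan $C$. So Theorem \ref{th:ConjugacyForFraisse}(2) applies to $G=\cl(G_\CCb)\leq\Homeo(C)$: $G$ has a comeager conjugacy class iff $\CCub$ is directed and has weak m-amalgamation. Directedness is Proposition \ref{pr:FAnDirected}. By Lemma \ref{le:C_IDense}, $G_\CCb$ is dense in $\Homeo(C)$, so $G=\Homeo(C)$. Hence the main work is weak m-amalgamation of $\CCub$.

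For this, I will transfer weak m-amalgamation from the co-initial subcategory $\DDub$ (Proposition \ref{pr:CatMCoInitial}, Lemma \ref{le:DAmalg}) to $\CCub$.

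Fix $A\in\CCub$. Choose $A'\in\DDub$ and $\nil_0\in\CCub^A_{A'}$, then a weak m-amalgamation base $\nil_1\in\DDub^{A'}_B$ for $A'$, and set $\nil=\nil_0\circ\nil_1$. Given $C_i\in\CCub$ and $\sqsp_i\in\CCub^B_{C_i}$, use co-initiality again to get $C'_i\in\DDub$ with $\sqni_i\in\CCub^{C_i}_{C'_i}$. Then $\sqsp_i\circ\sqni_i$ is a morphism $C'_i\to B$, and since $\DDub$ is full, it lies in $\DDub$. Lemma \ref{le:DAmalg} then yields $D$ and $\sqsp'_i$ with
\[
\nil_1\circ\sqsp_0\circ\sqni_0\circ\sqsp'_0\ \cap\ \nil_1\circ\sqsp_1\circ\sqni_1\circ\sqsp'_1
\]
co-surjective in $K_{A'}\times K_D$. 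Postcomposing with $\nil_0$ preserves co-surjectivity of the intersection: if $q$ is related to some $p'$ through both compositions, any $p$ with $p\,\nil_0\,p'$ witnesses it for $\nil$. So $D$ with $\sqni_i\circ\sqsp'_i$ is the required m-amalgam over $\nil$.

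The main obstacle is the composition step. $\CCub$ is only a paracategory, so I must check that $\sqsp_i\circ\sqni_i$ and $\nil_0\circ\nil_1$ are actually morphisms, i.e. spoke-monotone, root- and end-preserving, co-bijective, and weakly arrow-preserving. Weak arrow-preservation and co-bijectivity compose directly. Spoke-monotonicity composes because images of paths along spoke-preserving monotone relations remain paths. If some composite fails to be co-injective, I would first replace the relevant lifts by functional ones (via Corollary \ref{co:CharCMon}(2) spokewise, as in Proposition \ref{pr:CatMCoInitial}); composites of surjective functions in $\CCb$ stay in $\CCb$.
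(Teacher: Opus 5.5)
Your proposal is correct and follows the paper's route. The paper derives the theorem directly from directedness of $\CCub$, Lemma \ref{le:DAmalg}, Lemma \ref{le:C_IDense} and Theorem \ref{th:ConjugacyForFraisse}(2), leaving implicit the step you carry out correctly: transferring weak m-amalgamation from the co-initial full subcategory $\DDub$ (Proposition \ref{pr:CatMCoInitial}) to $\CCub$. Your ``main obstacle'' does not actually arise. $\CCb$ is a category (it is a \fra \ category) and weak arrow-preservation composes, so composites of morphisms of $\CCub$ are again morphisms; co-injectivity is also preserved under composition, so the fallback to functional lifts is never needed.
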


Finally, we study topological entropy of the generic homeomorphism of the Cantor fan. We define transshipment and terminal points in $\CCb$-digraphs in the same way as for the category $\IIb$:  for $\la \in \CCub$, $x \in K_\la$ is a transhipment point if there exists a bi-infinite $\la$-walk $(x_n)_{n \in \ZZ}$ such that $x_n=x$ iff $n=0$; otherwise it is a terminal point.  For a spiral $S \in \SSb$ with cycles $R$, $B$ and directed path $G$ from $R$ to $B$, arrows in $R$ are called \emph{red arrows}, in $G$ -- \emph{green arrows}, and in $B$  -- \emph{blue arrows}. We refer to the ends of the path $G$ as the red and the blue branching points, depending on whether they also belong to $R$ or to $B$.

\begin{remark}
\label{re:TerminalCycle}
	If $x$ is a terminal point, then there is a $\la$-cycle $O$ such that $x \in O$. 
\end{remark}

\begin{proposition}
	\label{pr:TopEntropy}
	Let $\leftarrow \in \DDub$ be such that its spoke graph is a single cycle of size $n$. There is $k_0 \in \NN$ such that for every $k \geq k_0$ there is a lift $\tla \in \MMub$ of $\la$ satisfying the following properties:
	\begin{enumerate}
		\item $K_\tla$ is a regular graph with spokes of size $k$, and the spoke graph of $\twoheadleftarrow$ is a cycle of size $n$,
		\item every transshipment point has exactly one incoming and one outgoing arrow,
		\item any two distinct cycles are disjoint,
		\item $x$ is a terminal point iff there is a (unique) cycle $O$ such that $x \in O$,
	\end{enumerate}
	In particular, the full subcategory $\DDub_0 \subseteq \DDub$ of $\CCb$-digraphs such that the cycles forming the spirals of their spoke graph satisfy (1)-(4) is co-initial in $\CCub$.
\end{proposition}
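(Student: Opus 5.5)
The idea is to reduce the statement to the two-spoke pieces and treat each piece as a copy of the arc problem. By Proposition \ref{pr:CatMCoInitial}, $\la\in\DDub$ is determined by three pieces of data: the spoke-graph, which here is a cycle $\sigma_0\la\sigma_1\la\cdots\la\sigma_{n-1}\la\sigma_0$; a regular fan $K_\la$; and the local relations $\la_{\sigma_i}\in\CCub_2$, each a surjective, co-surjective $\leq$-monotone relation from one spoke to the next. Following the arrows around the cycle, the composite
$$R=\la_{\sigma_0}\circ\la_{\sigma_1}\circ\cdots\circ\la_{\sigma_{n-1}}$$
is a $\leq$-monotone, surjective, co-surjective relation from a single spoke $\sigma_0$ to itself. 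It is reflexive at the root, and reflexive at the end because ends go to ends. So $R$ is essentially an $\IIb$-digraph on the path $\sigma_0$, by Proposition \ref{pr:CharI}. Bi-infinite walks of $\la$ project to bi-infinite walks of $R$ (taking every $n$-th step), and conversely. Hence terminal points, transshipment points and cycles of $\la$ on the spoke $\sigma_0$ correspond to those of $R$.

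The plan has four steps.

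\textbf{Step 1 (make the pieces bijective).} Using Lemma \ref{le:C2Bijective} on each $\la_{\sigma_i}$, lift $\la$ so that every local relation is bijective. Lemma \ref{le:C2Lift} and amalgamation in $\IIb\cap\GGRFb$ let us synchronise the splittings of shared spokes around the cycle, as in the proof of Lemma \ref{le:DAmalg}. Once every $\la_{\sigma_i}$ is an order-isomorphism between consecutive spokes, each point has exactly one incoming and one outgoing arrow. Then $\la$ is the graph of a permutation, $R$ is an increasing bijection of $\sigma_0$, and every orbit is a cycle. This is too rigid: condition (4) is then trivially true, but we have lost the approximation of $A$ and cannot vary $k$. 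So we do not stop at bijectivity. Instead we lift $R$, as an $\IIb$-digraph, inside $\JJub_{pv}$ using Lemma \ref{le:SimpleToVerySimple}. Its pieces are then very simple, so terminal points are exactly the reflexive points (Remark \ref{re:TerminalReflexive}), and transshipment points have a unique predecessor and successor.

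\textbf{Step 2 (transport back).} We transport the lift of $R$ back to $\la$ by lifting only the spoke $\sigma_0$ and propagating through the bijective pieces $\la_{\sigma_1},\dots,\la_{\sigma_{n-1}}$. Terminal points of $\la$ on $\sigma_0$ are then the fixed points of $R$. Their forward images around the cycle form a $\la$-cycle of length $n$, and distinct fixed points give disjoint cycles. This yields (2), (3) and (4). For transshipment points, the uniqueness of incoming and outgoing arrows comes from very simplicity together with bijectivity of the other pieces.

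\textbf{Step 3 (spoke length $k$).} To obtain spokes of any prescribed size $k\geq k_0$, where $k_0$ is the spoke size after Steps 1--2, we apply Lemma \ref{le:SameLengthOfOrbits} inside a piece, or simply split the end, to lengthen $\sigma_0$ by $k-k_0$ points. We propagate the lengthening through all spokes while keeping bijectivity, so the fan stays regular and the spoke graph stays an $n$-cycle, giving (1).

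\textbf{Step 4 (the ``in particular'').} Given $A\in\CCub$, Proposition \ref{pr:CatMCoInitial} gives a lift in $\DDub$ whose spoke graph is a disjoint union of spirals. On each of the two cycles of each spiral we apply the above construction. Using the freedom in $k$, we make all cycles use a common spoke length. On the green path, local relations are only required to lie in $\CCub_2$, which we keep bijective.

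I expect the main obstacle to be Step 2, specifically the compatibility at branching spokes of the spiral. At the red and blue branching points, a spoke lies on a cycle and also has a green arrow in or out. Lifting the cycle's spoke must be pulled back along the green path without destroying properties (2)--(4) on the cycle. I would handle this by choosing the green local relations last: use Lemma \ref{le:C2Lift} to lift them along the functional splittings already performed on the cycle spokes, and then check that points on green spokes are transshipment points lying in no cycle, so they do not interfere with (4).
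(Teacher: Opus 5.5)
Your Step~1 cannot be carried out as written, and the later steps only make sense once it is changed.

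Suppose every local relation of a lift $\tla$ with spoke graph an $n$-cycle were an order-isomorphism. The lifting maps supplied by Lemmas~\ref{le:C2Bijective} and~\ref{le:C2Lift} are functions $p$, and a weakly arrow-preserving function is a digraph homomorphism. So the $\tla$-cycle of length $n$ through a point $b$ of the lifted $\sigma_0$ maps to a closed $\la$-walk of length $n$ through $p(b)$, i.e.\ $p(b)\,R\,p(b)$. Since $p$ is surjective, $R$ would then be reflexive at every point of $\sigma_0$. This fails for a general $\la\in\DDub$. Relational lifting morphisms only buy one cell of slack at each spoke, so the obstruction persists.

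Thus ``synchronising the splittings around the cycle'' is exactly where Step~1 breaks. The fully bijective lift is not merely too rigid; in general it does not exist. And if it did, lifting $R$ afterwards would mean lifting the identity.

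What your Step~2 actually uses is bijectivity of only the $n-1$ pieces $\la_{\sigma_1},\dots,\la_{\sigma_{n-1}}$ along the \emph{path} $\sigma_1\to\cdots\to\sigma_{n-1}\to\sigma_0$, with the closing piece $\la_{\sigma_0}$ left free. This is attainable:
\begin{itemize}
\item treat these pieces one at a time with Lemma~\ref{le:C2Bijective};
\item push each new splitting of a spoke backwards through the pieces already made bijective, splitting the matched point too, which keeps them bijective;
\item lift the next untreated piece along the splittings of the following spoke;
\item absorb all splittings of $\sigma_1$ and $\sigma_0$ into $\la_{\sigma_0}$ via Lemma~\ref{le:C2Lift}.
\end{itemize}
After identifying the spokes through the bijections, the closing piece is $R\in\JJub$ itself, and your Steps~2 and~3 go through. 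In Step~3, the option that works is to add a new reflexive point at the tip of every spoke; this only creates one more fixed point of $R$.

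There are two smaller omissions.
\begin{itemize}
\item For (3), and for the direction ``lies on a cycle $\Rightarrow$ terminal'' in (4), you need that the lifted return map $\tilde R$ has no cycles other than loops. Showing that distinct fixed points give disjoint cycles is not enough. The missing fact follows from $\le$-monotonicity. Along a cycle $x_0\la x_1\la\cdots\la x_m=x_0$, the inequality $x_{j+1}\le x_{j+2}$ forces $x_j\le x_{j+1}$, and $x_{j+2}\le x_{j+1}$ forces $x_{j+1}\le x_j$. So all steps have the same sign and, summing to $0$, vanish. Transshipment points of $\tilde R$ are not reflexive by Remark~\ref{re:TerminalReflexive}.
\item In Step~4, the green pieces cannot in general be kept bijective once both branching spokes have been lifted independently. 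This is not needed: $\DDub_0$ constrains only the cycles, and lifting the green pieces with Lemma~\ref{le:C2Lift}, as in your last paragraph, suffices.
\end{itemize}

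With these repairs your route is correct and genuinely different from the paper's. The paper never normalizes the pieces. It runs the splitting procedure of Lemma~\ref{le:SimpleToVerySimple} directly on the fan digraph, which is $\le$-monotone for the fan order, to get (2). Because its pieces are not injective, distinct cycles can still merge at some spoke, so it needs a second surgery that splits a shared point to obtain (3). It then derives (4) from (2) and (3), and enlarges spokes by splitting cycles.

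Your reduction pays up front for the normalization to $n-1$ bijective pieces. In return it reuses the arc results verbatim, and (3)--(4) become essentially automatic: bijective connecting pieces prevent cycles from merging, and a monotone return map has only fixed points as periodic points.
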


\begin{proof}
	By Condition (2) of Proposition \ref{pr:CatMCoInitial}, $\la$ is $\leq$-monotone with respect to the partial ordering $\leq$ on $K_\la$.  Therefore exactly the same construction as in the proof of Proposition \ref{le:SimpleToVerySimple} gives a lift $\tla$ of $\la$ that satisfies (1) and (2) with sizes of spokes equal to some $k_0 \in \NN$. To obtain such $\tla$ with spokes of size $k>k_0$, we can additionally split cycles.
	
	We show (3). Suppose that $c_0 \la \ldots \la c_m$,  and $d_0 \la  \ldots \la d_n$ are cycles (i.e. $c_0 \la c_m$, $d_0 \la d_n$) that are not disjoint. Without loss of generality, we can assume that $m=n$, and $\pi_S(c_i)=\pi_S(d_i)$, $i \leq n$, and that each $d_i$ is an immediate successor of $c_i$ with regard to $\leq$. There exists (possibly, after a renumeration of the cycle) $0<i<n$ such that $c_{i-1}<d_{i-1}$, $c_i=d_i$. We construct a lift $\tla$ of $\la$ in the following way. We split $c_i$ into $c_i$, $c'_i$, and define $\tla$ so that $$c_{i-1} \tla c'_i \tla c_{i+1}, \quad c_{i-1} \not \tla c_i \not \tla c_{i+1}$$ and $\tla$ agrees with $\la$ for other cases. At each step the number of pairs of distinct non-disjoint cycles
	strictly decreases, while no new such pair is created. Since there are
	only finitely many cycles, the process terminates. Applying the above
	construction repeatedly, we therefore obtain a lift $\tla$ of $\la$
	satisfying~(3).
	

Now we show that (2) and (3) imply (4). The implication from left to right follows from Remark~\ref{re:TerminalCycle} and (3). Conversely, suppose that a cycle $O$ contains a transshipment point $x$, and let $(x_i)_{i\in\mathbb Z}$ be a bi-infinite $\tla$-walk such that $x_0=x$ and $x_i\neq x$ for $i\neq0$. By (2), $O$ must contain a terminal point; let $c$ be the first such point along $O$ starting from $x$. Since all preceding points are transshipment, (2) forces $(x_i)$ to follow $O$ from $x$ to $c$. As $c$ is terminal, it occurs again in $(x_i)$. The segment between two occurrences of $c$ contains a cycle through $c$, which by (3) must be $O$, and hence contains $x$. This contradicts the choice of $(x_i)$. Thus every point belonging to a cycle is terminal.

To prove the ``in particular'' part, consider first $\la\in\DDub$ whose spoke-graph is a single spiral, with red and blue cycles $R,B$ and green path $G$. Apply the above construction separately to the restrictions of $\la$ to $R$ and $B$, choosing the same spoke size. For the restriction to $G$, temporarily regard the red and blue branching points as reflexive, so that they form trivial cycles, and apply the same construction. Finally, glue the resulting lifts. Applying this independently to all spirals gives the required lift in $\DDub_0$.

\end{proof} 

\begin{theorem}
A generic homeomorphism of $C$ has no Li--Yorke pairs. In particular, zero topological entropy is generic for homeomorphisms of $C$.
\end{theorem}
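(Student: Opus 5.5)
We want to show that the set of $f\in\Homeo(C)$ with no Li--Yorke pair is comeager; since zero entropy follows from the absence of Li--Yorke pairs (positive entropy implies Li--Yorke chaos, by Blanchard--Glasner--Kolyada--Maass), the second claim will follow. Recall that $(x,y)$ is a Li--Yorke pair if $\liminf_n d(f^n x,f^n y)=0$ and $\limsup_n d(f^n x,f^n y)>0$.

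\textbf{Reduction to a $G_\delta$-type statement.} Fix a compatible metric $d$ on $C$. For $\epsilon>0$ let $P_\epsilon$ be the set of $f$ such that there is $N$ with the following property: for all $x,y$ and all $n\ge N$, if $d(f^n x,f^n y)$ is small, then $d(f^m x,f^m y)<\epsilon$ for all $m\ge n$. It suffices to show that, for each $\epsilon$ (equivalently, each mesh level $m_\epsilon$ with $\mesh(K_{m_\epsilon})<\epsilon$), the set
\[
U_\epsilon=\{f:\ \exists \la\in\DDub_0 \text{ with } f\in[\la] \text{ and } \la \text{ has property } (\ast)_\epsilon\}
\]
is open and dense. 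Here $(\ast)_\epsilon$ is a combinatorial condition on $\la$ guaranteeing, for every $f\in[\la]$, that any two points are eventually forever $\epsilon$-close or eventually forever $\delta$-apart for some $\delta>0$ depending only on $\la$. Openness is automatic because each $[\la]$ is open (Proposition \ref{pr:Basis}). The intersection $\bigcap_k U_{1/k}$ then consists of homeomorphisms without Li--Yorke pairs. Density will come from the co-initiality of $\DDub_0$ in $\CCub$ (Proposition \ref{pr:TopEntropy}), Corollary \ref{co:LiftBasis}, and Lemma \ref{le:C_IDense}: every basic open set $[A]$ contains some $[\la]$ with $\la\in\DDub_0$ refining $A$ at a fine level.

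\textbf{Dynamics forced by $\la\in\DDub_0$.} The key step is to read off the dynamics of any $f\in[\la]$ from properties (1)--(4) of Proposition \ref{pr:TopEntropy}. Transshipment points have a unique incoming and outgoing arrow, so an $f$-orbit visiting a transshipment cell is funneled deterministically along a directed path. Such a path starts at a terminal cycle and ends at another terminal cycle, and by (3) the cycles are disjoint. On the spoke level, the spiral structure forces orbits to pass from the red cycle to the blue cycle through the green path.

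We therefore aim to show the following: for $f\in[\la]$, the closed sets $\bigcup O$ over the terminal cycles $O$ are, up to the cover, the only places where points accumulate. Any point $x$ eventually lies forever in the star of a single cycle, which serves as an attracting region in forward time. Since cycles are disjoint and reflexive terminal points separate them, two points whose forward orbits end in different cycles stay at distance bounded below by $\ell(K_\la)$ from some time on. Two points ending in the same cycle of length $c$ have $f^{c}$-iterates confined to the same set of cells of mesh less than $\epsilon$ in the appropriate phase, so they are eventually $\epsilon$-close whenever they are ever in the same phase.

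To control phase mismatch we choose the lift so that every terminal cycle is a loop, i.e.\ reflexive single points. This is possible by first splitting cycles (as in the proof of Proposition \ref{pr:TopEntropy}) and passing to a further lift, so that the stars of the terminal points have diameter less than $\epsilon$. Then $(\ast)_\epsilon$ reads: each terminal cycle has star of diameter less than $\epsilon$ and distinct terminal cycles have disjoint stars.

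\textbf{Main obstacle.} The hardest step is justifying that orbits eventually settle in one terminal star. A walk in $\la$ could oscillate back and forth between terminal cycles through transshipment cells infinitely often; the point is that (2) and (4) exclude this, since any return would create a cycle through a transshipment point. Compactness, together with the uniform bound on transshipment path lengths, gives a uniform $N$. I would first prove this eventual-trapping statement carefully for a single spiral, treating the red and blue cycles separately and using the spoke-graph projection. I would then glue over the spirals, where the spirals are disjoint invariant unions of spokes up to the root, and the root is terminal and reflexive by the earlier remark.
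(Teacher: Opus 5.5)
Your overall scheme is sound: open sets $U_\epsilon$, intersected over $\epsilon=1/k$, each guaranteeing that any two points are ``eventually forever $\epsilon$-close or eventually forever $\delta$-apart.'' Your eventual-trapping argument from (3) and (4) of Proposition~\ref{pr:TopEntropy} is also fine pointwise.

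\textbf{Main gap.} The condition $(\ast)_\epsilon$ you finally adopt cannot be attained, so $U_\epsilon$ is not dense. You assert that, after splitting as in Proposition~\ref{pr:TopEntropy}, one can pass to a lift whose terminal cycles are all loops. That splitting only separates intersecting cycles; it never shortens one, and no construction can.

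Take $A\in\CCub$ forcing $f$ to interchange two clopen families of arcs, i.e.\ $A$ relates the end of a spoke $\sigma_1$ only with cells over $\sigma_2$, and vice versa. Let $\la$ be any lift of $A$. Ends of $K_\la$ lie over ends of $K_A$, and arrows are weakly preserved. Hence every end cell over $\sigma_1\cup\sigma_2$ is related to some end cell, and any end cell it is related to lies over the opposite spoke. So these end cells carry a cycle of even length $\ge 2$, and by (4) its points are terminal. Thus $U_\epsilon\cap[A]=\emptyset$ for the nonempty open set $[A]$.

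Once loops are unavailable, the two situations you wanted to avoid are exactly what must be handled:
\begin{itemize}
\item two points running around the same cycle of length $\ge 2$ in different phases;
\item two points on distinct but adjacent terminal cycles, so that ``disjoint stars'' and your lower bound by $\ell(K_\la)$ are not automatic.
\end{itemize}
The proposal gives no argument for either.

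\textbf{How the paper proceeds.} It needs neither loops nor a uniform condition on $\la$. It fixes $\phi$ in the comeager set $Y$ and a pair $x,y$. At each scale the $\la$-walks of $x$ and $y$ are eventually periodic, and the paper splits into two cases:
\begin{itemize}
\item if at every scale the periodic parts become adjacent, monotonicity of $\la$ is used to keep them adjacent, giving $d(\phi^nx,\phi^ny)\to 0$;
\item otherwise, at some scale they are eventually at graph distance $\ge 2$; only finitely many pairs of cells with disjoint closures then occur, so $\liminf d(\phi^nx,\phi^ny)>0$.
\end{itemize}
Cycles of any length, and phase shifts along them, are absorbed by this case split. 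If you replace $(\ast)_\epsilon$ by ``$\la\in\DDub_0$ and $\mesh(K_\la)<\epsilon$'' and argue this way, your scheme becomes the paper's.

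\textbf{Two smaller points.}
\begin{itemize}
\item Co-initiality of $\DDub_0$ yields $\sqsp$-lifts for arbitrary $\sqsp$, whereas $[\la]\subseteq[A]$ needs an $\It$-lift (Proposition~\ref{pr:Subset}). For general $\sqsp$ you only get $g[\la]g^{-1}\subseteq[A]$ with $g\in[\sqsp]$ (Proposition~\ref{pr:DownwardClosed}), which your cover-dependent $U_\epsilon$ does not capture. The paper handles this by choosing the Fra\"{i}ss\'{e} sequence so that $\DDub_0$-lifts occur as $\succeq^m_n$-lifts.
\item The ``uniform $N$'' you derive from compactness does not exist: an orbit can stay arbitrarily long on a terminal cycle before leaving along a transshipment path. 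Only pointwise eventual periodicity is actually needed, so this claim can simply be dropped.
\end{itemize}
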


\begin{proof}
First, we construct a \fra \ sequence $(K_n, \sqsp_n)$ in $\CCb$, so that, for every $m \in \NN$, and every $\la \in \CCub$ with $K_\la=K_m$ there exist $n \geq m$, and $\twoheadleftarrow \in \DDub_0$ such that $K_\twoheadleftarrow=K_n$, and $\twoheadleftarrow$ is a $\sqsp^m_n$-lift of $\la$. This can be easily done using amalgamation in $\CCb$, and in $\DDub$, together with co-initiality of $\DDub_0$ in $\DDub$ given by Proposition \ref{pr:TopEntropy}. In particular, the \fra \ limit of $(K_n, \sqsp_n)$ is the Cantor fan $C$. Moreover, we have that the set $Y \subseteq \Homeo(C)$ of all $\phi \in \Homeo(C)$ such that for every $m \in \NN$ there is $n \geq m$, and $\la \in \DDub_0$ such that $K_\la=K_ n$, and $\phi \in [\la]$, is comeager in $ \Homeo(C)$. Thus it is enough to prove that every $\phi \in Y$ has no Li--Yorke pairs.

Fix $\phi \in Y$. We show that $\phi$ has no Li-Yorke pairs. We fix a compatible metric $d$ on $C$, and $x, y \in C$. We fix $\epsilon>0$, and $\la \in \DDub_0$ such that $\mbox{mesh}(K_\la)<\epsilon$, and $\phi \in [\la]$. Let $(x^\epsilon_n)$, $(y^\epsilon_n)$ be $\la$-walks in $K_\leftarrow$ such that $$\phi^n(x) \in x^\epsilon_n, \quad \phi^n(y) \in y^\epsilon_n,$$ $n \in \NN$. 
 Since every directed path in a spiral eventually enters the blue cycle, there is $m \in \NN$ such that all the arrows $$x_n^\epsilon \la x_{n+1}^\epsilon, \quad y_n^\epsilon \la y_{n+1}^\epsilon$$ are blue whenever $n \geq m$. Thus, by Proposition \ref{pr:TopEntropy}(4),  the sequences $(x^\epsilon_n)$ and $(y^\epsilon_n)$ are eventually periodic. We distinguish two cases according to the eventual position of the periodic parts of the symbolic walks. First, for every $\epsilon>0$ there is $m \in \NN$ such that $x^\epsilon_m$ and $y^\epsilon_m$ are periodic, and $\rho(x^\epsilon_m,y^\epsilon_m) \leq 1$. Then monotonicity of $\la$ implies that
$x^\epsilon_n$ and $y^\epsilon_n$ remain in the same spoke, and satisfy
$\rho(x^\epsilon_n,y^\epsilon_n)\le 1$ for every $n\ge m$. Since $\mesh(K_\la)<\epsilon$, it follows that $$\lim d(\phi^n(x),\phi^n(y))=0.$$ Otherwise, there is $\epsilon>0$ such that $\rho(x^\epsilon_n,y^\epsilon_n) \geq 2$ for all sufficiently large $n$. The corresponding cells have disjoint closures, and, since the two symbolic walks are eventually periodic, only finitely many such pairs occur. Hence they are uniformly separated by a positive distance, and therefore $$\liminf d(\phi^n(x),\phi^n(y))>0.$$ In either case, $x$, $y$ are not a Li--Yorke pair.
\end{proof}

\subsection{The Lelek fan.} 

 A Lelek fan is a nondegenerate subcontinuum $L$ of the Cantor fan whose ends (in the topological sense) are dense in $L$. All Lelek fans are homeomorphic, and it therefore makes sense to speak of \emph{the} Lelek fan. See, e.g., \cite{BaBiVi2} for more details on this topic. The category $\LLb \subseteq \GGRb$ controlling the behavior of the Lelek fan consists of all fans, i.e., $\ob(\LLb)=\ob(\CCb)$, and spoke-monotone morphisms that preserve roots. By \cite[Proposition 5.29]{BaBiVi2} and \cite[Theorem 5.43]{BaBiVi2}, $\LLb$ is a \fra \ category, and its \fra \ limit is the Lelek fan. 
 
It turns out that the category $\LLub$ is very similar to $\CCub$. However, when working with the former, one needs to deal with two interrelated technical difficulties: morphisms are not necessarily spoke-preserving and, when restricted to spokes, they need not even be co-injective. Nevertheless, the line of reasoning from the previous section applies to the Lelek fan as well, either verbatim or with only minor adjustments. Unfortunately, we are not aware of any general transfer principle in the spirit of \cite[Section~3.1]{BaKuKwMa} that would account for these differences. We therefore revisit the arguments from the Cantor fan section, indicating only the points where such adjustments are required and omitting those parts of the proofs that remain unchanged.
 
 \begin{remark}
 	Let $F, G \in \LLb$, ${\sqsp} \subseteq \LLb^F_G$.
 	\begin{enumerate}
 		\item Let $\tau \in *G$. As $\sqsp$ preserves roots, we have that $\tau^\sqsb=[\rt,f]$ for some $f \in F$. 
 		\item Let $\sigma \in *F$. By co-injectivity and spoke-monotonicity of $\sqsp$, there exists $\tau \in *G$ such that $\tau^\sqsb=\sigma$, and $\sqsp \uhr \sigma \times \tau \in \mor(\IIb)$.
 	\end{enumerate}
 \end{remark}
 
 \begin{remark}
\label{re:Break}
 For any $\sqsp \in \LLb^F_G$, and $\tau \in *G$, we have that $\sqsp \uhr \tau^\sqsb \times \tau$ is co-surjective, and monotone. However, because $\sqsp \uhr \tau^\sqsb \times \tau$ needs not be co-injective, it is not true in general that $\sqsp \uhr \tau^\sqsb \times \tau \in \mor(\IIb)$, even if $\sqsp$ is spoke-preserving. Nevertheless, by \cite[Lemma 5.18]{BaBiVi2}, for any $f \in \tau^\sqsb$ that is not $\leq$-maximal in $\tau^\sqsb$, there exists $g \in \tau$ such that $\pi_\sqsp(g)=f$. In particular, co-injectivity of a spoke-preserving $\sqsp \in \LLb^F_G$ may break down only at the ends of $F$. 
 \end{remark}
 
Before we start investigating $\LLub$, let us consider auxiliary categories needed because of the complications indicated in Remark \ref{re:Break}. Let $\LLb_2 \subseteq \LLb$ be the full subcategory whose objects are objects in $\CCb_2$. Let $\LLb_2^*$ be the category whose objects are objects in $\LLb_2$, but whose morphisms are co-surjective, spoke-monotone relations that preserve the roots. Note that $\LLb_2^* \not \subseteq \GGRb$ because morphisms in $\LLb_2^*$ are not necessarily co-injective (nor surjective), however, as already mentioned, co-injectivity may fail only at the ends. Let $\LLub^*_2$ be the category whose objects are $\leq$-monotone $\LLb_2$-digraphs, and morphisms are weakly arrow-preserving morphisms from  $\LLb_2^*$. We define $\IIb^*$ and $\JJub^*$ analogously, i.e., morphisms in $\IIb^*$ are co-surjective, and monotone relations between paths that preserve minimal elements, and objects in $\JJub^*$ are $\leq$-monotone $\IIb$-digraphs on paths. 

\begin{remark}
For $F, G \in \LLb$, $\sqsp \in \GGRb^F_G$, we have $\sqsp \in \LLb^F_G$ iff, for every $\tau \in *G$,  $\tau^\sqsb=[\rt,f]$ for some $f \in F$, and $\sqsp \uhr \tau^\sqsb \times \tau \in \mor(\IIb^*)$.
\end{remark}

 Exactly the same arguments as in the proofs of the corresponding results on $\IIb$, i.e., Proposition \ref{pr:CharCMon}, Corollary \ref{co:CharCMon}, and Proposition \ref{co:G_IDense}, give:

 \begin{proposition}
 	Let $K, L \in \IIb^*$, and let $\sqsp \subseteq K \times L$ be an edge-preserving relation such that $\pi_{\sqsp}(\rt)=\rt$. Then $\sqsp$ is monotone iff for any $l,l' \in L$, we have that $l \leq l'$ implies $\alpha_{l} \leq \alpha_{l'}$, $\beta_{l} \leq \beta_{l'}$.
 \end{proposition}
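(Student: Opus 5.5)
We adapt the proof of Proposition \ref{pr:CharCMon}. The relation $\sqsp$ is now only edge-preserving with $\pi_\sqsp(\rt)=\rt$; co-injectivity may fail, although the paper's convention for $\IIb^*$ keeps co-surjectivity. So every $l\in L$ has a nonempty set $\{l\}^\sqsb$. This set is a set of pairwise adjacent vertices of the path $K$, hence it is $\{\alpha_l,\beta_l\}$ with $\alpha_l\le\beta_l\le\alpha_l+1$, as in Remark \ref{re:ConnectedBasic}(3). Compared with Proposition \ref{pr:CharCMon}, two things change: condition (1) on $\max L$ is dropped, and every use of surjectivity of $\pi_\sqsp$ has to be avoided. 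So we check, direction by direction, where co-injectivity was used.

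\emph{If (2) holds, then $\sqsp$ is monotone.} Let $C=[k,k']\subseteq K$ and suppose $C^\sqsp$ is not connected. Then there are $l'_0<l<l'_1$ with $l'_0,l'_1\in C^\sqsp$ and $l\notin C^\sqsp$. Thus $\{\alpha_l,\beta_l\}\cap[k,k']=\emptyset$, so either $\beta_l<k$ or $\alpha_l>k'$. In the first case $l'_1$ has some $\sqsb$-neighbour in $[k,k']$, so $\beta_{l'_1}\ge k>\beta_l$; together with $l<l'_1$ this would have to satisfy $\beta_l\le\beta_{l'_1}$. To get an actual violation of (2), we compare $l$ with $l'_0$ instead: $l'_0$ has a neighbour in $[k,k']$ while $l$ does not, and we use that $\alpha_{l'_0}\le\alpha_l$ and $\beta_{l'_0}\le\beta_l$ under (2). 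If $\beta_l<k$, then $\beta_{l'_0}\le\beta_l<k$, so $l'_0$ has no neighbour in $C$, a contradiction. If $\alpha_l>k'$, then $\alpha_{l'_1}\ge\alpha_l>k'$, so $l'_1$ has no neighbour in $C$, again a contradiction. This direction uses only the existence of $\alpha_l,\beta_l$, which co-surjectivity provides.

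\emph{If $\sqsp$ is monotone, then (2) holds.} It suffices to treat $l'=l+1$. Suppose $\alpha_{l+1}<\alpha_l$. The set $[\rt,l]^\sqsb$ is connected by monotonicity applied to $\sqsb$; this is the analogue of Remark \ref{re:ConnectedBasic}(2), which only needs edge-preservation. It contains $\rt$, because $\pi_\sqsp(\rt)=\rt$ means $\rt\sqsp\rt$, and it contains $\alpha_l$. Hence it contains $\alpha_{l+1}$, so $\alpha_{l+1}\sqsp l''$ for some $l''\le l$. We have $l''\ne l$ because $\alpha_{l+1}<\alpha_l$, so $l''<l<l+1$. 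Then $\{\alpha_{l+1}\}^\sqsp$ contains $l''$ and $l+1$ but not $l$, so it is disconnected, contradicting monotonicity.

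Now suppose $\beta_{l+1}<\beta_l$. Using $\alpha_l\le\alpha_{l+1}$ and adjacency, this forces $\beta_l=\alpha_l+1$ and $\alpha_{l+1}=\beta_{l+1}=\alpha_l$. In the original proof one then chose $l'$ with $\pi_\sqsp(l')=\beta_l$, and this is the step that needed co-injectivity. Here we argue directly instead. We have $\beta_l\sqsp l$ and $\beta_l\not\sqsp l+1$. If $\beta_l\sqsp l'$ for some $l'>l+1$, then $\{\beta_l\}^\sqsp$ is disconnected, a contradiction. Otherwise, for every $l'>l+1$, the neighbours of $l'$ are $\ge\alpha_{l+1}=\alpha_l$ by the $\alpha$-monotonicity already proved, and $\ne\beta_l$. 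The set $[l+1,\max L]^\sqsb$ is connected, contains $\alpha_l$, and does not contain $\beta_l$, so it lies in $[\alpha_l,\ldots]$ and is an interval missing $\beta_l=\alpha_l+1$. Hence it equals $\{\alpha_l\}$, which means every $l'\ge l+1$ maps only to $\alpha_l$. In that case the violation $\beta_{l+1}<\beta_l$ is genuine, and (2) can fail without monotonicity failing. For example, take $K=\{0,1\}$ and $L=\{0,1\}$ with $0\sqsp0$, $1\sqsp0$, $0\sqsp1$.

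This terminal configuration is exactly the failure of co-injectivity at ends described in Remark \ref{re:Break}, and it is the main obstacle. We expect it must be excluded by the standing convention for $\IIb^*$: either $\max K$ lies in $L^\sqsb$, which is surjectivity, or the intended reading of "co-surjective" gives $\max K\in\{l\}^\sqsb$ for some $l$. In the example above, $\{\alpha_l\}^\sqsp$ with $\beta_l=\max K$ would then force $\beta_l\sqsp l'$ for some $l'$, and the $\beta$-case closes as in the original proof. So the plan is to verify which surjectivity hypothesis the objects of $\IIb^*$ carry, and to use it precisely at this single step.
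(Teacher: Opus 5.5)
\textbf{The proposal does not prove the statement, and no proof can: the $\beta$-half of the ``only if'' direction is false under the stated hypotheses.}

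You have found exactly the step where the paper's justification breaks. The paper says the proof of Proposition~\ref{pr:CharCMon} applies unchanged. But in its $\beta$-case that proof uses some $l'$ with $\pi_\sqsp(l')=\beta_l$. Such an $l'$ exists only because morphisms of $\GGRb$ are co-injective, and co-injectivity is exactly what $\IIb^*$ gives up.

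Two parts of your proposal are correct: the proof that the condition on $\alpha,\beta$ implies monotonicity (after the false start), and the proof that monotonicity forces $\alpha$ to be nondecreasing.

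Your example does not show that the statement fails. There $\rt\in L$ is related to both $0$ and $1$, so $\pi_\sqsp(\rt)$ is undefined. In fact $\pi_\sqsp(\rt)=\rt$ forces $\alpha_\rt=\beta_\rt$, so a drop of $\beta$ can never start at $\rt$. A valid counterexample is $K=\{0,1\}$, $L=\{0,1,2\}$, $\sqsp=\{(0,0),(0,1),(1,1),(0,2)\}$. It is edge-preserving, since any two points of $K$ are adjacent. It is surjective and co-surjective and satisfies $\pi_\sqsp(\rt)=\rt$. It is monotone, since $\{0\}^\sqsp=L$ and $\{1\}^\sqsp=\{1\}$. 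So it is even a morphism of $\IIb^*$, yet $\beta_1=1>0=\beta_2$.

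Your proposed repair does not work either. Surjectivity only guarantees that $\max K$ is related to some $l$. In the bad configuration $\beta_l\sqsp l$ already holds, so nothing new is gained; your example and the one above are both surjective. What the step actually needs is some $l'>l+1$ with $\beta_l\sqsp l'$, i.e.\ that $\beta_l$ lie in the range of $\pi_\sqsp$.

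At the first drop, combine your tail analysis (every $l'>l$ is related only to $\alpha_l$) with the observation from Proposition~\ref{pr:CharCMon} (any $l'$ with $\pi_\sqsp(l')=\beta_l$ satisfies $l'>l+1$). Together they show that $\beta_l=\max L^{\sqsb}$ and that no $l'$ has $\pi_\sqsp(l')=\beta_l$. This is precisely the failure of co-injectivity at the top that Remark~\ref{re:Break} allows.

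So the proposition needs one of two changes:
\begin{itemize}
\item an extra hypothesis, for instance that $\max L^{\sqsb}$ lies in the range of $\pi_\sqsp$, under which your argument closes as in Proposition~\ref{pr:CharCMon}; or
\item a weaker conclusion: $\sqsp$ is monotone iff $\alpha$ is nondecreasing and $\beta$ is nondecreasing except for at most one drop $\beta_{l+1}<\beta_l$, after which every $l'>l$ is related only to $\alpha_l$. Your arguments, plus a short check of the constant tail, prove this version.
\end{itemize}
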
 
 
 \begin{corollary}
 For every $\sqsp \in \mor(\IIb^*)$ there exists a $\leq$-monotone function $\sqsp' \in \mor(\IIb^*)$ such that $\sqsp' \subseteq \sqsp$.
 \end{corollary}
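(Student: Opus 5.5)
The plan is to follow the proof of Corollary \ref{co:CharCMon}(2), using the proposition just stated (the $\IIb^*$-version of Proposition \ref{pr:CharCMon}) in place of Proposition \ref{pr:CharCMon}. Fix $\sqsp \in \mor(\IIb^*)$ from $L$ to $K$. For each $l \in L$, consider $\{l\}^\sqsb$. Since $\sqsp$ is edge-preserving and $K$ is a path, $\{l\}^\sqsb$ consists of at most two neighbours $\alpha_l \leq \beta_l$, exactly as in Remark \ref{re:ConnectedBasic}(3). It is nonempty because $\sqsp$ is co-surjective. I will define the function $\sqsp' : L \to K$ by $\sqsp'(l)=\beta_l$ and then check three things: that $\sqsp'$ is contained in $\sqsp$, that it is a morphism of $\IIb^*$, and that it is $\leq$-monotone.

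Containment $\sqsp' \subseteq \sqsp$ holds by construction. Co-surjectivity of $\sqsp'$ is immediate, since $\sqsp'$ is a total function on $L$. Edge-preservation holds because $\sqsp' \subseteq \sqsp$ and subrelations of edge-preserving relations are edge-preserving. Root preservation amounts to $\pi_{\sqsp'}(\rt)=\rt$, that is, $\beta_\rt=\rt$. This follows from $\pi_\sqsp(\rt)=\rt$, which gives $\{\rt\}^\sqsb=\{\rt\}$.

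For monotonicity I will use the preceding proposition: $\sqsp$ is monotone with $\pi_\sqsp(\rt)=\rt$, so the sequence $(\beta_l)_{l \in L}$ is nondecreasing. Hence $\sqsp'$ is an order-preserving function. Every order-preserving function between paths maps intervals to intervals, so preimages of connected sets are connected, and $\sqsp'$ is monotone in the relational sense. In particular it is $\leq$-monotone, and therefore $\sqsp' \in \mor(\IIb^*)$.

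The one point requiring care, and the main difference from Corollary \ref{co:CharCMon}(2), is that no surjectivity of $\sqsp'$ onto $K$ is needed here. Morphisms in $\IIb^*$ need not be co-injective. So the argument that invoked $\pi_\sqsp \subseteq \sqsp'$ is unnecessary, and it would in fact fail when co-injectivity breaks at the maximal element (compare Remark \ref{re:Break}). I therefore expect the only real work to be confirming that the preceding proposition applies verbatim to yield monotonicity of $\beta$ without co-injectivity. Its proof direction ``monotone $\Rightarrow$ $\alpha,\beta$ nondecreasing'' in Proposition \ref{pr:CharCMon} used only connectedness of $\{k\}^\sqsp$ and of $[\rt,l]^\sqsb$, and never co-injectivity. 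The single exception is the step that located an $l'$ with $\pi_\sqsp(l')=\beta_l$. There I would instead use that $\{\beta_l\}^\sqsp$ is connected and contains $l$ but, in the offending configuration, not $l+1$, while also containing some later point obtained from co-surjectivity onto a larger element or from surjectivity near $\max K$. If this fails at the very top, I would simply take $\sqsp'(l)=\alpha_l$ on the final segment where $\beta$ drops, which keeps the function nondecreasing.
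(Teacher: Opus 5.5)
Your main line is the paper's route. The paper only says that the argument of Corollary~\ref{co:CharCMon}(2) carries over, i.e.\ $\sqsp'(l)=\beta_l$ together with the $\IIb^*$-analogue of Proposition~\ref{pr:CharCMon}. However, the step ``$(\beta_l)$ is nondecreasing'' is false for morphisms of $\IIb^*$, so that analogue fails in the direction you need.

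Take $K=\{0,1\}$, $L=\{0,1,2\}$ and $\sqsp=\{(0,0),(0,1),(1,1),(0,2)\}\subseteq K\times L$. This relation is co-surjective, edge-preserving and monotone ($\{0\}^{\sqsp}=L$, $\{1\}^{\sqsp}=\{1\}$), and $\pi_{\sqsp}(\rt)=\rt$. Yet $(\beta_0,\beta_1,\beta_2)=(0,1,0)$.

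Your attempted repair of the co-injectivity step cannot succeed. Suppose $\beta_{l+1}<\beta_l$ and put $k=\alpha_l$. Then $\alpha_l\le\alpha_{l+1}\le\beta_{l+1}<\beta_l\le\alpha_l+1$, so $\{l\}^{\sqsb}=\{k,k+1\}$ and $\{l+1\}^{\sqsb}=\{k\}$. Monotonicity applied to $[k+1,\max K]$ then forces $\{l'\}^{\sqsb}=\{k\}$ for all $l'>l$, so no later point is related to $\beta_l$.

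Your fallback points the right way but is not pinned down. Switching to $\alpha$ only after the drop changes nothing, since there $\alpha=\beta=k$. You would have to switch at the start of the last plateau of $\beta$, and then prove the structural facts just listed.

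The missing observation is that $\alpha$ works on all of $L$. The half of the proof of Proposition~\ref{pr:CharCMon} showing $\alpha_l\le\alpha_{l+1}$ uses only two facts: $[\rt,l]^{\sqsb}$ is an interval containing $\rt$ and $\alpha_l$, and $\{\alpha_{l+1}\}^{\sqsp}$ is connected. It never uses co-injectivity. So $\sqsp'(l)=\alpha_l$ is a nondecreasing total function contained in $\sqsp$ with $\sqsp'(\rt)=\rt$. It is therefore co-surjective, edge-preserving and root-preserving. It is also monotone, because preimages of intervals under a nondecreasing map are intervals; that is the fact you need, not that images of intervals are intervals.

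Your remark that surjectivity has to be given up is correct and essential. In the example, the only surjective function contained in $\sqsp$ is $(0,1,0)$. So the statement holds only because $\IIb^*$ admits non-surjective morphisms, i.e.\ an empty $C^{\sqsp'}$ counts as connected.
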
 
 
 
 
 \begin{proposition}
 \label{co:G_I*Dense}
 	Let $K,L,M \in \IIb^*$, and let $\sqsp \in (\IIb^*)^K_L$, $\sqsp' \in (\IIb^*)^L_M$, $\succeq \in (\IIb_2^*)^K_M$ be such that
 	\begin{enumerate}
 		\item $\pi_{\sqsp}(\rt)=\rt$, $\pi_{\sqsp'}(\rt)=\rt$,
 		\item $\sqsp \circ \sqsp' \subseteq \succeq$,
 		\item $\succeq$ is monotone.
 	\end{enumerate}
 	Then there exists a monotone $\sqni \in (\IIb^*)^K_L$ such that $\sqsp \subseteq \sqni$.
 \end{proposition}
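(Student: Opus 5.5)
We follow the proof of Proposition \ref{co:G_IDense} step by step, and isolate the single place where co-injectivity of $\sqsp$ was used; the only change is the absence of that hypothesis. Write $\alpha_l=\alpha^\sqsb_l$, $\beta_l=\beta^\sqsb_l$ for $l \in L$. By the proposition preceding this statement (the $\IIb^*$-analogue of Proposition \ref{pr:CharCMon}), it suffices to find an edge-preserving, co-surjective $\sqni \supseteq \sqsp$ with $\pi_\sqni(\rt)=\rt$ whose sequences $(\alpha^{\sqni}_l)$ and $(\beta^{\sqni}_l)$ are nondecreasing. Such a $\sqni$ is then monotone and lies in $(\IIb^*)^K_L$. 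No co-injectivity has to be verified, which makes the bookkeeping lighter than for $\IIb$.

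\emph{Step 1: the claim.} We first reprove that $l<l'$ implies $\alpha_l-1 \leq \alpha_{l'}$. Pick $m,m'$ with $l \sqsp' m$ and $l' \sqsp' m'$. If $m \leq m'$, combine (2) with the monotonicity (3) of $\succeq$ and the characterization of monotonicity; since $\sqsp$ is edge-preserving, $\alpha_l$ and $\beta_l$ are neighbors. If $m'<m$, use that $\sqsp'$ is monotone and satisfies $\pi_{\sqsp'}(\rt)=\rt$, hence $\rt \sqsp' \rt$. Then $[\rt,m']^{\sqsb'}$ is an interval containing $\rt$ and $l'$, so it contains $l$. This gives $m_0 \leq m'$ with $l \sqsp' m_0$, and we are back in the first case.

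\emph{Step 2: repairing decreases.} We adapt the repair procedure. At an index where $\alpha_{l+1}=\alpha_l-1$, add the pairs $\alpha_{l+1} \sqni' l$ and $\pi_\sqsp(l) \sqni' l+1$. At an index where $\beta_{l+1}<\beta_l$, add $\beta_l \sqni' l+1$. By Step 1, the only possible decrease of $\alpha$ has size $1$, so each added pair joins neighboring vertices of $K$. Edge-preservation is therefore kept: every $\{l\}^{\sqni'}$ stays of size at most two and consists of neighbors. Co-surjectivity survives automatically because we only add pairs. The root condition $\pi_\sqni(\rt)=\rt$ survives because $\rt \in K$ is never newly attached to an $l$ with $\alpha_l>\rt$. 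Each repair removes one violation without creating a new one to the left, so iterating from left to right terminates.

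\emph{Main obstacle.} The delicate point is the subcase handled in the proof for $\IIb$ by the auxiliary observation: an $\sqsp$-co-injective $l$ with $\alpha_{l+1}=\pi_\sqsp(l)-1$ admits a later $l'$ with the same $\pi_\sqsp$-value. Here that observation is not needed for co-injectivity. It is still needed to ensure that the first repair does not make $\beta$ drop further to the right, and the danger arises at the right end, where $\pi_\sqsp$ may fail to be surjective onto the last vertex. I would first take $l''$ to be the largest index with $\alpha_{l''}=\pi_\sqsp(l)-1$. If $l''=\max L$, we are at the end of $L$ and no further monotonicity constraint arises, since $\sqni$ need not be co-injective. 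Otherwise edge-preservation forces $\alpha_{l''+1}=\beta_{l''+1}=\pi_\sqsp(l)$, as before.
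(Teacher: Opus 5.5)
Your route is the paper's: its proof is just ``the same argument as for Proposition~\ref{co:G_IDense}''. Read literally, the statement is trivial, because morphisms of $\IIb^*$ are monotone by definition, so $\sqni=\sqsp$ works. Like you, I read it with $\sqsp$ only edge-preserving, co-surjective and root-preserving, and in that reading Step~2 has a genuine gap. Call the set of $k\in K$ related to $l$ the fibre of $l$. You only check that each fibre consists of at most two neighbours. Edge-preservation also requires every point of the fibre of $l$ to be adjacent to every point of the fibre of $l\pm1$, and the Step~1 inequality $\alpha_l-1\le\alpha_{l'}$ does not ensure this. Take $K=\{0,1,2\}$, $L=\{0,\dots,4\}$ and fibres $\{0\},\{1\},\{1,2\},\{1\},\{0\}$. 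This $\sqsp$ is edge-preserving, co-surjective, has $\pi_\sqsp(\rt)=\rt$, and satisfies your Step~1 inequality. Yet your $\alpha$-repair at $3\to4$ puts $0$ into the fibre of $3$, and $0$ is not adjacent to $2$ in the fibre of $2$. In fact no monotone extension exists, since the fibre of $2$ would have to contain $\beta_2=2$ and a point $\le\alpha_4=0$. So what you establish does not yield the conclusion, and your ``main obstacle'' paragraph does not address this. The proof of Proposition~\ref{co:G_IDense} you transcribe has the same looseness. Separately, ``no new violation to the left'' is false: an $\alpha$-repair lowers $\alpha$ at $l$, which creates a decrease at $l-1\to l$ whenever $\alpha_{l-1}=\alpha_l$.

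The missing ingredient is the stronger inequality $\beta_l\le\alpha_{l'}+1$ for $l<l'$, and your Step~1 computation already gives it. Write $\alpha^{\preceq}_m\le\beta^{\preceq}_m$ for the points of $K$ that are $\succeq$-related to $m$. If $l\sqsp' m$, $l'\sqsp' m'$ and $m\le m'$, then
\[
\beta_l\le\beta^{\preceq}_m\le\beta^{\preceq}_{m'}\le\alpha^{\preceq}_{m'}+1\le\alpha_{l'}+1,
\]
and the case $m'<m$ reduces to this as in your argument. With this, no iteration is needed: let $k\sqni l$ iff $\min_{j\ge l}\alpha_j\le k\le\max_{j\le l}\beta_j$.
\begin{itemize}
\item The stronger inequality together with $\beta_l\le\alpha_l+1$ bounds the width of each fibre by one.
\item Adding $\beta_{l+1}\le\alpha_l+1$ (edge-preservation of $\sqsp$) gives $\max_{j\le l+1}\beta_j\le\min_{j\ge l}\alpha_j+1$, hence edge-preservation between $l$ and $l+1$.
\item Both end-sequences are nondecreasing by construction, $\sqsp\subseteq\sqni$, and the fibre of $\rt$ stays $\{\rt\}$, so the characterization of monotonicity applies.
\end{itemize}
Finally, Step~1 picks, for every $l\in L$, some $m$ with $l\sqsp' m$, i.e.\ it uses surjectivity of $\sqsp'$. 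In $\IIb$ this came from co-injectivity of $\sqsp'$, which is also lost in the starred setting; the paper notes that these morphisms need not be surjective. The assumption cannot be dropped: the $\sqsp$ above, with $M=\{0,1\}$ and $\sqsp'=\succeq=\{(0,0),(1,1)\}$, satisfies (1)--(3) and has no monotone extension. So surjectivity of $\sqsp'$ must be assumed, or checked where the proposition is applied.
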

 %
  


Because $\ob(\LLub_2^*)=\ob(\CCub_2)$,  and $\mor(\CCub_2) \subseteq \ob(\LLub_2^*)$, Lemmas \ref{le:C2Lift} and \ref{le:C2Bijective} immediately imply the following  two facts.

\begin{lemma}
	\label{le:L2Lift}
	Let $F,G \in \LLb^*_2$,  let $A \in\LLub^*_2$ be such that $K_A=F$, and let $\sqsp \in (\LLb^*_2)^F_G$ be a spoke-preserving function. There exists a $\sqsp$-lift $B \in\LLub^*_2$ of $A$. 
\end{lemma}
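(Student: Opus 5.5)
The plan is to reduce Lemma~\ref{le:L2Lift} to Lemma~\ref{le:C2Lift}, as the sentence preceding it suggests. The key point is that a spoke-preserving function $\sqsp \in (\LLb^*_2)^F_G$ between two-spoke regular fans is already a morphism of $\CCb_2$.

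\emph{First step: $\sqsp \in (\CCb_2)^F_G$.} Since $\sqsp$ is a function $G \to F$ preserving the root and mapping each spoke of $G$ into a spoke of $F$, spoke-preservation gives $\tau_G^{\sqsb}$, $\sigma_G^{\sqsb} \in *F$. Co-surjectivity of $\sqsp$ then forces these two images to be the two distinct spokes $\sigma_F,\tau_F$. By spoke-monotonicity, the restriction of $\sqsp$ to each spoke is a monotone function between paths sending $\rt$ to $\rt$.

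Co-injectivity can fail only at ends (Remark~\ref{re:Break}), but for a function it is automatic. Every point of $F$ has a preimage, and each preimage $g$ is $\sqsp$-co-injective simply because $\sqsp$ is a function. So $\pi_\sqsp=\sqsp$ is surjective.

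\emph{Preservation of ends.} The end of a spoke $\tau$ of $G$ maps onto the end of $\tau^{\sqsb}$. This holds because a monotone surjective function between paths fixing $\rt$ is $\leq$-preserving, by Corollary~\ref{co:CharCMon}(1) and Proposition~\ref{pr:CharCMon}(1). Hence $\sqsp$ is root- and end-preserving and spoke-monotone, so $\sqsp \in \mor(\CCb)$ with $F,G\in\CCb_2$, that is, $\sqsp \in (\CCb_2)^F_G$.

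\emph{Second step: apply Lemma~\ref{le:C2Lift}.} Objects of $\LLub^*_2$ and $\CCub_2$ coincide, since both are $\leq$-monotone surjective, co-surjective relations on $\sigma_F\times\tau_F$. Lemma~\ref{le:C2Lift} therefore yields $B\in\CCub_2$ with $K_B=G$ and $\sqsp\in(\CCub_2)^A_B$, i.e.\ $\sqsp$ is weakly arrow-preserving from $B$ to $A$. As $\mor(\CCub_2)\subseteq\mor(\LLub^*_2)$, $B$ is a $\sqsp$-lift of $A$ in $\LLub^*_2$.

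The only genuine check, and the expected obstacle, is the end-preservation in the first step. If one prefers to avoid it, the proof of Lemma~\ref{le:C2Lift} can be rerun directly: decompose $\sqsp$ into single-point splittings and perform the same local modification of $A$. That construction never uses end-preservation, only that the splitting is monotone and spoke-preserving.
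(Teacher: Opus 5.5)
Your route is the paper's own: the paper records $\ob(\LLub^*_2)=\ob(\CCub_2)$ together with an inclusion of morphisms and cites Lemma~\ref{le:C2Lift}. Your first step adds the check the paper leaves implicit, namely that the given $\sqsp$ is actually a $\CCb_2$-morphism. Your derivations of co-injectivity and of end-preservation via Proposition~\ref{pr:CharCMon}(1) are fine once surjectivity of $\sqsp$ is available.

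The problem is the sentence ``co-surjectivity of $\sqsp$ then forces these two images to be the two distinct spokes $\sigma_F,\tau_F$''. In the paper's convention, co-surjectivity of $\sqsp\subseteq F\times G$ only says that every point of $G$ is related to some point of $F$. For a function this is automatic, and it says nothing about which spokes of $F$ are hit; the paper stresses that $\LLb^*_2$-morphisms need not be surjective. What you need is $\sigma_G^{\sqsb}=\sigma_F$ and $\tau_G^{\sqsb}=\tau_F$, with the labels matched, not merely two distinct image spokes. You then use this for surjectivity, and hence co-injectivity.

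This cannot be derived from the stated definition of $\LLb^*_2$, and without it the lemma is false. Suppose $\tau_G^{\sqsb}=\sigma_F$ and $B\subseteq\sigma_G\times\tau_G$ is a lift. Co-surjectivity of $B$ gives, for each $b\in\tau_G$, some $b'\in\sigma_G$ with $b' B b$. Weak arrow-preservation then forces $\sqsp(b)\in\tau_F$, since $A\subseteq\sigma_F\times\tau_F$. Hence $\sqsp(b)\in\sigma_F\cap\tau_F=\{\rt\}$ for all $b\in\tau_G$, contradicting that $\tau_G^{\sqsb}$ is a spoke. The same argument rules out swapping the two spokes.

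So you should state explicitly that morphisms between two-spoke objects are understood to send $\sigma_G$ onto $\sigma_F$ and $\tau_G$ onto $\tau_F$. The same implicit convention is already needed for Lemma~\ref{le:C2Lift} itself. Do not try to extract this from co-surjectivity. Your fallback of rerunning the single-point-splitting argument does not avoid the issue, because that decomposition also presupposes surjectivity. With the convention made explicit, the rest of your argument is correct.
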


\begin{lemma}
	\label{le:L2Bijective}
	For every $A \in\LLub^*_2$, there exists a spoke-preserving function $\sqsp \in (\LLb_2)^{K_A}_\bullet$, and a bijective $\sqsp$-lift $B \in\LLub^*_2$ of $A$. 
\end{lemma}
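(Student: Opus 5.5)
The plan is to reduce Lemma \ref{le:L2Bijective} to Lemma \ref{le:C2Bijective}, checking only that the objects and morphisms produced there are admissible in the starred setting. Fix $A \in \LLub^*_2$. By definition, $A$ is a surjective, co-surjective, $\leq$-monotone relation $A \subseteq \sigma_F \times \tau_F$ with $F=K_A \in \LLb_2$. Since $\ob(\LLb_2)=\ob(\CCb_2)$ and the object conditions coincide, $A$ is also an object of $\CCub_2$.

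Next I apply Lemma \ref{le:C2Bijective} to $A$. This yields a function $\sqsp \in (\CCb_2)^{K_A}_\bullet$ and a bijective $\sqsp$-lift $B \in \CCub_2$ of $A$. Recall how that proof proceeds: it only splits points $y$ of one spoke (and, after passing to $A^{-1}$, points of the other spoke) into consecutive pairs $y,y+1$ and maps them back. So $\sqsp$ is a composition of such elementary splitting contractions. Each of these sends each spoke onto the corresponding spoke and fixes the root. Hence $\sqsp$ is spoke-preserving, root-preserving, spoke-monotone, and co-injective, so $\sqsp \in (\LLb_2)^{K_A}_\bullet$. One should also check that regularity of $K_B$ is maintained, i.e.\ that both spokes end with equal size. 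This holds because bijectivity of $B$ forces $|\sigma_B|=|\tau_B|$.

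It remains to see that $B \in \LLub^*_2$ and that $\sqsp$ is a morphism there. The object condition is again identical, and $B$ is $\leq$-monotone, surjective, and co-surjective. For the morphism condition, $\sqsp$ is weakly arrow-preserving from $B$ to $A$ by Lemma \ref{le:C2Bijective}. It is also a morphism of $\LLb_2 \subseteq \LLb_2^*$, since morphisms of $\CCb_2$ are spoke-monotone, co-surjective, and root-preserving, which is all $\LLb_2^*$ requires. So $\sqsp \in (\LLub^*_2)^A_B$.

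The only delicate point, and the main thing to verify, is the identification of $\mor(\CCub_2)$ with a subclass of $\mor(\LLub^*_2)$ when restricted to spoke-preserving functions. In particular, the end-preservation required in $\CCb$ should not conflict with anything, because $\LLb$ merely drops that requirement. Once this is noted, the lemma follows immediately.
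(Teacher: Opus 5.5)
Your proposal is correct and follows the paper's own route: the paper derives this lemma immediately from Lemma~\ref{le:C2Bijective}, using that $\ob(\LLub^*_2)=\ob(\CCub_2)$ and that morphisms of $\CCub_2$ are morphisms of $\LLub^*_2$, which is exactly your reduction. Your additional checks make explicit what the paper leaves to the reader: that the resulting function is spoke-preserving, that it lies in $\LLb_2$ because $\CCb$ differs from $\LLb$ only by also requiring end-preservation, and that $K_B$ is regular by bijectivity.
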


Proceeding exactly as in the proof of  Lemma \ref{le:C2MAmalgamation}, and applying Lemma \ref{le:L2Bijective},  we  get:

\begin{lemma}
	\label{le:L2MAmalgamation}
	Let  $A, {B_i} \in\LLub^*_2$ be bijective, and let $\sqsp_i \in (\LLub^*_2)^{A}_{B_i}$, $i=0,1$ be such that $B_0^{\sqsb_0}=B_1^{\sqsb_1}$. There exist a bijective  $C  \in\LLub^*_2$, and surjective functions $\sqsp'_i \in (\LLub_2)^{B_i}_{C}$, $i=0,1$, that m-amalgamate $B_0$, $B_1$ over $A$. In particular, $\LLub_2^*$ has weak m-amalgamation.
\end{lemma}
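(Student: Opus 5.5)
We follow the proof of Lemma \ref{le:C2MAmalgamation} almost verbatim; the only new point is to check that the counting-and-splitting argument survives the loss of co-injectivity at the ends. Let $\{x_i\}$, $\{y_i\}$, $i\leq n$, enumerate $\sigma_A$, $\tau_A$ increasingly, so that by bijectivity $x_i A y_i$. Define $N(\sqsp,i)$ as before, counting pairs $(c,d)\in B$ lying over some pair $(x_j,y_j)$ with $j\leq i$. We aim to find a bijective $C$ and surjective functions $\sqsp'_i$ equalising these counts:
\[
N(\sqsp_0\circ\sqsp'_0,i)=N(\sqsp_1\circ\sqsp'_1,i), \quad i\leq n .
\]
As in the Cantor case, equal counts together with bijectivity let us identify the two lifts with a single $C$. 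The m-amalgamation property then follows because each pair of $C$ lies over the same pair $(x_j,y_j)$ under both composites, so it sees a common point of $K_A$.

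The induction on $i$ runs as before. Fix the least $i$ with, say, $N(\sqsp_0,i)<N(\sqsp_1,i)$. Choose a $\sqsp_0$-co-injective $d$ with $y_i\sqsp_0 d$, take its partner $c$ in $B_0$, and split $c,d$ into $c,c'$ and $d,d'$ with $c' B'_0 d'$. The extension $\sqsp'_0$ is the splitting function; it is surjective and spoke-preserving, and $B'_0$ is a lift of $B_0$ by Lemma \ref{le:L2Lift}.

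The obstacle is that such a $d$ need not exist. In $\LLb_2^*$, co-injectivity may fail, but by Remark \ref{re:Break} only at the ends. So for $i<n$ the choice works exactly as before, and the only problematic index is $i=n$, where $x_n,y_n$ are the ends. This is where the hypothesis $B_0^{\sqsb_0}=B_1^{\sqsb_1}$ enters. It guarantees that if the end pair $(x_n,y_n)$ has a preimage in $B_1$, then it has one in $B_0$. Any pair over it has coordinates $c,d$ that are co-injective onto the end along their spokes, because the only other possible preimages are $x_{n-1},y_{n-1}$, and those were already handled at stage $n-1$. So we can split such a pair.

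If neither $B_k$ has a pair over the ends, both counts at $n$ equal the count at $n-1$, and there is nothing to do. Since each split strictly increases the smaller count and never exceeds the larger, the process terminates. The spokes then have equal sizes, and bijectivity gives $B'_0=B'_1=C$.

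For the ``in particular'' clause, we start from any $A\in\LLub^*_2$. Using Lemma \ref{le:L2Bijective}, we replace $A$ by a bijective lift and take this lift as the base $\nil$. Given $\sqsp_i$ into arbitrary $C_i$, we pass to bijective lifts of the $C_i$, again by Lemma \ref{le:L2Bijective}. The remaining issue is to arrange $B_0^{\sqsb_0}=B_1^{\sqsb_1}$. Here the base has to absorb the discrepancy: a composite with $\nil$ either reaches the end pair of $A$ or does not. We would first try to choose $\nil$ so that every morphism into $B$, composed with $\nil$, does cover the end pair. We expect this to follow from $\nil$ collapsing a terminal segment of each spoke of $B$ onto the end of $A$, together with the fact that co-injectivity fails only at the ends. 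With that in place, the main part of the argument applies to the composites.
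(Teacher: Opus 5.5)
Your route is the paper's, which just asserts that the count-and-split proof of Lemma~\ref{le:C2MAmalgamation} carries over. But the step you add for the top index is false, and the target equalities $N(\sqsp_0\circ\sqsp'_0,i)=N(\sqsp_1\circ\sqsp'_1,i)$ can be unattainable. Failure of co-injectivity at the ends means exactly that a pair $(c,d)\in B_0$ over $(x_n,y_n)$ may also lie over $(x_{n-1},y_{n-1})$; nothing done at stage $n-1$ changes this.

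Here is a concrete counterexample. Take $A,B_0,B_1$ bijective with spokes of size $3$, and write $\{\rt,c_1,c_2\}$, $\{\rt,d_1,d_2\}$ for the spokes of $K_{B_0}$. Let $\sqsp_1$ be the obvious isomorphism. Let $\sqsp_0$ be the obvious isomorphism, except that $c_2$ is also related to $x_1$ and $d_2$ to $y_1$.
\begin{itemize}
\item This is a morphism of $\LLub_2^*$ (co-injectivity fails only at the ends), and both images are all of $K_A$.
\item $B_1$ has a pair lying over $(x_2,y_2)$ and over no earlier pair; $B_0$ has none. Splitting $(c_2,d_2)$ raises $N(\cdot,1)$ as well.
\item Since $\sqsp'_0$ is a function, each pair of $C$ lies over the same pairs of $A$ as its image in $B_0$. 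Hence $N(\sqsp_0\circ\sqsp'_0,2)=N(\sqsp_0\circ\sqsp'_0,1)$ for every choice, whereas $N(\sqsp_1\circ\sqsp'_1,2)>N(\sqsp_1\circ\sqsp'_1,1)$ for every surjective $\sqsp'_1$.
\item Yet $C=B_0$ with the obvious maps is an m-amalgam, since $\{x_1,x_2\}\cap\{x_2\}\neq\emptyset$.
\end{itemize}

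The fix is to replace counting by matching.
\begin{itemize}
\item For a pair $p$ of $B_0$ or $B_1$, let $U_p$ be the set of $j$ such that $p$ lies over $(x_j,y_j)$.
\item These sets are intervals of length at most $2$, nondecreasing along the increasing enumeration of pairs, with $\min U_{p'}\le\max U_p+1$ for consecutive $p,p'$.
\item The hypothesis $K_{B_0}^{\sqsb_0}=K_{B_1}^{\sqsb_1}$ makes the $U$'s of the two top pairs intersect.
\item A greedy monotone walk through pairs $(p,q)$ with $U_p\cap U_q\neq\emptyset$ always reaches the top pairs: step diagonally if possible, otherwise advance the side whose next interval lies below the other's.
\item Taking one pair of $C$ per visited $(p,q)$ yields $C$ and surjective functions $\sqsp'_i$, and the m-amalgamation property is immediate.
\end{itemize}
This also handles the fact that for non-surjective $\LLb_2^*$-morphisms co-injectivity fails at the top of the image, which need not be $x_n$.

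Your treatment of the ``in particular'' clause is only a hope, and it cannot work, because morphisms of $\LLub_2^*$ need not be surjective. For any object $C$, the relation sending all of $K_C$ to the root of $K_B$ is co-surjective, spoke-monotone, root-preserving and weakly arrow-preserving (as $\rt\,B\,\rt$). Its composite with any $\nil$ has image $\{\rt\}$, so no choice of base can force the hypothesis of the main claim. As the definitions stand, such collapsing maps in fact make the clause trivial: take $\sqsp'_i$ collapsing $K_D$ onto the root of $K_{C_i}$, so both composites equal $\{\rt\}\times K_D$. That degenerate form is not what is used later, however. In the Lelek-fan theorem the equality of images is secured beforehand via Corollary~\ref{co:PreserveSpokes}, and only the main claim with surjective functions is needed.
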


%

Now we turn to the category $\LLub$. 

\begin{proposition}
	\label{pr:FAnDirected}
	$\LLub$ is directed.
\end{proposition}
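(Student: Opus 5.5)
The plan is to copy the proof of Proposition~\ref{pr:FAnDirected} for the Cantor fan, i.e.\ to take a wedge of two fans. Fix $A,B\in\LLub$. Replacing $B$ by an isomorphic copy, I may assume $K_A\cap K_B=\{\rt\}$. Let $F=K_A\cup K_B$ be the fan obtained by gluing $K_A$ and $K_B$ at the common root. Its edge relation is the union of the two edge relations, and its spokes are $*F=*K_A\cup *K_B$. Then $F$ is a fan, because the root has degree at least the sum of the two degrees, which is larger than $2$. Put $C=A\cup B\subseteq F\times F$. The claim is that $C\in\LLub$, and that suitable ``collapsing'' relations $\sqsp_A\in\LLub^A_C$ and $\sqsp_B\in\LLub^B_C$ exist.

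First I show $C\in\LLub$. Write $A=\sqsp_0\circ\sqsb_1$ and $B=\sqsp_2\circ\sqsb_3$, using Remark~\ref{re:FraisseDigraphs}, with $\sqsp_0,\sqsp_1\in\LLb^{K_A}_{G_A}$ and $\sqsp_2,\sqsp_3\in\LLb^{K_B}_{G_B}$. Again assume $G_A\cap G_B=\{\rt\}$. The unions $\sqsp_0\cup\sqsp_2$ and $\sqsp_1\cup\sqsp_3$ are relations $F\times(G_A\cup G_B)$. They preserve roots and edges. They are co-bijective: a co-injective witness for $x\in K_A\setminus\{\rt\}$ under $\sqsp_0$ lies in $G_A\setminus\{\rt\}$, hence stays co-injective in the union. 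The root is handled by $\pi(\rt)=\rt$. They are spoke-monotone, since every spoke of $G_A\cup G_B$ lies in one part and paths in $F$ meet only through $\rt$. Their composition gives $(\sqsp_0\cup\sqsp_2)\circ(\sqsb_1\cup\sqsb_3)=A\cup B$, because the cross terms meet only at the root and $\rt A\rt$, $\rt B\rt$. Remark~\ref{re:FraisseDigraphs} then gives $C\in\LLub$, since $K_C=F$ can be taken in $\It$ after passing to a level of the \fra\ sequence.

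Next I construct the morphism to $A$. Let $\sqsp_A\subseteq K_A\times F$ be the identity on $K_A$, with every point of $K_B$ sent to $\rt$. This is a function $F\to K_A$ collapsing $K_B$ onto the root. I check the following properties. It is surjective and co-bijective. It is edge-preserving, because $K_B$ is connected and contains $\rt$. It preserves roots. It is spoke-monotone, since for a spoke $\tau$ of $K_B$ the image is $\{\rt\}$, and for a spoke of $K_A$ the map is the identity. Note that in $\LLb$, unlike $\CCb$, morphisms need not preserve ends, so collapsing a whole spoke to the root is allowed. It is weakly arrow-preserving: if $x\,C\,y$ then either $x\,A\,y$, where the same pair works, or $x,y\in K_B$ and both map to $\rt$ with $\rt A\rt$. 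The same construction gives $\sqsp_B$.

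The main obstacle is the bookkeeping that $C$ and the collapsing maps are genuinely in the paracategory. In particular $K_C$ must be an object of the fixed \fra\ sequence, which requires that $\sqsp_A$ be realized inside $\KKb$ relative to the chosen levels. I would handle this via Proposition~\ref{pr:Robust}(3)/(4). Choose a level $K_n$ of the sequence together with a morphism $\sqni\in\LLb^{n}_{\bullet}$ onto the wedge, which exists by absorption and the fact that $F\in\LLb$. Then pull $C$ back along $\sqni$ to a lift. Composition with $\sqsp_A$, $\sqsp_B$ keeps weak arrow-preservation, and this yields the required common lift.
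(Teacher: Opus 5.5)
Your proposal is the paper's argument: glue $K_A$ and $K_B$ at the root and take $C=A\cup B$. You also make explicit what the paper's one-line proof leaves implicit, namely the root-collapsing functions from the wedge onto $K_A$ and $K_B$ (legitimate since $\LLb$-morphisms need not preserve ends) and the check that $C$ is again of the form $\sqsp_0\circ\sqsb_1$. The one loose point is the level bookkeeping, which the paper ignores entirely: Proposition~\ref{pr:Robust}(3)/(4) is stated only for digraphs already on levels, so pulling $C$ back from the wedge to a level needs the coinitiality-plus-amalgamation argument behind that proposition rather than the proposition itself.
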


\begin{proof}
	Let $A,B \in \LLub$. We can assume that $K_A \cap K_B=\{\rt\}$. Then $C=A \cup B$ is as required.  
\end{proof}

\begin{lemma}
\label{le:NiceFan}
For every fan $F$, there exist a unique regular fan $F' \supseteq F$,  and a unique contractive function $p_F \in \LLb^F_{F'}$ so that
\begin{enumerate}
\item the size of spokes in $*F'$ is equal to the size of the largest spoke in $F$,
\item for every $f \in F$ there are a unique spoke $\sigma_f \in *F'$, and $m_f \in \sigma_f$ such that $$p_F(\sigma_f)=p_F([\rt,m_f])=[\rt,f],$$
\item $p_F$ is injective on $[\rt,m_f]$.
\end{enumerate}
\end{lemma}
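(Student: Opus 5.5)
The plan is to build $F'$ spoke by spoke, by attaching to each vertex of $F$ that is not an end a fresh tail. Let $N$ be the size of the largest spoke of $F$; since $F$ is a tree rooted at $\rt$, for each $f\in F$ the segment $[\rt,f]$ is a path of size $\rho(\rt,f)+1\le N$. For every $f\in F$ (including non-ends, and ends of spokes shorter than $N$), we adjoin a new path of $N-\rho(\rt,f)-1$ vertices hanging off $f$ in $F'$, forming a new spoke $\sigma_f=[\rt,f]\cup\{\text{new vertices}\}$ of size exactly $N$. If $f$ is an end of a spoke already of size $N$, we add nothing and set $\sigma_f=[\rt,f]$. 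To keep spokes of $F$ that pass through $f$ distinct from $\sigma_f$, the new tail is attached as a fresh branch at $f$ whenever $f$ is not an end. Thus $F'\supseteq F$ is a fan (the root keeps degree $>2$), every spoke of $F'$ has size $N$, and each vertex $f$ determines exactly one spoke $\sigma_f$ whose ``$F$-part'' is $[\rt,f]$, with $m_f=f$.

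Next, define $p_F:F'\to F$ as the identity on $F$, and let it send every new vertex on the tail attached at $f$ to $f$. This is a contraction. We then verify the following properties.
\begin{enumerate}
\item It is edge-preserving: new edges either lie inside a tail, which is collapsed to $f$, or join $f$ to the first tail vertex.
\item It is surjective and co-injective: elements of $F$ are their own unique preimages among themselves, and a vertex of $F$ is $p_F$-co-injective via itself.
\item It preserves the root.
\item It is spoke-monotone: for a spoke $\sigma_f$ and a path $C\subseteq F$, the set $C^{\sqsp}\cap\sigma_f$ is $(C\cap[\rt,f])$ together with possibly the tail when $f\in C$. This is again a path, because $C\cap[\rt,f]$ is a path (a tree intersection of paths) and the tail is attached at its top.
\end{enumerate}
Hence $p_F\in\LLb^F_{F'}$. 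Conditions (1)–(3) follow by construction: $p_F(\sigma_f)=[\rt,f]=p_F([\rt,m_f])$, and $p_F$ is the identity on $[\rt,m_f]=[\rt,f]$, hence injective there.

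Uniqueness is the step needing care, and I would read it "up to isomorphism over $F$". Suppose $F''\supseteq F$ and $q$ satisfy (1)–(3). By (2) and (3), each $f$ has a spoke $\sigma_f$ in which $[\rt,m_f]$ maps bijectively onto $[\rt,f]$; since $q$ is a contraction this forces $[\rt,m_f]=[\rt,f]$. The remaining vertices of $\sigma_f$ map into $[\rt,f]$ with image exactly $[\rt,f]$, and by monotonicity and edge-preservation they must all map to $f$. Uniqueness of $\sigma_f$ in (2), combined with every spoke of $F''$ arising as some $\sigma_f$ (a spoke $\tau$ has $q(\tau)=[\rt,f]$ for some $f$, by the remark on root preservation), shows that the spokes of $F''$ correspond bijectively to $F$. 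Their lengths are fixed at $N$ by (1), which produces an isomorphism with $F'$ commuting with the contractions. The main obstacle is this last bookkeeping: showing that no spoke of $F''$ can collapse onto $[\rt,f]$ other than $\sigma_f$, and that tails must be attached as separate branches. Both follow from the uniqueness clause in (2).
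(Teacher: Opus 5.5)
Your construction does not produce a fan, so there is a genuine gap. In this paper a fan is a tree with a \emph{single} vertex of degree $>2$, so distinct spokes meet only at $\rt$. If $f\in F$ is neither the root nor an end, then $f$ already has degree $2$ in $F$. Since $\rho(\rt,f)+1<N$ for every such $f$, your tail at $f$ is nonempty and hanging it there gives $f$ degree $3$. Thus every interior vertex of $F$ becomes a branching point. The resulting $F'$ is not a (regular) fan, $F'\notin\ob(\LLb)$, and the claim $p_F\in\LLb^F_{F'}$ is not even meaningful. Your remark that ``the root keeps degree $>2$'' checks the wrong condition.

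The same misconception drives your uniqueness argument. Being a contraction does \emph{not} force $[\rt,m_f]=[\rt,f]$. Take any fan $F''\supseteq F$ and a non-end $f$. The spoke of $F''$ containing $f$ also contains the whole spoke $[\rt,e]$ of $F$ through $f$, and $q$ is the identity there. So this spoke maps onto at least $[\rt,e]\supsetneq[\rt,f]$. Hence $\sigma_f$ must be a different spoke, and $[\rt,m_f]$ meets $F$ only in $\rt$.

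The missing idea is the paper's ``copy'' construction. Handle the vertices of $F$ as follows.
\begin{enumerate}
\item For each non-end $f$, adjoin a \emph{new} spoke glued to $F$ only at $\rt$. It consists of a fresh copy of $[\rt,f]$ followed by a tail bringing its size to $N$. For $f=\rt$ this is just a new tail at the root.
\item For an end $f$ of a shorter spoke, extend that spoke at $f$; this is harmless, since $f$ has degree $1$.
\item For an end $f$ of a spoke of size $N$, take $\sigma_f=[\rt,f]$.
\end{enumerate}
Define $p_F$ as follows.
\begin{enumerate}
\item It is the identity on $F$.
\item It maps each copy of $[\rt,f]$ isomorphically onto $[\rt,f]$.
\item It collapses each tail to the image of its attaching point.
\end{enumerate}
Then $m_f$ is the copy of $f$, or $f$ itself when $f$ is an end. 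With this change, your checks of edge-preservation, co-bijectivity, root preservation and spoke-monotonicity go through, with $C\cap[\rt,f]$ replaced by its copy inside $\sigma_f$.
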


\begin{proof}
For every $f\in F$, extend a copy of $[\rt,f]$ to the size of the largest spoke of $F$, and let $p_F$ collapse the added terminal segment to $f$. These requirements uniquely determine $F'$ and $p_F$.
\end{proof}

\begin{lemma}
	\label{le:Lift}
	Let $F,G \in \LLb$. The fans $F' \supseteq F$, $G' \supseteq G$, and contractive functions $p_F \in \LLb^F_{F'}$, $p_G \in \LLb^G_{G'}$ as in Lemma \ref{le:NiceFan} are such that for every $\sqsp \in \LLb^F_G$, there exists a spoke-preserving $\sqsp'  \in \LLb^{F'}_{G'}$ such that $$\sqsp \circ p_G=p_F \circ \sqsp'.$$ 
\end{lemma}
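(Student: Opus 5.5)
We will build $\sqsp'$ spoke by spoke from the structure given by Lemma \ref{le:NiceFan}. Spokes of $G'$ are indexed by elements $g\in G$: each spoke $\sigma_g\in *G'$ is a copy of $[\rt,g]$ (mapped injectively by $p_G$ onto $[\rt,g]$ up to $m_g$) followed by a terminal segment collapsed onto $g$. Similarly spokes $\sigma_f$ of $F'$ correspond to $f\in F$. For $g\in G$, the set $[\rt,g]^{\sqsb}$ is, by spoke-monotonicity and root preservation (first remark on $\LLb$, applied to a spoke of $G$ through $g$ and monotonicity on the path $[\rt,g]$), an initial path $[\rt,f_g]$ in $F$, where $f_g$ is the $\leq$-largest element related to $[\rt,g]$. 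We assign the spoke $\sigma_g$ to the spoke $\sigma_{f_g}$ of $F'$, and define $\sqsp'\subseteq \sigma_{f_g}\times\sigma_g$ as follows: on the initial part $[\rt,m_g]$ we transport $\sqsp\uhr[\rt,f_g]\times[\rt,g]$ via the bijections $p_F\uhr[\rt,m_{f_g}]$ and $p_G\uhr[\rt,m_g]$; on the collapsed tail of $\sigma_g$ (all points mapped to $g$) we relate each point to the points of $\sigma_{f_g}$ lying over $\{g\}^{\sqsb}$, extending so that the tail of $\sigma_{f_g}$ (collapsed to $f_g$) is covered in a $\leq$-monotone way, e.g. mapping the tail of $\sigma_g$ increasingly onto the segment of $\sigma_{f_g}$ from the copy of $f_g$ to its end. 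Since both spokes have the same length (the maximal spoke length, with $|[\rt,f_g]|\le|[\rt,g]|$ not needed) there is room; if the tail of $\sigma_g$ is too short we use that $\sqsp$ relates $g$ to $f_g$ and let the last point of $\sigma_g$ relate to the whole remaining tail of $\sigma_{f_g}$, which keeps monotonicity because all those points map to $f_g$.

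Then we check: (i) $\sqsp'$ is spoke-preserving by construction, since each $\sigma_g$ is sent into the single spoke $\sigma_{f_g}$ and covers it; (ii) root preservation and edge preservation are inherited from $\sqsp$ on the initial parts and from the monotone choice on the tails; (iii) co-surjectivity holds since every point of $G'$ was related; co-injectivity (needed for $\sqsp'\in\LLb$, i.e.\ $\GGRb$) follows from co-injectivity of $\sqsp$ transported along the initial copies together with the fact that every $f\in F$ has its own spoke $\sigma_f$, which must be hit: for this one must check that $\sigma_f$ appears as some $\sigma_{f_g}$, which holds because co-injectivity of $\sqsp$ gives $g$ with $\pi_\sqsp(g)=f$ and then $f_g$ can be taken $\ge f$; here one may need to refine the assignment, sending $\sigma_g$ to $\sigma_{\pi_\sqsp(g)}$ when $g$ is $\sqsp$-co-injective, and this is the point I would settle first. (iv) The identity $\sqsp\circ p_G=p_F\circ\sqsp'$ is verified pointwise: on initial parts it is the transport, and on tails both sides relate a tail point to $\{g\}^{\sqsb}$, the right side because the tail of $\sigma_{f_g}$ is collapsed by $p_F$ to $f_g$ and the initial part to the corresponding elements.

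The main obstacle is making the assignment of spokes and tail relations simultaneously satisfy co-injectivity, monotonicity along each spoke, and the exact equality (not merely inclusion) with $\sqsp\circ p_G$, particularly when $\{g\}^{\sqsb}$ contains two adjacent points $f_g-1,f_g$ and the end of $F$ is where co-injectivity may fail (Remark \ref{re:Break}). I would handle it by first treating co-injective $g$ (choosing $\sigma_{\pi_\sqsp(g)}$) and then the remaining $g$, using Remark \ref{re:Break} to guarantee that every non-maximal element of $[\rt,f_g]$ is already $\pi_\sqsp$ of something in the initial segment, so only ends need the special spoke choice. Uniqueness of $F',p_F$ from Lemma \ref{le:NiceFan} ensures the construction depends only on $F,G$ as required.
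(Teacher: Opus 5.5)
Your construction is the paper's: send $\sigma_g$ to $\sigma_{f_g}$, copy $\sqsp$ on $[\rt,m_g]$, and map the tail of $\sigma_g$ monotonically onto the tail of $\sigma_{f_g}$. The case you single out as the main obstacle is a genuine gap, and your patches do not close it.

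\textbf{The spoke assignment is forced.} Spoke-preservation and $\sqsp\circ p_G=p_F\circ\sqsp'$ give $p_F[\sigma_g^{\sqsb'}]=[\rt,g]^{\sqsb}=[\rt,f_g]$. Since $\sigma_{f_g}$ is the only spoke of $F'$ with this image, there is nothing to ``refine''. Your co-injectivity worry is in fact harmless: if $\pi_\sqsp(g)=f$ then $f_g=f$, because $\alpha,\beta$ are non-decreasing along spokes.

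\textbf{The non-co-injective case fails.} Let $g$ satisfy $\{g\}^{\sqsb}=\{f_g-1,f_g\}$, and let $u$ be the point of $\sigma_{f_g}$ over $f_g-1$.
A point $g'\in\sigma_g$ can be related to a tail point of $\sigma_{f_g}$ only if $f_g\in\{p_G(g')\}^{\sqsb}$. Monotonicity (with $\alpha_g=f_g-1$) then gives $\{p_G(g')\}^{\sqsb}=\{f_g-1,f_g\}$, so the identity forces $u\in\{g'\}^{\sqsb'}$.
Since $g'\sqcap g'$ and $\sqsp'$ is edge-preserving, $\{g'\}^{\sqsb'}$ is a clique, so it contains no tail point.
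Hence the tail of $\sigma_{f_g}$ is never reached, and spoke-preservation fails whenever that tail is non-empty. If you instead map the tail of $\sigma_g$ increasingly into it, your check (iv) breaks there: the right side is $\{f_g\}$, the left side $\{f_g-1,f_g\}$.

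\textbf{The short-tail patch fails too.} Images of adjacent points must together form a clique, so the image advances by at most one vertex per step, and one point cannot absorb a whole remaining tail. The remedy is to lengthen $G'$, as the paper does; but then $G'$ is no longer the one from Lemma \ref{le:NiceFan}. Also, $\sigma_g$ and $\sigma_{f_g}$ have the lengths of the longest spokes of $G$ and $F$ respectively, which differ in general.

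\textbf{The paper's proof has the same defect, and the statement is false as written.} A spoke-monotone function onto the added terminal segment is incompatible with the identity over a non-co-injective $g$. For a counterexample, let $F$ have spokes $\rt,a_1,a_2$; $\rt,b_1,b_2$; $\rt,e_1,e_2$. Let $G$ be the same except that its first spoke is $\rt,x_1,x_2,x_3$. Let $\sqsp$ be the identity on the other two spokes, with
\[
\{x_1\}^{\sqsb}=\{\rt,a_1\},\quad \{x_2\}^{\sqsb}=\{a_1\},\quad \{x_3\}^{\sqsb}=\{a_2\}.
\]
This relation is co-bijective, edge-preserving, spoke-monotone and root-preserving, so $\sqsp\in\LLb^F_G$.
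In $F'$ the spoke $\sigma_{a_1}$ is $\rt,m_{a_1},t$ with $p_F(t)=a_1$, and we would need $\sigma_{x_1}^{\sqsb'}=\sigma_{a_1}$. But every non-root $g'\in\sigma_{x_1}$ has $p_G(g')=x_1$, so $\{g'\}^{\sqsb'}$ contains $\rt$ (the only point of $\sigma_{a_1}$ over $\rt$), hence not $t$. The root of $G'$ is related only to $\rt$. So $t$ is never reached, however long $G'$ is made.

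What survives is the case where $\sqsp$ is a function. Then every $g$ is co-injective, and your construction, with $G'$ lengthened, does work. For general $\sqsp$ the lemma has to be reformulated.
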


\begin{proof}
Fix  $\sqsp \in \LLb^F_G$. 
Since $\sqsp$ is co-bijective, for every $g \in G$ there is a unique maximal $f \in F$ such that $[\rt,g]^\sqsb=[\rt,f]$. Define $\sqsp' \uhr \sigma_f \times \sigma_g$ so that on $[\rt,m_f] \times [\rt,m_g]$ it is a copy of $\sqsp \uhr [\rt,f] \times [\rt,g]$, and on the added terminal segments it is a spoke-monotone function from $\sigma_g$ onto the added terminal segment of $\sigma_f$. This can always be done after possibly replacing $G'$ by a longer regular extension.

Clearly, $\sqsp'$ is co-bijective and spoke-preserving, and $$\sqsp \circ p_G=p_F \circ \sqsp'.$$ 
	
	
\end{proof}

\begin{corollary}
	\label{co:PreserveSpokes}
	Let $F,G \in \LLb$, $A,B \in \MMub$ be such that $K_{A}=F$, $K_{B}=G$, and let $\sqsp \in \MMub^A_B$. For the fans $F' \supseteq F$, $G' \supseteq G$ as in Lemma \ref{le:NiceFan}, and the spoke-preserving morphism $\sqsp' \in \LLb^{F'}_{G'}$ as in Lemma \ref{le:Lift}, there exist a $p_F$-lift $A'$ of $A$, and a $p_G$-lift $B'$ of $B$ such that $B'$ is a $\sqsp'$-lift of $A'$.
\end{corollary}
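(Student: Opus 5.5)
The plan is to build $A'$ and $B'$ directly from the factorizations of $A$ and $B$, and then verify weak arrow-preservation by chasing elements through the commuting square of Lemma \ref{le:Lift}. Here $\MMub$ is read as $\LLub$.

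First I construct $A'$. Write $A=\sqsp_0\circ\sqsb_1$ with $\sqsp_0,\sqsp_1\in\LLb^F_H$ for some fan $H$ (Remark \ref{re:FraisseDigraphs}). Lemma \ref{le:Lift}, applied to $F$ and $H$, gives spoke-preserving $\sqsp'_0,\sqsp'_1\in\LLb^{F'}_{H'}$ with $\sqsp_j\circ p_H=p_F\circ\sqsp'_j$. I then set
\[
A'=\sqsp'_0\circ\sqsb'_1 .
\]
This is an $\LLb$-digraph on $F'$. To check that $p_F$ is weakly arrow-preserving from $A'$ to $A$, take $x'A'y'$, witnessed by some $h'\in H'$ with $x'\sqsp'_0h'$ and $y'\sqsp'_1h'$. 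Put $x=p_F(x')$ and $y=p_F(y')$. The commuting equation gives $x\sqsp_0 p_H(h')$ and $y\sqsp_1 p_H(h')$, so $xAy$. I construct $B'$ on $G'$ from the factorization of $B$ in the same way.

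Next I compare $A'$ and $B'$ along $\sqsp'$. Take $u'B'v'$. Its image $u=p_G(u')$, $v=p_G(v')$ satisfies $uBv$, and since $\sqsp\in\MMub^A_B$ there exist $x\sqsp u$ and $y\sqsp v$ with $xAy$. The aim is to lift these to $x'\sqsp' u'$ and $y'\sqsp' v'$ with $x'A'y'$. By the commuting square, the elements of $F'$ that are $\sqsp'$-related to $u'$ map under $p_F$ onto the elements of $F$ that are $\sqsp$-related to $u$. So $x'$ and $y'$ with $p_F(x')=x$ and $p_F(y')=y$ exist. The remaining difficulty is to arrange that they also satisfy $x'A'y'$.

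I expect this last point to be the main obstacle. The independent constructions of $A'$ and $B'$ need not be synchronised: the chosen preimages can land on different copies of spokes, or on different positions inside the collapsed terminal segments. To fix this, I would not build $B'$ independently. Instead I would first take a common fan $M$ with $B=\sqsp_2\circ\sqsb_3$, and use amalgamation in $\LLb$ to assume the factorizations of $A$ and $B$ are linked through $\sqsp$, that is, $\sqsp\circ\sqsp_2=\sqsp_0\circ\theta$ and $\sqsp\circ\sqsp_3=\sqsp_1\circ\theta$ for a suitable $\theta$, up to inclusion. I would then apply Lemma \ref{le:Lift} once to this whole diagram, using the uniqueness in Lemma \ref{le:NiceFan}: the canonical copies $\sigma_f$ and $m_f$ are determined by the points alone, so all lifted morphisms respect the same copies.

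With this done, a path $u'\sqsb'_2 m'\sqsp'_3 v'$ pushes forward to a path in $F'$ witnessing $A'$. Inside the terminal segments, where all points collapse to an end, I would use the freedom left in Lemma \ref{le:Lift} to choose the monotone functions consistently, lengthening $G'$ or $M'$ where needed. This should give that $\sqsp'$ is weakly arrow-preserving from $B'$ to $A'$, so $B'$ is a $\sqsp'$-lift of $A'$.
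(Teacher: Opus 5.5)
Your overall route is the paper's. You present $A$ and $B$, lift the whole finite diagram at once by Lemma~\ref{le:Lift} (using the canonical copies of Lemma~\ref{le:NiceFan} and common long regular extensions), and read the lifting properties off the commuting identities. Your check that $p_F$ (and likewise $p_G$) is weakly arrow-preserving is correct. You are also right that independent lifts of $A$ and $B$ need not synchronise. The gap is in the step meant to synchronise them, which the paper compresses into ``choose a corresponding presentation of $B$ witnessing that $\sqsp$ is a morphism'': the mechanism you propose for that step does not work.

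First, amalgamation in $\LLb$ amalgamates one span at a time. It does not produce a single $\theta$ serving both squares simultaneously.

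Second, with equalities the identities $\sqsp\circ\sqsp_2=\sqsp_0\circ\theta$ and $\sqsp\circ\sqsp_3=\sqsp_1\circ\theta$ are in general false. If $m$ realises only the arrow $uBv$, they force \emph{every} $x$ with $x\sqsp u$ to have an $A$-arrow to some $y$ with $y\sqsp v$. Weak arrow-preservation supplies just one such pair.

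Third, even granting such identities, with equality or with inclusion, your push-forward breaks as soon as $\sqsp'_2$ or $\sqsp'_3$ relates $m'$ to two points. An $h'$ with $h'\theta'm'$ then yields a witness for \emph{some} arrow $u''B'v''$ through $m'$, not necessarily for the given $u'B'v'$.

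What you need is a presentation of $B$ in which each point of the middle fan realises a single arrow and carries its own chosen witness in $A$, i.e.\ functional presentations. As Proposition~\ref{pr:CharI} shows for paths, these exist only for $\leq$-monotone digraphs. This is where the $\MMub$ hypotheses you discarded might matter.

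Fourth, inclusions between morphisms do not survive Lemma~\ref{le:Lift}. The lifted $\sqsp'$ sends $\sigma_g$ into the spoke $\sigma_f$ with $[\rt,g]^{\sqsb}=[\rt,f]$, and a smaller relation may select a different spoke. So your ``up to inclusion'' linking is not preserved by the simultaneous lift. The presentation of $A$ must also be chosen so that $A'$ has arrows between exactly the pairs of spokes that the lifted $\sqsp$ will demand.
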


\begin{proof}
Write $A=\sqsp_0\circ\sqsb_1$
and choose a corresponding presentation of $B$ witnessing that
$\sqsp$ is a morphism. Apply Lemma~\ref{le:Lift} simultaneously to
the morphisms in this finite diagram, choosing common sufficiently
long regular extensions. Define $A'=\sqsp'_0\circ\sqsb'_1$, and define $B'$ analogously. The commuting identities from
Lemma~\ref{le:Lift} show that $p_F$ and $p_G$ witness the first two
lifting properties and that $\sqsp'$ witnesses the last one.
\end{proof}

\begin{corollary}
	\label{co:BranchesonBanchesCoinitial}
	The class $\MMub \subseteq \LLub$ consisting of all $A \in\LLub$ such that
	\begin{enumerate}
		\item $K_A$ is regular, 
		\item $\sigma^A \cap \tau$ is a spoke or the root, for any $\sigma, \tau \in *K_A$
	\end{enumerate}
	is co-initial in $\LLub$.
\end{corollary}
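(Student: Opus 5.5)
Starting from an arbitrary $A \in \LLub$, I will build a lift in $\MMub$ in two stages: first regularize the fan, then separate spokes so that every $\sigma^A\cap\tau$ is a spoke or the root. Write $F=K_A$ and fix a presentation $A=\sqsp_0\circ\sqsb_1$ with $G\in\LLb$ and $\sqsp_0,\sqsp_1\in\LLb^F_G$, which exists by Remark \ref{re:FraisseDigraphs}.

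\emph{Stage 1 (regularity and spoke-preservation).} I apply Lemma \ref{le:Lift} to $\sqsp_0$ and $\sqsp_1$ simultaneously, using the same regular extensions $F'\supseteq F$ and $G'\supseteq G$ from Lemma \ref{le:NiceFan}. If needed, I lengthen $G'$ once so that it works for both morphisms. This yields spoke-preserving $\sqsp'_0,\sqsp'_1\in\LLb^{F'}_{G'}$ with $\sqsp_i\circ p_G=p_F\circ\sqsp'_i$. As in Corollary \ref{co:PreserveSpokes}, I set $A'=\sqsp'_0\circ\sqsb'_1$. Then $p_F$ witnesses that $A'$ is a lift of $A$: by the commuting identities, $p_F$ is weakly arrow-preserving from $A'$ to $A$. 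So $K_{A'}=F'$ is regular, which gives condition (1).

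\emph{Stage 2 (condition (2)).} Since $\sqsp'_0$ and $\sqsp'_1$ are spoke-preserving, for $\sigma,\tau\in *F'$ the set $\sigma^{A'}\cap\tau$ is the union, over the spokes $\upsilon\in *G'$ with $\upsilon^{\sqsb'_1}=\sigma$ and $\upsilon^{\sqsb'_0}=\tau$, of the images of $\sigma$ through $\sqsp'_1\uhr\sigma\times\upsilon$ and then $\sqsp'_0\uhr\tau\times\upsilon$.
\begin{itemize}
\item First, using the corollary that every $\sqsp\in\mor(\IIb^*)$ contains a $\leq$-monotone function, applied spokewise, I shrink $\sqsp'_0$ and $\sqsp'_1$ to functions. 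The resulting digraph is contained in $A'$, so $\Id$ is still a morphism into $A'$.
\item Second, the images now fill an initial segment $[\rt,t]$ of $\tau$. This is because the restrictions lie in $\IIb^*$ and are root-preserving and monotone.
\item Third, I have to make this initial segment all of $\tau$. The failure of co-injectivity at ends (Remark \ref{re:Break}) is exactly what can make $t$ a non-end. To repair it, I lift along a function that splits a spoke $\sigma$ of $F'$ into two copies. One copy keeps the full segment $[\rt,t]$ pattern; the other copy is truncated, with its part beyond the relevant point collapsed, so that it matches an entire spoke.
\end{itemize}

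More concretely, in the third step I would mimic the proof of Proposition \ref{pr:CatMCoInitial}: add copies of spokes to $F'$ and $G'$, copy the local relations, and use the contractions as witnesses. By Lemma \ref{le:NiceFan}(2), any initial segment $[\rt,f]$ equals $p_F(\sigma_f)$, i.e.\ the image of a full spoke. So I replace the target $\tau$ by a fresh spoke $\tau_t$, which $p_F$ maps onto $[\rt,t]$ and which is stretched to full length. I then redefine $\sqsp'_0$ on $\upsilon$ so that it maps onto $\tau_t$ rather than partially onto $\tau$. Each such step strictly decreases the number of pairs $(\sigma,\tau)$ violating (2), and a finite iteration yields the required element of $\MMub$.

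I expect this last step to be the main obstacle. The stretched map must remain spoke-monotone and root-preserving, so that it still lies in $\LLb$. The duplication must also not create new violations on other spokes that share the same $\upsilon$. I would handle both by choosing $G'$ long enough and treating each $\upsilon\in *G'$ separately, since after Stage 1 each $\upsilon$ meets exactly one source spoke and one target spoke.
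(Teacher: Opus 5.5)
Your Stage~1 is the paper's construction. The paper takes $A'=\sqsp'_0\circ\sqsb'_1$ from Lemmas~\ref{le:NiceFan} and~\ref{le:Lift} and then argues that (2) already holds, with no further lifting. The gap is in your Stage~2, which is where (2) is supposed to be proved.

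\textbf{Stage~2 repairs a defect that Stage~1 does not leave.} The defect you target is a spoke $\upsilon$ lying over both $\sigma$ and $\tau$ that contributes only a proper initial segment of $\tau$. This cannot happen after Stage~1:
\begin{itemize}
\item Since $\sqsp'_0,\sqsp'_1$ are spoke-preserving, $\upsilon^{\sqsb'_i}$ is a whole spoke for every $\upsilon\in *G'$.
\item If $\upsilon^{\sqsb'_0}=\sigma$, co-surjectivity puts all of $\upsilon$ into $\sigma^{\sqsp'_0}$.
\item Hence $\sigma^{A'}=(\sigma^{\sqsp'_0})^{\sqsb'_1}\supseteq\upsilon^{\sqsb'_1}$, which is an entire spoke.
\end{itemize}
Remark~\ref{re:Break} only says that the end of that spoke may lack a $\sqsp'_i$-injective witness on $\upsilon$. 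It does not say the end is missing from $\upsilon^{\sqsb'_i}$.

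The truncated image $[\rt,t]$ is produced by your own first bullet. A $\leq$-monotone function inside $\sqsp'_i\uhr\tau\times\upsilon$ may send the end of $\upsilon$ to the predecessor of the end of $\tau$, for instance when the injective witness of that end lies on another spoke. So shrinking to functions can discard exactly the spoke-preservation Stage~1 gave you. Your third bullet must then rebuild it, and there the essential checks are missing, as you acknowledge. You would need to show:
\begin{itemize}
\item the redefined relations stay co-bijective, spoke-monotone and root-preserving;
\item the new spokes $\tau_t$ create no new bad pairs;
\item the number of violations actually decreases.
\end{itemize}
As written, (2) is not established. For spokes over both $\sigma$ and $\tau$, it is enough to omit the shrinking and use spoke-preservation as above; this is what the paper's end-chasing argument is after.

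\textbf{The contribution at the root is missing, and it is the delicate one.} You describe $\sigma^{A'}\cap\tau$ as a union over the spokes of $G'$ lying over both $\sigma$ and $\tau$. This omits points of a spoke $\upsilon$ with $\upsilon^{\sqsb'_0}\neq\sigma$ that are $\sqsp'_0$-related to $\rt$. Such points still lie in $\sigma^{\sqsp'_0}$, because $\rt\in\sigma$. Their image under $\sqsb'_1$ can then put a proper initial segment of $\tau$ into $\sigma^{A'}$.

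Here is an example. Let $F,G$ be regular and $\sqsp_0=\sqsp_1$, and let $\tau=[\rt,t_1,t_2]$. Suppose:
\begin{itemize}
\item the successor $v$ of the root on a spoke $\upsilon$ of $G$ is related to both $\rt$ and $t_1$;
\item $t_2$ and the end of $\upsilon$ are related only to each other.
\end{itemize}
The copy of $v$ kept in Lemma~\ref{le:Lift} gives $\rt A' t_1$, so $t_1\in\sigma^{A'}$ for every spoke $\sigma$. But $t_2\in\sigma^{A'}$ only for $\sigma=\tau$, so (2) fails for $A'$.

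Neither your Stage~2 nor the paper's step from ``every end is $A'$-related to an end'' to ``$\sigma^{A'}\cap\tau$ contains the end of $\tau$'' covers this case. A complete argument needs an extra step controlling the arrows at the root. For example, a further lift in which $\rt A' b$ holds only for $b=\rt$ would, together with spoke-preservation, yield (2).
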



\begin{proof}
	Let $A\in\LLub$, put $F=K_A$, and choose $G\in\LLb$ and
	$\sqsp_0,\sqsp_1\in\LLb^F_G$ such that $A=\sqsp_0\circ\sqsb_1$.
	Let $F'$, $G'$, $p_F$ and the spoke-preserving morphisms $\sqsp'_0,\sqsp'_1\in\LLb^{F'}_{G'}$ be given by Lemmas~\ref{le:NiceFan} and~\ref{le:Lift}. Then $A'=\sqsp'_0\circ\sqsb'_1$ is a $p_F$-lift of $A$, so it remains to verify that $A'\in\MMub$.
	Clearly, $K_{A'}=F'$ is regular. Let $f\in F'$ be an end and suppose that $f\sqsp'_1 g$. If $g'$ is the unique end of the spoke containing $g$, then $f\sqsp'_1 g'$. Moreover, by co-surjectivity of $\sqsp'_0$, there is an end $f'\in F'$ such that $f'\sqsp'_0 g'$. Hence $f'A'f$. It follows that whenever $\sigma^{A'}\cap\tau$ contains a non-root point, it also contains the end of $\tau$, and therefore, by monotonicity, $\sigma^{A'}\cap\tau=\tau$.
\end{proof}

The same argument as in the proof of the corresponding result for the Cantor fan, Proposition~\ref{pr:CatMCoInitial}, gives:

\begin{proposition}
	The full subcategory $\MMub \subseteq \LLub$ of all $A \in \LLub$ that satisfy the following conditions:
	\begin{enumerate}
		\item $K_{A}$ is regular,
		\item $A_\sigma \in \LLub_2^*$, for every $\sigma \in *K_A$,
		\item the spoke-graph of $A$ is in $\SSb$.
	\end{enumerate}
	is co-initial in $\LLub$.
\end{proposition}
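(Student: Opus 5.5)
Given $A\in\LLub$, I will produce a lift of $A$ satisfying (1)--(3) in three stages, mirroring the proof of Proposition~\ref{pr:CatMCoInitial}.

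\emph{Stage 1: regularity and spoke-preservation.} By Corollary~\ref{co:BranchesonBanchesCoinitial}, replace $A$ by a lift in $\MMub$: $K_A$ is regular, $A=\sqsp_0\circ\sqsb_1$ with spoke-preserving $\sqsp_0,\sqsp_1\in\LLb^{F}_{G}$ (Lemma~\ref{le:Lift}), and for all spokes $\sigma,\tau$, $\sigma^A\cap\tau$ is a spoke or the root. By the corollary preceding Proposition~\ref{co:G_I*Dense}, applied spoke by spoke of $G$, I shrink $\sqsp_0,\sqsp_1$ to $\leq$-monotone functions. These stay spoke-preserving and root-preserving, and still lie in $\LLb$ because co-surjectivity is kept and the shrinking is done inside each spoke. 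Passing to the corresponding sub-digraph gives a lift via the identity, since a sub-digraph on the same graph is a lift through $\Id$ as in Proposition~\ref{pr:IisDirected}. Now each spoke $\tau$ of $G$ yields a pair $(\tau^{\sqsb_0},\tau^{\sqsb_1})$ of spokes of $F$, and the restriction of $A$ to such a pair is a composite of two $\leq$-monotone, root-preserving surjections of paths. Hence it is a surjective, co-surjective $\leq$-monotone relation, i.e.\ an object of $\LLub_2^*$ once regular.

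\emph{Stage 2: separate conflicting spokes.} Condition (2) fails exactly when distinct spokes $\tau_0,\tau_1$ of $G$ satisfy $\tau_0^{\sqsb_0}=\tau_1^{\sqsb_0}=\sigma_0$ and $\tau_0^{\sqsb_1}=\tau_1^{\sqsb_1}=\sigma_1$. In that case $A_\sigma$ is a union of two monotone relations and need not be monotone. I will use the same surgery as in Proposition~\ref{pr:CatMCoInitial}: add a copy $\sigma_0'$ of $\sigma_0$ to $F$ and a copy $\tau'$ of some $\tau$ with $\tau^{\sqsb_1}=\sigma_0$ to $G$, redirect $\tau_1$ under $\sqsp_0$ onto $\sigma_0'$, and copy the relations on $\tau$ to $\tau'$. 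The contractions $p_F,p_G$ are root-preserving spoke-monotone functions, hence morphisms of $\LLb$, and the new relations are still in $\LLb$. The number of conflicting pairs strictly decreases, so after finitely many steps every $A_\sigma$ is a single monotone piece and belongs to $\LLub_2^*$.

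\emph{Stage 3: spoke-graph in $\SSb$.} Take the spoke-graph $K$ of the current digraph (an $\AAb$-digraph) and, by co-initiality of $\SSub$ in $\AAub$ \cite{Kw}, pick $S\in\SSb$ with a surjective function $S\to K$. Duplicate each spoke according to its fibre and copy local relations along arrows of $S$. Since morphisms of $\SSub$ are epimorphisms, the copied relation is weakly arrow-preserving onto the original, and regularity and conditions (1), (2) survive because only whole spokes with their local two-spoke relations are duplicated.

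\emph{Main obstacle.} I expect the hard part to be Stage~1/2 bookkeeping caused by Remark~\ref{re:Break}: restrictions to spokes need not be co-injective at ends, so I must check that the shrunken functions and the copying surgery remain morphisms of $\LLb$ (co-bijective as a whole) rather than merely of $\LLb^*_2$. My plan is to carry out all surgery on the regularized fans $F',G'$ of Lemma~\ref{le:NiceFan}, where by Corollary~\ref{co:BranchesonBanchesCoinitial} ends map to ends. There co-injectivity of each spoke restriction can be verified exactly as in the Cantor fan case, and failures at ends are absorbed into the added terminal segments.
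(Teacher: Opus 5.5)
Your proposal is correct and follows the paper's route. The paper's proof is the remark that the Cantor-fan argument of Proposition~\ref{pr:CatMCoInitial} carries over once Lemmas~\ref{le:NiceFan}, \ref{le:Lift} and Corollary~\ref{co:BranchesonBanchesCoinitial} have reduced you to regular fans with spoke-preserving presentations $A=\sqsp_0\circ\sqsb_1$; one then shrinks to functions, separates conflicting spokes, and lifts the spoke-graph into $\SSb$, exactly as in your three stages, and your observation that failures of co-injectivity at ends are absorbed into the added terminal segments is the adjustment the paper leaves implicit.
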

Also, arguing as in the proof of Lemma \ref{le:C_IDense}, and using Proposition \ref{co:G_I*Dense}  instead of Proposition \ref{co:G_IDense}, we get

\begin{lemma}
	\label{le:L_IDense}
	$G_\LLb$ is a dense subset of $\Homeo(L)$.
\end{lemma}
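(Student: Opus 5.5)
The plan is to follow the proof of Lemma \ref{le:C_IDense} and check that each step survives the relaxation of morphisms from $\CCb$ to $\LLb$. By Proposition \ref{pr:Robust}(5), applied with $G=\Homeo(L)$, it suffices to show that every $\sqsp\in(\KKb_L)^k_l$ is contained in some $\sqni\in\LLb^{K_k}_{K_l}$.

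Fix $k,l$ and $\sqsp\in(\KKb_L)^k_l$, say $\sqsp=\sqsp_{f,k,l}$ for a homeomorphism $f$ of $L$. By Proposition \ref{pr:LaxAbs}, choose $m$ and $\sqsp'=\sqsp_{f^{-1},l,m}\in(\KKb_L)^l_m$ with $\sqsp\circ\sqsp'\subseteq\succeq^k_m$.

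The first real step is to show that $\sqsp$ has the right coarse shape spoke by spoke. Since $f$ fixes the root of $L$ (the unique ramification point), $\pi_\sqsp(\rt)=\rt$. Since $f$ maps arcs from the root onto arcs from the root, for each $\tau\in *K_l$ the set $\tau^\sqsb$ should be an initial segment $[\rt,g]$ of some spoke of $K_k$. I would argue this topologically: the union of the cells in $\tau$ contains an arc from the root, its $f$-image is again such an arc, and the cells of $K_k$ meeting that arc form a path from $\rt$.

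Next, for each $\tau\in *K_l$, pick $\upsilon\in *K_m$ with $\upsilon^{\sqsb'}\supseteq\tau$. The restriction $\preceq^k_m\uhr[\rt,g]\times\upsilon$ is monotone, being a restriction of an $\LLb$-morphism, and it preserves the root. So the hypotheses of Proposition \ref{co:G_I*Dense} hold for $\sqsp_\tau=\sqsp\uhr[\rt,g]\times\tau$, $\sqsp'\uhr\tau\times\upsilon$ and $\succeq$. That proposition yields a monotone $\sqni_\tau\in(\IIb^*)^{[\rt,g]}_\tau$ with $\sqsp_\tau\subseteq\sqni_\tau$.

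Put $\sqni=\bigcup_\tau\sqni_\tau$. It remains to check that $\sqni\in\LLb^{K_k}_{K_l}$. It preserves the root, and by the remark characterizing $\LLb$-morphisms spoke-wise (each $\tau^{\sqni^{-1}}=[\rt,g]$ with the restriction in $\mor(\IIb^*)$), $\sqni$ is spoke-monotone. Co-bijectivity and edge-preservation are inherited from $\sqsp\subseteq\sqni$ together with the facts that the pieces agree at the root and that distinct spokes meet only at $\rt$. Co-injectivity needs care, since enlarging $\sqsp$ might destroy co-injective witnesses. However, the construction in Proposition \ref{co:G_IDense} only adds pairs while preserving co-injectivity, so it holds for the union as well.

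The main obstacle is the first step: $\tau^\sqsb$ being an initial segment $[\rt,g]$ with $\sqsp\uhr[\rt,g]\times\tau$ co-surjective and monotone. The Lelek fan has dense endpoints, so the relevant images of spokes are genuinely truncated. I would handle this via the identification of $K_n$ with covers $K_n^\in$ and the fact that arcs from the root map to arcs from the root. If that direct argument is messy, the fallback is to lift $\sqsp$ first by Lemma \ref{le:Lift} to a spoke-preserving morphism between regularizations, apply the $\CCb$-style argument there, and project back using the contractions.
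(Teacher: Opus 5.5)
Your route is the paper's: its proof of this lemma just says to rerun the proof of Lemma~\ref{le:C_IDense} with Proposition~\ref{co:G_I*Dense} in place of Proposition~\ref{co:G_IDense}. Your steps up to the monotone pieces $\sqni_\tau\supseteq\sqsp_\tau$ match that. The shape of $\tau^{\sqsb}$, which you call the main obstacle, is simply assumed there, as in the Cantor-fan proof. The real gap is co-injectivity of $\sqni=\bigcup_\tau\sqni_\tau$, and your justification does not transfer. In Proposition~\ref{co:G_IDense} the input is co-bijective and the output is an $\IIb$-morphism. Here $\sqsp_\tau=\sqsp\uhr[\rt,g]\times\tau$ need not be co-injective, and Proposition~\ref{co:G_I*Dense} only returns a morphism of $\IIb^*$, i.e.\ a co-surjective monotone relation. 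Enlarging cannot repair this: since $\sqsp$ is co-surjective and $\sqsp\subseteq\sqni$, every $\sqni$-co-injective element of $K_l$ is already $\sqsp$-co-injective with the same image, so supersets can only lose witnesses. The paper is silent on this point too; in the Cantor case co-injectivity came for free because the pieces were $\IIb$-morphisms.

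Here is where it breaks. Let $a$ be the end of a spoke of $K_k$, so $a$ is the top of every $\tau^{\sqsb}$ containing it. Proposition~\ref{co:G_IDense} replaces a destroyed witness by observing that after a co-injective $s$ with $\alpha_{s+1}=\pi_\sqsp(s)-1$ there is a later co-injective $s'$ with the same image. That step needs the drop to $a-1$ to end before the last cell of the path, which is exactly what truncated spokes violate. Concretely, a cell $b\in\tau$ with $\{b\}^{\sqsb}=\{a\}$ followed by cells related to $a-1$ up to the end of $\tau$ is not excluded by anything you use, including $\sqsp\circ\sqsp'\subseteq\succeq^k_m$. Then $\{a-1\}^{\sqni_\tau}$ must be an interval of $\tau$ containing cells on both sides of $b$, so every monotone $\sqni_\tau\supseteq\sqsp_\tau$ contains $(a-1,b)$. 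Hence $a$ keeps a witness only if some end cell $e$ of $K_l$ satisfies $\{e\}^{\sqsb}=\{a\}$, i.e.\ $f[e]\subseteq a$ and $f[e]\not\subseteq a-1$. You must prove or arrange this condition. Note that the proof of Proposition~\ref{pr:Robust}(5) only needs, for each $f$ and each $k$, one $l$ for which $\sqsp_{f,k,l}$ extends, so you are free to choose $l$.

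For non-end $a$ the difficulty can be handled. In a spoke containing a witness of $a+1$, the cell just before the first cell related to $a+1$ is related only to $a$. The inequality $\beta_s\le\alpha_{s'}+1$ for $s<s'$, obtained from $\sqsp\circ\sqsp'\subseteq\succeq$ by the argument of the first claim in the proof of Proposition~\ref{co:G_IDense}, shows that no later cell is related to $a-1$.

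Two smaller points. Edge-preservation is not inherited by supersets; it has to come from each $\sqni_\tau$ being edge-preserving together with $\rt$ being related only to $\rt$. Your fallback via Lemma~\ref{le:Lift} is circular, since that lemma starts from an $\LLb$-morphism, which is what you are trying to produce.
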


Now we are ready to prove weak m-amalgamation in $\MMub$. The structure of the argument is almost exactly the same as the proof of weak m-amalgamation in $\DDub$, i.e., Lemma \ref{le:DAmalg}. The crucial modification is that on the way we need to apply Corollary \ref{co:PreserveSpokes} to reduce the construction to spoke-preserving digraphs.
 
\begin{theorem}
	$\LLub$  has weak m-amalgamation. In particular,  $\Homeo(L)$ has a comeager conjugacy class.
\end{theorem}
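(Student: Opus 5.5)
The plan is to mirror the proof of Theorem for the Cantor fan: establish weak m-amalgamation in the co-initial class $\MMub$ (the version with regular fans, $A_\sigma\in\LLub_2^*$, and spoke-graph in $\SSb$), and then transfer it to $\LLub$. Since $\MMub$ is co-initial in $\LLub$, an amalgamation base for a lift of $A$ composed with the lifting morphism is an amalgamation base for $A$. For the group-theoretic conclusion I will combine directedness of $\LLub$ (Proposition \ref{pr:FAnDirected}), the density of $G_\LLb$ (Lemma \ref{le:L_IDense}), and Theorem \ref{th:ConjugacyForFraisse}(2). This gives that $\cl(G_\LLb)=\Homeo(L)$ has a comeager conjugacy class.

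For $A\in\MMub$ I build the base $B$ spoke by spoke, as in Lemma \ref{le:DAmalg}. For each $\sigma\in *K_A$, Lemma \ref{le:L2Bijective} and Lemma \ref{le:L2MAmalgamation} give a functional weak m-amalgamation base $\nil_\sigma$ for $A_\sigma$ in $\LLub_2^*$. Using amalgamation of functions in $\IIb\cap\GGRFb$ and Lemma \ref{le:L2Lift}, I synchronize the copies of each spoke and equalize the spoke lengths. This glues the local bases into a functional, spoke-preserving $\nil\in\MMub^A_B$.

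Now I take $C_i\in\MMub$ and $\sqsp_i\in\MMub^B_{C_i}$. These need not be spoke-preserving, and their spoke restrictions may fail co-injectivity at the ends (Remark \ref{re:Break}). I first apply Corollary \ref{co:PreserveSpokes} to replace $C_i$ by $p_{K_{C_i}}$-lifts $C_i'$, and $\sqsp_i$ by spoke-preserving morphisms $\sqsp_i'$ with $\sqsp_i\circ p=p\circ\sqsp_i'$. Since $p_{K_B}$ is a contraction that collapses only added terminal segments, I will arrange in advance, by replacing $B$ with $B'$ and precomposing $\nil$ with $p_{K_B}$, that the base already lives on the extended regular fan. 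Then any m-amalgam of the $C_i'$ over $B'$ composed with the $p$'s is an m-amalgam of the $C_i$, because co-surjectivity of the intersection is preserved under postcomposition with the surjective functions $p$.

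With spoke-preserving data, the induced maps of spoke-graphs are morphisms in $\SSb$, so I amalgamate them in $\SSb$ to get $U$. For each arrow $u\la v$ of $U$, I m-amalgamate the two-spoke restrictions in $\LLub_2^*$ via Lemma \ref{le:L2MAmalgamation}. The hypothesis $B_0^{\sqsb_0}=B_1^{\sqsb_1}$ holds here because both restrictions map onto the same two-spoke piece of $B$. I then synchronize the local amalgams at shared spokes, at both simple and branching vertices of spirals (Remark \ref{re:SpiralStructure}), exactly as before. Gluing gives $D$, and the intersection is co-surjective because every point of $K_D$ lies in some two-spoke piece.

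The main obstacle is the end behaviour. At the ends, morphisms of $\LLb_2^*$ are not co-injective, so synchronization by amalgamation in $\IIb\cap\GGRFb$ is only available away from the maximal points of spokes. I will first try to handle this by passing once more through Lemma \ref{le:NiceFan} extensions. These make every relevant spoke long enough that the non-co-injective part is a collapsed terminal segment, on which all maps can be taken constant, so the synchronization reduces to the Cantor fan case.
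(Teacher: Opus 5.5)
Your proposal is essentially the paper's proof. It reduces to $\MMub$ and glues spoke-wise weak m-amalgamation bases from Lemma~\ref{le:L2MAmalgamation} using amalgamation in $\IIb\cap\GGRFb$ and Lemma~\ref{le:L2Lift}. It then makes the data spoke-preserving with Corollary~\ref{co:PreserveSpokes}, amalgamates spoke-graphs in $\SSb$, m-amalgamates two-spoke pieces in $\LLub^*_2$, synchronizes and glues as in Lemma~\ref{le:DAmalg}, and concludes with Lemma~\ref{le:L_IDense} and Theorem~\ref{th:ConjugacyForFraisse}(2).

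The obstacle in your last paragraph does not actually arise, and the extra pass through Lemma~\ref{le:NiceFan} is unnecessary. The maps being synchronized are the surjective functions supplied by Lemma~\ref{le:L2MAmalgamation}, which are spoke-wise morphisms of $\IIb\cap\GGRFb$ even at the ends, because the failure of co-injectivity was already absorbed by Corollary~\ref{co:PreserveSpokes}. Likewise, no ``in advance'' arrangement of $B$ is needed: the paper simply observes that the two-spoke pieces of the lifted $B$ are further lifts of the $B_\sigma$, so they remain local bases.
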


\begin{proof}
Let $A \in \MMub$. By Lemma \ref{le:L2MAmalgamation}, for every $\sigma \in *K_A$, we can find $B_\sigma \in \LLub^*_2$, and a spoke-preserving function $\nil_\sigma \in (\LLub^*_2)^{A_\sigma}_{B_\sigma}$ that is a weak m-amalgamation base for $A_\sigma$. As $\IIb \cap \GGRFb$ has amalgamation, using Lemma \ref{le:L2Lift}, and proceeding as in the beginning of the proof of Lemma \ref{le:DAmalg}, we can find $B \in \MMub$, and a spoke-preserving function $\nil \in \MMub^A_B$ such that $K_B$ is the union of $K_{B_\sigma}$, and $\nil$ is the union of $\nil_\sigma$, $\sigma \in *K_A$.
	
Fix $C_i\in \MMub$ and  $\sqsp_i \in \MMub^B_{C_i}$, $i=0,1$. Applying Corollary~\ref{co:PreserveSpokes} simultaneously to $\sqsp_0$ and $\sqsp_1$, using the same lift of the common source $B$, and keeping the same notation for the resulting objects and morphisms, we may assume that $$K_{C_0}^{\sqsb_0}=K_{C_1}^{\sqsb_1},$$ and that $C_0$, $C_1$ are spoke-preserving digraphs. Every two-spoke restriction of the lifted $B$ is a further lift of the corresponding $B_\sigma$, and hence remains a weak
$m$-amalgamation base over $A_\sigma$. Moreover, by the commuting
identities in Corollary~\ref{co:PreserveSpokes}, an $m$-amalgam of
the lifted diagrams gives an $m$-amalgam of the original ones after
composition with the lifting morphisms.

Let $S,T_i \in \SSb$ be the spoke-graphs of $B,C_i$, and let $\alpha_i \in \SSb^S_{T_i}$ be the morphisms induced by $\sqsp_i$.  As $\SSb$ has amalgamation, there are $U \in \SSb$, and morphisms $\alpha'_i \in  \SSb^{T_i}_U$, $i=0,1$, that amalgamate $T_0, T_1$ over $S$.
	
Fix an arrow $u \la v$ in $U$.  
Let $\sigma_i,\tau_i\in T_i$ be the images of $u,v$ under $\alpha'_i$, and let
$\sigma_B,\tau_B\in S$ be their common images under $\alpha_i$. 
Using Corollary \ref{le:L2MAmalgamation}, i.e., weak m-amalgamation in $\LLub^*_2$, we can fix paths $\sigma^{u,v}$, $\tau^{u,v}$ with corresponding amalgamating relations $D_{u,v}$ on $\sigma^{u,v} \times \tau^{u,v}$, and morphisms $$\sqsp^{u,v}_{i,\sigma} \subseteq \sigma_i \times \sigma^{u,v}, \quad \sqsp^{u,v}_{i,\tau} \subseteq \tau_i \times \tau^{u,v},$$ $i=0,1$, that are functions, and weakly m-amalgamate $C_i \uhr \sigma_i \times \tau_i$ over the corresponding restriction of $A$.  
As $\sqsp^{u,v}{i,\sigma}$ and $\sqsp^{u,v}{i,\tau}$ are functions, the construction of the amalgam $D$ from the $D_{u,v}$, and of the amalgamating morphisms $\sqsp'_i$ from $\sqsp^{u,v}_{i,\sigma}$ and $\sqsp^{u,v}_{i,\tau}$, proceeds exactly as in the last part of the proof of Lemma~\ref{le:DAmalg}; in particular, it uses amalgamation in $\IIb\cap\GGRFb$ to synchronize the local constructions on common spokes. The local $m$-amalgamation properties are preserved throughout this construction, and the same argument as in Lemma~\ref{le:DAmalg} shows that the intersection of the resulting composite morphisms is co-surjective. Thus $D$ weakly $m$-amalgamates $C_0$ and $C_1$ over $A$.
\end{proof}

Proposition \ref{pr:TopEntropy} immediately gives that the class $\ob(\DDub_0)$ is co-initial in $\ob(\MMub)$. Hence, exactly the same argument as for the Cantor fan yields:

\begin{theorem}
A generic homeomorphism of $L$ has no Li--Yorke pairs. In particular, zero topological entropy is generic for homeomorphisms of $L$.
\end{theorem}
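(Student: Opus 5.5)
The statement is the last theorem: a generic homeomorphism of the Lelek fan $L$ has no Li--Yorke pairs, and hence generically has zero topological entropy. I would transfer the Cantor-fan proof essentially verbatim, checking only the two places where the Cantor fan structure was used.

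First, fix a \fra\ sequence $(K_n,\sqsp_n)$ in $\LLb$ with the following property: for every $m$ and every $\la\in\LLub$ with $K_\la=K_m$, there are $n\ge m$ and $\tla\in\DDub_0$ with $K_\tla=K_n$ such that $\tla$ is a $\sqsp^m_n$-lift of $\la$. Its limit is still $L$.

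To build this sequence I interleave two kinds of steps. One kind uses amalgamation in $\LLb$. The other, to handle each $\la$, first lifts $\la$ into $\MMub$ using the co-initiality of $\MMub$ in $\LLub$. It then lifts further into $\DDub_0$, using the remark preceding the theorem that $\ob(\DDub_0)$ is co-initial in $\ob(\MMub)$, which comes from Proposition~\ref{pr:TopEntropy}. Finally, Proposition~\ref{pr:Robust}(4) realizes this lift along a morphism of the sequence.

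Next, let $Y$ be the set of $\phi$ such that for every $m$ there are $n\ge m$ and $\la\in\DDub_0$ on $K_n$ with $\phi\in[\la]$. For each $m$ this is a union of open sets $[\la]$ by Proposition~\ref{pr:Basis}, so $Y$ is $G_\delta$. It is dense by the choice of the sequence, Corollary~\ref{co:UpwardClosed}, and the density of $G_\LLb$ from Lemma~\ref{le:L_IDense}. Hence $Y$ is comeager in $\Homeo(L)$.

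Now fix $\phi\in Y$, points $x,y\in L$, and $\epsilon>0$. Choose $\la\in\DDub_0$ with $\mesh(K_\la)<\epsilon$ and $\phi\in[\la]$, and code the orbits of $x$ and $y$ by $\la$-walks $(x^\epsilon_n)$ and $(y^\epsilon_n)$. Because the spoke graph is a union of spirals, these walks eventually use only blue arrows. By Proposition~\ref{pr:TopEntropy}(2)--(4), they are then eventually periodic. There are two cases.
\begin{enumerate}
\item If for every $\epsilon$ the periodic parts eventually satisfy $\rho\le 1$, then $d(\phi^n x,\phi^n y)\to 0$.
\item Otherwise, for some $\epsilon$ only finitely many pairs of cells with disjoint closures occur, and $\liminf_n d(\phi^n x,\phi^n y)>0$.
\end{enumerate}
In either case $(x,y)$ is not a Li--Yorke pair. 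Zero entropy follows, since positive entropy forces Li--Yorke pairs (Blanchard--Glasner--Kolyada--Maass).

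The main obstacle is the claim in case (1) that adjacency persists along the walks, i.e.\ $\rho(x^\epsilon_n,y^\epsilon_n)\le 1$ for all later $n$. For the Cantor fan this came from monotonicity of $\la$. For $\DDub_0$ digraphs arising in the Lelek setting, I would obtain it from the fact that the two-spoke restrictions $A_\sigma$ lie in $\LLub^*_2$ and are therefore $\leq$-monotone relations between spokes. Together with condition (2) of Proposition~\ref{pr:TopEntropy}, which says each transshipment point has one successor, this means adjacent cells map to adjacent cells. I would check carefully that the end-of-spoke failures of co-injectivity described in Remark~\ref{re:Break} do not break this. Their effect was removed by the regularization in Corollary~\ref{co:PreserveSpokes}, after which the digraphs are spoke-preserving.
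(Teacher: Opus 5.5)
Your proposal is essentially the paper's proof. The paper observes that Proposition~\ref{pr:TopEntropy} makes $\ob(\DDub_0)$ co-initial in $\ob(\MMub)$, and then reruns the Cantor-fan argument verbatim: a \fra\ sequence along whose bonding maps every digraph has a $\DDub_0$-lift, the comeager set $Y$, eventually periodic codings of orbits, and the same two-case analysis. Your explicit $G_\delta$/density check for $Y$ and the Blanchard--Glasner--Kolyada--Maass reference for the entropy consequence only fill in details the paper leaves implicit. The persistence of adjacency that you flag is justified in the paper, as in your plan, only by the $\leq$-monotonicity of $\la$. Condition (2) of Proposition~\ref{pr:TopEntropy} adds nothing there, because the periodic cells in case (1) lie on cycles and are therefore terminal points.
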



\end{document}